\documentclass[11pt]{article}
\textwidth = 16 cm \textheight = 21 cm \oddsidemargin = 0 cm
\evensidemargin = 0 cm \topmargin = 0 cm
\hoffset = -.125 cm \voffset = 0 cm
\parskip = 2.5 mm
\usepackage[utf8x]{inputenc}
\usepackage{microtype}
\usepackage[T1]{fontenc}
\usepackage{ae,aecompl}
\usepackage{times}
\usepackage{mathrsfs}  
\usepackage{leftidx}
\usepackage{verbatim,amsmath,amsthm,amsfonts,amssymb,latexsym,graphicx,mathtools,extpfeil,color} 
\usepackage{epstopdf,pinlabel}
\epstopdfsetup{suffix=}
\usepackage[all]{xy}
\usepackage{graphicx}
\usepackage{caption}
\usepackage{subcaption}
\usepackage{tikz,xcolor}
\tikzset{pics/.cd,
handle/.style={code={
\draw (1,2)  coordinate (-left) 
to (0,2)
to (-.5,2)
to [out=180, in=-20] (-2,3) 
to [out=160, in=20] (-4,3) 
to [out=200,in=90] (-6,0) 
to [out=270,in=160] (-4,-3) 
to [out=340,in=200] (-2,-3) 
to [out=20,in=180] (-.5,-2)
to (0,-2) 
to (1,-2) coordinate (-right);
}}}
\tikzset{pics/.cd,
hole/.style={code={
\pgfgettransformentries{\tmpa}{\tmpb}{\tmp}{\tmp}{\tmp}{\tmp}
\pgfmathsetmacro{\myrot}{-atan2(\tmpb,\tmpa)}
\draw[rotate around={\myrot:(0,-2.5)}] (-1.2,-2.4) to[bend right]  (1.2,-2.4);
\draw[fill=white,rotate around={\myrot:(0,-2.5)}] (-1,-2.5) to[bend right] (1,-2.5) 
to[bend right] (-1,-2.5);
}}}
\tikzset{pics/.cd,
thole/.style={code={
\pgfgettransformentries{\tmpa}{\tmpb}{\tmp}{\tmp}{\tmp}{\tmp}
\pgfmathsetmacro{\myrot}{-atan2(\tmpb,\tmpa)}
\draw[rotate around={\myrot:(-2.5,0)}] (-2.4,1.2) to[bend right]  (-2.4,-1.2);
\draw[fill=white,rotate around={\myrot:(-2.5,0)}] (-2.5,1) to[bend right] (-2.5,-1) 
to[bend right] (-2.5,1);
}}}
\tikzset{pics/.cd,
thandle/.style={code={
\draw (2,-1)  coordinate (-left) 
to (2,0)
to (2,0.5)
to [out=90, in=-110] (3,2) 
to [out=70, in=-70] (3,4) 
to [out=110,in=0] (0,6) 
to [out=180,in=70] (-3,4) 
to [out=250,in=110] (-3,2) 
to [out=-70,in=90] (-2,.5)
to (-2,0) 
to (-2,-1) coordinate (-right);
}}}
\DeclareMathAlphabet{\mathpzc}{OT1}{pzc}{m}{it}
\pagestyle{plain}

\newcommand\rSI{{\rm SI}}
\newcommand\Hess{{\rm Hess}}

\newtheorem{theorem-intro}{Theorem}
\newtheorem{cor-intro}{Corollary}
\newtheorem{hypo-intro}{Hypothesis}

\usepackage[colorlinks,pagebackref,hypertexnames=false]{hyperref} \usepackage[alphabetic,backrefs,msc-links]{amsrefs}

\usepackage{aliascnt}
\numberwithin{equation}{section}


\newcommand{\rd}{{\rm d}}
\newcommand{\re}{{\rm e}}

\newcommand{\rv}{{\rm v}}

\newcommand{\rE}{{\rm E}}

\newcommand{\rI}{{\rm I}}

\newcommand{\rL}{{\rm L}}
\newcommand{\rM}{{\rm M}}




\newcommand{\bi}{{\bf i}}

\newcommand{\br}{{\bf r}}
\newcommand{\bs}{{\bf s}}

\newcommand{\bA}{{\bf A}}
\newcommand{\bB}{{\bf B}}
\newcommand{\bC}{{\bf C}}

\newcommand{\bH}{{\bf H}}

\newcommand{\bK}{{\bf K}}
\newcommand{\bL}{{\bf L}}
\newcommand{\bM}{{\bf M}}
\newcommand{\bN}{{\bf N}}

\newcommand{\bQ}{{\bf Q}}
\newcommand{\bR}{{\bf R}}
\newcommand{\bS}{{\bf S}}

\newcommand{\bW}{{\bf W}}

\newcommand{\bZ}{{\bf Z}}




\newcommand{\fb}{{\mathfrak b}}
\newcommand{\fc}{{\mathfrak c}}

\newcommand{\ff}{{\mathfrak f}}
\newcommand{\fg}{{\mathfrak g}}

\newcommand{\fq}{{\mathfrak q}}

\newcommand{\fs}{{\mathfrak s}}

\newcommand{\fB}{{\mathfrak B}}
\newcommand{\fC}{{\mathfrak C}}
\newcommand{\fD}{{\mathfrak D}}
\newcommand{\fE}{{\mathfrak E}}

\newcommand{\fM}{{\mathfrak M}}
\newcommand{\fN}{{\mathfrak N}}

\newcommand{\fQ}{{\mathfrak Q}}

\newcommand{\fU}{{\mathfrak U}}



\newcommand{\N}{\bN}
\newcommand{\Z}{\bZ}
\newcommand{\Q}{\bQ}
\newcommand{\R}{\bR}
\newcommand{\C}{\bC}


\newcommand{\su}{\mathfrak{su}}

\renewcommand{\O}{{\rm O}}
\newcommand{\SO}{{\rm SO}}

\newcommand{\SU}{{\rm SU}}

\newcommand{\U}{{\rm U}}


\DeclareMathOperator{\coker}{coker}

\DeclareMathOperator{\ind}{index}

\DeclareMathOperator{\tr}{tr}

\renewcommand{\det}{\operatorname{det}}

\newcommand{\id}{{\rm id}}

\newcommand{\dvol}{{\rm dvol}}

\renewcommand{\epsilon}{\varepsilon}

\def\({\mathopen{}\left(}
\def\){\right)\mathclose{}}
\def\<{\mathopen{}\left<}
\def\>{\right>\mathclose{}}


\usepackage{multicol, color}

\definecolor{gold}{rgb}{0.85,.66,0}
\definecolor{cherry}{rgb}{0.9,.1,.2}
\definecolor{burgundy}{rgb}{0.8,.2,.2}
\definecolor{orangered}{rgb}{0.85,.3,0}
\definecolor{orange}{rgb}{0.85,.4,0}
\definecolor{olive}{rgb}{.45,.4,0}
\definecolor{lime}{rgb}{.6,.9,0}
\definecolor{green}{rgb}{.2,.7,0}
\definecolor{grey}{rgb}{.4,.4,.2}
\definecolor{brown}{rgb}{.4,.3,.1}


\def\makeautorefname#1#2{\AtBeginDocument{\expandafter\def\csname#1autorefname\endcsname{#2}}}

\newcommand{\mynewtheorem}[2]{
  \newaliascnt{#1}{equation}          
  \newtheorem{#1}[#1]{#2}
  \aliascntresetthe{#1}
  \makeautorefname{#1}{#2}
}
\mynewtheorem{theorem}{Theorem}
\mynewtheorem{prop}{Proposition}
\mynewtheorem{cor}{Corollary}
\mynewtheorem{construction}{Construction}
\mynewtheorem{lemma}{Lemma}
\mynewtheorem{conjecture}{Conjecture}
\mynewtheorem{hypo}{Hypothesis}

\numberwithin{substep}{step}
\makeautorefname{step}{Step}
\makeautorefname{substep}{Step}

\numberwithin{subcase}{case}
\makeautorefname{case}{Case}
\makeautorefname{subcase}{case}

\theoremstyle{remark}
\mynewtheorem{remark}{Remark}

\theoremstyle{definition}
\mynewtheorem{definition}{Definition}
\mynewtheorem{example}{Example}
\mynewtheorem{exercise}{Exercise}
\mynewtheorem{convention}{Convention}
\newtheorem*{convention*}{Convention}
\newtheorem*{conventions*}{Conventions}
\mynewtheorem{question}{Question}
        
\makeautorefname{chapter}{Chapter}
\makeautorefname{section}{Section}
\makeautorefname{subsection}{Section}
\makeautorefname{subsubsection}{Section}


\title{Lagrangians, SO(3)-instantons and the Atiyah-Floer Conjecture}
\author{\bf \sc \large Aliakbar Daemi\thanks{The work of AD was supported by NSF Grant DMS-1812033 and NSF FRG Grant DMS-1952762.} \hspace{1cm} Kenji Fukaya\thanks{The work of KF was supported by the Simons Foundation through its Homological Mirror Symmetry Collaboration grant.} \hspace{1cm} Maksim Lipyanskiy}
\date{}

\begin{document}
\maketitle

\begin{abstract}
	A version of the Atiyah-Floer conjecture, adapted to admissible $\SO(3)$-bundles, is established.
\end{abstract}
{
  \hypersetup{linkcolor=black}
  \tableofcontents
}
\newpage

\section{Introduction}

Gauge theoretic methods in low dimensional topology and holomorphic curve methods in symplectic geometry are responsible for many revolutionary advancements in respective fields. These two approaches have many formal similarities, despite the fact that low dimensional topology and symplectic geometry have different origins. The Atiyah--Floer conjecture is a manifestation of these similarities. According to this conjecture, instanton Floer homology, a 3-manifold invariant constructed in the context of Yang--Mills gauge theory can be recovered using holomorphic curve methods. The main goal of the present paper is to prove a version of the Atiyah--Floer conjecture.

\subsection*{Main Results}

Suppose $Y_\#$ is an orientable connected closed 3-manifold and $E_\#$ is an $\SO(3)$-bundle on $Y_\#$. The pair $(Y_\#,E_\#)$ is {\it admissible} if the Stiefel-Whitney class $w_2(E_\#)$ lifts to a non-torsion element of $H^2(Y_\#;\Z)$. This condition is equivalent to the existence of an element $R\in H_2(Y_\#;\Z)$ such that the pairing of $w_2(E_\#)$ and $R$ is non-trivial. Any such $R$ is called a {\it nice} homology class for the pair $(Y_\#,E_\#)$. Associated to any such admissible pair $(Y_\#,E_\#)$, we have the {\it instanton Floer homology} of $(Y_\#,E_\#)$, which is a relatively $\Z/8\Z$-graded group \cite{Floer:inst2,BD:surgery}. A nice homology class $R$ induces a degree $4$ involution on the instanton Floer homology of $(Y_\#,E_\#)$, and the invariant subspace with respect to this involution determines a relatively $\Z/4\Z$-graded group $\rI_*(Y_\#,E_\#)$. 

Any admissible pair with a choice of a nice homology class admits an {\it admissible splitting}. An admissible splitting 
\begin{equation}\label{ad-split}
	(Y,E)\cup_{(\Sigma,F)}(Y',E')
\end{equation}	
consists of connected 3-manifolds $Y$, $Y'$ with boundary $\Sigma$ and $\SO(3)$-bundles $E$, $E'$ on $Y$, $Y'$ whose restrictions to $\Sigma$ are identified with an $\SO(3)$-bundle $F$. The restriction of $w_2(F)$ to each connected component of $\Sigma$ is required to be non-trivial. This assumption implies that $\Sigma$ has an even number of connected components and as an additional assumption we require that $\Sigma$ has exactly two connected components $\Sigma_0$ and $\Sigma_1$. 
We also assume that $Y$ and $Y'$ are oriented such that the induced orientations on their boundaries are the orientation on $\Sigma$. Thus, after reversing the orientation of $Y'$, we may glue $Y$ and $Y'$ to form an oriented closed 3-manifold $Y_\#$. The bundles $E$ and $E'$ can be also glued to from an $\SO(3)$-bundle $E_\#$ on $Y_\#$. Connected components of $\Sigma$ determine homologous homology classes in $Y_\#$ which are denoted by $R$. Since $w_2(E_\#)$ has a non-trivial pairing with $R$, the pair $(Y_\#,E_\#)$ is admissible and $R$ is a nice homology class for this pair. We say \eqref{ad-split} is an admissible splitting of $(Y_\#,E_\#)$ compatible with $R$.

Suppose $(Y_\#,E_\#)$ is a pair with an admissible splitting as in \eqref{ad-split}, and $\mathcal M(\Sigma,F)$ denotes the moduli space of flat connections on $F$ modulo determinant one automorphisms of $F$ \cite{AB:YM}. The manifold $\mathcal M(\Sigma,F)$ admits a canonical symplectic structure $\Omega$. (See Subsection \ref{flat-surface} for our conventions.) Flat connections on $E$ after a small perturbation gives rise to an immersed Lagrangian submanifold $L(Y,E)$, which is called the {\it 3-manifold Lagrangian} associated to $(Y,E)$ \cite{He:3-man-lag}. (See Subsection \ref{3-man-lag-sec} for more details.) Similarly, we can associate a 3-manifold Lagrangian $L(Y',E')$ to $(Y',E')$. We say that the 3-manifold Lagrangians $L(Y,E)$ and $L(Y',E')$ are embedded if there are arbitrarily small perturbations such that the associated Lagrangians are embedded. A more precise version of this assumption is stated as Hypothesis \ref{main-hypo}. For instance, if the fundamental group of one of the connected components of $\Sigma$ surjects into the fundamental group of $Y$ (resp. $Y'$), then $L(Y,E)$ (resp. $L(Y',E'))$ is embedded. (See Proposition \ref{emb-fund}.)
\begin{theorem-intro}
	Suppose $L(Y,E)$ and $L(Y',E')$ are embedded submanifolds of $\mathcal M(\Sigma,F)$. 
	Then the pair of Lagrangians $(L(Y,E), L(Y',E'))$ is monotone with minimal Maslov number $4$, 
	and configuration space of strips associated to these Lagrangians can be coherently oriented. 
\end{theorem-intro}

Let $(L,L')$ be a pair of embedded Lagrangians with minimal Maslov number $N$. Building on Floer's work \cite{Fl:LFH}, Oh defines a $\Z/N\Z$-graded Lagrangian Floer homology group ${\rm HF}(L,L')$ in \cite{Oh:LFH}, which is a vector space over $\Z/2$. {\it Coherent orientations} for configuration spaces of strips allow us to work with integer coefficients (see Definition \ref{coh-ori-def}). Let $L(Y,E)$ and $L(Y',E')$ be embedded. Then Oh's Lagrangian Floer homology of these two Lagrangians is a $\Z/4\Z$-graded abelian group which is called {\it symplectic instanton Floer homology} of $(Y_\#,E_\#)$ and is denoted by $\rSI_*(Y_\#,E_\#)$. The following theorem is our main result.
\begin{theorem-intro}\label{main-thm}
	Suppose an admissible splitting for a pair $(Y_\#,E_\#)$ is given such that 
	the associated 3-manifold Lagrangians are embedded.
	Then there is an isomorphism of relatively $\Z/4\Z$-graded abelian groups $\bN: \rI_*(Y_\#,E_\#)\to \rSI_*(Y_\#,E_\#)$.
\end{theorem-intro}
\noindent 
The proof of Theorem \ref{main-thm} modulo some analytical results is given in Section \ref{main-thm-sec}. Sections \ref{lin-analysis}, \ref{reg-comp-exp-dec-sec} and \ref{perturbations} of the paper are devoted to verifying the analytical results which are used in Section \ref{main-thm-sec}. 

Instanton Floer homology group $\rI_*(Y_\#,E_\#)$ is an invariant of the topological type of $(Y_\#,E_\#)$. However, symplectic instanton Floer homology $\rSI_*(Y_\#,E_\#)$, a priori, depends on the choice of an admissible splitting. As a consequence of Theorem \ref{main-thm}, we have the following corollary.
\begin{cor-intro}
	Symplectic instanton Floer homology group $\rSI_*(Y_\#,E_\#)$ depends only on the topological type of $(Y_\#,E_\#)$. In particular, it is independent of the admissible splitting of $(Y_\#,E_\#)$.
\end{cor-intro}

\begin{cor-intro}\label{DS}
	Let  $\phi:\Sigma_g\to \Sigma_g$ be a diffeomorphism and $Y_\phi$ be the mapping cylinder 
	$[0,1]\times Y/\{(x,1)\sim (\phi(x),0)\}$. Let also $E_\phi$ be the $\SO(3)$-bundle on $Y_\phi$ which is induced by 
	the non-trivial $\SO(3)$-bundle on $\Sigma_g$. The map $\phi$ induces a symplectomorphism
	$\phi_*:\mathcal M(\Sigma_{g},F_{g}) \to \mathcal M(\Sigma_{g},F_{g})$. Then instanton Floer homology group 
	$\rI_*(Y_\phi,E_\phi)$ is isomorphic to the Lagrangian Floer homology of 
	the diagonal and the graph $\Gamma_{\phi_*}$ of  $\phi_*$, which are Lagrangians in 
	$(\mathcal M(\Sigma_{g},F_{g})\times \mathcal M(\Sigma_{g},F_{g}),-\Omega\times \Omega)$.
\end{cor-intro}

This corollary of Theorem \ref{main-thm} is essentially the same as Dostoglou and Salamon's celebrated result in \cite{DS:At-Fl}. It is shown in \cite{DS:At-Fl} that $\rI_*(Y_\phi,E_\phi)$ is isomorphic to the fixed point Floer homology of $\phi_*$. It is a folklore theorem that fixed Floer homology of a symplectomorphism is isomorphic to the Lagrangian Floer homology of the diagonal and the graph of the symplectomorphism. 	

\begin{proof}
	The pair $(Y_\phi,E_\phi)$ has an obvious admissible splitting as the union of $(Y,E)$ and $(Y',E')$
	where $Y$, $Y'$ are diffeomorphic to $[0,1]\times \Sigma_g$ and $E$, $E'$ are pull-backs of the non-trivial 
	$\SO(3)$-bundle $F_g$ on $\Sigma_g$. The Lagrangian submanifolds associated to $Y$, $Y'$
	are the diagonal $\Delta$ and $\Gamma_{\phi_*}$. 
\end{proof}

In \cite{KM:YAFT}, Kronheimer and Mrowka use instanton Floer homology to define an invariant of 3-manifolds which is called {\it framed Floer homology}. Let $T^3=S^1\times T^2$ be the 3-dimensional torus and $E_1$ be the $\SO(3)$-bundle on $T^3$ which is the pullback of the non-trivial bundle $F_1$ on $T^2$. The trivial $\SO(3)$-bundle on a 3-manifold $M$ and $E_1$ induces a bundle $E_\#$ on $Y_\#=M\#T^3$. The pair $(Y_\#,E_\#)$ is admissible and the factor $T^2$ of $T^3$ determines a nice homology class for this pair. The framed Floer homology $\rI_*^\sharp(M)$ of $M$ is defined to be the associated instanton Floer homology. Given a Heegaard splitting $H\cup_{\Sigma_g} H'$ of $Y$, we can obtain an admissible splitting of $(Y_\#,E_\#)$. Let $Y$ (resp. $Y'$) be the boundary sum of $H$ (resp. $H'$) and $[0,1]\times T^2$. Then $Y$ and $Y'$ are 3-manifolds whose boundary components are $\Sigma=\Sigma_{g+1}\sqcup T^2$. The non-trivial $\SO(3)$-bundle on $[0,1]\times T^2$ induces $\SO(3)$-bundles $E$ and $E'$ on $Y$ and $Y'$. The restriction $F$ of $E$ (or equivalently $E'$) to $\Sigma$ is given by the non-trivial $\SO(3)$-bundle $F_{g+1}$ on $\Sigma_{g+1}$ and the bundle $F_1$ on $T^2$. In particular, $\mathcal M(\Sigma,F)=\mathcal M(\Sigma_{g+1},F_{g+1})$. The subspaces of elements of $\mathcal M(\Sigma_{g+1},F_{g+1})$ which extend as flat connections to $E$ and $E'$ determine embedded Lagrangian submanifolds $L$ and $L'$ of $\mathcal M(\Sigma_{g+1},F_{g+1})$. In \cite[Definition 4.4.1]{WW:FFT-coprime}, Wehrheim and Woodward, define a 3-manifold invariant as the Lagrangian Floer homology of $L$ and $L'$. We call this invariant {\it symplectic framed Floer homology} of $M$, and denote it by $\rSI_*^\sharp(M)$. 
\begin{theorem-intro} \label{framed-AF}
	The 3-manifold invariants $\rI_*^\sharp(M)$ and $\rSI_*^\sharp(M)$ together with their Chern-Simons filtrations, are isomorphic to each other.
\end{theorem-intro}

The Chern-Simons filtrations on $\rI_*^\sharp(M)$ and $\rSI_*^\sharp(M)$ are defined in Section \ref{action}, where the proof of Theorem \ref{framed-AF} is given. Forgetting this additional structure, Theorem \ref{framed-AF} is a special case of Theorem \ref{main-thm}. 

For a pair of monotone Lagrangians $(L,L')$ in a symplectic manifold $(M,\omega)$, ${\rm HF}(L,L')$ is a module over the quantum cohomology ring $QH^*(M)$. See, for example, \cite{Fl:cup-Lag,Sei:van-mut,Al:PSS-mor,FOOO:HF1,BC:ruling} for this structure on Lagrangian Floer homology and related constructions in symplectic geometry. 
For our purposes in the present paper, this structure determines an action of $QH^*(\mathcal M(\Sigma,F))$ on $\rSI_*(Y_\#,E_\#)$. To simplify the discussion we work with the coefficients in $\Q$ for the rest of the introduction. An explicit set of generators for the ring $QH^*(\mathcal M(\Sigma,F))$ (or equivalently $H^*(\mathcal M(\Sigma,F))$) is given in \cite{AB:YM}. There is a {\it universal $\SO(3)$-bundle} $\mathbb F$ over the product $\mathcal M(\Sigma,F)\times \Sigma$, and the slant products 
\begin{equation}\label{mu-class-symp}
	\hspace{2cm}\frac{1}{4}p_1(\mathbb F)\backslash \sigma,\hspace{1cm}\sigma \in H_*(\Sigma)
\end{equation}	
defines an element of $H^*(\mathcal M(\Sigma,F))$. These cohomology classes as $\sigma$ varies over a generating set for $H_*(\Sigma)$ determine multiplicative generating for the cohomology ring of $\mathcal M(\Sigma,F)$. We write $m^S_\sigma:\rSI_*(Y_\#,E_\#) \to \rSI_*(Y_\#,E_\#)$ for the induced action of \eqref{mu-class-symp}, as an element of $QH^*(\mathcal M(\Sigma,F))$, on $\rSI_*(Y_\#,E_\#)$.

On the gauge theoretical side, there is an action of $H_*(Y_\#)$ on $\rI_*(Y_\#,E_\#)$ for any admissible pair $(Y_\#,E_\#)$. For $\sigma\in H_*(Y_\#)$, we denote the corresponding action by $m^G_\sigma:\rI_*(Y_\#,E_\#) \to \rI_*(Y_\#,E_\#)$. This action plays a crucial role in certain topological applications of instanton Floer homology of admissible pairs (see \cite{km-sutures} for some instances of such topological applications), and it is related to $\mu$-classes and polynomial invariants in Donaldson theory of smooth closed 4-manifolds. 

\begin{theorem-intro}\label{module-over-quantum}
	Suppose $\sigma\in H_1(\Sigma)\oplus H_2(\Sigma)$, and the endomorphism $m^G_\sigma$ of $\rI_*(Y_\#,E_\#)$ is defined using the inclusion of $\Sigma$ in $Y_\#$. Then the isomorphism $\bN$ of Theorem \ref{main-thm} is compatible with $m^G_\sigma$ and the the homomorphism
	 $m^S_\sigma$ on the symplectic side. That is to say, $\bN\circ m^G_\sigma=m^S_\sigma\circ \bN$.
\end{theorem-intro}

This theorem is meant to feature an instance of a more general result. In particular, we believe that this theorem should generalize to the case that one uses arbitrary homology classes in $H_*(\Sigma)$ and the homology classes are defined with arbitrary coefficient ring. 

The original version of the Atiyah-Floer conjecture was stated in \cite{At:AT-Fl}. This version of the Atiyah-Floer conjecture concerns the invariants of 3-manifolds with the same integral homology as $S^3$ \cite{floer:inst1}. One can generalize this conjecture so that it has the conjecture of \cite{At:AT-Fl} and Theorem \ref{framed-AF} as two special cases. A strategy to approach the Atiyah-Floer conjecture for admissible bundles, similar to the method of this paper, was proposed in \cite{Fuk:Fl-boundary}. The key geometrical tool to prove the results of this paper is the mixed equation \cite{Max:GU-comp,DFL:mix} whereas the proposal of \cite{Fuk:Fl-boundary} is based on a version of the ASD equation defined using degenerate metrics. Another major approach to the Atiyah-Floer conjecture makes use of adiabatic limits. The adiabatic limits method was already used in \cite{DS:At-Fl} and it forms a crucial part of the programs of \cite{We:AF-exp,duncan2015higherrank}. Other attempts to the Atiyah-Floer conjecture can be found in \cite{Yos:AF,LL:AF}.

\paragraph{Notations.} In this paper we shall be concerned with connections on manifolds of dimensions $2$, $3$ and $4$. To avoid confusion, we denote a typical connection on a $4$-manifold by $A$ (possibly with an index), on a $3$-manifold by $B$ (possibly with an index) and on a surface by a greek letter.

The Euclidean space $\R^3$ with the standard cross product defines a Lie algebra with an action of $\SO(3)$. This Lie algebra with the action of $\SO(3)$ is isomorphic to $\mathfrak{so}(3)$, linear space of skew-adjoint endomorphisms of $\R^3$, and $\mathfrak{su}(2)$, the linear space of trace free skew-Hermitian endomorphisms of $\C^2$. The Lie algebra structure and the action of $\SO(3)$ on $\mathfrak{so}(3)$ and $\mathfrak{su}(2)$ are respectively given by the commutator map and the adjoint action. Throughout this paper, we use this isomorphism to identify an $\SO(3)$ vector bundle $V$ with the bundle $\mathfrak{so}(V)$ of skew-adjoint endomorphisms of $V$. We also define a bi-linear form $\tr:\R^3\times \R^3\to \R$ given by $-\frac{1}{2}$ of the inner product. Using the identification with $\mathfrak{su}(2)$, this bi-linear form can be identified with $\tr:\mathfrak{su}(2) \times \mathfrak{su}(2)\to \R$ which maps a pair of a skew-Hermitian matrices $A$ and $B$ to $\tr(AB)$. The bi-linear form $\tr$ induces a bi-liner form on sections of any $\SO(3)$-vector bundle, which is denoted by the same notation.

\section{Floer homology groups} \label{HF}

In this section, we recall the definitions of Floer homology groups $\rI_*(Y_\#,E_\#)$ and $\rSI_*(Y_\#,E_\#)$ for an admissible pair $(Y_\#,E_\#)$. The definition of the latter Floer homology group requires some preparation. First we recall the definition of the symplectic manifold $\mathcal M(\Sigma,F)$. Then we define the 3-manifold Lagrangian $L(Y,E)$, which is an immersed Lagrangian of $\mathcal M(\Sigma,F)$. In the case that $F$ is replaced with the trivial bundle, analogues of the 3-manifold Lagrangian $L(Y,E)$ are the main subject of study in \cite{He:3-man-lag}. Our case of interest is less complicated because there is no singular point in $\mathcal M(\Sigma,F)$. 

\subsection{3-manifolds and $\SO(3)$-bundles}\label{3-man-bdles}

Suppose $\Sigma$ is a Riemann surface with two connected components. We assume that $Y$, $Y'$ are oriented connected 3-manifolds such that an identification of collar neighborhoods of their boundary components with $[-1,2)\times \Sigma$, $(-2,1]\times \Sigma$ are fixed, which are respectively orientation preserving and orientation reversing. Throughout this paper, we use outward-normal-first convention to orient the boundary of an oriented 3-manifold and the first-factor-first convention to orient the product of two oriented manifolds. We reverse the orientation of $Y'$ and glue it to $Y$ using the rule
\[
  (t,x)\in [-1,1]\times \Sigma \subset Y \sim (t,x)\in [-1,1]\times \Sigma \subset Y'
\]
to form a closed oriented 3-manifold $Y_\#$. It will be useful to fix a notation for the following subspaces of $Y$, $Y'$ 
\begin{equation}\label{Y0'}
	Y_0:= Y\setminus [-1,1)\times \Sigma, \hspace{1cm} Y_0':= Y'\setminus (-1,1]\times \Sigma
\end{equation}
which are clearly diffeomorphic to $Y$, $Y'$. Let $g$, $g'$ be Riemannian metrics on $Y$, $Y'$ that restrict to the product metric on the collar neighborhoods of the boundaries of $Y$ and $Y'$ corresponding to a fixed metric on $\Sigma$. Gluing these metrics produces a metric $g_\#$ on $Y_\#$, and in the following we use $g$, $g'$ or $g_\#$ when we need a metric on $Y$, $Y'$ or $Y_\#$. We will impose further constraints on the metrics $g$ and $g'$ in Section \ref{perturbations}.

Suppose $F$ is an $\SO(3)$-bundle on $\Sigma$ with non-trivial restrictions to the connected components of $\Sigma$. Suppose $\SO(3)$-bundles $E$, $E'$ on $Y$, $Y'$ are also given such that their restrictions to $[-1,2)\times \Sigma\subset Y$, $(-2,1]\times \Sigma\subset Y'$ are identified with $[-1,2)\times F$, $(-2,1]\times F$. Then we can glue these two $\SO(3)$-bundles to form the bundle $E_\#$ on $Y_\#$. In particular, $(Y_\#,E_\#)$ is an admissible pair with an admissible splitting as in \eqref{ad-split} determined by $(Y,E)$ and $(Y',E')$. This admissible pair with the given splitting shall be fixed for the rest of the paper.

\subsection{The moduli space $\mathcal M(\Sigma,F)$}\label{flat-surface}

Throughout this section, we need to consider the space of connections on $\SO(3)$-bundles over manifolds of dimensions $2$, $3$ and $4$. As the first instance, let $\mathcal A(\Sigma,F)$ be the space of connections on the bundle $F$. For analytical purposes, it is convenient to allow for Sobolev connections. To that end, we fix an integer $l\geq 2$, and assume that $\mathcal A(\Sigma,F)$ is defined using $L^2_{l-1}$ connections.\footnote{The same integer $l$ is used throughout the paper for Sobolev spaces associated to manifolds of various dimensions. The exact choice of the Sobolev parameter $l$ does not play any role in this paper.} Let $F \times_{\rm ad} \SU(2)$ be the fiber bundle on $\Sigma$ associated to the framed bundle of $F$ via the adjoint action of $\SO(3)$ on $\SU(2)$. Any section of this fiber bundle is called a {\it determinant one gauge transformation} of $F$. We use this standard terminology for $\SO(3)$ bundles over manifolds of any dimension. The space of $L^2_{l}$ sections of $F \times_{\rm ad} \SU(2)$ forms the Banach Lie group $\mathcal G(F)$. Taking pullbacks with respect to elements of $\mathcal G(F)$ determines an action of this group on $\mathcal A(\Sigma,F)$, and the quotient space, the {\it configuration space} of connections on $\Sigma$, is denoted by $\mathcal B(\Sigma,F)$. Embedded in this infinite dimensional Banach manifold, there is the moduli space $\mathcal M(\Sigma,F)$, which consists of the elements of $\mathcal B(\Sigma,F)$ that are represented by flat connections. The dimension of $\mathcal M(\Sigma,F)$ is equal to $-3\chi(\Sigma)$.

\begin{remark}\label{ext-gauge-p}
	An alternative  gauge group $\mathcal G_{\rm ex}(F)$ can be defined by considering the sections of the 
	fiber bundle $F \times_{\rm ad} \SO(3)$ induced by the adjoint action of $\SO(3)$ on itself. 
	There is an obvious map from $\mathcal G(F)$ to $\mathcal G_{\rm ex}(F)$ induced by the quotient map 
	$\SU(2)$ to $\SO(3)$. This map fits into an exact sequence:
	\[
	  \Z/2\oplus \Z/2 \hookrightarrow{}\mathcal G(F)\xrightarrow{}\mathcal G_{\rm ex}(F)\twoheadrightarrow{}
	  H^1(\Sigma;\Z/2).
	\]
	The first map is the inclusion of the elements of $\mathcal G(F)$ which are locally equal to $\pm 1$, and the last map for $g\in \mathcal G_{\rm ex}(F)$
	 is given as the obstruction of lifting $g$ to $\mathcal G(F)$ over the 
	1-skeleton of $\Sigma$.
\end{remark}

For a flat connection $\sigma$ on $F$, the tangent space of the smooth manifold $\mathcal M(\Sigma,F)$ at $[\sigma]$ is given by
\begin{equation}\label{tangent=space}
	\mathcal H^1(\Sigma;\sigma)=\{c\in \Omega^1(\Sigma,\Lambda^1\otimes F)\mid d_\sigma c=0,\,d_\sigma^* c=0\}.
\end{equation}
We consider the $L^2$ inner product
\begin{equation}\label{L^2-metric-dim-2}
	\langle c,c'\rangle:=-\int_{\Sigma} \tr(c\wedge *_2c')
\end{equation}
on  \eqref{tangent=space}, where $\tr$ is defined by applying the inner product of $F$ to the vector factor of $c$ and $c'$.
In this paper, we use a similar convention to define inner products of differential forms of any degree with values in an $\SO(3)$-bundle over a Riemannian manifold. We similarly define a symplectic form $\Omega$ on $\mathcal H^1(\Sigma;\sigma)$:
\begin{equation}\label{symplectic-forms}
	\Omega(c,c'):=-\int_{\Sigma} \tr(c\wedge c').
\end{equation}
The complex structure $J_*:\mathcal H^1(\Sigma;\sigma) \to \mathcal H^1(\Sigma;\sigma)$, defined as $J_*(c)=*_2c$, can be used to relate the metric and the symplectic form:
\[
  \langle c,c'\rangle=\Omega(c,J_*c').
\]

The basic topological invariants of $\mathcal M(\Sigma,F)$ are well understood. This manifold is simply connected and $\pi_2(\mathcal M(\Sigma,F))=\pi_2(\mathcal M(\Sigma_0,F_0))\oplus \pi_2(\mathcal M(\Sigma_1,F_1))$ where $\pi_2(\mathcal M(\Sigma_i,F_i))=0$ if the genus of the connected component $\Sigma_i$ of $\Sigma$ is $1$, and $\pi_2(\mathcal M(\Sigma_i,F_i))=\Z$ otherwise \cite{AB:YM,DS:-CR-SF}. In fact, for a surface $\Sigma_g$ of genus $g\geq 2$, a generator of $\pi_2(\mathcal M(\Sigma_g,F_g))$ can be constructed in the following way. There is a connection $A$ on the pullback of $F_g$ to $D^2\times \Sigma_g$ such that for any point $z\in D^2$ the restriction $A\vert_{\{z\}\times \Sigma_g}$ is flat, the restriction of $A$ to the boundary $S^1\times \Sigma_g$ is flat and 
\[
  \frac{1}{8\pi^2}\int_{D^2\times \Sigma_g}\tr\(F_A\wedge F_A\)=\frac{1}{2}.
\]
In particular, for any $z\in S^1$, the flat connection $A\vert_{\{z\}\times \Sigma_g}$ represents a fixed element $\alpha$ of $\mathcal M(\Sigma_g,F_g)$. Therefore, $A$ induces a map $s:(D^2,S^1)\to (\mathcal M(\Sigma_g,F_g),\alpha)$ determining a generator of $\pi_2(\mathcal M(\Sigma_g,F_g))$. Since the connections $A\vert_{\{z\}\times \Sigma_g}$ for $z\in S^1$ are gauge equivalent to a fixed (irreducible) flat connection, we also obtain a loop in $\mathcal G(F_g)/\{\pm 1\}$ from the restriction of $A$ to $S^1\times \Sigma_g$ which gives a generator of $\pi_1(\mathcal G(F_g)/\{\pm 1\})$.

\subsection{3-manifold Lagrangians}\label{3-man-lag-sec}

Suppose $(Y,E)$ is as in Subsection \ref{3-man-bdles}. Fix a smooth connection $B_0$ on $E$, and define 
\begin{equation*}
	\mathcal A(Y,E):=\{B_0+b \mid b \in L^2_l(Y,\Lambda^1 \otimes E)\}
\end{equation*}
The group $\mathcal G(E)$ of determinant one gauge transformations of $E$ with finite $L^2_{l+1}$ norm acts smoothly on $\mathcal A(Y,E)$ by taking pull back. We denote the quotient space by $\mathcal B(Y,E)$.  Let $B\in \mathcal A(Y,E)$ be an irreducible connection, that is to say, the stabilizer of $B$ with respect to the action of $\mathcal G(E)$ consists of only $\pm 1$. Then $\mathcal B(Y,E)$ is a Banach smooth manifold at the class of $B$. The tangent space of $\mathcal B(Y,E)$ at this point can be identified with 
the Banach space
\begin{equation}  \label{gauge-slice}
	X_B:=\{b \in L^2_l(Y,\Lambda^1 \otimes E) \mid d_B^*b=0,\, *_3b|_{\Sigma}=0\}.
\end{equation}
In particular, for any $b$ in $L^2_l(Y,\Lambda^1 \otimes E)$, the tangent space of $\mathcal A(Y,E)$ at $B$, there is $\zeta \in L^2_{l+1}(Y,E)$ such that $b-d_B\zeta$ belongs to $X_B$. There is a variation of this fact which shall be useful. Let $b\in L^2_l(Y,\Lambda^1 \otimes E)$ such that $b|_{\Sigma}=0$. Then there is $\zeta \in L^2_{l+1}(Y,E)$ with $\zeta\vert_{\Sigma}=0$ such that $d_B^*(b-d_B\zeta)=0$.

Define a $\mathcal G(E)$-equivariant map $\phi:\mathcal A(Y,E) \to L^2_{l-1}(Y,\Lambda^1 \otimes E)$ by
\begin{equation}\label{flat-3d}
  \phi(B)=*_3F_B.
\end{equation}
In general the space $L(Y,E):=\phi^{-1}(0)/\mathcal G(E)$ might not be smooth because $\phi$ might have zeros which are not cut down transversely. To achieve transversality,  we perturb $\phi$ following a standard scheme used in various places including \cite{donaldson:orientations,floer:inst1,Tau:Cass,He:3-man-lag,KM:YAFT}.

In Section \ref{hol-pert-sect}, we review the definition of a family of functions defined on $\mathcal A(Y,E)$, which are known as {\it cylinder functions}. Given a cylinder function $h:\mathcal A(Y,E)\to \R$, we may define its formal gradient $\nabla h:\mathcal A(Y,E) \to L^2_{l}(Y,\Lambda^1 \otimes E)$ with respect to the $L^2$ metric on $\mathcal A(Y,E)$. This determines a gauge invariant perturbation of \eqref{flat-3d}:
\begin{equation}\label{flat-pert-3d}
  \phi_h(B)=*_3F_B+\nabla_B h.
\end{equation}
The function $h$ depends only on the restriction of the connection $B$ to the interior of $Y_0$. In particular, $\nabla_B h$ vanishes on a neighborhood of the boundary of $Y$. Moreover, invariance of $h$ with respect to the action of $\mathcal G(E)$ implies that $d_B^*\nabla_B h$ vanishes.  This together with Bianchi identity implies that $\phi_h(B)$ belongs to the kernel of $d_B^*$. We write $L_h(Y,E)$ for the quotient space $\phi_h^{-1}(0)/\mathcal G(E)$. Any element of $L_h(Y,E)$ restricts to a flat connection on the boundary Riemann surface $\Sigma$. In particular, this defines a map $r:L_h(Y,E) \to \mathcal M(\Sigma,F)$. 

The linearization of $\phi_h$ at any connection $B$ modulo the action of the gauge group defines a map from $X_{B}$ to $L^2_{l-1}(Y,\Lambda^1\otimes E)$ as follows:
\begin{equation}\label{linearized}
	b\to*_3d_{B}b+{\rm Hess}_{B}h(b).
\end{equation}
This map has an infinite dimensional co-kernel and is not Fredholm. To resolve this issue, let $\Pi_{B}$ be the projection to the kernel of $d_{B}^*$ acting on $L^2_{l-1}(Y,\Lambda^1\otimes E)$. Since $d_{B'}^*\phi_h(B')=0$ for any $B'\in \mathcal A(Y,E)$, the zeros of $\phi_h$ and  $\Pi_{B}\circ \phi_h$, in a neighborhood of $B$, agree with each other. Therefore, we may consider linearization of the operator $\Pi_{B}\circ \phi_h$ to study the deformation theory of the space $L_h(Y,E)$.

\begin{prop}[\cite{He:3-man-lag}]\label{linearized-3-man-Lag}
	Suppose $B$ represents an element of $L_h(Y,E)$. Then
	\[
	  d_{B}^*(*_3d_{B}+{\rm Hess}_{B}h)=0.
	\]
	In particular, the linearization of $\Pi_{B}\circ \phi_h$, 
	denoted by $L_{B}:X_{B}\to \ker(d_{B}^*)$, is given by \eqref{linearized}.
	The operator $L_{B}$ is Fredholm with index $-\frac{3}{2}\chi(\Sigma)$. 
	The kernel of this operator can be identified with
	\begin{equation}\label{tang-per}
		\mathcal H^1_h(Y;B):=\{b\in L^2_l(Y,\Lambda^1 \otimes E) \mid *_3b|_{\Sigma}=0,\, d_B^*b=0,\,*_3d_B(b)+{\rm Hess}_{B}h(b)=0 \},
	\end{equation}
	and its cokernel is given by
	\begin{equation}\label{tang-per}
		\mathcal H^1_h(Y,\Sigma;B):=\{b \in L^2_l(Y,\Lambda^1 \otimes E) \mid b|_{\Sigma}=0,\, d_B^*b=0,\,*_3d_B(b)+{\rm Hess}_{B}h(b)=0 \}.
	\end{equation}
\end{prop}
\begin{proof}
	Let $\Psi$ be a smooth 0-form on $Y$ with values in $E$ which is supported in the interior of $Y$,
	 and $b \in L^2_l(Y,\Lambda^1 \otimes E)$. If $B_t$ is the connection $B+tb$, then the inner product of 
	$*_3F_{B_t}+\nabla_{{B_t}} h$ and $d_{B_t} \Psi$ vanishes. Taking derivative with respect to $t$ implies that:
	\begin{equation*}
		\langle*_3d_{B}b+{\rm Hess}_{B}h(b),d_{B} \Psi\rangle_{L^2(Y)}+\langle*_3F_B+\nabla_{{B}} h,[b,\Psi]\rangle_{L^2(Y)}=0
	\end{equation*}	
	Now the claim follows from the assumption that $*_3F_B+\nabla_{{B}} h$ vanishes. The remaining claims can be treated as in 
	\cite{He:3-man-lag}.
\end{proof}
As another useful property of cylinder functions, we record the following lemma. It is a consequence of the symmetric property of hessians.
\begin{lemma}\label{H-prop}
	Suppose $B\in \mathcal A(Y,E)$ and $b, b'\in L^2_l(Y,\Lambda^1 \otimes E)$. Then we have
 	\begin{equation}\label{H-prop-id}
	  \int_{Y}\tr(b\wedge *_3{\rm Hess}_{B}h (b'))=\int_{Y}\tr(*_3{\rm Hess}_{B}h (b)\wedge b').
	\end{equation}
\end{lemma}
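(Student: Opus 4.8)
The plan is to reduce the identity \eqref{H-prop-id} to the bare statement that the Hessian $\Hess_B h$, as a bounded symmetric operator on $L^2(Y,\Lambda^1\otimes E)$, is self-adjoint with respect to the $L^2$ inner product \emph{together with} the pairing conventions fixed in the Notations section. The key observation is that for any $b,b'\in L^2_l(Y,\Lambda^1\otimes E)$ one has
\[
  \int_{Y}\tr\bigl(b\wedge *_3\,\Hess_B h(b')\bigr)=\langle b,\Hess_B h(b')\rangle_{L^2(Y)},
\]
by the very definition of the $L^2$ inner product on $E$-valued $1$-forms via $\tr(\,\cdot\wedge *_3\,\cdot\,)$ (this is the dimension-$3$ analogue of \eqref{L^2-metric-dim-2}). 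So the left-hand side of \eqref{H-prop-id} is $\langle b,\Hess_B h(b')\rangle_{L^2(Y)}$ and the right-hand side is $\langle \Hess_B h(b),b'\rangle_{L^2(Y)}$, and the lemma is precisely the symmetry of $\Hess_B h$.

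First I would recall from Section \ref{hol-pert-sect} (cylinder functions) that $h$ is a smooth real-valued function on $\mathcal A(Y,E)$, so $\nabla h:\mathcal A(Y,E)\to L^2_l(Y,\Lambda^1\otimes E)$ is its formal $L^2$-gradient, characterized by $\frac{d}{dt}\big|_{t=0}h(B+tc)=\langle \nabla_B h,c\rangle_{L^2(Y)}$ for all $c$. The Hessian $\Hess_B h$ is then the derivative of $B\mapsto\nabla_B h$ at $B$, i.e.
\[
  \langle \Hess_B h(b),b'\rangle_{L^2(Y)}
  =\frac{d}{dt}\Big|_{t=0}\langle \nabla_{B+tb}h,b'\rangle_{L^2(Y)}
  =\frac{\partial^2}{\partial s\,\partial t}\Big|_{s=t=0} h(B+tb+sb').
\]
The right side is manifestly symmetric in $(b,b')$ by equality of mixed second partials (Clairaut/Schwarz), using that $h$ is at least $C^2$ along the affine slices $B+tb+sb'$ — which is part of the definition of a cylinder function. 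Hence $\langle \Hess_B h(b),b'\rangle_{L^2(Y)}=\langle \Hess_B h(b'),b\rangle_{L^2(Y)}=\langle b,\Hess_B h(b')\rangle_{L^2(Y)}$, the last step using that the $L^2$ inner product is symmetric and real. Translating both sides back through the identity $\langle b,c\rangle_{L^2(Y)}=\int_Y\tr(b\wedge *_3 c)$ gives exactly \eqref{H-prop-id}.

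The only point requiring care — and thus the main (minor) obstacle — is bookkeeping with the sign and normalization conventions: the paper defines $\tr$ as $-\tfrac12$ of the inner product and defines the $L^2$ pairing of $E$-valued forms with a compensating sign, so one must check that the two minus signs and the factor of $*_3$ line up so that $\int_Y\tr(b\wedge *_3 c)$ really does equal the positive-definite $L^2$ inner product $\langle b,c\rangle_{L^2(Y)}$ used to define $\nabla h$ and $\Hess h$. Once that identification is pinned down (it is the $3$-dimensional version of \eqref{L^2-metric-dim-2}), the lemma is immediate from symmetry of second derivatives; there is no analysis beyond the smoothness of cylinder functions, and no integration by parts is needed since no differential operator appears. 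I would therefore present the proof in two lines: rewrite both integrals as $L^2$ inner products, and invoke symmetry of the Hessian.
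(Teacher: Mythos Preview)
Your proposal is correct and matches the paper's approach exactly: the paper simply states that the lemma ``is a consequence of the symmetric property of hessians,'' and your argument spells out precisely this. One minor correction on the sign bookkeeping you flagged: with the paper's convention $\langle b,c\rangle_{L^2(Y)}=-\int_Y\tr(b\wedge *_3 c)$, the identification carries an overall minus sign, but since it appears on both sides of \eqref{H-prop-id} this is immaterial to the symmetry argument.
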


To study regularity properties of $L_h(Y,E)$ for a general choice of $h$, it is useful to consider the family version of this construction. Let $\mathcal P$ denote the {\it parameter space} of cylinder functions (to be defined in Section \ref{hol-pert-sect}). Define a gauge invariant map $\Phi:\mathcal A(Y,E)\times \mathcal P\to L^2_{l-1}(Y,\Lambda^1 \otimes E)$ as follows:
\begin{equation}\label{flat-pert-3d}
	\Phi(B,\rho)=*_3F_B+\nabla_B h_\rho.
\end{equation}
Then $\bL(Y,E)=\Phi^{-1}(0)/\mathcal G(E)$ determines the family version of $L_h(Y,E)$. There is an obvious map $\pi:\bL(Y,E) \to \mathcal P$ such that the fibers of this map are the spaces $L_h(Y,E)$. Any element of $\bL(Y,E)$ restricts to a flat connection on the boundary Riemann surface $\Sigma$. In particular, we have a map $\br:\bL(Y,E) \to \mathcal M(\Sigma,F)$. Obviously the restriction of this map to each subspace $L_h(Y,E)$ is $r$. The proof of the following proposition will be given in Section \ref{hol-pert-sect}.

\begin{prop}[\cite{He:3-man-lag}]\label{family-lag}
	The map $\Phi$ is a submersion, and hence the space $\bL(Y,E)$ is a smooth Banach manifold. The projection map $\pi:\bL(Y,E) \to \mathcal P$
	 is a proper Fredholm map of index $-\frac{3}{2}\chi(\Sigma)$. 
	 Moreover, the restriction map $\br:\bL(Y,E) \to \mathcal M(\Sigma,F)$ is a submersion.
\end{prop}

Proposition \ref{family-lag} and Sard-Smale theorem imply that the space $L_h(Y,E)$ is a smooth compact manifold of dimension $-\frac{3}{2}\chi(\Sigma)$ for a generic choice of $h$. Thus, Proposition \ref{linearized-3-man-Lag} implies that for any such $h$ and any $[B]\in L_h(Y,E)$, $\mathcal H_{h}^1(Y;B)$ has dimension $-\frac{3}{2}\chi(\Sigma)$ and $\mathcal H_{h}^1(Y,\Sigma;B)$ is trivial. For any such $B$ and any $b\in \mathcal H_{h}^1(Y;B)$, the restriction $c:=b\vert_{\Sigma}$ is $d_\sigma$ closed where $\sigma=B\vert_{\Sigma}$. Thus, there is $\xi$ such that $c-d_\sigma\xi$ belongs to $\mathcal H^1(\Sigma;\sigma)$, and the derivative of the restriction map $r:L_h(Y,E) \to \mathcal M(\Sigma,F) $ at $B$ is given by $b\to c-d_\sigma\xi$. Thus, if $b$ belongs to the kernel of this map, then there is $\zeta \in L^2_{l+1}(Y,E)$ such that $b-d_B\zeta$ restricts to the trivial 1-form on $\Sigma$. In particular, by applying the fact mentioned at the beginning of this subsection, we may assume that $b-d_B\zeta\in \mathcal H^1_h(Y,\Sigma;B)$, which implies that $b=d_B\zeta$ because of our assumption on $h$. Since $b\in X_b$, this implies that $b=0$. Thus, the restriction map $r$ is an immersion.

\begin{prop} \label{immersed-lag}
	Suppose $h$ is a regular value of the projection map $\pi:\bL(Y,E) \to \mathcal P$. Then the immersion 
	$r:L_{h}(Y,E)\to \mathcal M(\Sigma,F)$ is Lagrangian. Moreover, given a finite dimensional smooth manifold $N$ and a 
	smooth map $s:N\to \mathcal M(\Sigma,F)$,
	for a generic $h$, the map $r$ is transversal to $s$.
\end{prop}
\noindent
In the following, if $h$ satisfies the assumption of the first part of this proposition, we say $L_{h}(Y,E)$ is {\it regular}.
\begin{proof}
	Suppose $B$ represents an element of $L_{h}(Y,E)$ and $b, b'\in \mathcal H_{h}^1(Y;B)$.
	Suppose also $c$ and $c'$ denote the restrictions of these elements to $\Sigma$:
	\begin{align*}
		\int_\Sigma \tr(c\wedge c')&=\int_Y \tr(d_Bb\wedge b')-\tr(b\wedge d_Bb')\\
		&=-\int_Y \tr(*_3{\rm Hess}_{B}h(b) \wedge b')-\tr(b\wedge *_3{\rm Hess}_{B}h(b'))\\
		&=0
	\end{align*}
	where the last identity is a consequence of Lemma \ref{H-prop}. To verify the second part, notice that 
	$\br:\bL(Y,E) \to \mathcal M(\Sigma,F)$ is transversal to $s:N\to \mathcal M(\Sigma,F)$ because the former map is a submersion.
	In particular, the following space is a smooth Banach manifold:
	\[
	  \bL(Y,E) \leftidx{_\br}{\times}{_s} N:=\{([A],x)\mid [A]\in \bL(Y,E),\, x\in N,\, \br([A])=s(x)\}.
	\]
	The map $p$ induces a map from $\bL(Y,E)\leftidx{_\br}{\times}{_s}N$ to $\mathcal P$ which is Fredholm (with 
	index $-\frac{3}{2}\chi(\Sigma)+\dim(N)-\dim(\mathcal M(\Sigma,F))$). 
	If $h$ is a regular value of this map, then $r:L_{h}(Y,E)\to \mathcal M(\Sigma,F)$ is transversal to
	$s:N\to \mathcal M(\Sigma,F)$. Therefore, the second part is a consequence of the Sard-Smale theorem.
\end{proof}

\begin{prop}\label{pert-3-man-Lag}
	Suppose $h, h'\in \mathcal P$ are two cylinder functions such that $L_{h}(Y,E)$ and $L_{h'}(Y,E)$ 
	are immersed Lagrangians of $\mathcal M(\Sigma,F)$. Then $L_{h}(Y,E)$ and $L_{h'}(Y,E)$ are 
	Lagrangian cobordant. That is to say, there is a Lagrangian immersion 
	$R:V\to \R^2\times \mathcal M(\Sigma,F)$ such that there are subspaces 
	$V_-,V_+\subset V$ with the property that 
	\[
	  R^{-1}((-\infty,-1]\times \R \times \mathcal M(\Sigma,F))=V_-,\hspace{1cm}
	  R^{-1}([1,\infty)\times \R \times \mathcal M(\Sigma,F))=V_+,
	\]
	$V_-$, $V_+$ can be identified with $(-\infty,-1]\times L_{h}(Y,E)$, $[1,\infty)\times L_{h'}(Y,E)$, and the 
	restrictions of $R$ to $V_\pm$ is given by $(\id,0,r)$.
	Here the symplectic structure on the product space $\R^2\times \mathcal M(\Sigma,F)$ is induced 
	by the standard symplectic structure $dx\wedge dy$ on $\R^2$ and the symplectic form $\Omega$ on $\mathcal M(\Sigma,F)$.
\end{prop}
A closely related result is proved in \cite{He:3-man-lag}. In the case that the bundles $F$ and $E$ over $\Sigma$ and $Y$ are trivial, Herlad shows that $L_h(Y,E)$, which is a singular Lagrangian in the singular symplectic manifold $\mathcal M(\Sigma,F)$, has a Legendrian lift to a certain $S^1$-bundle over $\mathcal M(\Sigma,F)$, and there is a Legendrian cobordism from the Legendrian lift of $L_h(Y,E)$ to the Legendrian lift of $L_{h'}(Y,E)$ for any two choices of perturbation functions $h$ and $h'$. Herlad's result has a counterpart in the admissible setup of this paper. However, we content ourselves with Proposition \ref{pert-3-man-Lag}.
\begin{proof}
	For $t\in \R$, suppose $h_t\in \mathcal P$ is a 1-parameter family of cylinder functions depending smoothly on 
	$t$ such that $h_t=h$ for $t\leq -1$ and $h_t=h'$.  Let also $g_t$ be the function 
	$\frac{dh_\tau}{d\tau}\vert_{\tau=t}$.
	Define
	\[
	  V=\{(t,[B])\in \R\times \mathcal B(Y,E)\mid *_3F_B+\nabla_B h_t=0 \}.
	\]
	As a consequence of Proposition \ref{family-lag}, this family of cylinder functions can be chosen such 
	that $V$ is a smooth manifold, and its tangent space at $(t,[B])$ is given as
	\begin{equation}\label{tangent-space}
	  T_{(t,[B])}V=\{(s,b)\mid *_3b|_{\Sigma}=0,\, d_B^*b=0,\,*_3d_B(b)+{\rm Hess}_{B}h(b)+s\nabla_B g_t=0\}.
	\end{equation}
	Since $V$ is cut down transversely, the vector space
	\begin{equation}\label{cokernel-family}
	  \{b \in L^2_l(Y,\Lambda^1 \otimes E) \mid b|_{\Sigma}=0,\, d_B^*b=0,\,*_3d_B(b)+{\rm Hess}_{B}h(b)=0
	  ,\,\langle\nabla_B g_t,b\rangle=0 \}
	\end{equation}
	 is trivial.
	Consider the map $R:V\to \R^2\times \mathcal M(\Sigma,F)$ defined as
	\[
	  R(t,[B])=(t,g_t([B]),r([B])).
	\]
	Analogous to the map $r$ and using the triviality of \eqref{cokernel-family}, one can see that $R$ 
	is an immersion. Let $(s,b)$ and $(s',b')$ be
	two vectors in \eqref{tangent-space}, and $c$, $c'$ denote the restrictions of $b$, $b'$ to $\partial Y$.
	Then we have
	\begin{align*}
		\int_\Sigma \tr(c\wedge c')&=\int_Y \tr(d_Bb\wedge b')-\tr(b\wedge d_Bb')\\
		&=-\int_Y \tr((*_3{\rm Hess}_{B}h(b)+s*_3\nabla_B g_t) \wedge b')-
		\tr(b\wedge (*_3{\rm Hess}_{B}h(b')+s'*_3\nabla_B g_t))\\
		&=s\langle\nabla_B g_t,b'\rangle-s'\langle\nabla_B g_t,b\rangle.
	\end{align*}
	It is easy to see from this identity that $R$ induces a Lagrangian immersion.
\end{proof}

\subsection{Floer homology groups}\label{Fl-hom-subsec}

In this subsection, the pairs $(Y,E)$, $(Y',E')$ and the glued up pair $(Y_\#,E_\#)$ is given as in Subsection \ref{3-man-bdles}. The construction of Subsection \ref{3-man-lag-sec} can be used to form two immersed Lagrangians $L_{h}(Y,E)$ and $L_{h'}(Y',E')$ of $\mathcal M(\Sigma,F)$. The essential assumption that we make throughout the paper is:

\begin{hypo-intro}\label{main-hypo}
	For any positive real number $\epsilon$, there is $h$ such that the associated parameter in $\mathcal P$ 
	is smaller than $\epsilon$,
	$L_{h}(Y,E)$ is regular, and $r:L_{h}(Y,E) \to \mathcal M(\Sigma,F)$ defines a submanifold. 
	The pair $(Y',E')$ satisfies a similar property.
\end{hypo-intro}

The following proposition provides a special case where the assumptions in Hypothesis \ref{main-hypo} are always satisfied.
\begin{prop}\label{emb-fund}
	If the inclusion map induces a surjection of the fundamental group of a connected
	component 
	$\Sigma_0$ of $\Sigma$ into $\pi_1(Y)$, then 
	$L_{h}(Y,E)$ with $h=0$, denoted by $L_0(Y,E)$, is regular and $r:L_{0}(Y,E) \to \mathcal M(\Sigma,F)$ defines an 
	embedded submanifold.
\end{prop}
\begin{proof}
	It is clear that $r:L_{0}(Y,E) \to \mathcal M(\Sigma,F)$ is injective. For $B\in L_{0}(Y,E)$, it suffices to show that $\mathcal H^1(Y,\Sigma;B)$ is trivial. Let 
	$b$ be a 1-form with values in $E$ such that 
	\[b|_{\Sigma}=0, \hspace{1cm} d_B^*b=0, \hspace{1cm}d_Bb=0.\]
	Let $\zeta$ be the section of $E$ defined as 
	\[
	  \zeta(p):=\int_{\gamma}\gamma^*b \hspace{1cm}\text{for $p\in Y$}.
	\]
	Here $\gamma$ is a path from a base point $p_0\in \Sigma_0$ to $p$ and to define the integral, we 
	trivialize $E$ along $\gamma$ using parallel transport with respect to $B$. This integral 
	does not depend on the choice of $\gamma$, because the integral of $b$ over any closed path
	based at $p_0$ vanishes. To see the latter claim, note that any closed path based at $p_0$ can 
	be homotoped into a closed path in $\Sigma_0$ and the Stokes theorem and the 
	assumption $d_Bb=0$ show that the integral 
	does not change throughout the homotopy. Since $b|_{\Sigma_0}=0$, the integral over a path in 
	$\Sigma$ clearly vanishes. The definition of $\zeta$ implies that 
	$b=d_B\zeta$. Using the Stokes theorem we have
	\begin{align}
	  |\!|b|\!|_{L^2}&=-\int_{Y}\tr(d_B\zeta\wedge *_3d_B\zeta)\nonumber\\
	  &=-\int_{\Sigma}\tr(\zeta\wedge *_3d_B\zeta)+\int_{Y}\tr(\zeta\wedge d_B*_3d_B\zeta) \label{L2normb}.
	\end{align}
	The second term in the last expression vanishes because $d_B^*b=0$. Since $d_B\zeta|_{\Sigma}=0$ and the restriction of $B$ to each boundary component of $\Sigma$
	is irreducible, $\zeta|_{\Sigma}=0$, which implies the vanishing of the first term in \eqref{L2normb}. 
\end{proof}

Suppose $h$ and $h'$ are chosen such that $L_{h}(Y,E)$ and $L_{h'}(Y',E')$ are embedded Lagrangian submanifolds of $\mathcal M(\Sigma,F)$. Proposition \ref{immersed-lag} implies that by a small perturbation of the cylinder function $h$ (or $h'$), we may assume that the submanifolds $L_{h}(Y,E)$ and $L_{h'}(Y',E')$ are transversal to each other. To define symplectic instanton Floer homology  $\rSI_*(Y_\#,E_\#)$ as the Lagrangian Floer homology of these two Lagrangians, we need to guarantee that $L_{h}(Y,E)$ and $L_{h'}(Y',E')$ satisfy further restrictive assumptions.

\begin{definition}\label{mon-prop-def}
	Suppose $L$ is a Lagrangian submanifold of a symplectic manifold $(M,\omega)$. Suppose $\mu:\pi_2(X,L)\to \Z$ is the Maslov index function.
	Integrating the symplectic form $\omega$ on discs with boundary values in $L$ defines another map $[\omega]:\pi_2(X,L)\to \R$.
	The Lagrangian $L$ is {\it monotone} if there is a positive constant $c$ such that
	\begin{equation}\label{mon-1}
		[\omega]=c\mu.
	\end{equation}
	The positive generator of the image $\mu$ is called the {\it minimal Maslov number} of $L$.
\end{definition}

\begin{definition}\label{mon-prop-2-def}
	Suppose $L$ and $L'$ are Lagrangian submanifolds of a symplectic manifold $(M,\omega)$. 
	Suppose $\Omega(L,L')$ denotes the space of all paths from $L'$ to $L$. Any $\alpha\in L\cap L'$ determines a constant path $o_\alpha\in \Omega(L,L')$.
	Any element of $\pi_1(\Omega(L,L'),o_\alpha)$
	determines a continuous map from $S^1\times [-1,1]$ to $M$ with boundary components $S^1\times \{1\}$, $S^1\times \{-1\}$
	mapped to $L$ and $L'$. In particular, the Maslov index and the symplectic form induces maps 
	$\mu:\pi_1(\Omega(L,L'),o_\alpha)\to \Z$ and $[\omega]:\pi_1(\Omega(L,L'),o_\alpha)\to \R$. We say the pair $(L, L')$ is 	{\it monotone}, if there 
	is a positive constant $c$ such that:
	\begin{equation}\label{mon-2}
		[\omega]=c\mu.
	\end{equation}	
	for any choice of $\alpha$.
	The positive generator of the image $\mu$ is called the {\it minimal Maslov number} of $(L,L')$.
\end{definition}

The following proposition will be proved in Subsection \ref{monotone-lag-subs}. The proof of this proposition utilizes the linear theory of the mixed equation, discussed in the next subsection.
\begin{prop}\label{monotone-lag}
	The Lagrangians $L_h(Y,E)$ and $L_{h'}(Y',E')$ are orientable and monotone with minimal Maslov numbers $4$. 
	In fact, the pair $(L_h(Y,E), L_{h'}(Y',E'))$ is monotone.
\end{prop}

\begin{remark}
	If $L$ and $L'$ are monotone Lagrangians in a simply connected symplectic manifold $(M,\omega)$, then $(L,L')$ is also a monotone pair. 
	In particular, the second part of Proposition \ref{monotone-lag} is an immediate consequence of the first part.
\end{remark}

Now we review how Proposition \ref{monotone-lag} allows us to define Lagrangian Floer homology of the monotone pair $(L_h(Y,E), L_{h'}(Y',E'))$ \cite{Oh:LFH}. Suppose $\alpha$, $\beta$ belong to the finite set $\fC_S:=L_{h}(Y,E)\cap L_{h'}(Y',E')$. Let $u:\R\times [-1,1] \to \mathcal M(\Sigma,F)$ be a smooth map such that 
\begin{equation} \label{constant-ends}
	\hspace{2cm}  u(-s,\theta)=\alpha,\hspace{.5cm}u(s,\theta)=\beta, \hspace{2cm}\forall (s,\theta)\in [1,\infty)\times [-1,1]
\end{equation}
and it satisfies the boundary conditions
\begin{equation}\label{BC}
	u\vert_{\R\times \{1\}}\subset L_{h}(Y,E),\hspace{1cm} u\vert_{\R\times \{-1\}}\subset L_{h'}(Y',E').
\end{equation}
Given another map $u':\R\times [-1,1] \to \mathcal M(\Sigma,F)$ with similar properties, we say $u$ and $u'$ are homotopic, if there is a smooth map $U:\R\times [-1,1]\times [-1,1] \to \mathcal M(\Sigma,F)$ such that 
\begin{itemize}
	\item[(i)] for any $t\in [-1,1]$, $U\vert_{\R\times [-1,1]\times \{t\}}$ satisfies \eqref{constant-ends} and \eqref{BC};
	\item[(ii)] $U\vert_{\R\times [-1,1]\times \{-1\}}=u$ and $U\vert_{\R\times [0,1]\times \{1\}}=u'$.
\end{itemize}
Equivalence classes of this relation can be regarded as homotopy classes of paths from $\alpha$ to $\beta$ in $\Omega((L_h(Y,E), L_{h'}(Y',E')))$. The set of all such homotopy classes is denoted by $\pi_2(\alpha,\beta)$. Maslov class of any $p\in \pi_2(\alpha,\beta)$, denoted by $\mu(p)$, is defined to be the Maslov class of any representative of $p$. For a path $p$, it is helpful to form a space $\mathcal B_S(\alpha,\beta)_{p}$ consisting of more general representatives of $p$.

To define $\mathcal B_S(\alpha,\beta)_{p}$, let $\exp$ denote the exponential map with respect to an arbitrary metric on $\mathcal M(\Sigma,F)$. A continuous map $u:\R\times [-1,1] \to \mathcal M(\Sigma,F)$ satisfying \eqref{BC} is an element of $\mathcal B_S(\alpha,\beta)_{p}$ if the following conditions hold. The covariant derivatives $\nabla^k (du)$ have finite $L^2$ norms for $0\leq k\leq l-1$. Moreover, there is $t_0$ such that the restriction of $u$ to $(-\infty,-t_0]$ and $[t_0,\infty)$ is given by $\exp_{\alpha}v_-$ and $\exp_{\beta}v_+$ for $v_-\in L^2_{l}((-\infty,-t_0]\times [-1,1],T_{\alpha}\mathcal  M(\Sigma,F))$ and $v_+\in L^2_{l}([t_0,\infty)\times [-1,1],T_{\beta}\mathcal  M(\Sigma,F))$. In particular, $u$ determines an element of $\pi_2(\alpha,\beta)$, which we denote by $p$. We call $\mathcal B_S(\alpha,\beta)_{p}$ the {\it configuration space of strips} corresponding to the path $p$.

The set $\fC_S$ can be decomposed as
\[
  \fC_S:=\bigcup_{o} \fC_{S,o},
\]
where $o$ runs over the set of the connected components of $\Omega(L_h(Y,E), L_{h'}(Y',E'))$ and $\fC_{S,o}$ consists of $\alpha\in \fC_S$ such that $o_\alpha=o$. There is a relative $\Z/4\Z$-grading $\deg_S$ on each $\fC_{S,o}$, which is called {\it Floer grading}. Let $\alpha,\beta\in \fC_{S,o}$ and $p$ be a path from $\alpha$ to $\beta$ in $\Omega(L_h(Y,E), L_{h'}(Y',E'))$. Then define
\begin{equation}\label{Deg-S}
	\deg_S(\alpha)-\deg_S(\beta)\equiv \mu(p)  \mod 4
\end{equation}
Proposition \ref{monotone-lag} implies that the value of $\mu(p)$ mod $4$ is independent of the choice of $p$ and hence $\deg_S$ is well-defined. A relative $\Z/4\Z$-grading on $\fC_S$ is {\it compatible with the Floer grading} if its restriction to each $\fC_{S,o}$ agrees with the Floer grading.

\begin{remark}
	Since $\mathcal M(\Sigma,F)$ is simply connected, the connected component of $o_\alpha$ 
	is determined by 
	the connected components of $L_h(Y,E)$ and $L_{h'}(Y',E')$ that contain $\alpha$. 
	In particular, if $L_h(Y,E)$ and $L_{h'}(Y',E')$
	are connected, then $\Omega(L_h(Y,E), L_{h'}(Y',E'))$ is path connected. Consequently,
	there is a unique relative $\Z/4\Z$-grading on $\fC_S$ compatible with the Floer grading. This, for example, happens for the Lagrangians involved in
	the definition of symplectic framed Floer homology.
\end{remark}

Fix a 1-parameter family of $\Omega$-compatible almost complex structures $\mathcal J=\{J_\theta\}_{\theta\in [-1,1]}$ on $\mathcal M(\Sigma,F)$, and consider the Cauchy-Riemann equation
\begin{equation}\label{CR}
	\frac{\partial u}{\partial \theta}-J_\theta\frac{\partial u}{\partial s}=0
\end{equation}
where $u:\R\times [-1,1] \to \mathcal M(\Sigma,F)$ satisfies the Lagrangian boundary condition of \eqref{BC}. Any solution of \eqref{CR}, with $\vert\!\vert d u\vert\!\vert_{L^2}$ being finite, belongs to $\mathcal B_S(\alpha,\beta)_{p}$ for some choice of $\alpha$, $\beta$ and the homotopy class of a path $p$ from $\alpha$ to $\beta$. The space of all such solutions of \eqref{CR} is denoted by $ \rM_S(\alpha,\beta)_p$. Translation along the $\R$ factor defines an $\R$-action on $\rM_S(\alpha,\beta)_p$, which is free unless $\rM_S(\alpha,\beta)_p$ contains the constant map to $\alpha$. The quotient space by this action is denoted by $\breve \rM_S(\alpha,\beta)_p$. 

For any $u\in \mathcal B_S(\alpha,\beta)_{p}$, let $\mathcal D_u$ denote the linearization of \eqref{CR}	. Then $\mathcal D_u$ is an operator acting on $L^2_l$ sections of $u^*T\mathcal M(\Sigma,F)$ with the boundary condition that the restriction of $u$ to $\R\times \{1\}$ and $\R\times \{-1\}$ belong to $TL_{h}(Y,E)$ and $TL_{h'}(Y',E')$, and $L^2_{l-1}(\R\times [-1,1], u^*T\mathcal M(\Sigma,F))$ is the target of this operator. For a section $\zeta$ of $u^*T\mathcal M(\Sigma,F)$ in the domain of $\mathcal D_u$, we have
\begin{equation}\label{linear-CR}
	\mathcal D_u \zeta=\nabla_\theta \zeta-J_{\theta}(u)\nabla_{s}\zeta-(\nabla_\zeta J_{\theta})\frac{du}{ds},   
\end{equation}
where the connection $\nabla$ is defined by pulling back the Levi-Civita connection on $\mathcal M(\Sigma,F)$. The index of this elliptic operator is equal to $\mu(p)$. The equation \eqref{CR} is cut down transversely at $u$ if $\mathcal D_u$ is surjective. In a neighborhood of $u$, the moduli space $ \rM_S(\alpha,\beta)_p$ is a smooth manifold of dimension $\mu(p)$, which is equal to $ \deg_S(\alpha)-\deg_S(\beta)$ mod 4.

\begin{lemma}[\cite{Oh:LFH}]\label{alm-cx-str}
	There is a family of almost complex structures $\mathcal J=\{J_\theta\}_{\theta\in [-1,1]}$ such that 
	the moduli space $\rM_S(\alpha,\beta)_p$ is cut-down transversely. 
\end{lemma}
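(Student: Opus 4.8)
The plan is to follow the now-standard Floer-theoretic transversality argument due to Floer and Oh, adapted to the monotone Lagrangian boundary value problem set up above. The statement asserts that a generic path of $\Omega$-compatible almost complex structures $\mathcal J=\{J_\theta\}$ makes every moduli space $\rM_S(\alpha,\beta)_p$ cut down transversely, i.e.\ $\mathcal D_u$ surjective for every solution $u$. The key technical input is that every nonconstant solution $u$ of \eqref{CR} must be \emph{somewhere injective} — that is, there exists $(s_0,\theta_0)$ with $\theta_0\in(-1,1)$, $du(s_0,\theta_0)\neq 0$, and $u(s_0,\theta_0)\notin u(\R\times\{-1,1\})$ and $u^{-1}(u(s_0,\theta_0))=\{(s_0,\theta_0)\}$. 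This is the usual unique-continuation/injectivity lemma for $J$-holomorphic strips, which holds because $u$ is a nonconstant solution of an elliptic equation and its interior critical points and self-intersection points form a discrete set, while $u$ is real-analytic-like in the sense that unique continuation applies. For constant solutions (which only occur when $\alpha=\beta$ and $p$ is trivial) the statement is vacuous since those are excluded from the relevant moduli spaces, or $\mathcal D_u$ is automatically surjective there by the nondegeneracy of the intersection $L_h(Y,E)\pitchfork L_{h'}(Y',E')$.

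First I would set up the universal moduli space: fix a Banach manifold $\mathcal{J}^{\ell}$ of $C^\ell$ (or $C^\infty$ with Floer's $\epsilon$-norm) $\Omega$-compatible almost complex structure families, and consider
\[
\widetilde{\rM}_S(\alpha,\beta)_p=\{(u,\mathcal J)\mid u\in\mathcal B_S(\alpha,\beta)_p \text{ solves } \eqref{CR} \text{ for }\mathcal J\}.
\]
The core step is to prove that the linearization of the defining section, namely the operator $(\zeta,Y)\mapsto \mathcal D_u\zeta + (\text{term linear in the variation }Y\text{ of }\mathcal J)$, is surjective at every $(u,\mathcal J)$ with $u$ nonconstant. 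Surjectivity is equivalent, by elliptic regularity and the fact that $\mathcal D_u$ is Fredholm, to showing that if $\eta\in L^2$ annihilates the image, then $\eta=0$. The variation term, evaluated against $\eta$, produces an integral of the form $\int \langle \eta(s,\theta), (\text{stuff involving } Y(u(s,\theta)) \text{ and } du)\rangle$; by choosing $Y$ supported near the injective point $(s_0,\theta_0)$ one forces $\eta$ to vanish on an open set, and then unique continuation for the formal adjoint $\mathcal D_u^*$ (an elliptic first-order operator, Aronszajn-type argument) forces $\eta\equiv 0$. Here one must be a little careful that varying $\mathcal J$ keeps it $\Omega$-compatible — this is handled as in Oh or McDuff–Salamon by noting that the tangent space to the space of compatible structures is large enough (the variations $Y$ with $YJ+JY=0$ and $\Omega$-symmetric) to realize any prescribed value at a point.

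Given universal transversality, I would then apply the Sard–Smale theorem to the projection $\widetilde{\rM}_S(\alpha,\beta)_p\to\mathcal{J}^\ell$ to conclude that a residual set of $\mathcal J\in\mathcal{J}^\ell$ are regular values, so that $\rM_S(\alpha,\beta)_p$ is a smooth manifold of the expected dimension $\mu(p)$ for those $\mathcal J$; intersecting over the countably many triples $(\alpha,\beta,p)$ (countable since $\fC_S$ is finite and homotopy classes are countable) keeps a residual, hence nonempty, set. The final standard wrinkle is to pass from $C^\ell$ to $C^\infty$ almost complex structures, which is done by Taubes's trick / Floer's $C^\epsilon$-space argument so that the residual set can be taken inside the smooth structures. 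The main obstacle I anticipate is establishing the somewhere-injectivity of strip solutions in this setting — one has a strip with \emph{two different} Lagrangian boundary conditions rather than a sphere or disk, and one must ensure that the injective point can be chosen in the interior away from both boundary arcs; the argument is classical (it appears in Floer's and Oh's work and in Lazzarini's refinement) and uses that a nonconstant $J$-holomorphic strip cannot be constant on any open subset and that the "bad" set of non-injective or critical points is meager, but transcribing it correctly for the monotone pair of embedded Lagrangians $L_h(Y,E), L_{h'}(Y',E')$ is where the real work lies. Everything else — the Fredholm property of $\mathcal D_u$ (already noted in the excerpt, with index $\mu(p)$), elliptic regularity, unique continuation — is off-the-shelf.
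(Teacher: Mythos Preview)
The paper does not give its own proof of this lemma: it is stated with the citation \cite{Oh:LFH} and treated as a known input from Oh's work on monotone Lagrangian Floer theory. Your proposal is a faithful sketch of the standard Floer--Oh transversality argument (universal moduli space, somewhere-injectivity of nonconstant strips, Sard--Smale, Taubes/Floer $C^\epsilon$ trick), which is precisely what the citation points to, so there is nothing to compare.
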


The moduli spaces $\rM_S(\alpha,\beta)_p$ are orientable. Using a standard construction, we may define the {\it determinant line bundle}  $\delta^S_p$ on $ \mathcal B_S(\alpha,\beta)_{p}$, where the fiber over $u$ is given by
\[
  \Lambda^{\rm max}\ker(\mathcal D_u)\otimes (\Lambda^{\rm max}\coker(\mathcal D_u))^*.
\]
If $\rM_S(\alpha,\beta)_p$ is cut down transversely at $u$, then an orientation of the fiber of $\delta^S_p$ determines an orientation of $T_u\rM_S(\alpha,\beta)_p$. Thus, to orient $\rM_S(\alpha,\beta)_p$ it suffices to fix a trivialization of $\delta^S_p$, which always exists (see Proposition \ref{orientaion-delta-p}). We denote the set of trivializations of this bundle by $\Lambda^S_p$, which is a $\Z/2\Z$-torsor. The set $\Lambda^S_p$ can be identified with the trivializations of $\delta^S_p$ over the subspace $\mathcal B^c_S(\alpha,\beta)_p$ of $\mathcal B_S(\alpha,\beta)_p$ consisting maps $u$ which satisfy \eqref{constant-ends}. 

If $p$ is a path from $\alpha_0$ to $\alpha_1$ and $p'$ is a path $\alpha_1$ to $\alpha_2$, then there is an obvious strip gluing map $\mathcal B^c_S(\alpha_0,\alpha_1)_p\times \mathcal B^c_S(\alpha_1,\alpha_2)_{p'} \to \mathcal B^c_S(\alpha_0,\alpha_2)_{p\sharp p'}$ which induces the map
\begin{equation}\label{strip-gluing-ori}
  \Phi_{p,p'}:\Lambda^S_p\otimes_{\Z/2\Z}\Lambda^S_{p'}\to \Lambda^S_{p\sharp p'}.
\end{equation}
using additivity of the index of the Fredholm operator $\mathcal D_u$ with respect to gluing strips.

\begin{definition}\label{coh-ori-def}
	A {\it coherent system of orientations for strips associated to the Lagrangians $L_{h}(Y,E)$ and 
	$L_{h'}(Y',E')$} is an association of an element $\lambda_p\in \Lambda^S_p$ to each homotopy class $p$ 
	of a path between two elements of $\fC_S$ which is compatible with the map
	$\Phi_{p,p'}$. That is to say, for any two paths $p$ and $p'$, 
	where the terminal point of $p$ is equal to the initial point of $p'$, we have
	\[
	  \Phi_{p,p'}(\lambda_p\otimes \lambda_{p'})=\lambda_{p\sharp p'}.
	\]
	Two systems of coherent orientations 
	$\{\lambda_p\}$ and $\{\lambda_p'\}$ are {\it $\epsilon$-equivalent} if there is $\epsilon:\fC_S\to \Z/2$
	such that for any path $p$ from $\alpha$ to $\beta$
	\[
	  \lambda_p'=(-1)^{\epsilon(\beta)-\epsilon(\alpha)}\lambda_p.
	\]
\end{definition}

\begin{prop}\label{orientaion-delta-p}
	The line bundles $\delta^S_p$ are orientable. Moreover, there is a coherent system of orientations for strips 
	associated to the Lagrangians $L_{h}(Y,E)$ and $L_{h'}(Y',E')$. 
\end{prop}

A proof of this proposition will be given in Subsection \ref{monotone-lag-subs}. In fact, we will also give a recipe in the proof of Proposition \ref{orientaion-delta-p} to fix a coherent system of orientations for strips associated to the Lagrangians $L_{h}(Y,E)$ and $L_{h'}(Y',E')$.

\begin{remark}\label{deficiency-ori}
	Although Proposition \ref{orientaion-delta-p} is sufficient for our purposes here, there is still room to improve this proposition.
	For instance, the Lagrangians $L_h(Y,E)$ and $L_{h'}(Y',E')$ are in fact spin, and the spin structure can be used to fix orientations for the line bundles $\delta^S_p$ following \cite{FOOO:HF1,FOOO:HF2}.
	The authors expect that there is a preferred choice of spin structures for $L_h(Y,E)$ and $L_{h'}(Y',E')$ and the induced orientations by these spin structures agree with the coherent system of orientations $\{\lambda_p\}$ constructed in Subsection \ref{monotone-lag-subs}. 
	Another issue related to orientations of the determinant bundles which is not completely addressed here is compatibility of the coherent system of orientations $\{\lambda_p\}$ with {\it gluing spheres}.
	For an arbitrary smooth map $s:S^2\to \mathcal M(\Sigma,F)$, we may similarly define the linearized Cauchy-Riemann operator $\mathcal D_s$ as a map
	\[
	  L^2_l(S^2,s^*T\mathcal M(\Sigma,F)) \to L^2_{l-1}(S^2,\Lambda^{0,1}\otimes s^*T\mathcal M(\Sigma,F)).
	\]
	Since $S^2$ is a closed Riemann surface, the operator $\mathcal D_s$ is complex linear up to compact terms and hence its determinant line has a canonical orientation. Gluing $s$ to elements of $\mathcal B^c_S(\alpha,\beta)_p$ determines a map $\mathcal B^c_S(\alpha,\beta)_p \to 	\mathcal B^c_S(\alpha,\beta)_{p'}$ where $p':=p\sharp s$ is the induced path from $\alpha$ to $\beta$. This gluing map gives 
	\[
	  \Psi_{p,s}:\Lambda^S_p \to \Lambda^S_{p'},
	\]
	which depends only on the homotopy class of $s$. Recall that $\pi_2(\mathcal M(\Sigma,F))=\Z^{i}$ where $i$ is the number of the connected components of $\Sigma$ which have genus greater than $1$.
	We can guarantee that the system of orientations $\{\lambda_p\}$ given by Proposition \ref{orientaion-delta-p} is compatible with the map $\Psi_{p,s}$ in the case that $i=1$. For $i=2$, we can only obtain a system of orientations $\{\lambda_p\}$ compatible with $\Psi_{p,s}$
	when $s$ belongs to one of the summands of $\pi_2(\mathcal M(\Sigma,F))$. See Remark \ref{comp-Z-summand} for more details. 
\end{remark}

Let $C_S((Y,E),(Y',E'))$ be the abelian group freely generated by the elements of $\fC_S$. Fix a family of almost complex structures as in Lemma \ref{alm-cx-str} and orient the smooth manifolds $\rM_S(\alpha,\beta)_p$ using the orientation given by Proposition \ref{orientaion-delta-p}. The space $\rM_S(\alpha,\beta)_p$ is a fiber bundle over $\breve \rM_S(\alpha,\beta)_p$ with fiber $\R$, and the total space and the fiber of this bundle are oriented. We orient $\breve \rM_S(\alpha,\beta)_p$ such that the orientation of $\rM_S(\alpha,\beta)_p$ is obtained from those of $\R$ and $\breve \rM_S(\alpha,\beta)_p$ using the fiber-first convention. Let $d_S:C_S((Y,E),(Y',E'))\to C_S((Y,E),(Y',E'))$ be the linear map whose value at $\alpha\in \fC_S$ is given by
\[
  d_S(\alpha):=\sum_{p:\alpha\to \beta}\#\breve \rM_S(\alpha,\beta)_p \cdot \beta,
\]
where the above sum is taken over all paths $p$ such that $\breve \rM_S(\alpha,\beta)_p$ is zero dimensional, and $\#\breve \rM_S(\alpha,\beta)_p$ denotes the signed count of the elements of $\breve \rM_S(\alpha,\beta)_p$. In particular, $d_S$ decreases the $\Z/4\Z$-grading by $1$. This map is a differential, i.e., $d_S^2=0$, and the homology of the chain complex $(C_S((Y,E),(Y',E')),d_S)$ is independent of the choice of the family of almost complex structures $\mathcal J$. In fact, our main theorem shows that this homology group depends only on $(Y_\#,E_\#)$, and the symplectic instanton Floer homology $\rSI_*(Y_\#,E_\#)$ of the pair $(Y_\#,E_\#)$ is defined to be this relatively $\Z/4\Z$-graded homology group.

\begin{prop}\label{inv-symp-ins}
	The chain homotopy type of the chain complex $(C_S((Y,E),(Y',E')),d_S)$ is an invariant of the pair
	$(Y_\#,E_\#)$. In particular, it does not depend on the family of almost complex strictures
	$\mathcal J$, the cylinder functions $h$ and $h'$, and the coherent system of orientations provided by Proposition 
	\ref{orientaion-delta-p}..
\end{prop}
\begin{proof}
	This is a consequence of Proposition \ref{top-inv-gauge} below and Theorem 
	\ref{main-thm-detailed}, proved in the next section.
\end{proof}

\begin{remark}
	It is desirable to give a direct proof for the above proposition. The invariance with 
	respect to the choice of almost complex structures is standard. Proposition \ref{pert-3-man-Lag}
	asserts that changing cylinder functions $h$ and $h'$ gives rise to cobordant Lagrangians. 
	Thus, one would expect that the results of \cite{BC:lag-cob}
	imply that changing $h$ and $h'$ give
	chain homotopy equivalent chain complexes $(C_S((Y,E),(Y',E')),d_S)$. However, 
	it is not clear that the Lagrangian cobordism $V$ provided by Proposition \ref{pert-3-man-Lag}
	is embedded. If so, then it is reasonable to expect that the 
	analogue of Proposition \ref{monotone-lag} 
	holds, and $V$ is monotone. We hope to come back to this issue in a sequel where we pursue 
	generalization of the results of this paper to the case of immersed Lagrangians. 
\end{remark}

\begin{remark}\label{simply-con-Lag-ori}
	Suppose the Lagrangians $L_h(Y,E)$ and $L_{h'}(Y',E')$ are simply connected, 
	and $\{\lambda_p\}$ and $\{\lambda_p'\}$ are two systems of coherent orientations, which are compatible with the maps $\Psi_{p,s}$. 
	(For example, the Lagrangians involved in the definition of symplectic framed Floer homology together with the systems of orientations of \cite{WW:FFT-coprime} and Proposition \ref{orientaion-delta-p} have this property.)
	Then there 
	is a map $\kappa:\fC_S\times \fC_S\to \Z/2\Z$ such that for any path $p$ from $\alpha$ to $\beta$
	we have
	\[
	  \lambda_p'=(-1)^{\kappa(\alpha,\beta)}\lambda_p.
	\]
	This follows from the fact that any two paths from $\alpha$ to $\beta$ are related to each other by
	gluing an element of $\pi_2(\mathcal M(\Sigma,F))$. Since $\{\lambda_p\}$ and $\{\lambda_p'\}$ are both
	systems of coherent orientations, we have
	\[
	  \kappa(\alpha_1,\alpha_3)=\kappa(\alpha_1,\alpha_2)+\kappa(\alpha_2,\alpha_3).
	\]
	Therefore, there exists $\epsilon: \fC_S\to \Z/2\Z$ such that 
	\[
	  \kappa(\alpha,\beta)=\epsilon(\beta)-\epsilon(\alpha).
	\]
	That is to say, $\{\lambda_p\}$ and $\{\lambda_p'\}$ are $\epsilon$-equivalent. Thus, Lagrangian Floer homology groups 
	of $L_h(Y,E)$ and $L_{h'}(Y',E')$ with respect to $\{\lambda_p\}$ and $\{\lambda_p'\}$ are isomorphic to each other. 
	In particular, our definition of symplectic framed Floer homology agree with \cite{WW:FFT-coprime}.
\end{remark}

Next, we turn into the gauge theoretical component of our main theorem. For the pair $(Y_\#,E_\#)$, the class $w_2(E_\#)\in H^2(Y_\#,\Z/2\Z)$ has a non-trivial pairing with each connected component of the copy of $\Sigma$ in $Y_\#$. In particular, $E_\#$ is admissible in the sense of \cite{BD:surgery}. We review the definition of a version of instanton Floer homology for the admissible pair $(Y_\#,E_\#)$ which is more suitable for our purposes.

Suppose $\mathcal A(Y_\#,E_\#)$ is the space of all $L^2_l$ connections on $E_\#$. This is an affine space modeled on $L^2_l(Y_\#,\Lambda^1 \otimes E_\#)$. Let $\mathcal G(E_\#)$ be the space of global sections of the fiber bundle $E_\#\times_{\rm ad}\SU(2)$ of class $L^2_{l+1}$. As in the case of 3-manifolds with boundary, the gauge group $\mathcal G(E_\#)$ acts on $\mathcal A(Y_\#,E_\#)$, and we denote the quotient space by $\mathcal B(Y_\#,E_\#)$. Analogous to Remark \ref{ext-gauge-p}, we may form the gauge group $\mathcal G_{\rm ex}(E_\#)$ using sections of $E_\#\times_{\rm ad}\SO(3)$. There is an obvious homomorphism $\mathcal G(E_\#) \to \mathcal G_{\rm ex}(E_\#)$, whose cokernel can be identified with $H^1(Y,\Z/2\Z)$. The group $\mathcal G_{\rm ex}(E_\#)$ acts on $\mathcal A(Y_\#,E_\#)$, extending the action of $\mathcal G(E_\#)$. In particular, there is an action of $H^1(Y,\Z/2\Z)$ on $\mathcal B(Y_\#,E_\#)$. We shall be interested in the action of $l_\Sigma\in H^1(Y,\Z/2\Z)$ given as the Poincar\'e dual of any of the connected components of $\Sigma$. We write $\iota$ for the involution determined by $l_\Sigma$.

\begin{lemma}\label{fixed-iota}
	An element of $\mathcal B(Y_\#,E_\#)$ is fixed by the action of $\iota$ if it is represented 
	by an $\O(2)$-connection such that its orientation bundle is determined by the cohomology class
	$l_\Sigma$. In particular, $\iota$ does not have any fixed point which restricts to a flat connection 
	on one of the connected components of $\Sigma$.	
\end{lemma}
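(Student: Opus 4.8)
The plan is to read off the $\iota$-fixed condition from the stabilizer of $B$ inside the extended gauge group $\mathcal G_{\rm ex}(E_\#)$, using the exact sequence of Remark \ref{ext-gauge-p} in its version for $Y_\#$: there $\mathcal G_{\rm ex}(E_\#)/\mathcal G(E_\#)\cong H^1(Y_\#;\Z/2)$, and $[B]$ is fixed by $\iota$ precisely when the stabilizer of $B$ in $\mathcal G_{\rm ex}(E_\#)$ contains an element whose class in $H^1(Y_\#;\Z/2)$ equals $l_\Sigma$.

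For the displayed implication, suppose $[B]$ is represented by an $\O(2)$-connection: thus $E_\# = \xi\oplus W$ splits as a real line bundle plus a rank-two bundle, $B = B_\xi\oplus B_W$ respects this splitting, and the orientation line bundle $\Lambda^2 W$ satisfies $w_1(\Lambda^2 W) = l_\Sigma$; orientability of $E_\#$ then forces $w_1(\xi) = w_1(W) = l_\Sigma$. First I would write down the section $\rho := \id_\xi\oplus(-\id_W)$ of $E_\#\times_{\rm ad}\SO(3)$. It is globally defined because the splitting is, so $\rho\in\mathcal G_{\rm ex}(E_\#)$; and since $\id_\xi$ and $-\id_W$ are parallel for any connection respecting the splitting, $\rho^*B = B$. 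Next I would compute the class of $\rho$ in $H^1(Y_\#;\Z/2)$: in a local frame adapted to $\xi\oplus W$ the value of $\rho$ is the fixed element $\diag(1,-I_2)\in\SO(3)$, namely rotation by $\pi$ about the $\xi$-axis, whose two preimages in $\SU(2) = \Spin(3)$ are $\pm q_0$; conjugating $q_0$ by a lift of a transition function $\diag(\eta_{\alpha\beta},A_{\alpha\beta})$ of $E_\#$ (with $\eta_{\alpha\beta} = \det A_{\alpha\beta}\in\{\pm 1\}$) returns $\eta_{\alpha\beta}\,q_0$, so the obstruction cocycle to lifting $\rho$ to $\SU(2)$ is $\{\eta_{\alpha\beta}\}$, which represents $w_1(W) = l_\Sigma$. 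Hence the stabilizer of $B$ in $\mathcal G_{\rm ex}(E_\#)$ contains an element (namely $\rho$) of class $l_\Sigma$, and by the criterion above $\iota[B] = [B]$.

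For the concluding assertion I would argue by contradiction. If $[B]$ is $\iota$-fixed, then by the criterion above the stabilizer of $B$ in $\mathcal G_{\rm ex}(E_\#)$ contains an element $\gamma$ whose class in $H^1(Y_\#;\Z/2)$ is $l_\Sigma$; since $l_\Sigma$ is nonzero (being the Poincar\'e dual of a nice, hence non-torsion, homology class), $\gamma$ is not the identity section. Thus $B$ has a nontrivial parallel automorphism and is therefore reducible: the structure theory of parallel automorphisms of an $\SO(3)$-connection gives a $B$-parallel splitting $E_\# = \xi'\oplus W'$ into a line bundle and a rank-two bundle (if $\gamma$ is a parallel rotation by $\pi$, take its $(\pm 1)$-eigenbundles; if $\gamma$ is a parallel rotation by an angle in $(0,\pi)$ its axis is a parallel, necessarily orientable, line bundle, in which case $\gamma$ lifts to $\SU(2)$, contradicting $\gamma\notin\mathcal G(E_\#)$, so this subcase does not occur). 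Restricting this parallel splitting to a connected component $\Sigma_i$ of $\Sigma$ shows that $B|_{\Sigma_i}$ is a reducible connection on $F|_{\Sigma_i}$. But admissibility of $(Y_\#,E_\#)$ means $\langle w_2(E_\#),[\Sigma_i]\rangle\neq 0$, i.e.\ $F|_{\Sigma_i}$ is the non-trivial $\SO(3)$-bundle over $\Sigma_i$, on which every flat connection is irreducible (this is the fact, recorded in Section \ref{HF}, that $\mathcal M(\Sigma,F)$ has no singular points). Hence $B|_{\Sigma_i}$ cannot be flat, which is the claim.

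The construction and $B$-invariance of $\rho$ and the structure theory of reducible $\SO(3)$-connections are routine; the step that needs real care — the main obstacle — is the identification of the class of $\rho$ (equivalently of the stabilizer element $\gamma$) in $H^1(Y_\#;\Z/2)$ with $w_1(W) = l_\Sigma$, i.e.\ the $\Spin(3)\to\SO(3)$ cocycle computation sketched above. One should also make sure to record that $l_\Sigma$ is nonzero and that $w_1(\xi) = w_1(W)$ is forced, since both facts are used.
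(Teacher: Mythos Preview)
Your overall strategy---reading the fixed-point condition through the stabilizer of $B$ in the extended gauge group $\mathcal G_{\rm ex}(E_\#)$---is different from the paper's approach, which lifts to a $\U(2)$-bundle and analyzes the holonomy condition $(-1)^{l_\Sigma(\gamma)}{\rm hol}_\gamma(\widetilde B)=g\,{\rm hol}_\gamma(\widetilde B)\,g^{-1}$ directly. Both viewpoints are natural; yours has the virtue of staying in the $\SO(3)$ world and making the cocycle computation for the obstruction class explicit. Your second paragraph (the direction ``$\O(2)$ with orientation class $l_\Sigma$ implies fixed'') is correct, and the $\Spin(3)\to\SO(3)$ cocycle computation is carried out properly.

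There is, however, a genuine gap in the third paragraph. You obtain a $B$-parallel splitting $E_\#=\xi'\oplus W'$ and then assert that the restriction $B|_{\Sigma_i}$ is ``reducible'', concluding a contradiction from the smoothness of $\mathcal M(\Sigma,F)$. But ``smoothness of $\mathcal M(\Sigma,F)$'' means that every flat connection has stabilizer $\{\pm 1\}$ in $\mathcal G(F)$; it does \emph{not} say that no flat connection admits a parallel line subbundle. In fact such connections exist on every $F|_{\Sigma_i}$: for instance, the representation $\pi_1(\Sigma_g)\to O(2)\subset\SO(3)$ sending $a_1,b_1$ to two perpendicular reflections and all other generators to $1$ is flat, has $w_2\neq 0$, and visibly preserves a line. (For $g=1$ this is the unique flat connection on $F_1$, with Klein-four holonomy; its stabilizer in $\mathcal G(F_1)$ is still $\{\pm 1\}$.) So the fact you cite does not rule out the situation you produced.

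The fix is short and uses information you already have. From your second-paragraph computation, the class of the stabilizing element $\gamma'=\id_{\xi'}\oplus(-\id_{W'})$ in $H^1(Y_\#;\Z/2)$ equals $w_1(\xi')=w_1(W')$, and this class is $l_\Sigma$. Now observe that $l_\Sigma|_{\Sigma_i}=0$: any loop in $\Sigma_i$ can be pushed slightly into $Y$ (or $Y'$) and hence has zero mod-$2$ intersection with $\Sigma_0$. Therefore $\xi'|_{\Sigma_i}$ and $W'|_{\Sigma_i}$ are both orientable, so $B|_{\Sigma_i}$ is actually an $\SO(2)$-connection. A flat $\SO(2)$-connection on a surface has $w_2=0$, contradicting $w_2(F|_{\Sigma_i})\neq 0$. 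This is exactly how the paper closes the argument (``the restriction \dots\ is an $S^1$ connection on $F$, and hence this restriction cannot be flat'').
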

\begin{proof}
	Suppose $\widetilde E_\#$ denotes a $\U(2)$-bundle such that $c_1(\widetilde E_\#)$ is a lift of $w_2(E_\#)$. Then the vector bundle associated to $\widetilde E_\#$ by the adjoint map $\U(2) \to \SO(3)$ is isomorphic to $E_\#$. Moreover, the determinant 
	map $\U(2)\to \U(1)$ induces a complex line bundle $\det(\widetilde E_\#)$, and we fix a connection $b_0$ on this line bundle. The configuration space of connections on $\widetilde E_\#$ with the induced connection on $\det(\widetilde E_\#)$ being $b_0$
	can be identified with $\mathcal B(Y_\#,E_\#)$. Using this identification, the involution $\iota$ on $\mathcal B(Y_\#,E_\#)$ is given by taking the tensor product with a real line bundle determined by $l_\Sigma$. For a $\U(2)$ connection $\widetilde B$ 
	representing an element of $\mathcal B(Y_\#,E_\#)$ and a loop $\gamma$ based at a point $x\in Y_\#$, if ${\rm hol}_\gamma(\widetilde B)$ denotes the holonomy of $\widetilde B$ along $\gamma$, then the holonomy of $\iota(\widetilde B)$ 
	is $(-1)^{l_\Sigma(\gamma)}{\rm hol}_\gamma(\widetilde B)$. Thus $\widetilde B$ represents a fixed point of $\iota$, if and only if there is $g\in \SU(2)$ such that for any loop $\gamma$ based at $x$ we have
	\[
	  (-1)^{l_\Sigma(\gamma)}{\rm hol}_\gamma(\widetilde B)=g{\rm hol}_\gamma(\widetilde B)g^{-1}.
	\]
	By picking a loop $\gamma_0$ with $l_\Sigma(\gamma_0)=1$, we conclude that $\tr(g)=0$. Now, if $\gamma$ represents an element in $\ker(l_\Sigma)$, then ${\rm hol}_\gamma(\widetilde B)$ commutes with $g$, and otherwise 
	${\rm hol}_{\gamma_0}(\widetilde B){\rm hol}_\gamma(\widetilde B)$ commutes with $g$. It is easy to see from this that the $\SO(3)$ connection induced by $\widetilde B$ is an $\O(2)$ connection with orientation bundle $l_\Sigma$. In particular, the restriction of 
	any such connection to a connected component of $\Sigma$ is an $S^1$ connection on $F$, and hence this restriction cannot be flat.
\end{proof}

The cylinder functions $h$ and $h'$ may be used to define a perturbation of the flat equation on $\mathcal A(E_\#)$:
\begin{equation}\label{flat-3d-closed}
	\phi_{h,h'}(B)=*_3F_B+\nabla_B h+\nabla_B h'.
\end{equation}
This map is equivariant with respect to the automorphisms of $E_\#$. Any solution of $\phi_{h,h'}(B)=0$ restricts to a flat connection on a neighborhood of $\Sigma$. In particular, such connections are irreducible and $\iota$ acts freely on them by Lemma \ref{fixed-iota}. We write $\widetilde \fC_G$ and $\fC_G$ respectively for the subspaces of $\mathcal B(Y_\#,E_\#)$ and $\mathcal B(Y_\#,E_\#)/\iota$ which are represented by the solutions of \eqref{flat-3d-closed}. If $B_\#\in \mathcal A(Y_\#,E_\#)$ represents an element of $\widetilde \fC_G$, then the restrictions of $B_\#$ to $Y$ and $Y'$ determine an element of $\fC_S= L_h(Y,E) \cap L_{h'}(Y',E')$. Since the restrictions of $l_\Sigma$ to $Y$ and $Y'$ are trivial, this element of $\fC_S$  depends only on the equivalence class of $A_\#$ in $\fC_G$. Moreover, if the pair of $[B]\in  L_h(Y,E) $ and $[B']\in L_{h'}(Y',E')$ represents an element of $\fC_S$, then gluing these connections gives rise to two equivalence classes of connections in $\widetilde \fC_G$ which are related to each other by $\iota$. Thus, any element of $\fC_S$ determines a well-defined element of $\fC_G$. We summarize this discussion in the following lemma.
 
\begin{lemma}
	The space $\fC_G$ is compact and can be identified with $\fC_S$. 
\end{lemma}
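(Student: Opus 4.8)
The plan is to turn the informal correspondence described in the paragraph preceding the lemma into an honest bijection $\fC_G\to\fC_S$ and then read off compactness. First I would set up the restriction map. Since $\nabla h$ is supported in the interior of $Y_0\subset Y$ and $\nabla h'$ in the interior of $Y_0'\subset Y'$, their supports are disjoint inside $Y_\#$; hence a solution $B$ of $\phi_{h,h'}(B)=0$ restricts on $Y$ to a solution of $*_3F_B+\nabla_B h=0$ and on $Y'$ to a solution of $*_3F_B+\nabla_B h'=0$, so $[B|_Y]\in L_h(Y,E)$ and $[B|_{Y'}]\in L_{h'}(Y',E')$. Near $\Sigma$ both perturbation terms vanish, so $B$ is genuinely flat there and the two restrictions carry the same connection on the collar; consequently they have a common image in $\mathcal M(\Sigma,F)$, which therefore lies in $L_h(Y,E)\cap L_{h'}(Y',E')=\fC_S$. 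This assignment is $\mathcal G(E_\#)$-invariant, and since $l_\Sigma$ restricts to $0$ on $Y$ and on $Y'$ the involution $\iota$ alters $B|_Y$ and $B|_{Y'}$ only by gauge transformations; hence it descends to a map $\fC_G\to\fC_S$.

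Next I would construct the inverse by gluing, which gives surjectivity. Given a point of $\fC_S$ represented by $[B_Y]\in L_h(Y,E)$ and $[B_{Y'}]\in L_{h'}(Y',E')$ with common image $\alpha$, both $B_Y$ and $B_{Y'}$ are flat on the collars of their boundaries, so after gauge transformations on $Y$ and on $Y'$ one may arrange that each restricts over $[-1,1]\times\Sigma$ to the pullback $\pi^*\sigma$ of a single fixed flat connection $\sigma$ representing $\alpha$; matching the two exactly amounts to absorbing a determinant one gauge transformation of $F$, and any such transformation extends over $Y'$ because the fiber $\SU(2)$ is $2$-connected and $Y'$ is a $3$-manifold. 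Gluing $B_Y$ and $B_{Y'}$ along this common collar produces a connection on the bundle obtained by gluing $E$ and $E'$ via the identity on $F$, namely $E_\#$; it solves $\phi_{h,h'}=0$, so its class in $\fC_G$ maps to the chosen point of $\fC_S$.

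For injectivity, suppose $B$ and $\widehat B$ represent points of $\widetilde\fC_G$ with the same image in $\fC_S$. Choose $g\in\mathcal G(E)$ and $g'\in\mathcal G(E')$ with $g^*(B|_Y)=\widehat B|_Y$ and $(g')^*(B|_{Y'})=\widehat B|_{Y'}$. On the collar, $g|_\Sigma$ and $g'|_\Sigma$ both conjugate $B|_\Sigma$ to $\widehat B|_\Sigma$, and since $B|_\Sigma$ is flat and irreducible on each of the two components $\Sigma_0,\Sigma_1$, these two gauge transformations differ on each $\Sigma_i$ by $\pm1$. After replacing $g'$ by $-g'$ if necessary, the pair $(g,g')$ glues to a section of $E_\#\times_{\rm ad}\SO(3)$ conjugating $B$ to $\widehat B$; its obstruction to lifting to a determinant one gauge transformation of $E_\#$, an element of $H^1(Y_\#;\Z/2)$, is either $0$ — in which case $B$ and $\widehat B$ are gauge equivalent in $\mathcal B(Y_\#,E_\#)$ — or it equals $l_\Sigma=\PD[\Sigma_i]$ — in which case $\widehat B$ is gauge equivalent to $\iota(B)$. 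Either way $[B]=[\widehat B]$ in $\fC_G$, so the map $\fC_G\to\fC_S$ is a bijection. Finally, under Hypothesis~\ref{main-hypo} together with the small perturbation making the two Lagrangians transversal, $L_h(Y,E)$ and $L_{h'}(Y',E')$ are closed submanifolds of $\mathcal M(\Sigma,F)$ meeting transversally, so $\fC_S$ is finite; by the bijection so is $\fC_G$, which is therefore compact.

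The step I expect to be the main obstacle is the injectivity bookkeeping: one must pin down exactly the residual $(\Z/2)\times(\Z/2)$ of gauge freedom over $\Sigma$ and verify that the genuinely non-trivial gluings realize precisely the class $l_\Sigma$ — and not some other element of $H^1(Y_\#;\Z/2)$ — which is exactly what forces the quotient by $\iota$, rather than $\widetilde\fC_G$ itself, to match $\fC_S$. A secondary point needing care is checking, in the gluing construction, that the resulting bundle is $E_\#$ on the nose rather than some other $\SO(3)$-bundle over $Y_\#$.
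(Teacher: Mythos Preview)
Your proposal is correct and follows essentially the same approach as the paper. The paper's proof is really the paragraph immediately preceding the lemma (the lemma is explicitly introduced as a summary of that discussion), and you have carefully fleshed out exactly the steps sketched there: restriction gives a well-defined map $\fC_G\to\fC_S$, gluing gives surjectivity, and the ambiguity in gluing is precisely the $\iota$-action. Your injectivity bookkeeping with the $(\pm 1,\pm 1)$ signs on $\Sigma_0\sqcup\Sigma_1$ is the right way to make rigorous the paper's assertion that gluing produces ``two equivalence classes of connections in $\widetilde\fC_G$ which are related to each other by $\iota$''; the key point you identified---that $\PD[\Sigma_0]=\PD[\Sigma_1]=l_\Sigma$ since the components are homologous---is exactly what confines the obstruction to $\{0,l_\Sigma\}$. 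Your concern about the glued bundle being $E_\#$ is handled by the conventions of Subsection~\ref{3-man-bdles}, where the identifications of $E|_{[-1,2)\times\Sigma}$ and $E'|_{(-2,1]\times\Sigma}$ with pullbacks of $F$ are fixed in advance.
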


As the first step to study \eqref{flat-3d-closed}, we may consider the linearization of \eqref{flat-3d-closed} as in the case of 3-manifolds with boundary. For any connection $\alpha$ representing an element of $\fC_G$, we may define $X_\alpha$ as in \eqref{gauge-slice} where the condition $*_3a\vert_\Sigma=0$ is dropped. There is also a Fredholm operator $L_B:X_B\to X_B$ as in Proposition \ref{linearized-3-man-Lag} with index $0$. An element of $\mathcal B(Y_\#,E_\#)$ represented by $B$ is regular if $L_B$ is a surjective operator. This is equivalent to say that the kernel of $L_B$ given as follows
\begin{equation}\label{tang-closed}
	\mathcal H^1_{h,h'}(Y_\#;B):=\{b \in L^2_l(Y_\#,\Lambda^1 \otimes E_\#) \mid d_B^*b=0,\,
	*_3d_B(b)+{\rm Hess}_{B}h(b)+{\rm Hess}_{B}h'(b)=0 \}
\end{equation}
 is trivial. The following lemma is a consequence of Mayer-Viertoris principle for the space $\mathcal H^1_{h,h'}(Y_\#,B)$.

\begin{lemma}\label{comp-per-reg}
	An element of $\fC_G$ is regular if the corresponding element of $\fC_S$ is given by a transversal intersection of $L_h(Y,E)$ and $L_{h'}(Y',E')$.
\end{lemma}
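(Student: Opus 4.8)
The plan is to run a Mayer--Vietoris argument for the decomposition $Y_\#=Y\cup_\Sigma Y'$. Let $B$ be a solution of \eqref{flat-3d-closed} representing the given element of $\fC_G$, put $B_Y:=B|_Y$, $B_{Y'}:=B|_{Y'}$, $\sigma:=B|_\Sigma$, and let $\alpha\in\fC_S$ be the corresponding intersection point of $L_h(Y,E)$ and $L_{h'}(Y',E')$. By definition of regularity we must show $\mathcal H^1_{h,h'}(Y_\#;B)=0$, so let $b$ be an element of this space. Since $\nabla_B h$ and $\nabla_B h'$ are supported in the interiors of $Y_0$ and $Y_0'$, the restrictions $b_Y:=b|_Y$ and $b_{Y'}:=b|_{Y'}$ satisfy $d_B^*b_Y=0$, $*_3 d_B b_Y+\Hess_B h(b_Y)=0$ on $Y$, and the analogous equations with $h'$ on $Y'$; moreover $b$ is $d_B$-closed and $d_B$-coclosed on the collar $[-1,1]\times\Sigma$, so $c:=b|_\Sigma$ is $d_\sigma$-closed and decomposes as $c=c_0+d_\sigma\xi$ with $c_0\in\mathcal H^1(\Sigma;\sigma)$ and $\xi\in\Omega^0(\Sigma;F)$, the latter unique because $\sigma$ restricts to an irreducible flat connection on each component of $\Sigma$ (see Lemma \ref{fixed-iota} and the surrounding discussion).

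The first key step is to show that $c_0$ lies in $T_\alpha L_h(Y,E)\cap T_\alpha L_{h'}(Y',E')$. Solving a Neumann-type boundary value problem for $d_B^*d_B$ on $Y$ --- uniquely solvable because the associated homogeneous problem has no nonzero solution, again by irreducibility of $\sigma$ --- we write $b_Y=\beta_0+d_B\zeta$ with $d_B^*\beta_0=0$ and $*_3\beta_0|_\Sigma=0$. The infinitesimal gauge transformation $d_B\zeta$ lies in the kernel of the operator $\beta\mapsto *_3 d_B\beta+\Hess_B h(\beta)$ by gauge invariance of the equation $\phi_h=0$ (which $B_Y$ satisfies on $Y$), so $\beta_0$ still solves $*_3 d_B\beta_0+\Hess_B h(\beta_0)=0$ and hence represents an element of $\mathcal H^1_h(Y;B_Y)$, the tangent space to $L_h(Y,E)$ at $[B_Y]$. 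By the description of the differential of $r$ recorded before Proposition \ref{immersed-lag}, the image $dr_Y(\beta_0)$ is the harmonic part of $\beta_0|_\Sigma$, which is $c_0$; using Hypothesis \ref{main-hypo} to regard $L_h(Y,E)$ as an embedded submanifold of $\mathcal M(\Sigma,F)$, this gives $c_0\in T_\alpha L_h(Y,E)$. Running the same argument on $Y'$ yields $c_0\in T_\alpha L_{h'}(Y',E')$ (reversing the orientation of $Y'$ only flips the sign of $\Omega$, not the submanifold). Since the intersection at $\alpha$ is transversal, $T_\alpha L_h(Y,E)\cap T_\alpha L_{h'}(Y',E')=0$, so $c_0=0$ and $b|_\Sigma=d_\sigma\xi$.

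To finish, let $\zeta_Y$ be the unique solution of the Dirichlet problem $d_B^*d_B\zeta_Y=0$ on $Y$, $\zeta_Y|_\Sigma=\xi$ (unique because a $B$-parallel section of $E$ over the connected manifold $Y$ that vanishes on $\Sigma$ is zero). Then $d_B\zeta_Y$ again lies in the kernel of $\beta\mapsto *_3 d_B\beta+\Hess_B h(\beta)$, satisfies $d_B^*(d_B\zeta_Y)=0$, and restricts to $d_\sigma\xi=b_Y|_\Sigma$ on $\Sigma$; therefore $b_Y-d_B\zeta_Y$ lies in $\mathcal H^1_h(Y,\Sigma;B_Y)$, which is trivial because $L_h(Y,E)$ is regular (Proposition \ref{linearized-3-man-Lag} together with Hypothesis \ref{main-hypo}). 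Hence $b_Y=d_B\zeta_Y$, and symmetrically $b_{Y'}=d_B\zeta_{Y'}$. On the collar $\zeta_Y-\zeta_{Y'}$ is $\sigma$-parallel and vanishes along the gluing surface, hence vanishes on the whole collar, so $\zeta_Y$ and $\zeta_{Y'}$ glue to a section $\zeta_\#$ of $E_\#$ with $b=d_B\zeta_\#$. Finally, integrating $d_B^*b=0$ against $\zeta_\#$ over the closed manifold $Y_\#$ gives $\|d_B\zeta_\#\|_{L^2(Y_\#)}^2=0$, so $b=0$.

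Conceptually the argument is the exactness of
\[
  0\to\mathcal H^1_{h,h'}(Y_\#;B)\to \mathcal H^1_h(Y;B_Y)\oplus\mathcal H^1_{h'}(Y';B_{Y'})\to\mathcal H^1(\Sigma;\sigma),
\]
where the last map sends $(\beta_1,\beta_2)$ to $dr_Y(\beta_1)-dr_{Y'}(\beta_2)$: transversality makes the image of this map all of $T_\alpha\mathcal M(\Sigma,F)$, while its source and target both have dimension $-3\chi(\Sigma)$ by Proposition \ref{linearized-3-man-Lag}, so it is injective and the term on the left vanishes. I expect the only points needing genuine work to be the two boundary value problems above --- the Neumann problem used to move $b_Y$ into the gauge slice and the Dirichlet problem used to extend $\xi$ over $Y$ --- both of which are routine once one invokes the irreducibility of $\sigma|_{\Sigma_0}$ and $\sigma|_{\Sigma_1}$, but which is where the proof must do something beyond quoting the propositions already established; everything else is bookkeeping and the dimension count.
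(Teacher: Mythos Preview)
Your proof is correct and follows essentially the same Mayer--Vietoris strategy as the paper: put the restrictions of $b$ into the gauge slice on each piece, observe that the resulting elements of $\mathcal H^1_h(Y;B_Y)$ and $\mathcal H^1_{h'}(Y';B_{Y'})$ restrict to a common class in $\mathcal H^1(\Sigma;\sigma)$ which must vanish by transversality, deduce that $b$ is exact on each piece, glue via irreducibility on the overlap, and finish by integration by parts on the closed manifold. The only stylistic difference is that after obtaining $c_0=0$ the paper simply invokes the injectivity of $dr$ (established just before Proposition~\ref{immersed-lag}) to conclude $\beta_0=0$ directly, whereas you reprove this via a Dirichlet extension and the vanishing of $\mathcal H^1_h(Y,\Sigma;B_Y)$; the two are equivalent and your route is a bit more self-contained.
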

\begin{proof}
	Let $B_\#$ represent an element of $\fC_G$, and $B$, $B'$ denote its restrictions to $Y$ and $Y'$. 
	Since the connection $B_\#$ is flat on 
	the overlapping region $[-1,1]\times \Sigma$ of $Y$ and $Y'$, we may assume that $B_\#$
	is the pull-back of a flat connection $\sigma$ on the bundle $F$.
	Suppose $b_\#\in \mathcal H^1_{h,h'}(Y_\#;B_\#)$ whose restrictions to $Y$ and $Y'$ are denoted by 
	$b$ and $b'$. 
	There are $\zeta\in L^2_{l+1}(Y,E)$ and $\zeta'\in L^2_{l+1}(Y',E')$ such that
	\[
	  b-d_B\zeta\in \mathcal H^1_h(Y;B),\hspace{1cm}b'-d_{B'}\zeta'\in \mathcal H^1_{h'}(Y';B').
	\]
	The restrictions of $b-d_B\zeta$ and $b'-d_{B'}\zeta'$ to $\partial Y$ and $\partial Y'$
	represent the same element of $\mathcal H^1(\Sigma;\sigma)$ because they are equal to the 
	cohomology classes represented by $b_\#\vert_{\{t\}\times \Sigma}$ for any $t\in [-1,1]$.
	Transversality of the intersection of Lagrangians $L_h(Y,E)$ and $L_{h'}(Y',E')$ implies that
	$b-d_B\zeta=0$ and $b'-d_{B'}\zeta'=0$. 
	In particular, on the overlap region $[-1,1]\times \Sigma$, we have $d_{B_\#}(\zeta-\zeta')=0$.
	Since $B_\#$ restricted to the overlap region is irreducible, 
	$\zeta=\zeta'$, and hence $\zeta$ and $\zeta'$ determine a 0-form $\zeta_\#$ on $Y_\#$ such that
	$b_\#=d_{B_\#}\zeta_\#$. This identity and the condition $d_{B_\#}^*b_\#=0$ imply that
	$\zeta_\#=0$. Thus, $b_\#$ vanishes.
\end{proof}

Fix $\alpha,\,\beta\in \fC_G$, and let $A_0$ be a smooth connection on the bundle $E_\#\times \R$ over the cylinder 4-manifold $\R\times Y_\#$, such that the restriction of $A_0$ to $(-\infty,-1]\times Y_\#$ (resp. $[1,\infty)\times Y_\#$) is the pull-back of a representative $B$ of $\alpha$ (resp. $B'$ of $\beta$). We say two such connections $A_0$ and $A_1$ represent the same path, if there is a smooth section $g$ of $(\R\times E_\#)\times_{\rm ad} \SU(2)$ over $\R\times Y_\#$ such that $A_1-g^*A_0$ or $A_1-\iota (g^*A_0)$ is compactly supported. This defines an equivalence relation, and any equivalence class of this relation is called a {\it path} along $\R\times Y_\#$ from $\alpha$ to $\beta$. For a fixed pair $\alpha$ and $\beta$, a set of representatives for all paths from $\alpha$ to $\beta$ can be given as follows. Suppose $A_0$ is a connection as above which restricts to the pullbacks of the representative $B$ for $\alpha$ and the representative $B'$ for $\beta$. Suppose also $A_1$ is a connection on $\R\times Y_\#$ which restricts to the pullbacks of $B'$ and $\iota(B')$ on the ends $(-\infty,-1]\times Y_\#$ and $[1,\infty)\times Y_\#$. Then any path from $\alpha$ to $\beta$ is represented by gluing an $\SU(2)$ connection on $S^4$ with $c_2=k\in \Z$ to $A_0$ and then possibly gluing the resulting connection to $A_1$.

Given a path $p$ from $\alpha$ to $\beta$, fix a representative $A_0$ for $p$, and let $\mathcal A_G(\alpha,\beta)_p$  be the space of connections of the form $A_0+a$ where $a\in L^2_{l}(\R\times Y,\Lambda^1\otimes E_\#)$. The configuration space $\mathcal B_G(\alpha,\beta)_p$ is defined as the quotient of $\mathcal A_G(\alpha,\beta)_p$ with respect to the action of the sections $g$ of $(\R\times E_\#)\times_{\rm ad}\SU(2)$ over $\R\times Y_\#$ such that $\nabla_{A_0}g$ is in  $L^2_{l}$.
For $A\in \mathcal A_G(\alpha,\alpha_+)_p$, define the {\it perturbed ASD equation}
\begin{equation}\label{ASD}
	F^+_A+(*_3\nabla_{A_t}h)^++(*_3\nabla_{A_t'}h')^+=0
\end{equation}
where $F^+_A$ denotes the self-dual part of the curvature of $A$, defined with respect to the product metric on $\R\times Y_\#$. For each $t\in \R$, $A_t$ (resp. $A_t'$) denotes the restriction of $A$ to $\{t\}\times Y$ (resp. $\{t\}\times Y'$). Thus $\nabla_{A_t}h$ (resp. $\nabla_{A_t'}h'$) can be regarded as a 1-form on $\{t\}\times Y$ (resp. $\{t\}\times Y'$) with values in $E$ (resp. $E'$), and $*_3\nabla_{A_t}h$ (resp. $*_3\nabla_{A_t'}h'$) is the Hodge dual of $\nabla_{A_t}h$ (resp. $\nabla_{A_t'}h'$) with respect to the metric on $Y$ (resp. $Y'$). This equation is gauge invariant and determines a subspace of $\mathcal B_G(\alpha,\beta)_p$ which is denoted by $\rM_G(\alpha,\beta)_p$. Translation along the first factor of $\R\times Y_\#$ determines an action of $\R$ on $\rM_G(\alpha,\beta)_p$ and the quotient space with respect to this action is denoted by $\breve \rM_G(\alpha,\beta)_p$. 

For a connection $A\in \mathcal A_G(\alpha,\beta)_p$, define the ASD operator
\[
  \mathcal D_A:L^2_{l}(\R\times Y,\Lambda^1\otimes E_\#)\to 
  L^2_{l-1}(\R\times Y,(\Lambda^0\otimes \Lambda^+)\otimes E_\#)
\]
as follows:
\begin{equation}\label{ASD-op}
	\mathcal D_A(a):=(d_A^*a,d_A^+a+(*_3\Hess_{A_t}h(a_t))^++(*_3\Hess_{A_t'}h'(a_t'))^+).
\end{equation}
The first component of $\mathcal D_A$ takes into account the gauge fixing condition and the second component is given by the linearization of \eqref{ASD}. This operator $\mathcal D_A$ is the perturbation of the standard ASD operator by a compact term induced by $h$ and $h'$. Since $\alpha$ and $\beta$ are regular, $\mathcal D_A$ is a Fredholm operator. The index of this operator depends only on the path $p$ and otherwise is independent of $A$. Moreover, switching $p$ with another path from $\alpha$ to $\beta$ changes the index by a multiple of $4$. 
In fact, gluing a connection on $S^4$ with $c_2=k$ to the path $p$ changes the index by $8k$, and gluing $p$ to a path from a representative $B'$ of $\beta$ to $\iota(B')$, changes the index by an integer of the form $8k+4$. In particular, we may use the index of the path $p$ to define a relative $\Z/4\Z$-grading $\deg_G$ on $\fC_G$:
\begin{equation}\label{Deg-G}
	\deg_G(\alpha)-\deg_G(\beta)\equiv \ind(\mathcal D_A)  \mod 4.
\end{equation}

There is another useful number associated to a path $p$ from $\alpha$ to $\beta$. For any connection $A\in \mathcal A_G(\alpha,\beta)_p$, define the topological energy of $A$ as follows:
\begin{equation}\label{top-energy-connection}
	\mathcal E(A):=\frac{1}{8\pi^2}\int_{\R\times Y}\tr\((F_A+*_3\nabla_{A_t}h+*_3\nabla_{A_t'}h')\wedge (F_A+*_3\nabla_{A_t}h+*_3\nabla_{A_t'}h')\)
\end{equation}	
It is straightforward to check that
\begin{equation}\label{rewrite-top-energy}
  \mathcal E(A)=\frac{1}{4\pi^2}\(h(\alpha)+h'(\alpha)\)-\frac{1}{4\pi^2}\(h(\beta)+h'(\beta)\)+\frac{1}{8\pi^2}\int_{\R\times Y}\tr\((F_A)\wedge (F_A)\).
\end{equation}
The last term in the above sum, which is the more standard definition for the topological energy of $A$, depends only on the path. This implies that $\mathcal E(A)$ also depends only on $p$. For a connection $A$ that represents an element of $\rM_G(\alpha,\beta)_p$, $\mathcal E(A)$ is non-negative and is zero if and only if $\alpha=\beta$, $p$ is the constant path and $A$ is the pullback of a representative of $\beta$. Another straightforward observation about topological energy is that $2\mathcal E(A)\in \Z$ for any connection $A\in \mathcal A_G(\alpha,\alpha)_p$. This is a consequence of \eqref{rewrite-top-energy} and the fact that the Chern-Weil integral in \eqref{rewrite-top-energy} satisfies a similar property.

The following proposition gives a relationship between $\mathcal E(A)$ and the index of $\mathcal D(A)$.

\begin{prop}\label{top-energy-index-gauge}
	To each $\alpha \in \fC_G$, we can associate a real number $\epsilon(\alpha)$ such that for any 
	$A\in \mathcal A_G(\alpha,\beta)_p$, we have
	\[
	  \ind(\mathcal D_{A}):=8\mathcal E(A)+\epsilon(\beta)-\epsilon(\alpha).
	\]
\end{prop}
\begin{proof}
	The standard index formula for the ASD operator on a manifold with cylindrical end \cite{Taubes:L2-mod-space,MMR:L2-mod-space} asserts that
	\[
	  \ind(\mathcal D_{A})=\frac{\rho_{\beta}-\rho_{\alpha}}{2}+\frac{1}{\pi^2}\int_{\R\times Y}\tr\(F_A\wedge F_A\),
	\]
	where for $\alpha\in \fC_G$, $\rho_\alpha$ is the $\rho$-invariant associated to the connection $\alpha$. This identity and \eqref{rewrite-top-energy} give the desired result.
\end{proof}

Following proposition can be regarded as a linear version of our main theorem. It is also a variation of the main result of \cite{Tau:Cass} for the admissible setting. Proof of this result will be given in Subsection \ref{monotone-lag-subs}.
\begin{prop} \label{linear-AF-gradings}
	The relative $\Z/4$-grading $\deg_G$ on $\fC_S\cong \fC_G$ is compatible with the Floer grading $\deg_S$. In particular, in the case of framed Floer homology, the two gradings $\deg_G$ and $\deg_S$ agree with each other.
\end{prop}

As in the symplectic case, orientations of determinant lines of $\mathcal D_A$ for connections $A$ representing elements of $\mathcal B_G(\alpha,\beta)_p$ determine a real line bundle $\delta_p^G$ on $\mathcal B_G(\alpha,\beta)_p$. This line bundle is oriented \cite{floer:inst1,Don:YM-Floer} and the set of the two orientations of this line bundle is denoted by $\Lambda_p^G$. For paths $p$ from $\alpha_0\in \fC_G$ to $\alpha_1\in \fC_G$ and $p'$ from $\alpha_1$ to $\alpha_2\in \fC_G$ along $\R\times Y$, we may again define an isomorphism
\begin{equation}\label{cyl-gluing-ori}
  \Phi_{p,p'}:\Lambda^G_p\otimes_{\Z/2\Z} \Lambda^G_{p'}\to \Lambda^G_{p\sharp p'}.
\end{equation}
For a path $p$ from $\alpha$ to $\beta$ and the path $p'$ obtained by gluing a connection on $S^4$ with $ ,kc_2=k$ to $p$, there is an isomorphism obtained from gluing the standard orientations of the ASD complexes for $S^4$:
\[\Psi_{p,k}:\Lambda_{p}^G\to \Lambda_{p'}^G.\]
We can fix a system of orientations for the line bundles $\delta_p^G$, which is compatible with the maps $\Phi_{p,p'}$ and $\Psi_{p,k}$ \cite[Section 5.4]{Don:YM-Floer}. To achieve this goal, fix $\alpha_0\in \fC_G$ with a representative connection $B_0$ on $E_\#$. We also fix a path $p_0$ from $\alpha_0$ to $\alpha_0$ represented by a connection $A_0$ whose restrictions to the ends $(-\infty,-1]\times Y_\#$ and $[1,\infty)\times Y_\#$ are pullback of $B_0$ and $\iota(B_0)$. Fix an element $\lambda_{p_0}\in \Lambda_{p_0}^G$. For any $\alpha\in \fC_G$, we pick an arbitrary path $p$ from $\alpha$ to $\alpha_0$, and pick an element $\lambda_{p}\in \Lambda_p^G$. Then we extend this choice of orientations of the line bundle $\delta_p^G$ to all paths from $\alpha$ to $\alpha_0$ using the maps $\Phi_{p,p_0}$ and $\Psi_{p,k}$ and the orientation element $\lambda_{p_0}$. Finally for $\alpha$, $\beta\in \fC_G$ and a path $p$ from $\alpha$ to $\beta$, we pick an arbitrary path $p_+$ from $\beta$ to $\alpha_0$, and pick $\lambda_p\in \Lambda_p^G$ such that 
\[
  \Phi_{p,p_+}(\lambda_p\otimes \lambda_{p_+})=\lambda_{p_-},
\]
where $p_-$ is the path obtained by gluing $p$ to $p_+$.

 A connection $A$ representing an element of $\rM_G(\alpha,\beta)_p$ is regular if $\mathcal D_A$ is surjective. The moduli space $\rM_G(\alpha,\beta)_p$ in a neighborhood of a regular connection $[A]$ is a smooth manifold of dimension $\ind(\mathcal D_A)$, and a trivialization of $\delta_p^G$ fixes an orientation of this manifold. The following lemma, which will be proved in Subsection \ref{hol-pert-sect}, asserts that we can ensure regularity of the elements of moduli spaces which are essential for the definition of instanton Floer homology.
\begin{lemma} \label{reg-ASD}
	There are Riemannian metrics $g$, $g'$ on $Y$, $Y'$ and small enough perturbations of the cylinder functions $h$ and $h'$
	such that the sets $\fC_S$ and $\fC_G$ do not change and all solutions of 
	\eqref{ASD} with index at most seven are regular.
\end{lemma}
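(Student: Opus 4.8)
The plan is to run the standard universal moduli space argument for the perturbed ASD equation \eqref{ASD} and to conclude with the Sard--Smale theorem, the one structural novelty being that the perturbations must be arranged so as to leave $\fC_S$ and $\fC_G$ intact. First I would fix the metrics $g,g'$ as they will be constrained in Section \ref{perturbations}, and note that only non-constant solutions require attention: a $t$-independent solution of \eqref{ASD} coming from $\alpha\in\fC_G$ has linearization conjugate to $\partial_t+L_\alpha$, where $L_\alpha$ is the index-$0$ extended Hessian operator on $Y_\#$ whose kernel is $\mathcal H^1_{h,h'}(Y_\#;\alpha)$; since $\alpha$ is regular --- which holds by Lemma \ref{comp-per-reg}, applicable because Hypothesis \ref{main-hypo} and Proposition \ref{immersed-lag} let us take $L_h(Y,E)$ and $L_{h'}(Y',E')$ transverse --- the operator $L_\alpha$ is invertible and $\partial_t+L_\alpha$ is automatically surjective.

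Next I would introduce the perturbation space. Pick small, pairwise disjoint, gauge-invariant neighbourhoods $N\subset\mathcal A(Y,E)$ and $N'\subset\mathcal A(Y',E')$ of the finitely many connections representing $\fC_G$, small enough that no non-constant finite-energy trajectory of the $(h,h')$-perturbed flow is entirely contained in $N$ (or $N'$); this is possible because two distinct critical connections are separated, while the local Morse picture near a single non-degenerate critical point excludes non-constant nearby trajectories of small energy. Let $\mathcal V$ be the Banach space of pairs $(\delta h,\delta h')$ of cylinder functions of the type reviewed in Section \ref{hol-pert-sect}, with small parameter and with supports in $\mathcal A(Y,E)\setminus N$ and $\mathcal A(Y',E')\setminus N'$ respectively. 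Since $\delta h,\delta h'$ vanish near each element of $\fC_G$, the three-dimensional equation $\phi_{h+\delta h,h'+\delta h'}=0$ coincides with $\phi_{h,h'}=0$ there, so $\fC_G$ remains a solution set; regularity of $\fC_G$ together with the usual compactness argument rules out new critical points for small perturbations, so $\fC_S$ and $\fC_G$ do not change.

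The heart of the argument is transversality of the universal moduli space. For $\alpha,\beta\in\fC_G$ and a path $p$, form $\mathfrak M(\alpha,\beta)_p=\{(A,v):v\in\mathcal V,\ A\text{ solves }\eqref{ASD}\text{ for }(h+\delta h,h'+\delta h')\}/\mathcal G(E_\#)$, and I would show it is a smooth Banach manifold at every non-constant $A$ by verifying that the universal linearized operator $(a,\dot v)\mapsto\mathcal D_A a+\Theta_A(\dot v)$ is onto, where $\Theta_A$ is the derivative of the left-hand side of \eqref{ASD} in the perturbation direction. If $\psi$ is $L^2$-orthogonal to the image then $\mathcal D_A^*\psi=0$, so unique continuation forces $\psi$ to be either identically zero or nonzero on a dense open subset of $\R\times Y_\#$; in the latter case, since $\R\times Y_0$ has non-empty interior, $\psi$ is nonzero somewhere over $\R\times Y_0$. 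Because $A$ is non-constant it is, by the choice of $N$, not confined to $N$, so there is a non-empty open set of times with $A_t\notin N$; using the openness of the support of $\psi$ and, after a preliminary arbitrarily small perturbation spreading the holonomy of $A$ along the loops defining cylinder functions, the abundance of holonomy perturbations at the irreducible connection $A_{t_0}$, I would produce a variation $\Theta_A(\dot v)$ supported near a point $(t_0,y_0)$ with $y_0\in\mathrm{int}(Y_0)$ and $\psi(t_0,y_0)\neq0$ that pairs non-trivially with $\psi$ --- a contradiction. Hence $\psi=0$ and $\mathfrak M(\alpha,\beta)_p$ is smooth at non-constant points.

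Finally, the projection $\mathfrak M(\alpha,\beta)_p\to\mathcal V$ is Fredholm of index $\ind(\mathcal D_A)$; by Proposition \ref{top-energy-index-gauge} the bound $\ind(\mathcal D_A)\le7$ bounds the topological energy and therefore the number of homotopy classes $p$, so only finitely many triples $(\alpha,\beta,p)$ are relevant. Applying Sard--Smale to each and intersecting the resulting residual sets of parameters yields $(\delta h,\delta h')\in\mathcal V$ of arbitrarily small parameter for which every non-constant solution of \eqref{ASD} of index at most seven is regular; combined with the automatic regularity of the constant solutions this proves the lemma. The hard part is exactly this surjectivity step under the twofold constraint that $\delta h$ may act only on $Y$ and $\delta h'$ only on $Y'$ --- both vanishing near $\Sigma$, where \eqref{ASD} is the plain ASD equation --- and that both be supported away from $\fC_G$: unique continuation for $\mathcal D_A^*$ and the non-confinement of non-constant trajectories to $N$ are what guarantee that the remaining localized cylinder-function perturbations still detect every cokernel element.
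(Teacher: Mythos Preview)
Your framework is the right one, but the surjectivity step has a real gap. Holonomy perturbations have the constrained form $\Theta_A(\delta h)(t,y)=(*_3(\nabla_{A_t}\delta h)(y))^+$: the $t$-dependence is only through the slice $A_t$, so you cannot freely localize near a spacetime point, and ``a preliminary perturbation spreading the holonomy'' is not an argument. The danger is concrete: if $\phi_t$ were proportional to $\dot A_t$, then $\langle\nabla_{A_t}\delta h,\phi_t\rangle$ would be a total $t$-derivative of $\delta h(A_t)$ and integrate to zero for every $\delta h$. The paper rules this out by a short computation: writing a cokernel element as $\{\phi_t\}$ solving $\dot\phi_t=-L_t\phi_t$ while the flow direction satisfies $\frac{d}{dt}\dot A_t=L_t\dot A_t$ with $L_t$ self-adjoint, one sees that $\langle\phi_t,\dot A_t\rangle_{L^2(Y_\#)}$ is constant in $t$, hence zero by decay. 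The resulting linear independence of $\phi_t$ and $\dot A_t$ on $Y_\#\setminus\Sigma$ is exactly the input for the construction in \cite[Proposition~5.17]{Don:YM-Floer}, which then produces a cylinder function, vanishing near $\fC_G$, with $\int_\R\langle\nabla_{A_t}h_\rho+\nabla_{A_t'}h'_\rho,\phi_t\rangle\,dt\neq0$. You should supply this computation in place of the localization sketch.

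Apart from this, your organization differs from the paper's but is a legitimate variant. You work with a Banach space of perturbations vanishing near $\fC_G$ and apply Sard--Smale once. The paper instead inducts on the index $k\le 7$: the induction hypothesis together with the index bound (which forbids bubbling) make the set of non-regular index-$k$ solutions compact, so at each step only a finite-dimensional family of cylinder functions needs to be adjoined; these extra functions are allowed to be supported in the slightly larger $Y_1\supset Y_0$, and a final rescaling of the metric brings all supports outside $[-1,1]\times\Sigma$. Your packaging is slicker when it works; the paper's makes explicit both why the index bound matters and why the statement allows the metric to vary.
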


From now on, we assume that $h$ and $h'$ are chosen such that the spaces $L_h(Y,E)$, $L_{h'}(Y',E')$ are smooth embedded Lagrangians which intersect transversely and the claim of Lemma \ref{reg-ASD} holds. We also drop $h$ and $h'$ from our notations for the 3-manifolds Lagrangians.

Let $C_G(Y_\#,E_\#)$ be the abelian group freely generated by the elements of $\fC_G$. Fix orientations of the determinant line bundles $\delta_p^G$ as above, and use them to orient the moduli spaces $\breve \rM_G(\alpha,\beta)_p$. Let $d: C_G(Y_\#,E_\#) \to C_G(Y_\#,E_\#)$ be the map defined as 
\[
  d(\alpha):=\sum_{p:\alpha\to \beta}\#\breve \rM_G(\alpha,\beta)_p \cdot \beta.
\]
where the sum is over all paths $p$ from $\alpha$ to another element $\beta\in \fC_G$ such that the associated ASD operator has index $1$. Thus, the operator $d$ decreases the relative grading by $1$. Moreover, $d^2=0$ and the instanton Floer group $\rI_*(Y_\#,E_\#)$ is defined to be the homology of the relatively $\Z/4$-graded  complex $(C_G(Y_\#,E_\#),d)$. 

\begin{prop}\label{top-inv-gauge}
	The chain homotopy type of the chain complex $(C_G(Y_\#,E_\#),d)$ is independent of the choice of the 
	Riemannian metrics on $Y$, $Y'$, 
	the cylinder functions $h$ and $h'$ and the choices of orientation elements $\lambda_p\in \Lambda_p^G$.
	In particular, $\rI_*(Y_\#,E_\#)$ is a topological invariant of $(Y_\#,E_\#)$.
\end{prop}

The proof is standard and we refer the reader to \cite{floer:inst1,Don:YM-Floer} for more details. We only remark on the dependence of $\rI_*(Y_\#,E_\#)$ on the involution $\iota$, which is determined by the $\Z/2$ cohomology class dual to the  connected component $\Sigma_0$ of $\Sigma$. We may define a variant of $\rI_*(Y_\#,E_\#)$, where we do not pass to the quotient by the action of $\iota$. The resulting invariant is a $\Z/8\Z$-graded chain complex, which is a topological invariant of $(Y_\#,E_\#)$ and does not depend on the $\Z/2$ cohomology class of $\Sigma_0$. Moreover, $\iota$ induces an involution of degree $4$ on this complex and the quotient space is isomorphic to $\rI_*(Y_\#,E_\#)$. Thus, the isomorphism type of $\rI_*(Y_\#,E_\#)$ does not depend on $\iota$.

\section{Proof of the Main Theorem}\label{main-thm-sec}

In this section, we prove our main result, Theorem \ref{main-thm}. Our key tool in the proof is the {\it mixed equation}, which is defined using a combination of the Cauchy-Riemann equation and the ASD equation. In the prequel to this paper \cite{DFL:mix} and following \cite{Max:GU-comp}, we defined mixed equation for any {\it quintuple}. We recall the notion of quintuples in Subsection \ref{quintuples}, and introduce {\it special quintuples}, which are the specific type of quintuples used in our proof. In the next subsection, we use the moduli spaces of solutions to the mixed equation associated to special quintuples, and construct the desired isomorphism for Theorem \ref{main-thm}.

\subsection{Special quintuples}\label{quintuples}
A quintuple $\fq=(X,V,S,\mathcal M(\Sigma,F), \mathbb L)$ consists of a Riemannian $4$-manifold $X$, an $\SO(3)$-bundle $V$ over $X$, a Riemann surface $(S,j)$, the symplectic manifold $\mathcal M(\Sigma,F)$ and a collection of Lagrangians $\mathbb L=\{L_1,L_2,\dots,L_k\}$ in $\mathcal M(\Sigma,F)$. There is a (possibly non-compact) oriented 1-manifold $\gamma$ such that the boundary of the $4$-manifold $X$ is identified with $\gamma\times \Sigma$ where $\Sigma$ is the disconnected Riemann surface that we fixed in the previous section. Moreover, the restriction of $V$ to $\partial X$ is identified with the pullback of the $\SO(3)$-bundle $F$ on $\Sigma$ to $\gamma\times \Sigma$. The boundary components of the Riemann surface $S$ are given as 
\begin{equation}\label{boundary-S}
  \partial S=-\gamma\sqcup \eta_1 \sqcup \eta_2\sqcup \dots \sqcup \eta_k.
\end{equation}
In particular, we regard $L_i$ as a Lagrangian attached to the boundary component $\eta_i$. In \cite{DFL:mix}, we considered quintuples in the more general case that $\mathcal M(\Sigma,F)$ is replaced with an arbitrary symplectic manifold $(M,\omega)$. In that case, $\mathbb L$ includes some additional information in the form a certain type of Lagrangian correspondence from $\mathcal A(\Sigma,F)$ to $M$.

The mixed equation associated to the quintuple $\fq$ is defined for a pair of a connection $A$ on $V$ and a map $u:S\to \mathcal M(\Sigma,F)$:
\begin{equation}\label{mixed-eq-pre}
	\left\{\begin{array}{c}
		F^+_A=0,\\\overline \partial_{J_*}u=0,
	\end{array}	\right.
\end{equation}
where $\overline \partial_{J_*}u=1/2(du+J_*\circ du\circ j)$. The restriction of $A$ to $\{x\}\times \Sigma\subset \gamma \times \Sigma$, for each $x\in \gamma$, is required to be a flat connection representing $u(x)$. Moreover, $u(x)\in L_i$ for $x\in \eta_i$.
These two conditions are respectively called the {\it matching} and the Lagrangian boundary conditions. The Cauchy-Riemann-equation in \eqref{mixed-eq-pre} is defined using the standard complex structure $J_*$ on $\mathcal M(\Sigma,F)$. Eventually, we shall be interested in the case that the ASD equation in \eqref{mixed-eq-pre} is perturbed and the Cauchy-Riemann equation is defined by a domain dependent family of almost complex structures.

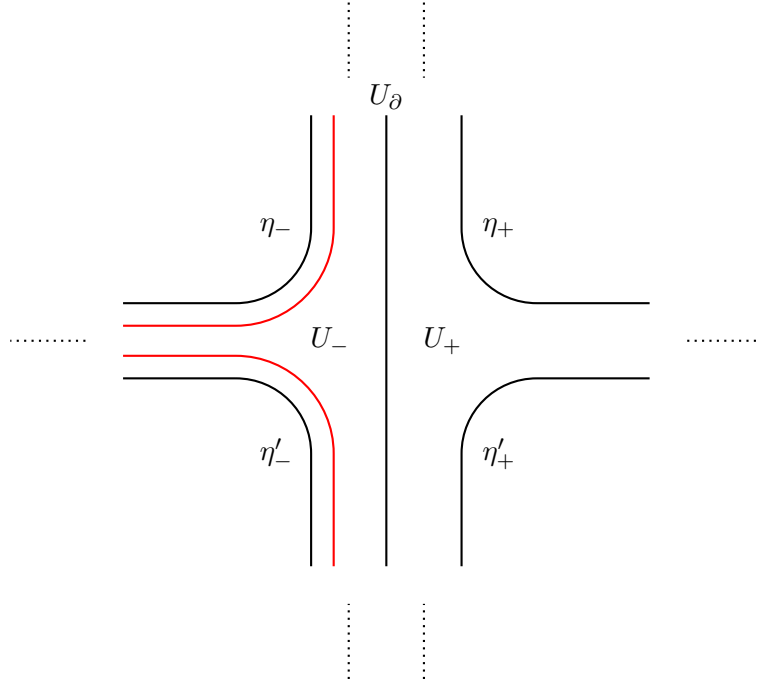
\begin{figure}
	\begin{center}
	\begin{tikzpicture}[thick]
\draw (0,3)  --  (0,-3)
	(-1,3)  --  (-1,1.5)
	(-1,1.5) coordinate (-left) 
	to [out=-90, in=0] (-2,.5)
	 (-2,.5) -- (-3.5,0.5)
	 (-2,-.5) -- (-3.5,-0.5)
	(-2,-.5) coordinate (-left) 
	to [out=0, in=90] (-1,-1.5)
	to (-1,-3);
\draw (1,3)  --  (1,1.5)
	(1,1.5) coordinate (-left) 
	to [out=-90, in=180] (2,.5)
	 (2,.5) -- (3.5,0.5)
	 (2,-.5) -- (3.5,-0.5)
	(2,-.5) coordinate (-left) 
	to [out=180, in=90] (1,-1.5)
	to (1,-3);
\draw[red]	(-.70,3) to (-.70,1.5)
	to [out=-90, in=0] (-2,.2)
	to (-3.5,0.2)
	(-3.5,-0.2) to (-2,-.2)
	to [out=0, in=90] (-.70,-1.5)
	to (-0.7,-3); 
\node at (-.75,0){$U_-$};	
\node at (.75,0){$U_+$};	
\node at (1.5,1.5){$\eta_+$};	
\node at (1.5,-1.5){$\eta_+'$};
\node[left] at (-1.1,1.5){$\eta_-$};
\node[left] at (-1.1,-1.5){$\eta_-'$};
\node at (0,3.25){$U_\partial$};
\draw[dotted] (-.5,4.5)  --  (-0.5,3.5);	
\draw[dotted] (.5,4.5)  --  (0.5,3.5);	
\draw[dotted] (-.5,-4.5)  --  (-0.5,-3.5);	
\draw[dotted] (.5,-4.5)  --  (0.5,-3.5);	
\draw[dotted] (4,0)  --  (5,0)
		     (-4,0)  --  (-5,0);	
	\end{tikzpicture}
	\end{center}
		\caption{The space $U$, a subspace of the complex plane $\C$. The chosen regular neighborhoods of $\eta_-$ and $\eta_-'$ are determined by the red curves.}
	\label{region-U}
\end{figure}

To define special quintuples, let $U$ be the domain in the complex plane which is sketched in Figure \ref{region-U}. This non-compact space has four boundary components, denoted by $\eta_+$, $\eta_-$, $\eta_+'$, $\eta_-'$, and contains the following subspaces of $\C$:
\[
  [-2,2]\times [2,\infty),\hspace{1cm}[-2,2]\times (-\infty,-2],\hspace{1cm}[3,\infty)\times [-1,1],\hspace{1cm}(-\infty,-3]\times [-1,1].
\] 
This space is decomposed as the union of the regions $U_+$ and $U_-$ which share the imaginary line in $\C$, denoted by $U_\partial$, as their common boundary components. We identify a regular neighborhood of the boundary components $\eta_-$, $\eta_-'$ with $\R \times (\frac{1}{2},1]$, $\R \times [-1,-\frac{1}{2})$ and fix a Riemannian metric $g_-$ on $U_-$ which is equal to product metrics on these regular neighborhoods, and is equal to the standard metric of the complex plane on the subsets 
\[
  [-2,0]\times [2,\infty),\hspace{1cm}[-2,0]\times (-\infty,-2],\hspace{1cm}(-\infty,-3]\times [-1,1],\hspace{1cm}[-1,0]\times [-2,2].
\]

Let $X$ be the oriented smooth 4-manifold given by gluing the following 4-manifolds along their common boundaries
\[\R\times Y_0\cup U_-\times \Sigma \cup \R \times -Y_0',\]
where $Y_0\subset Y$, $Y_0'\subset Y'$ are given in \eqref{Y0'}.
The subspaces $[-2,0]\times [2,\infty)$, $[-2,0]\times (-\infty,-2]$ and $(-\infty,-3]\times [-1,1]$ of $U_-$ determine subspaces $Z$, $Z'$ and $Z_\#$ of $X$ which are, respectively, diffeomorphic to $[2,\infty) \times Y $, $(-\infty,-2]\times Y'$ and $(-\infty,-3]\times Y_\#$. The projection maps from $Z$, $Z'$ and $Z_\#$ to $Y$, $Y'$ and $Y_\#$ are respectively denoted by $\pi$, $\pi'$ and $\pi_\#$. The fixed Riemannian metrics on $\Sigma$, $Y$ and $Y'$ in Subsection \ref{3-man-bdles} and the metric $g_-$ on $U_-$ give rise to a Riemannian metric on $X$, which we denote by $g_X$.  Moreover, the $\SO(3)$ bundles $E$, $E'$ and $F$ determine an $\SO(3)$-bundle on $X$, which we denote by $V$.

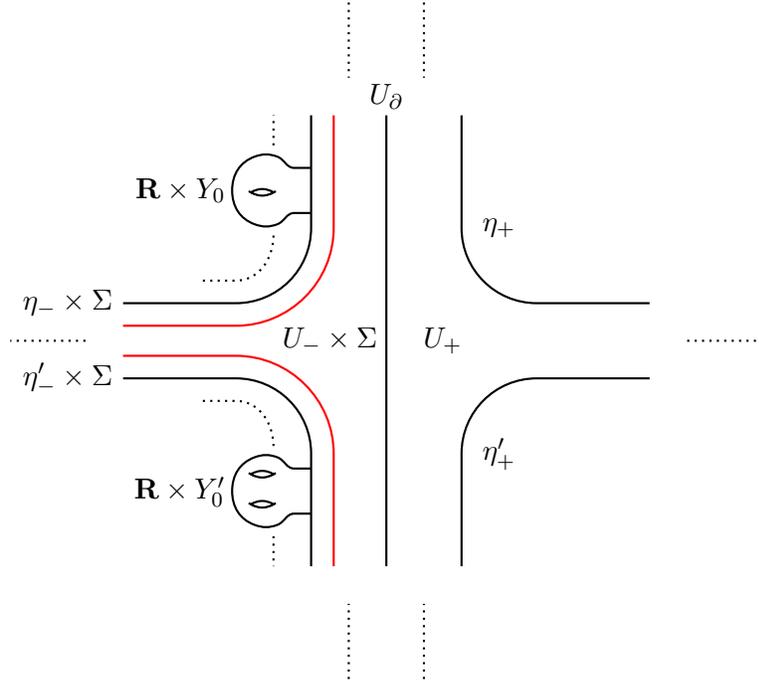
\begin{figure}
	\begin{center}
	\begin{tikzpicture}[thick]
\pic [scale=0.15](lower) at (-1.15,2) {handle};
\pic [scale=0.15](lower) at (-1.65,2.35) {hole};
\pic [scale=0.15](lower) at (-1.15,-2) {handle};
\pic [scale=0.15](lower) at (-1.65,-1.8) {hole};
\pic [scale=0.15](lower) at (-1.65,-1.4) {hole};
\draw (0,3)  --  (0,-3)
	(-1,3)  --  (-1,1.5)
	(-1,1.5) coordinate (-left) 
	to [out=-90, in=0] (-2,.5)
	 (-2,.5) -- (-3.5,0.5)
	 (-2,-.5) -- (-3.5,-0.5)
	(-2,-.5) coordinate (-left) 
	to [out=0, in=90] (-1,-1.5)
	to (-1,-3);
\draw[red]	(-.70,3) to (-.70,1.5)
	to [out=-90, in=0] (-2,.2)
	to (-3.5,0.2)
	(-3.5,-0.2) to (-2,-.2)
	to [out=0, in=90] (-.70,-1.5)
	to (-0.7,-3); 
\draw (1,3)  --  (1,1.5)
	(1,1.5) coordinate (-left) 
	to [out=-90, in=180] (2,.5)
	 (2,.5) -- (3.5,0.5)
	 (2,-.5) -- (3.5,-0.5)
	(2,-.5) coordinate (-left) 
	to [out=180, in=90] (1,-1.5)
	to (1,-3);
\node at (-.75,0){$U_-\times \Sigma$};	
\node at (.75,0){$U_+$};	
\node at (1.5,1.5){$\eta_+$};	
\node at (1.5,-1.5){$\eta_+'$};
\node[left] at (-3.5,.5){$\eta_-\times \Sigma$};
\node[left] at (-3.5,-.5){$\eta_-'\times \Sigma$};
\node[left] at (-2,2){$\R\times Y_0$};
\node[left] at (-2,-2){$\R\times Y'_0$};
\node at (0,3.25){$U_\partial$};
\draw[dotted] (-.5,4.5)  --  (-0.5,3.5);	
\draw[dotted] (.5,4.5)  --  (0.5,3.5);	
\draw[dotted] (-.5,-4.5)  --  (-0.5,-3.5);	
\draw[dotted] (.5,-4.5)  --  (0.5,-3.5);	
\draw[dotted] (4,0)  --  (5,0)
		     (-4,0)  --  (-5,0)
		     (-1.5,2.6) -- (-1.5,3)
		     (-1.5,1.4) to [out=-90, in=0] (-2,.8)
		     to (-2.5,0.8)
		     (-1.5,-2.6) -- (-1.5,-3)
		     (-1.5,-1.4) to [out=90, in=0] (-2,-.8)
		     to (-2.5,-0.8);	
	\end{tikzpicture}
	\end{center}
		\caption{ The Riemannian $4$-manifold $X$ and $U_+$}
	\label{special-quin}
\end{figure}

\begin{remark}
	The 4-manifold $X$ contains two subspaces which are naturally parametrized as $\R\times Y_0$ and $[2,\infty)\times Y$ and their intersection is $[2,\infty)\times Y_0$. Similarly, there are two subspaces diffeomorphic to $\R\times -Y_0'$ and 
	$(-\infty,-2]\times Y'$, whose intersection is the subspace $(-\infty,-2]\times Y'_0$ of $(-\infty,-2]\times Y'$ which is identified with the subspace $[2,\infty)\times -Y_0'$ of $\R\times -Y_0'$ using the orientation preserving map
	$(t,y) \to (-t,y)$ with $(t,y)\in (-\infty,-2]\times Y'_0$. To avoid confusion in the rest of the paper, 
	we write $\{t\}\times -Y_0'$ for the subspace of $\R\times -Y_0'$ with $t\in \R$, and $\{t\}\times Y'$ for the subspace of $(-\infty,-2]\times Y'$ with $t\in (-\infty,-2]$. 
	In particular, $\{-t\}\times -Y_0'$ is a subspace of $\{t\}\times Y'$ for any $t\in (-\infty,-2]$.
\end{remark}

Let $S$ be the surface given by the domain $U_+$. Using the notation in \eqref{boundary-S}, the boundary components $U_\partial$, $\eta_+$ and $\eta_+'$ of $S$ respectively play the roles of $\gamma$, $\eta_1$ and $\eta_2$. We associate the 3-manifold Lagrangians $L(Y,E)$, $L(Y',E')$ to the boundary components $\eta_+$, $\eta_+'$ of $U_+$.
We write $\mathbb L$ for these two Lagrangians together. The quintuple $\fq_{s}:=(X,V,S,M,\mathbb L)$ is called the {\it special quintuple associated to $(Y,E)$ and $(Y',E')$}. The subspaces $(-\infty,-3]\times Y_\#$ and $[3,\infty)\times[-1,1]$ are respectively 
called the gauge theoretic and symplectic ends of $X$ and $S$. Moreover, the subspaces $[2,\infty)\times Y\subset X$ and $[0,2]\times [2,\infty) \subset S$ together are called the mixed end associated to $(Y,E)$. The mixed end associated to $(Y',E')$ is defined in an analogous way.

We shall need a slight modification of the mixed equation in \eqref{mixed-eq-pre} associated to the special quintuple $\fq_{s}$. First, we fix a family of compatible almost complex structures $\{J_{(s,\theta)}\}_{(s,\theta)\in U_+}$ on the symplectic manifold $\mathcal M(\Sigma,F)$ such that $J_{(s,\theta)}$ is equal to the standard complex structure $J_*$ for $s\leq 1$ and is equal to $J_\theta$, the complex structure given by Lemma \ref{alm-cx-str} for $s\geq 1$. Moreover, $J_{s,\theta}$ is constant in the $\theta$ direction if $|\theta|>2$. For a connection $A$ on $V$ and a map $u:U_+\to \mathcal M(\Sigma,F)$, which satisfy matching and Lagrangian boundary conditions, we define the mixed equation as
\begin{equation}\label{mixed-eq}
	\left\{
	\begin{array}{l}
		F^+_A+(*_3\nabla_{A_t}h)^++(*_3\nabla_{A_t'}h')^+=0\\
		\frac{\partial u}{\partial s}+ J_{(s,\theta)}\frac{\partial u}{\partial \theta}=0 
	\end{array}
	\right.
\end{equation}
Here the self-dual part of the 2-forms in the first equation are defined with respect to the Riemannian metric $g_X$. For each $t\in \R$, $A_t$ (resp. $A_t'$) denotes the restriction of $A$ to $\{t\}\times Y_0$ (resp. $\{t\}\times -Y'_0$), and the perturbation terms $(*_3\nabla_{A_t}h)^+$ and $(*_3\nabla_{A_t}h)^+$ are defined as in \eqref{ASD} and are respectively supported in the interior of $\R\times Y_0$ and $\R\times Y'_0$. The Cauchy-Riemann equation in \eqref{mixed-eq} is defined with respect to the family of complex structures $\{J_{(s,\theta)}\}_{(s,\theta)\in U_+}$.

\subsection{Moduli spaces associated to special quintuples and the isomorphism $\bN$}\label{bN-definition}

Working with the space of all solutions of \eqref{mixed-eq} is unmanageable due to non-compactness of $X$ and $S$, and we need to impose some decay conditions on the ends to obtain a well-behaved moduli space. Suppose $(A,u)$ is a pair of a connection on $V$ and a map $U_+\to \mathcal M(\Sigma,F)$.  The {\it analytical energy} of the pair $(A,u)$ is defined as
\begin{equation}\label{analytical-energy}
  \fE(A,u):=\int_{X}\vert F_A+*_3\nabla_{A_t}h+*_3\nabla_{A_t'}h'\vert^2 \dvol_X + \int_{U_+}\vert d u\vert_{J_{(s,\theta)}}^2 ds d\theta.
\end{equation}
where $\vert d u\vert_{J_{(s,\theta)}}^2=\Omega(\frac{\partial u}{\partial s},J_{(s,\theta)}\frac{\partial u}{\partial s})+\Omega(\frac{\partial u}{\partial \theta},J_{(s,\theta)}\frac{\partial u}{\partial \theta})$.
\begin{prop}\label{exp-decay}
	There is a positive real number $\delta_0$ such that the following holds.
	Suppose $(A,u)$ is a pair of an $L^2_{2,loc}$ connection on $V$ and a continuous map $U_+\to \mathcal M(\Sigma,F)$ such that $d u$ belongs to the Sobolev space $L^2_{1,loc}$.
	Suppose $(A,u)$ is a solution of the mixed equation in \eqref{mixed-eq}, satisfies the matching and Lagrangian boundary conditions, and $\fE(A,u)$ is finite.
	Then $u$ is smooth and there is a section $g$ of $V\times_{ad} \SU(2)$ such that $\widetilde A:=g^*A$ is also smooth. Moreover, the following properties for any positive integer $l$ hold.
	\begin{itemize}
		\item[(i)] There is a representative $\alpha$ for an element of $\fC_G$ such that the difference $a:=\widetilde A-\pi_\#^*(\alpha)$, defined on the end $(-\infty,-3]\times Y_\#$, is in $L^2_l$.
		\item[(ii)] There is an element $\beta$ of $\fC_S$ such that $\lim_{s\to \infty} u(s,\theta)= \beta$ and $d u$ is in $L^2_{l-1}$.
		\item[(iii)] On the mixed cylinder associated to $(Y,E)$ (resp. $(Y',E')$), there is an element 
		$q\in L(Y,E)$ (resp. $q'\in L(Y',E')$) with a representative connection $B$ on $E$ (resp. $B'$ on $E'$)
		such that $\widetilde A-\pi^*(B)$ (resp. $\widetilde A-(\pi')^*(B')$) is in 
		$L^2_{l,\delta_0}$, $\lim_{\theta\to \infty} u= q$ (resp. $\lim_{\theta\to -\infty} u= q'$)
		and $d u$ is in $L^2_{l-1,\delta_0}$.
	\end{itemize}
\end{prop}
The weighted Sobolev norms in Theorem \ref{exp-decay} are defined as follow. Let $\tau:[2,\infty)\times Y \to \R^{\geq 0}$ be given by projection to the second factor. Then the $L^2_{l,\delta}$ norm of a function $f$ on $[2,\infty)\times Y$ is the $L^2$ norm of $e^\tau \cdot f$. Similarly,  the $L^2_{l,\delta}$ norm on $(-\infty,-2]\times Y'$ is defined using $\tau':(-\infty,-2]\times Y' \to \R^{\geq 0}$ given by the magnitude of projection to the second factor. These definitions extend to sections of bundles in the obvious way. The $L^2_{l-1,\delta}$ norm of $d u$ over the ends $[0,2]\times [2,\infty)$ and $[0,2]\times (-\infty,-2]$ are also defined in a similar fashion. The proof of the above theorem will be given in Section \ref{reg-comp-exp-dec-sec} based on results of \cite{DFL:mix}.

Theorem \ref{exp-decay} can be used as a guide to define a configuration space where the mixed equation for the special quintuple is defined. Fix $\alpha\in \fC_G$ and $\beta \in \fC_S$. We assume that a connection on $E_\#$ representing $\alpha$ is fixed, and with a slight abuse of notation, we denote this connection by $\alpha$. Let $\bA(\alpha,\beta)$ be the space of all pairs $(A,u)$ which are in $L^2_{l,loc}$, satisfy the matching and Lagrangian conditions, satisfy $(i)$ and $(ii)$ of Theorem \ref{exp-decay} for the given $\alpha$ and $\beta$. Moreover, property $(iii)$ of Theorem \ref{exp-decay} is satisfied for some choice of $q$, $q'$ and their representatives $B$, $B'$ (which might vary from one element of $\bA(\alpha,\beta)$ to another one) with $\delta_0$ being replaced with a positive constant $\delta<\delta_0$, which will be fixed later. In particular, any element of $\bA(\alpha,\beta)$ has finite analytical energy.

Suppose $\mathcal G(V)$ is the space of all sections $g$ of $V\times_{ad} \SU(2)$ such that for an element $(A_0,u_0)\in \bA(\alpha,\beta)$, the 1-form $(\nabla_{A_0} g)g^{-1}$ is in $L^2_{l,loc}$ and its restriction to the end $(-\infty,-3]\times Y_\#$ is in $L^2_l$. Moreover, there are $\fg\in \mathcal G(E)$ and $\fg'\in \mathcal G(E')$ such that the 1-forms $(\nabla_{A_0} g)g^{-1}-(\nabla_{A_0} \fg)\fg^{-1}$ and $(\nabla_{A_0} g)g^{-1}-(\nabla_{A_0} \fg')\fg'^{-1}$ on $[2,\infty)\times Y$ and $(-\infty,-2]\times Y'$ are in $L^2_{l,\delta}$. Here we regard $\fg$ and $\fg'$ as gauge transformations over $[2,\infty)\times Y$ and $(-\infty,-2]\times Y'$ by pulling them back using the projection maps $\pi$ and $\pi'$. There is an obvious map $\ff: \mathcal G(V)\to \mathcal G(E)\times \mathcal G(E')$. The group $\mathcal G(V)$ acts on $\bA(\alpha,\beta)$ and the quotient space is denoted by $\bB(\alpha,\beta)$. We may use the Sobolev norms to topologize the space $\bB(\alpha,\beta)$ in the obvious way. In particular, if $[A_i,u_i]\in \bB(\alpha,\beta)$ is convergent to $[A_0,u_0]\in \bB(\alpha,\beta)$, then the points $q_i\in L(Y,E)$ and $q_i'\in L(Y',E')$ associated to $[A_i,u_i]$ are convergent to $q_0\in L(Y,E)$ and $q_0'\in L(Y',E')$ associated to $[A_0,u_0]$.

\begin{remark}
	Note that the space $\bB(\alpha,\beta)$ is essentially independent of the choice of a representative for $\alpha$ because any element of $\mathcal G(E_\#)$ and the involution $\iota$ can be extended into $V$.
\end{remark}

The spaces $\bA(\alpha,\beta)$ and $\bB(\alpha,\beta)$ are smooth infinite dimensional spaces. To state this claim in a more precise way, we need to introduce some Banach spaces.
\begin{definition}\label{eighted-sob}
	Let $\tau:X\to \R$ be a smooth function on $X$ whose restrictions to $(-\infty,-3]\times Y_\#$, $[2,\infty)\times Y$ and $(-\infty,-2]\times Y'$ are respectively equal to $0$, projection to the first factor and the magnitude of the projection to the first factor. 
	For a vector bundle $E$ on $X$, the weighted Sobolev space $L^2_{k,\delta}(X,E)$ is defined as the space of sections $s$ of $E$ such that $e^{\tau}s$ is in the Sobolev space $L^2_{k}(X,E)$. For a vector bundle $E$ over $U_+$, the weighted sobolev space 
	$L^2_{k,\delta}(U_+,E)$ is defined in a similar way. Thus, roughly speaking, an element of $L^2_{k,\delta}(U_+,E)$ is in $L^2_k$ and is required to have exponential decay along the ends $[0,2]\times [2,\infty)$ and $[0,2]\times (-\infty,-2]$.
\end{definition}
\begin{definition}\label{E-A-u}	
	Let $(A,u)\in \bA(\alpha,\beta)$ be a mixed pair which is asymptotic to $(B,q)$ and $(B',q')$ on the mixed ends associated to $(Y,E)$ and $(Y',E')$.
	Define $E^k_{(A,u)}$ as the space of all
	\[
	  (\zeta,\nu) \in L^2_{k,loc}(X,\Lambda^1\otimes V)\times L^2_{k,loc}(U_+,u^*T\mathcal M(\Sigma,F))
	\]
	such that 
	\begin{itemize}
		\item[(i)] $\zeta|_{(-\infty,-3]\times Y_\#}$ and $\nu\vert_{[3,\infty)\times [-1,1]}$ have finite $L^2_{k}$ norms.
		\item[(ii)] There are $b\in \mathcal H^1_h(Y;B)$ and $b'\in \mathcal H^1_{h'}(Y';B')$ such that 
		\[\zeta-\pi^*(b)|_{[2,\infty)\times Y}\hspace{1cm}
		\text{and}\hspace{1cm} \zeta-\pi^*(b')|_{(-\infty,-2]\times Y'}\] 
		have finite $L^2_{k,\delta}$ norms. Let $s$ and $s'$ be tangent vectors
		to $\mathcal M(\Sigma,F)$ at the points $q$ and $q'$ given by restriction of $b$ and 
		$b'$ to the boundary. Then 
		\[\nu-\pi^*(s)|_{[0,2]\times [2,\infty)}\hspace{1cm} \text{and} \hspace{1cm} \nu-\pi^*(s')|_{[0,2]\times (-\infty,-2]}\] 
		also have finite $L^2_{k,\delta}$ norms.
		\item[(iii)] $*\zeta\vert_{U_{\partial} \times \Sigma}=0$, $d_{A_\theta}\zeta_{\theta}=0$ and 
		$[\zeta_\theta]=\nu(0,\theta)$ where $A_\theta$ and $\zeta_\theta$ are restrictions of $A$ and $\zeta$ to 
		$\{(0,\theta)\}\times \Sigma\subset X$, and $[\zeta_\theta]$ is the element of $\mathcal H^1(\Sigma;A_\theta)$ represented by $\zeta_\theta$.
		\item[(iv)] $ \nu\vert_{\eta_+}\in u^*TL(Y,E)$, $\nu\vert_{\eta'_+}\in u^*TL(Y',E')$.
	\end{itemize}
\end{definition}

The proof of the following proposition is discussed in Subsection \ref{conf-mixed}.
\begin{prop}\label{conf-space-smooth}
	The space $\bA(\alpha,\beta)$ is a Banach manifold and $\mathcal G(V)$ is a Banach Lie group 
	which acts smoothly on $\bA(\alpha,\beta)$, and the stabilizer of any element of $\bA(\alpha,\beta)$ is $\pm 1$. 
	The quotient space $\bB(\alpha,\beta)$ is also a 
	Banach manifold. 
	Let $(A,u)\in \bA(\alpha,\beta)$ be a mixed pair which is asymptotic to $(B,q)$ and $(B',q')$ on the mixed ends associated to $(Y,E)$ and $(Y',E')$.
	Then the tangent space to the point $[A,u]$ of $\bB(\alpha,\beta)$ can be identified with the 
	kernel of the surjective operator
	\[d_A^*:E^l_{(A,u)}\to L^2_{l-1,\delta}(X,V).\]
\end{prop}

For any element $[A,u]$ of the configuration space $\bB(\alpha,\beta)$, define the {\it topological energy} of $[A,u]$ as
\begin{equation*}\label{top-energy}
	\mathcal E(A,u):=\frac{1}{8\pi^2}\int_X\tr\((F_A+*_3\nabla_{A_t}h+*_3\nabla_{A_t'}h')\wedge (F_A+*_3\nabla_{A_t}h+*_3\nabla_{A_t'}h')\)
	+\frac{1}{4\pi^2}\int_{U_+}u^*\Omega.
\end{equation*}
Note that if $[A,u]$ satisfy the mixed equation in \eqref{mixed-eq}, then $\fE(A,u)=8\pi^2\mathcal E(A,u)$. Thus, the topological energy is non-negative for the solutions of \eqref{mixed-eq}. As it is justified by the following lemma, topological energy can be regarded as a soft variation of analytical energy. 

\begin{lemma}\label{top-energy-constant}
	The topological energy $\mathcal E(A,u)$ depends only on the connected component of $\bB(\alpha,\beta)$ that
	contains $(A,u)$.
\end{lemma}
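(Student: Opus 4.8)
The plan is to show that $\mathcal E(A,u)$ is locally constant on $\bB(\alpha,\beta)$ by exhibiting it as a sum of boundary terms each of which depends only on the asymptotic data (which is fixed on a connected component) plus a term that is manifestly a homotopy invariant. First I would apply Stokes' theorem to the two pieces of $\mathcal E(A,u)$ separately. For the gauge-theoretic piece, using the identity \eqref{rewrite-top-energy} in the form adapted to $X$, the integral $\frac{1}{8\pi^2}\int_X \tr(F_A\wedge F_A)$ can be rewritten via the Chern--Simons function as a difference of boundary contributions over the ends of $X$, namely the gauge-theoretic end $(-\infty,-3]\times Y_\#$, the mixed ends $[2,\infty)\times Y$ and $(-\infty,-2]\times Y'$, together with the perturbation-term corrections $h(\alpha)+h'(\alpha)$, $h(q)$, $h'(q')$ supplied by the formal gradients. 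For the symplectic piece, $\frac{1}{4\pi^2}\int_{U_+}u^*\Omega$ is, since $u^*\Omega = d(u^*\lambda)$ locally is not available globally, handled instead by the fact that for two maps $u_0,u_1$ in the same component joined by a homotopy $U$ satisfying the asymptotic and Lagrangian boundary conditions, $\int_{U_+}u_1^*\Omega - \int_{U_+}u_0^*\Omega = \int_{\partial} U^*\Omega$, and the boundary contributions along the Lagrangian boundary components $\eta_+$, $\eta_+'$ vanish because $\Omega$ restricts to zero on $L(Y,E)$ and $L(Y',E')$, while the contributions along the common boundary $U_\partial$ and the constant-end faces are controlled by the matching condition and the fact that $u$ is constant (equal to $\beta$, or to $q$, $q'$) on the ends.

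The key step is to reconcile the two pieces along the interface $U_\partial\times\Sigma$. There, the matching condition forces $A|_{\{(0,\theta)\}\times\Sigma}$ to be a flat connection representing $u(0,\theta)$, and the standard relation between the symplectic area swept by a path of flat connections on $\Sigma$ and the Chern--Simons-type boundary term of the ASD functional (this is exactly the computation underlying the mixed equation in \cite{DFL:mix, Max:GU-comp}) shows that the $U_\partial\times\Sigma$ contribution from the gauge side cancels against the $U_\partial$ contribution from the symplectic side. After this cancellation one is left with: a term depending only on $\alpha$ (through the $\rho$-invariant/Chern--Simons value and $h(\alpha)+h'(\alpha)$), a term depending only on $\beta$, and terms depending only on $q\in L(Y,E)$ and $q'\in L(Y',E')$; but $q$ and $q'$ vary continuously along a path in $\bB(\alpha,\beta)$ (as remarked just before Definition \ref{eighted-sob}) while taking values in the \emph{compact} Lagrangians, and the relevant functionals of $q$, $q'$ — Chern--Simons values plus the cylinder functions $h$, $h'$ — are continuous and $\Z$- or $c\mu$-valued modulo the monotonicity relation, hence locally constant. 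One then concludes that $\mathcal E$ is continuous and takes values in a discrete set along paths, so it is constant on connected components.

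A cleaner route, which I would actually carry out, is to differentiate directly: given a smooth path $(A_s,u_s)$ in $\bB(\alpha,\beta)$ with $s\in[0,1]$, compute $\frac{d}{ds}\mathcal E(A_s,u_s)$. The derivative of the Chern--Weil/area integrand is an exact form by the usual transgression formula, so $\frac{d}{ds}\mathcal E$ reduces to boundary integrals over the ends and over the Lagrangian boundary components. On the gauge-theoretic end the asymptotic connection is fixed equal to $\alpha$ and $\partial_s A_s\to 0$ exponentially (property (i) of Theorem \ref{exp-decay}), so that contribution vanishes; similarly the symplectic end contributes nothing since $u_s\to\beta$ with $d u_s$ in $L^2_{l-1}$ and $\partial_s u_s$ decaying. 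The mixed-end contributions telescope into $\frac{d}{ds}$ of boundary functionals at $(B_s,q_s)$ and $(B'_s,q'_s)$, and the Lagrangian boundary contribution vanishes because $\partial_s u_s|_{\eta_+}\in TL(Y,E)$ and $\Omega$ vanishes on $TL(Y,E)$ (likewise for $\eta_+'$); the $U_\partial\times\Sigma$ interface terms cancel by the matching-condition computation above. What remains is $\frac{d}{ds}$ of a function of $q_s$, $q'_s$ alone, and since $q_s$, $q'_s$ trace paths in the compact Lagrangians on which this function is constant (it is, up to the additive cylinder-function correction, the symplectic action, which is locally constant on a Lagrangian), the total derivative is zero.

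\emph{Main obstacle.} The delicate point is the cancellation at the interface $U_\partial\times\Sigma$: one must carefully match the sign conventions and the normalization $\tr=-\tfrac12\langle\cdot,\cdot\rangle$ between the $4$-dimensional Chern--Simons boundary term and the $2$-dimensional symplectic form $\Omega$ of \eqref{symplectic-forms}, and verify that the matching condition (flatness of $A$ on each $\{(0,\theta)\}\times\Sigma$ representing $u(0,\theta)$) makes the two agree up to the exact term that integrates to zero over the closed slices $\Sigma$. This is precisely the computation that motivates the definition of the mixed equation, so it should follow from \cite{DFL:mix}, but spelling it out with the conventions of the present paper is where the real work lies; the decay statements from Theorem \ref{exp-decay} are needed to justify dropping all the end contributions and to make every boundary integral absolutely convergent.
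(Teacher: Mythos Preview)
Your ``cleaner route'' is exactly the paper's approach: differentiate $\mathcal E$ along a smooth path $(A_s,u_s)$ and show the derivative vanishes by reducing to boundary terms that either decay, vanish on the Lagrangians, or cancel across $U_\partial\times\Sigma$ via the matching condition. You have correctly identified the interface cancellation as the heart of the computation, and the paper carries this out in detail, decomposing the gauge side over $\R\times Y_0$, $\R\times(-Y_0')$, and $U_-\times\Sigma$ and reducing each to $\int_{U_\partial\times\Sigma}\tr(F_A\wedge\zeta)$, which then cancels against the $U_\partial$ term $-\int_{U_\partial}\iota_\nu u^*\Omega$ from Stokes on the symplectic side.

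One point where your write-up is looser than the paper: the vanishing of the mixed-end contributions. You phrase it as ``$\frac{d}{ds}$ of a function of $q_s$ which is locally constant on the Lagrangian (the symplectic action).'' This is morally right but unnecessarily indirect, and the phrase ``symplectic action of a point on a Lagrangian'' is not well-defined without further choices. The paper's argument is simpler and more robust: on the gauge side, the $t\to+\infty$ boundary term on $\R\times Y_0$ is $\int_{Y_0}\tr\bigl((F_{A_t}+*_3\nabla_{A_t}h)\wedge\zeta_t\bigr)$, and as $t\to\infty$ one has $A_t\to B$ with $F_B+*_3\nabla_Bh=0$ by the very definition of $L(Y,E)$, so the integrand vanishes. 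On the symplectic side there is nothing to do: the mixed ends of $U_+$ are not boundary components, and the exponential decay $du\in L^2_{l-1,\delta}$ already kills any contribution at infinity when applying Stokes. So you do not need to invoke any action functional at all---the defining equation of the 3-manifold Lagrangian does the work.
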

\begin{proof}
	It suffices to show that for a $1$-parameter family $(A(s),u(s))$ of elements of $\bA(\alpha,\beta)$, depending smoothly on $s$, $\mathcal E(A(s),u(s))$
	is constant with respect to $s$. Since this is equivalent to vanishing of $\frac{d\mathcal E(A(s),u(s))}{ds}$,
	the claim follows if the expression 
	\begin{align}\label{top-energy-change}
	  \int_X\tr\((F_A+*_3\nabla_{A_t}h+*_3\nabla_{A_t'}h')\wedge (d_A\zeta+*_3{\rm Hess}_{A_t}h(\zeta_t)+\right.&\left.*_3{\rm Hess}_{A_t'}h'(\zeta_t')\)+ \hspace{2cm}\nonumber\\ 
	  &\hspace{1cm}+\int_{U_+}d \iota_\nu u^*\Omega
	\end{align}
	vanishes for any $(A,u)\in \bA(\alpha,\beta)$ and $(\zeta,\nu) \in E^l_{(A,u)}$. Here
	$\zeta_t$ and $\zeta_t'$ are respectively restrictions of $\zeta$ to $\{t\}\times Y_0$ and $\{t\}\times -Y'_0$, and 
	$d \iota_\nu u^*\Omega$ is the exterior derivative of the 1-form
	\[
	  \Omega(\nu,\frac{\partial u}{\partial s})ds+\Omega(\nu,\frac{\partial u}{\partial \theta})d\theta.
	\]
	Without loss of generality,  in the following we may assume that the restrictions of 
	$A$ to $\R\times Y_0$ and $\R\times Y_0'$ are in temporal gauge.

	We start by analyzing the first integral of \eqref{top-energy-change} over the sub-manifold $\R\times Y_0$ of $X$. 
	Note that $h'$ vanishes on this space.
	Therefore, the integrand over  $\R\times Y_0$ can be simplified to
	\[
	  \tr\(F_A \wedge d_A\zeta+ dt\wedge \frac{dA_t}{dt}\wedge *_3{\rm Hess}_{A_t}h(\zeta_t)+
	  *_3\nabla_{A_t}h\wedge dt \wedge \frac{d\zeta_t}{dt}\).
	\]
	By applying Stokes theorem, Bianchi identity and Lemma \ref{H-prop}, 
	the integral of the above expression over $\R\times Y_0$ is equal to
	\begin{align*}
	  \int_{\R\times Y_0} d\tr(F_A \wedge \zeta&)+\int_\R dt \(\int_{Y_0} \tr(*_3{\rm Hess}_{A_t}h( \frac{dA_t}{dt})\wedge \zeta_t+
	  *_3\nabla_{A_t}h \wedge \frac{d\zeta_t}{dt})\)=\hspace{3cm} \\
	  \hspace{2.5cm}& =\int_{\R\times \partial Y_0} \tr(F_A \wedge \zeta)+\lim_{t\to \infty}\int_{Y_0} \tr(F_{A_t} \wedge \zeta_t)+\int_\R dt \frac{d}{dt} \(\int_{Y_0} 
	  \tr(*_3\nabla_{A_t}h \wedge \zeta_t)\)\\
	  &=\int_{\R\times \partial Y_0} \tr(F_A \wedge \zeta)+\lim_{t\to \infty}\int_{Y_0} \tr((F_{A_t}+*_3\nabla_{A_t}h) \wedge \zeta_t).
	\end{align*}
	Note that we did not include the integrals of $\tr(F_{A_t} \wedge \zeta_t)$ and $\tr(*_3\nabla_{A_t}h \wedge \zeta_t)$ over $Y_0$ as $t\to -\infty$ in the second and the third identities because of the decay of $\zeta_t$ on the gauge theoretical end. 
	As $t\to \infty$, $A_t$ and $\zeta_t$ are convergent to $B$ and an element of $\mathcal H^1_h(Y;B)$. In particular, the integral of $\tr((F_{A_t}+*_3\nabla_{A_t}h) \wedge \zeta_t)$ over $Y_0$ as $t\to \infty$ is trivial.
	Consequently, the contribution of $ \R \times Y_0$ to \eqref{top-energy-change} equals the integral of $\tr(F_A \wedge \zeta)$ over 
	$ \R \times \partial Y_0$. A similar claim holds about $ \R \times -Y'_0$.

	The first integrand of \eqref{top-energy-change} over $U_-\times \Sigma$ simplifies to $\tr(F_A\wedge d_A\zeta)$. Thus, by Stokes theorem  
	and Bianchi identity this integral is equal to:
	\[
	  \int_{\partial (U_-\times \Sigma)} \tr(F_A\wedge \zeta).
	\]
	Assuming that the restriction of $A$ to $U_\partial\times \Sigma$ has the from $A_\theta+\phi ds+\psi d\theta$, we can summarize our simplifications as 
	\begin{align}
	   \int_X\tr\left((F_A+*_3\nabla_{A_t}h+*_3\nabla_{A_t'}h')\wedge \right. &(d_A\zeta+*_3{\rm Hess}_{A_t}h(\zeta_t)+
	  \left.*_3{\rm Hess}_{A_t'}h'(\zeta_t')\right)=
	    \int_{U_\partial \times \Sigma} \tr(F_A\wedge \zeta)\nonumber\\
	    &=\int_{-\infty}^\infty d\theta \(\int_\Sigma \tr(\partial_\theta {A_\theta} \wedge \zeta_\theta)-\tr(d_{A_\theta} \psi \wedge \zeta_\theta)\)\nonumber\\
	    &=\int_{-\infty}^\infty d\theta \int_\Sigma \tr(\partial_\theta {A_\theta} \wedge \zeta_\theta)\label{gauge-theory-exp}
	\end{align}
	The second identity is a consequence of the Stokes theorem and the assumption $d_{A_\theta} \zeta_\theta=0$. Another application of the Stokes theorem also shows that the second integral in \eqref{top-energy-change} can be simplified to
	\begin{equation}\label{symp-theory-exp}
	   -\int_{U_\partial}\iota_\nu u^*\Omega=-\int_{-\infty}^\infty  d\theta \int_{\Sigma}\tr(\frac{\partial u}{\partial \theta}\wedge \nu(0,\theta)).
	\end{equation}
	Using the matching conditions for $(A,u)$ and $(\zeta,\nu)$, the expressions \eqref{gauge-theory-exp} and \eqref{symp-theory-exp} cancel out each other and \eqref{top-energy-change} vanishes.
\end{proof}

In fact, Lemma \ref{top-energy-constant} can be strengthened as follows. The proof of the this lemma will be given in Subsection \ref{ind-mixed-op-subs}.
\begin{lemma}\label{top-energy-constant-reverse}
	If $[A,u],[A',u']\in \bB(\alpha,\beta)$, then $2(\mathcal E(A,u)-\mathcal E(A',u'))$ is an integer. Moreover, if $\mathcal E(A,u)=\mathcal E(A',u')$,
	then $[A,u]$ and $[A',u']$ belong to the same connected component of $\bB(\alpha,\beta)$.
\end{lemma}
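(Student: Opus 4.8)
The plan is to deduce both statements from an index formula for the linearized mixed operator, which reduces everything to a divisibility property. First I would prove the mixed analogue of Proposition~\ref{top-energy-index-gauge}: for every $(A,u)\in\bA(\alpha,\beta)$ one has
\[
  \ind(\mathcal D_{(A,u)}) = 8\,\mathcal E(A,u) + c(\alpha) + c'(\beta) + c'',
\]
where $c(\alpha)$ depends only on $\alpha$ (through the $\rho$-invariant, exactly as in the proof of Proposition~\ref{top-energy-index-gauge}), $c'(\beta)$ depends only on $\beta$ (the Maslov-type boundary contribution of Oh's index formula at the symplectic end), and $c''$ is a constant once the connected components of $L(Y,E)$ and $L(Y',E')$ meeting the mixed ends are fixed. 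This is the specialization to the quintuple $\fq_{s}$ of the index computation for the linearized mixed operator carried out in \cite{DFL:mix}: decompose $X$ and $U_+$ along the matching hypersurface $U_\partial\times\Sigma$ and along the necks of the cylindrical ends, apply the ASD index formula on the gauge-theoretic side, Oh's index formula on the symplectic side, and check that the neck contributions cancel against each other by the matching condition (the same cancellation that appears in the proof of Lemma~\ref{top-energy-constant}); the identity $\fE(A,u)=8\pi^2\mathcal E(A,u)$ on solutions then gives the displayed formula.

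Granting this, the first assertion is quick. From the formula,
\[
  8\bigl(\mathcal E(A,u)-\mathcal E(A',u')\bigr)=\ind(\mathcal D_{(A,u)})-\ind(\mathcal D_{(A',u')}),
\]
so it suffices to show the index difference is divisible by $4$. By additivity of the index under gluing, this difference is a sum of contributions of two kinds: gluing an anti-self-dual instanton of charge $k$ on $S^4$ into $X$ changes the index by $8k$; gluing a holomorphic sphere $s\in\pi_2(\mathcal M(\Sigma,F))$ into the map $u$ changes the index by $2\langle c_1(\mathcal M(\Sigma,F)),s\rangle$. Since $\mathcal M(\Sigma,F)$ is simply connected and the $3$-manifold Lagrangians have minimal Maslov number $4$ (Proposition~\ref{monotone-lag}), the class $c_1(\mathcal M(\Sigma,F))$ is even on $\pi_2$, so $2\langle c_1,s\rangle\in 4\Z$; and because $\iota$ extends over $V$ there is no residual half-integral ambiguity from the two lifts of $\alpha$. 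Hence the index difference lies in $4\Z$, and $2(\mathcal E(A,u)-\mathcal E(A',u'))\in\Z$.

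For the converse, suppose $\mathcal E(A,u)=\mathcal E(A',u')$. By Lemma~\ref{top-energy-constant} any path inside $\bA(\alpha,\beta)$ fixes the topological energy, so I may first move $[A',u']$ within its connected component until its asymptotics on all four ends agree with those of $[A,u]$: the limits $\alpha$ and $\beta$ are matched immediately (using that $\mathcal G(E_\#)$ and $\iota$ extend over $V$, and that the limit at the symplectic end is constant), and the mixed-end limits are matched by dragging the points of $L(Y,E)$ and $L(Y',E')$ along paths in those Lagrangians together with the corresponding flat connections. With all asymptotics equal, $[A,u]$ differs from $[A',u']$ only by gluing instantons on $S^4$ into $A$ and holomorphic spheres into $u$. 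These operations are not independent: by the description in Subsection~\ref{flat-surface} of the generator of $\pi_2(\mathcal M(\Sigma,F))$ as the symplectic trace of a fibrewise-flat connection of charge $\tfrac12$ over $D^2\times\Sigma$, a bubble on the symplectic side can be pushed through the matching hypersurface $U_\partial\times\Sigma$ and converted into an instanton on the gauge side at the energy-preserving rate. After recording all such relations, $\pi_0(\bB(\alpha,\beta))$ becomes a torsor over a finitely generated abelian group on which $\mathcal E$ induces an injective homomorphism to $\R$, so equality of energies forces $[A,u]$ and $[A',u']$ into the same component.

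The main obstacle is the converse, specifically the claim that the topological energy separates the connected components of $\bB(\alpha,\beta)$ — equivalently, that instanton charge on the gauge side and holomorphic-sphere classes on the symplectic side are tied together across the matching condition exactly as dictated by energy. This is an adiabatic-limit phenomenon, and making it precise (rather than the soft index bookkeeping of the first two steps) is where the geometry of the special quintuple $\fq_{s}$, and in particular the structure of the neck $U_\partial\times\Sigma$, must be used in an essential way.
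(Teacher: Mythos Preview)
Your plan takes a long detour around what in the paper is a two--line reduction. The paper does not touch the index formula at all here: it invokes the mixed shifting construction (Lemma~\ref{part-symp-constant}) to homotope each of $[A,u]$ and $[A',u']$ to a \emph{symplectically constant} pair, after which both are obtained by gluing a constant pair $(A_\beta,u_\beta)$ to connections $A_G,A_G'$ on $\R\times Y_\#$. The statement then collapses to the standard classification of components of $\mathcal B_G(\alpha,\beta)$ modulo $\iota$: the energy difference is a half--integer, and equal energies give the same path class, hence a path between the original mixed pairs. The ``pushing a bubble through $U_\partial\times\Sigma$'' that you flag as the main obstacle is precisely this mixed shifting move, already packaged in Lemma~\ref{part-symp-constant}.

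Your route, by contrast, has real gaps. First, there is a circularity: you invoke Proposition~\ref{monotone-lag} (minimal Maslov number $4$) to get the divisibility of the index difference, but in this paper that proposition is deduced from Corollary~\ref{ind-loop-four}, which in turn relies on the very lemma you are proving. You could break the circle by citing the classical computation of $c_1(T\mathcal M(\Sigma,F))$ directly, but you should say so. Second, the assertion that any two elements of $\bB(\alpha,\beta)$ differ by gluing $S^4$--instantons and holomorphic spheres is exactly a description of $\pi_0(\bB(\alpha,\beta))$, which is what the second half of the lemma asks for; you are assuming part of the conclusion to prove the first half. (And even then your list is incomplete: the Lagrangian boundary components $\eta_+,\eta_+'$ mean disc classes in $\pi_2(\mathcal M(\Sigma,F),L)$ can contribute, not just spheres.) Finally, you yourself note that the converse is where the real work lies and leave it as an obstacle; the paper's point is that mixed shifting removes that obstacle entirely by reducing to the gauge side, where $\pi_0$ is already understood.
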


Let $\bM(\alpha,\beta)$ be the subspace of $\bB(\alpha,\beta)$ given by the solutions of \eqref{mixed-eq}. The local behavior of this moduli space around a solution $(A,u)$ is governed by the {\it linearization} of the mixed equation. Define a linear operator 
\[
  L_{A,u}:E^l_{(A,u)} \to L^2_{l-1,\delta}(X,\Lambda^+\otimes V)\oplus L^2_{l-1,\delta}(U_+,u^*T\mathcal M(\Sigma,F)),
\]
as 
\begin{equation}\label{Linearized}
	L_{(A,u)}(\zeta,\nu):=(d_A^+\zeta+(*_3{\rm Hess}_{A_t}h(\zeta_t))^+
	+(*_3{\rm Hess}_{A_t'}h'(\zeta_t'))^+,\mathcal D_u(\nu)), 
\end{equation}	
where $\zeta_t$ (resp. $\zeta_t'$) is the restriction of $\zeta$ to $\{t\} \times Y_0$ (resp. $\{t\} \times -Y_0'$), and analogous to  \eqref{linear-CR}, $\mathcal D_u$ is the linearization of the Cauchy-Riemann operator 
\[
  \mathcal D_u(\nu):=\nabla_\theta \nu-J_{s,\theta}(u)\nabla_{s}\nu-(\nabla_\nu J_{s,\theta})\frac{du}{ds},
\]
with $\nabla$ being the Levi-Civita connection again. Using Proposition \ref{conf-space-smooth}, the linearization of the mixed equation is given by the restriction of $L_{(A,u)}$ to the kernel of the operator $d_A^*$. It is more convenient to combine these operators 
and define the {\it mixed operator}
\[\mathcal D_{(A,u)}:E^l_{(A,u)} \to L^2_{l-1,\delta}(X,V)\oplus L^2_{l-1,\delta}(X,\Lambda^+\otimes V)\oplus L^2_{l-1,\delta}(U_+,u^*T\mathcal M(\Sigma,F))\]
as $\mathcal D_{(A,u)}:=(d_A^*,L_{(A,u)})$. Since the operator $d_A^*$ is surjective, the kernels and co-kernels of the linearization of the mixed equation and $\mathcal D_{(A,u)}$ can be identified with each other.
\begin{definition}\label{reg}
	An element $[A,u]\in \bB(\alpha,\beta)$ is {\it regular} if $\mathcal D_{(A,u)}$ is surjective.
\end{definition}

The proof of the following proposition will be given in Subsection \ref{fred-adjoint}:
\begin{prop}\label{fred-prop-main-thm}
	After possibly decreasing the constant $\delta_0$ of Theorem \ref{exp-decay}, the following claim holds for any $\delta<\delta_0$. Suppose $\bA(\alpha,\beta)$ is defined using $\delta$ and $[A,u]$ is a smooth element of $\bA(\alpha,\beta)$ 
	that satisfies property (iii) of Theorem \ref{exp-decay}. 
	Then the operator $\mathcal D_{(A,u)}$ is Fredholm. 
	If $(A,u)$ represents a regular point of the mixed moduli space $\bM(\alpha,\beta)$, then $\bM(\alpha,\beta)$ 
	is a smooth manifold of dimension $\ind(\mathcal D_{(A,u)})$ in a neighborhood of $[A,u]$.
\end{prop}
\noindent
For the rest of this section, we assume that the constant $\delta$ used in the definition of $\bA(\alpha,\beta)$ is given by Proposition \ref{fred-prop-main-thm}.

To each $\alpha\in  \fC_S\cong \fC_G$, we can associate an element of the moduli space ${\bf M}(\alpha,\alpha)$ as we explain now. Fix a representative connection for $\alpha$ which is also denoted by $\alpha$. By definition there are connections $B$ and $B'$ on $(Y,E)$ and $(Y',E')$ which satisfy the equations $\phi_h(B)=0$ and $\phi_{h'}(B')=0$ of Subsection \ref{3-man-lag-sec}, and the restrictions of $B$ and $B'$ to collar neighborhoods of the boundaries of $Y$ and $Y'$ are given by the pullbacks of $\alpha$. Then the pullbacks of $B$ to $\R\times Y_0$, $B'$ to $\R\times Y_0'$ and $\alpha$ to $\Sigma\times U_-$ give rise to a connection $A_\alpha$ on $X$. We also define $u_\alpha:U_+\to \mathcal M(\Sigma,F)$ to be the constant map given by $\alpha$. The pair $(A_\alpha,u_\alpha)$, which is called the {\it constant pair}, clearly satisfies the mixed equation in \eqref{mixed-eq} and hence it represents an element of the moduli space ${\bf M}(\alpha,\alpha)$. Notice that the topological energy of any constant pair is zero. In fact, constant pairs are characterized as solutions of \eqref{mixed-eq} with vanishing topological energy (or equivalently analytical energy). The proof of the following proposition will be given in Subsection \ref{cons-map-ind-subs}.

\begin{prop}\label{cons-map-ind}
	The index of the mixed operator $\mathcal D_{(A_\alpha,u_\alpha)}$ is $0$. Moreover, the kernel 
	and the cokernel of the operator $\mathcal D_{(A_\alpha,u_\alpha)}$ are trivial.
\end{prop}

The following proposition generalizes the index computation of Proposition \ref{cons-map-ind} to the case of arbitrary mixed pairs.
\begin{prop} \label{ind-mixed-op}
	The index of the mixed  operator associated to a mixed pair $(A,u)\in \bA(\alpha,\beta)$ satisfies
	\begin{equation}\label{ind-mixed-op-formula}
	  \ind(\mathcal D_{(A,u)})=8\mathcal E(A,u)+\epsilon(\beta)-\epsilon(\alpha),
	\end{equation}
	where the constants $\epsilon(\alpha)$ and $\epsilon (\beta)$ are given by Proposition \ref{top-energy-index-gauge}.
\end{prop}

The proof of the above proposition will be given in Subsection \ref{ind-mixed-op-subs}. Notice that the first part of Proposition \ref{cons-map-ind} is a special case of this proposition. However, the proof of Proposition \ref{ind-mixed-op} relies on Proposition \ref{cons-map-ind} as an essential input. The other input is the {\it mixed shifting operator}, which is introduced in Subsection \ref{mixed-shit-op}.

For any integer $d$, let $\bM(\alpha,\beta)_d$ denote the subspace of $\bM(\alpha,\beta)$ consisting of index $d$ solutions. 
Suppose $(A,u)$ represents an element of $\bM(\alpha,\beta)_0$. Since the topological energy of $(A,u)$ is non-negative, the index formula implies that $\epsilon(\alpha)\geq \epsilon(\beta)$, and the equality holds if and only if $\alpha=\beta$. The latter claim holds because any element of $\bM(\alpha,\beta)$ with vanishing topological energy is a constant pair. In summary, $\bM(\alpha,\beta)_0$ is non-empty only if $\epsilon(\alpha)> \epsilon(\beta)$ or $\alpha=\beta$. In the latter case, there is exactly one element in $\bM(\alpha,\alpha)_0$ which is regular. 

We have shown that the constant solutions of the mixed solution are regular. However, not all elements of $\bM(\alpha,\beta)$ are regular. In Section \ref{pert-sect}, we introduce perturbations  of mixed equation by deforming the family of almost complex structures $\{J_{(s,\theta)}\}_{(s,\theta)\in U_+}$ and adding a term to the ASD equation:
\begin{equation}\label{mixed-eq-pert}
	\left\{
	\begin{array}{l}
		F^+_A+(*_3\nabla_{A_t}h)^++(*_3\nabla_{A_t'}h')^++\eta(A)=0\\
		\frac{\partial u}{\partial s}+ J_{(s,\theta)}\frac{\partial u}{\partial \theta}=0
	\end{array}
	\right.
\end{equation}
Here the term $\eta(A)$ is invariant with respect to the action of $\mathcal G(V)$. Thus, the solutions of the above equations determine a subspace of $\bB(\alpha,\beta)$, denoted by $\bM_{\eta}(\alpha,\beta)$. 

\begin{prop}\label{pert}
	There are secondary perturbations of the mixed equation satisfying the following properties.
	\begin{itemize}
		\item[(i)] There is a compact subset $K_-\subset U_-$ away from 
		the matching line $U_\partial$ such that $\eta(A)$ 
		depends on $A|_{K_-}$ and is supported in $K_-$. 
		There is a compact subset $K_+\subset U_+$ away from 
		the matching line $U_\partial$ such that the deformation of $J_{(s,\theta)}$ is trivial on the complement of $K_+$.
		In particular, the deformed almost complex structure agrees with the standard complex structure $J_*$ in a neighborhood of the matching line.
		\item[(ii)] The moduli spaces with expected dimension at most $3$ are regular.
		\item[(iii)] Any element of $\bM_{\eta}(\alpha,\beta)$ has non-negative topological energy. The moduli space $\bM_{\eta}(\alpha,\beta)_0$ is non-empty only if $\epsilon(\alpha)>\epsilon(\beta)$
		or $\alpha=\beta$. Moreover, $\bM_{\eta}(\alpha,\alpha)_0$ consists of only one element for each $\alpha\in \fC_S$.
	\end{itemize}
\end{prop}

This proposition is proved in Section \ref{pert-sect} after introducing an appropriate family of perturbation terms $\eta$. For any solution $(A,u)$ of \eqref{mixed-eq-pert} we have
\begin{equation}\label{rel-mixed-top-ana-energy}
  \fE(A,u)=8\pi^2\mathcal E(A,u)+2\vert\!\vert\eta(A)\vert\!\vert_{L^2(X)}^2.
\end{equation}

\begin{prop}\label{orientation}
	Let $\eta$ be given by Proposition \ref{pert}. Then the moduli spaces 
	$\bM_\eta(\alpha,\beta)_d$ with $d\leq 3$ are orientable $d$-dimensional manifolds.
\end{prop}

\begin{prop}\label{comp}
	The perturbation $\eta$ in Proposition \ref{pert} can be chosen such that the following holds.
	\begin{itemize}
		\item[(i)] The moduli spaces of the form $\bM_\eta(\alpha,\beta)_0$ are compact.
		\item[(ii)] The moduli spaces of the form $\bM_\eta(\alpha,\beta)_1$ 
		can be compactified into compact 1-manifolds by
		adding points in correspondence to the $0$-dimensional spaces
		\begin{equation}\label{ends-1dim}
		\bM_\eta(\alpha,\gamma)_0\times\breve \rM_{S}(\gamma,\beta)_p,\hspace{1cm}
		\breve \rM_{G}(\alpha, \gamma)_p \times \bM_\eta(\gamma,\beta)_0,
		\end{equation}
		where $\gamma\in \fC_G\cong \fC_S$, and in both cases $p$ denotes a path of index $1$.
	\end{itemize}
	Moreover, the orientations of the moduli spaces $\bM_\eta(\alpha,\beta)_d$ with $d\leq 1$ provided
	by Proposition \ref{orientation} can be chosen such that the induced orientation on the boundary components
	of the compactified moduli space
	$\bM_\eta(\alpha,\beta)_1$  (using outward-normal-first convention) agree with the product orientation on the first term in \eqref{ends-1dim}
	and disagrees with the induced orientation on the second term in \eqref{ends-1dim}.
\end{prop}

The essential step in the orientability of mixed moduli spaces is discussed in Subsection \ref{subsection-ori}. The proof of the compactness claims in Proposition \ref{comp} uses results of \cite{DFL:mix}, and is given in Subsection \ref{comp-sect}. The rest of the above two propositions is verified in Section \ref{pert-sect}.

We define a homomorphism $\bN:C_{G}(Y_\#,E_\#) \to C_S((Y,E),(Y',E'))$ using $0$-dimensional moduli spaces $\bM_\eta(\alpha,\beta)_0$. First we pick the perturbation $\eta$ such that the claims of Propositions \ref{pert} and \ref{comp} hold. Then define
\begin{equation}
	\bN(\alpha):=\sum_{\beta\in \fC_S}\#\bM_\eta(\alpha,\beta)_0 \cdot \beta,
\end{equation}
where $\#\bM_\eta(\alpha,\beta)_0$ denotes the signed count of the points in the $0$-dimensional moduli space $\bM_\eta(\alpha,\beta)_0$. Our main theorem is a consequence of the following result.

\begin{theorem} \label{main-thm-detailed}
	The map $\bN$ is an isomorphism and a chain map.
\end{theorem}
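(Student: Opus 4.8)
The plan is to verify the two assertions separately: first that $\bN$ is a chain map, then that it is an isomorphism. The chain map property will follow from a standard gluing-and-degeneration argument applied to the one-dimensional moduli spaces $\bM_\eta(\alpha,\beta)_1$. By Proposition \ref{comp}(ii), these spaces compactify to compact $1$-manifolds whose boundary is the disjoint union of $\bM_\eta(\alpha,\gamma)_0\times\breve\rM_S(\gamma,\beta)_p$ and $\breve\rM_G(\alpha,\gamma)_p\times\bM_\eta(\gamma,\beta)_0$, with the signed-count identity on oriented boundaries recorded in the same proposition. Counting boundary points with sign gives $0=\sum_\gamma \#\bM_\eta(\alpha,\gamma)_0\cdot\#\breve\rM_S(\gamma,\beta)_p - \sum_\gamma\#\breve\rM_G(\alpha,\gamma)_p\cdot\#\bM_\eta(\gamma,\beta)_0$, which is exactly the coefficient of $\beta$ in $(d_S\circ\bN - \bN\circ d)(\alpha)$. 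Hence $d_S\bN=\bN d$.

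For the isomorphism statement I would exploit the energy filtration. By Proposition \ref{top-energy-index-gauge} (and its mixed analogue, Proposition \ref{ind-mixed-op}), the index of $\mathcal D_{(A,u)}$ for a pair in $\bM_\eta(\alpha,\beta)_0$ is $8\mathcal E(A,u)+\epsilon(\beta)-\epsilon(\alpha)=0$, so $\mathcal E(A,u)=\frac{1}{8}(\epsilon(\alpha)-\epsilon(\beta))\ge 0$. Since topological energy is non-negative on solutions and vanishes only for constant pairs, $\bM_\eta(\alpha,\beta)_0$ is empty unless $\epsilon(\alpha)>\epsilon(\beta)$ or $\alpha=\beta$, and in the equality case $\bM_\eta(\alpha,\alpha)_0$ is a single regular point (Proposition \ref{pert}(iii)), contributing $\pm 1$ to the coefficient of $\alpha$ in $\bN(\alpha)$. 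Ordering the generators $\fC_G\cong\fC_S$ by decreasing value of $\epsilon$, the matrix of $\bN$ in this basis is therefore upper-triangular with $\pm 1$ on the diagonal, hence invertible over $\Z$. (A minor point: the $\epsilon(\alpha)$ are real numbers, possibly with coincidences; one can group the generators into level sets of $\epsilon$, observe that $\bN$ restricted to a single level set is $\pm\id$, and that off-diagonal entries only go from higher to strictly lower levels — this still gives a block-triangular unipotent-up-to-sign matrix, so invertibility holds.) This argument is independent of, and logically complementary to, the chain-map argument.

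The main obstacle I expect is organizing the orientation bookkeeping so that the boundary-orientation signs in Proposition \ref{comp} really do produce $d_S\bN-\bN d=0$ rather than $d_S\bN+\bN d=0$ or some other sign discrepancy; this requires tracking the fiber-first versus outward-normal-first conventions through the gluing maps $\Phi_{p,p'}$ on the determinant lines, consistently on both the gauge-theoretic end (where one glues $\breve\rM_G$ from the left) and the symplectic end (where one glues $\breve\rM_S$ from the right), and matching them against the coherent orientations of Propositions \ref{orientaion-delta-p} and \ref{orientation}. Fortunately the sign in Proposition \ref{comp} has been stated precisely — the induced boundary orientation agrees with the product orientation on the first factor and disagrees on the second — which is exactly the asymmetry that converts the two boundary contributions into $d_S\bN$ and $-\bN d$ respectively. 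A secondary technical point is to confirm that the compactness in Proposition \ref{comp}(i)--(ii) genuinely rules out other degenerations (bubbling of holomorphic spheres or discs, instanton bubbling on $X$, breaking at the mixed ends); this is where the monotonicity and minimal Maslov number $4$ of Proposition \ref{monotone-lag}, together with the index constraints, do the work, and it is delegated to Subsection \ref{comp-sect} via the results of \cite{DFL:mix}.
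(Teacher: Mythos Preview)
Your proposal is correct and follows essentially the same approach as the paper's proof: the isomorphism comes from the triangularity of $\bN$ with respect to the ordering by $\epsilon$ (Proposition \ref{pert}(iii)), and the chain-map property is the standard boundary-counting argument on the compactified $1$-dimensional moduli spaces (Proposition \ref{comp}(ii)). Your treatment is in fact more explicit than the paper's, which dispatches both points in two sentences and leaves the details you spell out (the sign bookkeeping, the block-triangular refinement when $\epsilon$-values coincide) to the reader.
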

\begin{proof}
	By Proposition \ref{pert}, $\bM_\eta(\alpha,\beta)_0$ is non-empty only if $\epsilon(\alpha)>\epsilon(\beta)$ or 
	$\alpha=\beta$. In the latter case, $\bM_\eta(\alpha,\beta)_0$ consists of only one element. Thus $\bN$ 
	is an isomorphism. The chain map property of $\bN$ follows from a standard argument using the second part of 
	Proposition \ref{comp}.
\end{proof}

\section{Linear analysis}\label{lin-analysis}
In this section, we verify several claims in Sections \ref{HF} and \ref{main-thm-sec} related to the linear analysis of the mixed equation. During this section $(Y,E)$ and $(Y',E')$ are fixed as in Subsection \ref{3-man-bdles}, and we fix Lagrangian 3-manifolds associated to these pairs that have transversal intersection and the claim of Lemma \ref{reg-ASD} holds. We continue to drop $h$ and $h'$ from our notations for the 3-manifolds Lagrangians, and denote them by $L(Y,E)$ and $L(Y',E')$.

\subsection{The configuration space of mixed pairs}\label{conf-mixed}

In Subsection \ref{flat-surface}, we introduced the space of connections $\mathcal A(\Sigma,F)$, which is an affine space modeled on the Banach space $\fB:=L^2_{l-1}(\Sigma,\Lambda^1\otimes F)$. For any positive constant $\epsilon$, we write $\fB_{<\epsilon}$ for the subspace of elements of $\fB$ with $L^2$ norm less than $\epsilon$. We write $\mathcal A_{\rm fl}(\Sigma,F)$ for the subspace of $\mathcal A(\Sigma,F)$ given by flat connections. If we want to be specific about the Sobolev exponent in the definition of $\mathcal A(\Sigma,F)$ and $\mathcal A_{\rm fl}(\Sigma,F)$, we denote them by $\mathcal A^{l-1}(\Sigma,F)$ and $\mathcal A^{l-1}_{\rm fl}(\Sigma,F)$. The space of $L^2_l$ gauge transformations of $F$ are also denoted by $\mathcal G_l(F)$. The following lemma provides an exponential map for the tangent vectors of $\mathcal A(\Sigma,F)$, which is invariant with respect to $\mathcal G(F)$ and induces an exponential map on $\mathcal A_{\rm fl}(\Sigma,F)$.
\begin{lemma}\label{exp-flat-matching}
	There are a positive constant $\epsilon$ and a smooth map 
	$\rE:\mathcal A(\Sigma,F)\times \fB_{<\epsilon}
	\to \mathcal A(\Sigma,F)$ 
	satisfying the following properties.
	\vspace{-8pt}
	\begin{itemize}
		\item[(i)] $\rE$ is $\mathcal G(F)$-equivariant where we use the diagonal action on $\mathcal A(\Sigma,F)\times \fB_{<\epsilon}$.
		\item[(ii)] $\rE(\sigma,0)=\sigma$.
		\item[(iii)] For any $\sigma\in \mathcal A(\Sigma,F)$, the differential $\left.D_{(\sigma,0)}\rE\right\vert_{\{0\}\times \fB}:\fB\to \fB$ is identity. The map $\rE$ determines a diffeomorphism
		from $\{\sigma\}\times \fB_{<\epsilon}$ to a neighborhood of $\sigma$. 
		\item[(iv)] For any $\alpha \in \mathcal A_{\rm fl}(\Sigma,F)$
		and $c \in \fB_{<\epsilon}$ with $d_\alpha c=0$, the connection 
		$\rE(\sigma,c)$ is flat. Moreover, if $c\in \mathcal H^1(\Sigma;\alpha)$ and $c':=c+d_{\alpha}\zeta$ with $\zeta \in L^2_{l}(\Sigma,F)$,
		then $\rE(\alpha,c'):=g^*\rE(\alpha,c)$ where $g$ is obtained from exponentiating $\zeta$.
	\end{itemize}
\end{lemma}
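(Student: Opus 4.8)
\textbf{Proof plan for Lemma \ref{exp-flat-matching}.}
The plan is to build $\rE$ by hand from the exponential map of a suitable $\mathcal G(F)$-invariant metric on $\mathcal A(\Sigma,F)$, and then to correct it so that the flatness statement in (iv) holds on the nose. First I would choose on $\mathcal A(\Sigma,F)$ a weak Riemannian metric coming from the $L^2$-inner product on $\fB$; since this metric is translation-invariant and $\mathcal G(F)$ acts by affine isometries, its (formal) exponential map $\exp_\sigma(c)=\sigma+c$ is already $\mathcal G(F)$-equivariant, satisfies $\exp_\sigma(0)=\sigma$, has identity differential at $0$, and is a diffeomorphism from $\fB_{<\epsilon}$ onto a neighborhood of $\sigma$ for any $\epsilon$. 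The subtlety is property (iv): $\sigma+c$ is flat only if $d_\sigma c + \tfrac12[c\wedge c]=0$, whereas we want $\rE(\alpha,c)$ flat whenever $d_\alpha c=0$ (i.e.\ whenever $c$ lies in the tangent cone to $\mathcal A_{\rm fl}$ at $\alpha$, in particular for $c\in\mathcal H^1(\Sigma;\alpha)$). So the naive translation map does not suffice, and I would instead define $\rE$ as a nonlinear perturbation of translation.

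The construction I have in mind is the standard one for slices to the flat locus: given $(\sigma,c)$ with $c$ small, look for a correction $\zeta=\zeta(\sigma,c)\in L^2_l(\Sigma,F)$, depending smoothly on $(\sigma,c)$ and vanishing to second order in $c$, such that $\sigma + c + d_\sigma\zeta + (\text{higher order terms})$ lands on $\mathcal A_{\rm fl}(\Sigma,F)$ precisely when $d_\sigma c=0$; one then sets $\rE(\sigma,c)$ to be this corrected connection when $c$ is $d_\sigma$-closed, and extends $\rE$ arbitrarily (e.g.\ by translation) off the closed subspace while keeping equivariance. Concretely, because $\mathcal M(\Sigma,F)$ has no singular points (there are no reducibles, the relevant bundles being non-trivial on each component), the Kuranishi map is trivial: near any flat $\alpha$, the map $c\mapsto \pi_{\rm im\,d_\alpha}\bigl(d_\alpha c+\tfrac12[c\wedge c]\bigr)$ can be solved for $c=c_0+d_\alpha\zeta$ with $c_0\in\mathcal H^1(\Sigma;\alpha)$, using the implicit function theorem on the complement of $\ker d_\alpha^*$ and the invertibility of $d_\alpha^* d_\alpha$ on that complement. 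This furnishes, for each $\alpha$, a local diffeomorphism $\mathcal H^1(\Sigma;\alpha)\supset B_\epsilon \to \mathcal A_{\rm fl}(\Sigma,F)$; the content of the lemma is to assemble these into a single globally defined, $\mathcal G(F)$-equivariant map $\rE$ on all of $\mathcal A(\Sigma,F)\times\fB_{<\epsilon}$. Equivariance is arranged by averaging/uniqueness: the solution $\zeta(\sigma,c)$ is characterized by $\zeta\perp\ker d_\sigma^*$ (or by an analogous normalization), which is gauge-natural, so $\zeta(g^*\sigma,g^*c)=g^*\zeta(\sigma,c)$ automatically. Properties (ii) and (iii) are then immediate from $\zeta(\sigma,0)=0$ and $D_c\zeta|_{c=0}=0$, while the second clause of (iv) follows because when $c\in\mathcal H^1(\Sigma;\alpha)$ the correction is determined by the same Kuranishi solution whether we perturb by $c$ or by $c+d_\alpha\zeta$, and changing $c$ by $d_\alpha\zeta$ changes the output by the gauge transformation $\exp(\zeta)$ — this is exactly how the linear term $d_\alpha\zeta$ enters, and one checks it by comparing the two defining equations.

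The main obstacle, I expect, is \emph{globalizing} and keeping everything simultaneously smooth, $\mathcal G(F)$-equivariant, defined on a fixed $\epsilon$-neighborhood independent of $\sigma$, and agreeing with a genuine diffeomorphism onto a neighborhood of $\sigma$ — rather than just having the local Kuranishi/slice statements near each flat connection. Two points need care: (a) the uniform constants in the implicit function theorem and in the elliptic estimate for $d_\sigma^* d_\sigma$ must be controlled as $\sigma$ ranges over $\mathcal A(\Sigma,F)$, which is non-compact, but one reduces to a neighborhood of the (compact) flat locus $\mathcal M(\Sigma,F)$ modulo gauge and uses compactness there together with $\mathcal G(F)$-equivariance to propagate; (b) the map must be defined for \emph{all} $c\in\fB_{<\epsilon}$, not only $d_\sigma$-closed ones, so one extends $\zeta$ by composing with the $L^2$-orthogonal projection onto $\ker d_\sigma$ (or simply sets the correction to zero in the complementary directions), which keeps (i)–(iii) intact and leaves (iv) unaffected since (iv) only concerns $d_\alpha$-closed $c$. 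Once these uniformity issues are handled, verifying (i)–(iv) is a routine unwinding of the definitions, so I would not belabor it.
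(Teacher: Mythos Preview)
Your approach has the right instinct---build a ``corrected'' translation map using an implicit function theorem near the flat locus---but there is a concrete error in the IFT setup, and the second clause of (iv) does not follow from it even after repair.

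The error: you propose to solve the flat equation $d_\alpha c+\tfrac12[c\wedge c]=0$ by writing $c=c_0+d_\alpha\zeta$ and invoking invertibility of $d_\alpha^*d_\alpha$. But for flat $\alpha$ one has $d_\alpha(d_\alpha\zeta)=0$, so the linearization of the curvature in the $d_\alpha\zeta$-direction vanishes and the IFT does not apply. The correct transverse slice to $\ker d_\alpha$ inside $\Omega^1$ is the coexact direction $*d_\alpha\eta$ (equivalently $d_\alpha^*\mu$), on which $d_\alpha$ restricts to the invertible operator $d_\alpha d_\alpha^*$ on top forms. Once you fix this, a Kuranishi-style argument does produce, for each harmonic $c_0$, a unique flat connection of the form $\alpha+c_0+*d_\alpha\eta(c_0)$. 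However, this map treats $c_0$ and $c_0+d_\alpha\zeta$ identically (same harmonic part), so it gives $\rE(\alpha,c_0+d_\alpha\zeta)=\rE(\alpha,c_0)$ rather than $\exp(\zeta)^*\rE(\alpha,c_0)$. The second clause of (iv) is not a consequence of the flat equation; it must be imposed by hand on the exact part of $c$.

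The paper takes a different route that makes (iv) transparent. It does not solve the flat equation directly in $\mathcal A(\Sigma,F)$; instead it uses the ordinary exponential map $\re$ on the finite-dimensional moduli space $\mathcal M(\Sigma,F)$. Given flat $\alpha$ and $c$, one Hodge-decomposes $c=c_0+d_\alpha\zeta+*d_\alpha\zeta'$, takes the geodesic $t\mapsto\re([\alpha],tc_0)$ in $\mathcal M(\Sigma,F)$, lifts it horizontally (via $d^*_{\widetilde\gamma(t)}\dot{\widetilde\gamma}(t)=0$) to a path $\widetilde\gamma$ in $\mathcal A_{\rm fl}$, and then sets $\rE(\alpha,c)=\exp(\zeta)^*\widetilde\gamma(1)+*d_\alpha\zeta'$. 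Flatness and both clauses of (iv) are then immediate from the construction. The extension to arbitrary $\sigma$ is handled not by abstract uniformity estimates but by the auxiliary Lemma~\ref{almost-flat-nbhd}, which writes any connection with small curvature as $\alpha+*d_\alpha\zeta$ for a flat $\alpha$, and a cutoff in $\|F_\sigma\|_{L^2}$ interpolating between the flat-case formula and the naive translation $\sigma+c$.
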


We may restrict a map $\rE$ provided by Lemma \ref{exp-flat-matching} to the configuration of flat connections $\mathcal A_{\rm fl}(\Sigma,F)$, and then use gauge equivariance of $\rE$ to obtain a map from  $T_{\epsilon}\mathcal M(\Sigma,F)$ to $\mathcal M(\Sigma,F)$, where $T_{\epsilon}\mathcal M(\Sigma,F)$ denotes tangent vectors to $\mathcal M(\Sigma,F)$ with length at most $\epsilon$. In fact, it is useful to fix one such map before constructing $\rE$. To do this, let $\re:T\mathcal M(\Sigma,F) \to \mathcal M(\Sigma,F)$ be the exponential map with respect to the chosen metric on $\mathcal M(\Sigma,F)$. 

\begin{lemma}\label{almost-flat-nbhd}
	There is a constant $\kappa$ such that the following holds. Suppose $U_\kappa$
	denotes the subspace of $L^2_1$ connections $\sigma$ on $F$ with $|\!|F_\sigma|\!|_{L^2}<\kappa$.
	Then there is a smooth $\mathcal G(F)$-equivariant map 
	\[
	  P:U_\kappa \to \mathcal A_{\rm fl}^1(\Sigma,F)\times L^2_{2}(\Sigma,F)
	\]
	such that if $P(\sigma)=(\alpha,\zeta)$, then $\sigma=\alpha+*d_\alpha\zeta$. Moreover, if $\sigma$
	is in $L^2_k$ for $k\geq 1$, then $\alpha\in L^2_{k}$ and $\zeta\in L^2_{k+1}$.
\end{lemma}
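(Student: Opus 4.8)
The plan is to obtain $P$ as a gauge--equivariant version of the nearest--flat--connection projection: construct it locally by an implicit function theorem, patch the local constructions together using the compactness of $\mathcal M(\Sigma,F)$, and finally check that the curvature hypothesis $\|F_\sigma\|_{L^2}<\kappa$ forces $\sigma$ into the tubular neighbourhood on which this construction is defined. Throughout I use that, because $w_2(F)$ is nontrivial on each component of $\Sigma$, every flat connection on $F$ is irreducible, so $\ker(d_\alpha\colon\Omega^0\to\Omega^1)=0$ and, by Hodge/Serre duality on the surface, $\coker(d_\alpha\colon L^2_k(\Omega^1)\to L^2_{k-1}(\Omega^2))\cong\ker(d_\alpha|_{\Omega^0})=0$ for every $k\ge 1$; hence $0$ is a regular value of the curvature map and $\mathcal A^k_{\rm fl}(\Sigma,F)$ is a smooth Banach submanifold with $T_\alpha\mathcal A^k_{\rm fl}=\ker(d_\alpha|_{\Omega^1})$.

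First I would set up the local model near a fixed flat connection $\alpha_0$. On a surface the Hodge decomposition for flat $\alpha$ reads
\[
L^2_{k-1}(\Sigma,\Lambda^1\otimes F)=\mathcal H^1(\Sigma;\alpha)\oplus\overline{\im d_\alpha}\oplus\overline{\im(*d_\alpha|_{\Omega^0})},
\]
with $\ker(d_\alpha|_{\Omega^1})=\mathcal H^1(\Sigma;\alpha)\oplus\overline{\im d_\alpha}$ and $(\ker d_\alpha|_{\Omega^1})^{\perp}=\overline{\im(*d_\alpha|_{\Omega^0})}$, the last equality because $d_\alpha^*|_{\Omega^2}$ and $*d_\alpha|_{\Omega^0}$ have the same image; moreover $*d_\alpha\colon L^2_{k}(\Omega^0)\to L^2_{k-1}(\Omega^1)$ is injective with closed range, and all of these objects depend smoothly on $\alpha$. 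For $\sigma$ near $\alpha_0$ I would determine the flat connection $\alpha$ by the equation $\Psi_\sigma(\alpha):=\Pi_{\ker d_\alpha}(\sigma-\alpha)=0$, a smooth section of $T\mathcal A^k_{\rm fl}$, where $\Pi_{\ker d_\alpha}$ is the $L^2$--orthogonal projection (which preserves Sobolev regularity). When $\sigma=\alpha_0$ is itself flat, $\alpha=\alpha_0$ solves this, and differentiating in a direction $a\in T_{\alpha_0}\mathcal A_{\rm fl}=\ker d_{\alpha_0}$ gives $D\Psi_{\alpha_0}(\alpha_0)a=-\Pi_{\ker d_{\alpha_0}}a=-a$, the term involving the derivative of the projection dropping because $\sigma-\alpha_0=0$. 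Thus $D\Psi$ is an isomorphism and the implicit function theorem produces, for $\sigma$ in an $L^2_1$--neighbourhood of $\alpha_0$, a unique nearby flat $\alpha=\alpha(\sigma)$ depending smoothly on $\sigma$; then $\sigma-\alpha\in\overline{\im(*d_\alpha|_{\Omega^0})}$, and $\zeta:=(*d_\alpha)^{-1}(\sigma-\alpha)\in L^2_2(\Sigma,F)$ is the unique solution of $\sigma=\alpha+*d_\alpha\zeta$. Running the same argument in $L^2_k$ (and using the $L^2_1$--uniqueness to see the solutions coincide) yields the regularity claim: $\sigma\in L^2_k$ forces $\alpha\in L^2_k$, and since $(*d_\alpha)^{-1}$ gains one derivative on its image, $\zeta\in L^2_{k+1}$.

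Next I would globalise. Everything above --- the submanifold $\mathcal A_{\rm fl}$, the projections, $*d_\alpha$ and its partial inverse, hence the equation $\Psi_\sigma(\alpha)=0$ --- is $\mathcal G(F)$--equivariant, and the constants entering the implicit function theorem at $\alpha_0$ depend only on $[\alpha_0]\in\mathcal M(\Sigma,F)$. Since $\mathcal M(\Sigma,F)$ is compact, finitely many Coulomb slices around flat connections $\alpha_1,\dots,\alpha_N$ cover a neighbourhood of $\mathcal M$ in $\mathcal B(\Sigma,F)$ on which the local construction applies with uniform estimates; by uniqueness the locally defined pairs $(\alpha,\zeta)$ agree on overlaps, and transporting by gauge assembles them into a single smooth $\mathcal G(F)$--equivariant map $P$ on a $\mathcal G(F)$--invariant open neighbourhood $W\subset\mathcal A^1(\Sigma,F)$ of $\mathcal A^1_{\rm fl}(\Sigma,F)$, with $P(\sigma)=(\alpha,\zeta)$ and $\sigma=\alpha+*d_\alpha\zeta$ (in particular $P(\alpha)=(\alpha,0)$ for flat $\alpha$).

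Finally, it remains to find $\kappa>0$ with $U_\kappa\subset W$, and this is the step that requires genuine work. Equivalently, every $\sigma$ with $\|F_\sigma\|_{L^2}<\kappa$ must lie, modulo gauge, within a fixed distance of $\mathcal M$ in $\mathcal B(\Sigma,F)$. If not, there are $\sigma_n$ with $\|F_{\sigma_n}\|_{L^2}\to 0$ but $[\sigma_n]$ bounded away from $\mathcal M$; since $\Sigma$ is two--dimensional there is no bubbling, so Uhlenbeck's weak compactness theorem provides gauge transformations $g_n$ and a subsequence with $g_n^*\sigma_n\rightharpoonup\sigma_\infty$ weakly in $L^2_1$ and strongly in every $L^p$, $p<\infty$. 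Writing $g_n^*\sigma_n=\sigma_\infty+a_n$ gives $d_{\sigma_\infty}a_n=F_{g_n^*\sigma_n}-a_n\wedge a_n\to 0$ in $L^2$; after an additional Coulomb gauge fixing relative to $\sigma_\infty$ the elliptic estimate for $d_{\sigma_\infty}\oplus d_{\sigma_\infty}^*$ (whose finite--dimensional kernel $\mathcal H^1(\Sigma;\sigma_\infty)$ is controlled because $a_n\to 0$ in $L^2$) upgrades this to strong $L^2_1$--convergence $a_n\to 0$, so $F_{\sigma_\infty}=0$ and $[\sigma_n]\to[\sigma_\infty]\in\mathcal M$, a contradiction. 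The main obstacle is precisely this last a priori estimate --- promoting the small--curvature hypothesis to membership in the tubular neighbourhood $W$ --- since it is the only non--formal input; the local implicit function argument, its gauge--equivariant patching, and the regularity bootstrap are routine.
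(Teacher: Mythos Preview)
Your construction of $P$ and the Uhlenbeck--compactness step are correct and closely parallel the paper. The paper is slightly more direct: rather than solving $\Pi_{\ker d_\alpha}(\sigma-\alpha)=0$ implicitly for $\alpha$ and then inverting $*d_\alpha$, it applies the inverse function theorem once to the $\mathcal G(F)$--equivariant map $\Psi(\alpha,\zeta)=\alpha+*d_\alpha\zeta$, whose derivative at $(\alpha_0,0)$ sends $(a,\eta)$ to $a+*d_{\alpha_0}\eta$ and is an isomorphism $\ker d_{\alpha_0}\oplus L^2_2(\Sigma,F)\to L^2_1(\Sigma,\Lambda^1\otimes F)$ by the very Hodge decomposition you wrote down; $P$ is then $\Psi^{-1}$. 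This avoids the section--of--a--bundle formulation but is otherwise equivalent, and the paper's appeal to Uhlenbeck compactness for the inclusion $U_\kappa\subset\Psi(V)$ is exactly the contradiction argument you spelled out.

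The regularity step, however, has a genuine gap. ``Running the same argument in $L^2_k$'' would require $\sigma$ to lie in the $L^2_k$--domain of the implicit function theorem, i.e.\ to be $L^2_k$--close to some smooth flat connection. But the hypothesis $\|F_\sigma\|_{L^2}<\kappa$ only places $\sigma$, via Uhlenbeck, in an $L^2_1$--tubular neighbourhood of $\mathcal A_{\rm fl}$; there is no a priori bound on $\|\sigma-\alpha_0\|_{L^2_k}$, and the Uhlenbeck argument cannot be upgraded to give one (it produces $L^2_1$ convergence after gauge, nothing more). The paper closes this gap by an elliptic bootstrap: given $P(\sigma)=(\alpha,\zeta)$, choose $g\in L^2_2$ with $\alpha':=g^*\alpha$ \emph{smooth}; then $\zeta':=g^*\zeta$ satisfies
\[
d_{\alpha'}*d_{\alpha'}\zeta'=-*d_{\alpha'}\zeta'\wedge *d_{\alpha'}\zeta'+F_{g^*\sigma},
\]
and since the Laplacian $d_{\alpha'}*d_{\alpha'}$ now has smooth coefficients one iterates elliptic regularity to push $\zeta'$ up one step, read off improved regularity of $g^*\sigma$, hence of $g$, hence of $\alpha$ and $\zeta$, and repeat. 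That bootstrap --- not a rerun of the implicit function theorem --- is the missing ingredient.
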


\begin{proof}
	Consider the $\mathcal G(\Sigma,F)$-equivariant map
	\[
	  \Psi:\mathcal A^1_{\rm fl}(\Sigma,F)\times L^2_{2}(\Sigma,F)\to \mathcal A^1(\Sigma,F)
	\]
	given by $\Psi(\alpha,\zeta)=\alpha+*d_\alpha\zeta$. Inverse function theorem and 
	Uhlenbeck compactness theorem imply that there are $\kappa>0$ and a 
	$\mathcal G_2(F)$-invariant neighborhood $V$ of $\mathcal A^1_{\rm fl}(\Sigma,F)\times\{0\}$
	such that $\Psi$ induces a diffeomorphism from $V$ to $U_\kappa$.
	Then we define $P:U_\kappa \to \mathcal A_{\rm fl}^1(\Sigma,F)\times L^2_{2}(\Sigma,F)$ to be the inverse of 
	this map. Now suppose $\sigma\in U_\kappa$ is an $L^2_k$ connection with $k\geq 2$ and 
	$P(\sigma)=(\alpha,\zeta)$.
	There is an $L^2_2$ automorphism $g$ of $F$ such that $\alpha'=g^*\alpha$ is a smooth 
	flat connection.
	Moreover, $F_{g^*\sigma}=gF_\sigma g^{-1}$ is in $L^2_1$, and if $\zeta':=g^*(\zeta)$, then
	\[d_{\alpha'}*d_{\alpha'}\zeta'=-*d_{\alpha'}\zeta'\wedge *d_{\alpha'}\zeta'+F_{g^*\sigma}.\]
	By applying elliptic regularity for the Laplacian operator $d_{\alpha'}*d_{\alpha'}$ twice, we may conclude that $\zeta'$ is in $L^2_{3}$
	and hence $g^*\sigma$ is in $L^2_2$. This implies that $g$ is in fact an $L^2_{3}$ gauge transformation 
	of $F$, $\alpha$ is an $L^2_2$ flat connection and $\zeta$ is in $L^2_3$. 
	Iterations of the above argument shows that $g$ is in fact an $L^2_{k+1}$ gauge transformation,
	$\alpha$ is an $L^2_k$ flat connection and $\zeta$ is in $L^2_{k+1}$. 
\end{proof}

\begin{proof}[Proof of Lemma \ref{exp-flat-matching}]
	First we define $\rE(\alpha,c)$ in the case that $\alpha$ belongs to $\mathcal A_{\rm fl}(\Sigma,F)$. 
	The  1-form $c$ can be uniquely decomposed as 
	\[
	  c=c_0+d_\alpha \zeta+*d_\alpha \zeta'
	\]
	with $c_0\in \mathcal H^1(\Sigma;\alpha)$, $\zeta,\zeta'\in L^2_{l}(\Sigma,F)$. 
	The 1-form $c_0$ determines an element of $T_{[\alpha]}\mathcal M(\Sigma,F)$ and 
	\[
	  \gamma(t):=\re([\alpha],tc)\in \mathcal M(\Sigma,F)
	\]
	defines a path from $[\alpha]$ to $\re([\alpha],c)$. 
	Let $\widetilde \gamma:[0,1]\to \mathcal A_{\rm fl}(\Sigma,F)$ be the unique path satisfying 
	\begin{itemize}
		\item[(i)] $\widetilde \gamma(t)$ is a flat connection representing $\gamma(t)$;
		\item[(ii)] $d_{\widetilde \gamma(t)}^*\left(\frac{d}{dt}\widetilde \gamma(t)\right)=0$. 		
	\end{itemize}
	Let also $g$ be the gauge transformation in $\mathcal G (F)$ given by exponentiating 
	$\zeta$. Then we define \[\rE(\alpha,c)=g^*\widetilde \gamma(1)+*d_\alpha \zeta'.\]
	Thus, we obtain a map $\rE(\alpha,c)$, for flat $\alpha$, that satisfies properties (i)-(iv).
	Compactness of $\mathcal M(\Sigma,F)$
	and the inverse function theorem can be used to find $\epsilon$ such that
	for any $\alpha\in \mathcal A_{\rm fl}(\Sigma,F)$ and $t\in [0,1]$ the map 
	\[
	  c\to \alpha+c+t(\rE(\alpha,c)-\alpha-c)
	\]
	sends $\fB_{<\epsilon}$ to a neighborhood of $\alpha\in \mathcal A(\Sigma,F)$ by a diffeomorphism.

	Next, we extend $\rE(\sigma,c)$ to the case that $\sigma$ is an arbitrary element of 
	$\mathcal A(\Sigma,F)$. Suppose $\tau:[0,1]\to [0,1]$ is a smooth function, which is equal to 
	$1$ in a neighborhood of $0$ and evaluates to $0$ in a neighborhood of $1$. Let $U_\kappa$ be given by Lemma \ref{almost-flat-nbhd}. Suppose $\sigma\in U_\kappa$
	and $P(\sigma)=(\alpha,\zeta)$. For any $c\in \fB_{<\epsilon}$ define
	\begin{equation}\label{rE}
		\rE(\sigma,c)=\sigma+c+
		\tau(\kappa^{-1}\vert\!\vert F(\sigma)\vert\!\vert_{L^2})(\rE(\alpha,c)-\alpha-c).
	\end{equation}
	and extend \eqref{rE} to the case that $\sigma\in \mathcal A(\Sigma,F)\setminus U_\kappa$ as
	$\rE(\sigma,c)=\sigma+c$.	
\end{proof}

Next, we need deformations of $\re$, which are well-behaved with respect to $L(Y,E)$ and $L(Y',E')$.

\begin{lemma}
	For any $-1\leq s\leq 1$, there is a smooth map 
	$\re_s:T\mathcal M(\Sigma,F) \to \mathcal M(\Sigma,F)$, 
	depending smoothly on $s$,
	such that the following properties hold.
	\vspace{-8pt}
	\begin{itemize}
		\item[(i)] $\re_0=\re$.
		\item[(ii)] $\re_s$ maps the zero section of $T\mathcal M(\Sigma,F)$ to 
		$\mathcal M(\Sigma,F)$ by the identity map
		\item[(iii)] The derivative of $\re_s$ at any point $x$ in the zero section 
		and along the fiber $T_x\mathcal M(\Sigma,F)$ is given by the identity map. 
		\item[(iv)] $\re_{1}$ maps the subspace $TL(Y,E)$ of $T\mathcal M(\Sigma,F)$
		to $L(Y,E)$ and $\re_{-1}$ maps the subspace $TL(Y',E')$ of 
		$T\mathcal M(\Sigma,F)$ to $L(Y',E')$.
	\end{itemize}
\end{lemma}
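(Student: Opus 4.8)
The plan is to obtain $\re_s$ as the Riemannian exponential map of a family of metrics $g_s$ on $\mathcal M(\Sigma,F)$ that depends smoothly on $s$, equals the fixed metric for $s$ near $0$, and for $s=1$ (resp.\ $s=-1$) makes the Lagrangian $L(Y,E)$ (resp.\ $L(Y',E')$) totally geodesic. Since $\mathcal M(\Sigma,F)$ is a closed manifold it is complete for every metric, so each such exponential map is defined on all of $T\mathcal M(\Sigma,F)$; properties (i)--(iii) then hold automatically, (i) because $g_0$ is the fixed metric and (ii)--(iii) because the exponential map of any metric fixes the zero section and has fiberwise derivative the identity there. Only (iv) uses the particular choice of $g_1$ and $g_{-1}$. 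Abbreviate $L=L(Y,E)$, $L'=L(Y',E')$; recall these are closed embedded submanifolds of the closed manifold $\mathcal M(\Sigma,F)$.

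First I would build a metric $g_1$ agreeing with the fixed metric $g$ outside a tubular neighborhood of $L$ and making $L$ totally geodesic. Let $N=(TL)^{\perp}\to L$ be the normal bundle with its induced fiber metric and normal connection, and let $h$ be the associated bundle metric on the total space of $N$, namely $h=\pi^{*}(g|_{L})\oplus\langle\cdot,\cdot\rangle_{N}$ relative to the horizontal-vertical splitting; the zero section is totally geodesic for $h$. The normal exponential map $\exp^{\nu}$ of $g$ restricts to a diffeomorphism from a neighborhood $W$ of the zero section of $N$ onto a tubular neighborhood $U$ of $L$, and its differential along the zero section is the identity, so $(\exp^{\nu})^{*}g$ and $h$ agree on $TN$ over the zero section. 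Choosing a cutoff $\chi$ on $N$ that is $1$ near the zero section and supported in a smaller neighborhood $W'\subset\subset W$, the convex combination $\chi h+(1-\chi)(\exp^{\nu})^{*}g$ is a metric on $W$ that equals $h$ near the zero section and $(\exp^{\nu})^{*}g$ off $W'$; pushing it forward by $\exp^{\nu}$ and extending by $g$ gives $g_1$. The same construction with $L'$ in place of $L$ produces $g_{-1}$.

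Next I would fix a smooth $\sigma\colon[-1,1]\to[0,1]$ with $\sigma\equiv0$ near $0$ and $\sigma\equiv1$ near $\pm1$, and set
\[
  g_{s}=
  \begin{cases}
  (1-\sigma(s))\,g+\sigma(s)\,g_{1}, & s\ge0,\\[2pt]
  (1-\sigma(s))\,g+\sigma(s)\,g_{-1}, & s\le0,
  \end{cases}
\]
which is a well-defined smooth family of metrics (both lines equal $g$ near $s=0$). Let $\re_s$ be the Riemannian exponential map of $g_s$. Smooth dependence of geodesics on the metric and on initial conditions gives smoothness of $\re_s$ in $(s,v)$; $\re_0=\re$ gives (i), and (ii), (iii) hold as noted above. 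For (iv): a $g_1$-geodesic issuing from a point of $L$ tangent to $L$ is, because $L$ is totally geodesic for $g_1$, a geodesic of $(L,g_1|_L)$, hence stays in $L$ for all time, so $\re_1(TL)\subset L$; symmetrically $\re_{-1}(TL')\subset L'$. There is no clash at the finitely many points of $L\cap L'$, since $L$ only intervenes at $s$ near $1$ and $L'$ only at $s$ near $-1$.

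The step I expect to be the main obstacle is the construction of $g_1$ (and $g_{-1}$): deforming $g$ within a neighborhood of the closed submanifold $L$ so as to make it totally geodesic while staying positive definite. This is standard, but it requires checking that $\chi h+(1-\chi)(\exp^{\nu})^{*}g$ is genuinely a Riemannian metric (true, as a convex combination of metrics), that its pushforward patches smoothly with $g$ (true, since it equals $(\exp^{\nu})^{*}g$ off $W'$), and that "totally geodesic near $L$" forces $g_1$-geodesics tangent to $L$ to remain in $L$ for all time (true, since such a geodesic never leaves $L$ and hence never leaves the region where $g_1$ is the bundle metric). Everything else reduces to smooth dependence of solutions of the geodesic ODE on parameters.
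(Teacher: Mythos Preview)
Your proposal is correct and follows essentially the same approach as the paper: take a metric making $L(Y,E)$ totally geodesic, use its exponential map for $\re_1$, and interpolate the metric back to the standard one to get $\re_s$ for $s\in[0,1]$ (and symmetrically for $s\in[-1,0]$). You supply more detail than the paper does---the explicit tubular-neighborhood construction of $g_1$ and the cutoff interpolation---but the strategy is identical.
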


By applying the argument of Lemma \ref{exp-flat-matching} to the family of maps $\re_s$ provided by the above lemma, we may construct a family of maps $\rE_s:\mathcal A(\Sigma,F)\times \fB_{<\epsilon}\to \mathcal A(\Sigma,F)$, which satisfies the properties (i)-(iv) of Lemma \ref{exp-flat-matching}. Moreover, if $\alpha$ is a flat connection on $F$ representing an element of $L(Y,E)$ and $c$ is a $d_\alpha$-closed $1$-form representing a tangent vector to $L(Y,E)$, then $\rE_1(\sigma,c)$ also represents an element in $L(Y,E)$. The map $\rE_{-1}$ has a similar property with respect to $L(Y',E')$.

\begin{proof}
	Fix a metric on $\mathcal M(\Sigma,F)$ such that $L(Y,E)$ is totally geodesic with respect to this metric.
	Then the exponential map with respect to this metric gives $\re_1$. A homotopy from this metric 
	and the standard Riemannian metric on $\mathcal M(\Sigma,F)$ can be used in a similar way to define the maps $\re_t$ for $t\in [0,1]$.
	The maps $\re_t$ for $t\in [-1,0]$ can be constructed in an analogous way.
\end{proof}

Our next goal is to give a chart for a neighborhood of a mixed pair $(A,u) \in \bA(\alpha,\beta)$. Before giving a description of such a chart, we need to fix another additional piece of data. The mixed pair $(A,u)$ is convergent to pairs $(B,q)$ and $(B',q')$ as $\theta\to \infty$ and $\theta\to -\infty$, where $q\in L(Y,E)$, $q'\in L(Y',E')$, and $B$, $B'$ are respectively connections on $E$, $E'$ representing $q$, $q'$. The restrictions of $B$, $B'$ to $\Sigma$ is denoted by $\alpha$, $\alpha'$. Let $b\in \mathcal H^1_{h}(Y;B)$ and $c$ denote the restriction of $b$ to $\Sigma$. Then $\alpha_b:=\rE_1(\alpha,c)$ is a flat connection on $F$ which represents an element of $L(Y,E)\subset \mathcal M(\Sigma,F)$. After possibly decreasing $\epsilon$, we fix a connection $B_b$, for $|b|<\epsilon$, such that
\vspace{-8pt}
\begin{itemize}
	\item[(i)] $B_b$ depends smoothly on $b$;
	\item[(ii)] $B_0=B$;
	\item[(iii)] $B_b$ represents an element of $L(Y,E)$, and its restriction to the boundary is equal to 
	$\alpha_b$.
\end{itemize}
\vspace{-8pt}
Similarly, we fix a smooth family of connections $\{B_{b'}'\}$ for $b'\in \mathcal H^1_{h}(Y';B')$ with $|b'|<\epsilon$.

Suppose $B^l_{(A,u)}$ is the space of all $\zeta \in L^2_{l,loc}(X,\Lambda^1\otimes V)$, $\nu\in L^2_{l,loc}(U_+,u^*T\mathcal M(\Sigma,F))$, which satisfy the following properties.
\vspace{-5pt}
\begin{itemize}
	\item[(i)] $\zeta|_{(-\infty,-3]\times Y_\#}$ and $\nu\vert_{[3,\infty)\times [-1,1]}$ have finite $L^2_{l}$ norms.
	\item[(ii)] There are $b\in \mathcal H^1_h(Y;B)$ and $b'\in \mathcal H^1_{h'}(Y';B')$ such that 
		\[\zeta-\pi^*(b)|_{Y\times [2,\infty)}\hspace{1cm}
		\text{and}\hspace{1cm} \zeta-\pi^*(b')|_{Y'\times (-\infty,-2])}\] 
		have finite $L^2_{l,\delta}$ norms where $\delta$ is a small positive constant, which will be fixed in the next subsection. Let $s$ and $s'$ be tangent vectors
		to $\mathcal M(\Sigma,F)$ at the points $q$ and $q'$ given by restrictions of $b$ and 
		$b'$ to the boundary. Then 
		\[\nu-\pi^*(s)|_{[0,2]\times [2,\infty)}\hspace{1cm} \text{and} \hspace{1cm} \nu-\pi^*(s')|_{[0,2]\times (-\infty,-2]}\] 
		also have finite $L^2_{l,\delta}$ norms.
	\item[(iii)] $d_{A_\theta}\zeta_{\theta}=0$ and 
		$[\zeta_\theta]=\nu(0,\theta)$ where $A_\theta$ and $\zeta_\theta$ are restrictions of $A$ and $\zeta$ to 
		$\{(0,\theta)\}\times \Sigma\subset X$, and $[\zeta_\theta]$ is the element of $\mathcal H^1(\Sigma;A_\theta)$ represented by $\zeta_\theta$.
	\item[(iv)] $ \nu\vert_{\eta_+}\in u^*TL(Y,E)$, $\nu\vert_{\eta'_+}\in u^*TL(Y',E')$.
\end{itemize}
Then $B^l_{(A,u)}$ is a Banach space where the norm is defined as 
\begin{align}
  \left|(\zeta,\nu)\right|_{B^l_{(A,u)}}:=&
  |\!|\zeta|\!|_{L^2_l(X^\circ)}+|\!|\nu|\!|_{L^2_l(U_+^\circ)}+|\!|\zeta-\pi^*(b)|\!|_{L^2_{l,\delta}(Y\times [2,\infty))}+|\!|\zeta-\pi^*(b')|\!|_{L^2_{l,\delta}(Y'\times (-\infty,-2])}+\nonumber\\
  &+|\!|\nu-\pi^*(s)|\!|_{L^2_{l,\delta}([0,2]\times [2,\infty))}+|\!|\nu-\pi^*(s') |\!|_{L^2_{l,\delta}([0,2]\times (-\infty,-2])}+|s|+|s'|\label{norm-fB}
\end{align}
with 
\[X^\circ:=X\setminus Y\times [2,\infty)\setminus Y'\times (-\infty,-2],\hspace{1cm}U_+^\circ:=U_+\setminus [0,2]\times [2,\infty)\setminus [0,2]\times (-\infty,-2].\]
In the following, we fix a constant $\kappa_0$ such that if $\left|(\zeta,\nu)\right|_{B^l_{(A,u)}}<\kappa_0$, then for any $(s,\theta)\in U_-$, the restriction of $\zeta$ to 
$\{(s,\theta)\}\times \Sigma$ belongs to $\fB_{<\epsilon}$ and for any $(s,\theta)\in U_-$, $\nu(s,\theta)\in T_{\epsilon}\mathcal M(\Sigma,F)$.

A neighborhood of $(A,u)$ in $\bA(\alpha,\beta)$ can be parametrized by the product of a small ball centered at the origin in $B^l_{(A,u)}$ and the group $\mathcal G(E)\times \mathcal G(E')$, as it is explained in the following. First we fix a smooth function $\tau:U_+ \to \R$ which in a neighborhood of $\eta_+$ is equal to $1$, in a neighborhood of $\eta_+'$ is equal to $-1$, on $[0,2]\times [2,\infty)$ is equal to $1$, and on $[0,2]\times (-\infty,-2]$ is equal to $-1$. Moreover, $\tau(s,\theta)$ on $[0,2]\times [2,\infty)$ and $[0,2]\times (-\infty,-2]$ depends only on $s$ and on $[3,\infty)\times [-1,1]$ depends only on $\theta$. We also fix a smooth {\it cutoff} function $\rho:[-2,0]\to [0,1]$ which is equal to $0$ in a neighborhood of $-2$ and is equal to $1$ in a neighborhood of $0$. For $(\zeta,\nu)$ in $B^l_{(A,u)}$ with norm less than $\kappa_0$ define
\[
  \widehat A_\zeta=A+\zeta+\rho(s) (\rE_{\tau(0,\theta)}(A_{s,\theta},\zeta_{s,\theta})-\zeta_{s,\theta}),\hspace{1cm} u_\nu(s,\theta)=\re_{\tau(s,\theta)}(\nu(s,\theta)).
\]
where $A_{s,\theta}$, $\zeta_{s,\theta}$ are the restrictions of $A$, $\zeta$ to $\{(s,\theta)\}\times \Sigma$. 
With a slight abuse of notation, $\rho$ in the definition of $\widehat A_\zeta$ denotes the induced function $X\to [0,1]$ which vanishes outside of $[-2,0]\times \R\times \Sigma$ and equals $\rho(s)$ for $(s,\theta,x)\in [-2,0]\times \R\times \Sigma$. Then $\widehat A_\zeta$ is respectively asymptotic to the connections 
\[
  \widehat B_{b}:=B+b+\rho(s)(\rE_{1}(B(s),b(s))-b(s)),\hspace{0.7cm}\widehat B'_{b'}:=B'+b'+\rho(s)(\rE_{-1}(B'(s),b'(s))-b'(s)),
\]
as $\theta \to \infty$ and $-\infty$. Here $B(s)$ and $b(s)$ are the restrictions of $B$ and $b$ to $\{s\}\times \Sigma \subset Y$, and $B'(s)$ and $b'(s)$ are defined similarly. The map $\rho$ is interpreted as a function on $Y$ and $Y'$ by composing the function $\rho:X\to [0,1]$ with the inclusion of $Y$ and $Y'$ as $Y\times \{2\}$ and $Y'\times \{-2\}$ in $X$.

The connections $\widehat B_{b}$ and $\widehat B'_{b'}$ do not necessarily represent elements of $L(Y,E)$ and $L(Y',E')$. We fix this issue by modifying $A^0_\zeta$ as
\[
  A_\zeta:=\widehat A_\zeta+\varphi_{+} \cdot (B_{b}-\widehat B_{b})+\varphi_{-} \cdot  (B_{b'}'-\widehat B'_{b'}).
\]
Here $\varphi_+:X\to \R$ (resp. $\varphi_-:X\to \R$) is a fixed cutoff function which is equal to $1$ on $Y\times [3,\infty)$ (resp. $Y'\times (-\infty,-3]$) and $0$ on the complement of $Y\times (2,\infty)$ (resp. $Y'\times (-\infty,-2)$). Given $(\fg,\fg')\in \mathcal G(E)\times \mathcal G(E')$, we define
\[
  A_{\zeta,\fg,\fg'}:=A_\zeta-\varphi_{+}\cdot  (\nabla_{B_{b}}\fg)\fg^{-1}-\varphi_{-} \cdot (\nabla_{B_{b'}'}\fg')\fg'^{-1}.
\]
The connection $A_{\zeta,\fg,\fg'}$ is asymptotic to $\fg^*B_{b}$ and $\fg'^*B'_{b'}$ as $\theta \to \infty$ and $\theta \to -\infty$.
We define a map $P$ from the product of the ball of radius $\kappa_0$ centered at the origin in $B^l_{(A,u)}$ and $\mathcal G(E)\times \mathcal G(E')$ to $\bA(\alpha,\beta)$ by mapping $(\zeta,\nu,\fg,\fg')$ to $P(\zeta,\nu,\fg,\fg'):=(A_{\zeta,\fg,\fg'},u_\nu)$. The map $P$ gives a chart for a neighborhood of $(A,u)$ in $\bA(\alpha,\beta)$. By a slight abuse of notation, $P(\zeta,\nu)$ in what follows denotes $P(\zeta,\nu,1,1)$. It is a straightforward (but daunting) task to check that the transition maps associated to these charts for different $(A,u)$ in $\bA(\alpha,\beta)$ are smooth.

The above discussion can be modified using {\it Coulomb gauge fixing condition} to define a chart for the configuration space $\bB(\alpha,\beta)$, which is obtained from $\bA(\alpha,\beta)$ by taking the quotient with respect to the action of the gauge group $\mathcal G(V)$. The following proposition is a consequence of Coulomb gauge fixing for the action of $\mathcal G(V)$ on $\bA(\alpha,\beta)$.
\begin{prop}
	For any $(A,u)\in \bA(\alpha,\beta)$, there is a constant $\kappa_0$
	such that the following holds.
	Let $\fU_{\kappa_0}$ denote the space of $(\zeta,\nu)\in B^l_{(A,u)}$ such that 
	$\left|(\zeta,\nu)\right|_{B^l_{(A,u)}}<\kappa_0$ and 
	\begin{equation}\label{coulomb-gauge-chart}
		*\zeta|_{U_\partial \times \Sigma}=0,\hspace{1cm}d_A^*\zeta=0.
	\end{equation}
	Then the map
	\[
	  (g,(\zeta,\nu)) \in \mathcal G(V)\times \fU_{\kappa_0} \to g^*P(\zeta,\nu)
	\]
	gives a diffeomorphism onto a neighborhood of $(A,u)$ in $\bA(\alpha,\beta)$.
\end{prop}

This proposition together with a standard argument can be used to show that $\bB(\alpha,\beta)$ is a Banach manifold modeled on the closed subspace of $B^l_{(A,u)}$ consisting of the elements which satisfy \eqref{coulomb-gauge-chart}. Recall that $E^l_{(A,u)}$ is the subspace of $B^l_{(A,u)}$ given by elements which satisfy the first identity in \eqref{coulomb-gauge-chart}, and we equip this space with the Banach space structure using the norm in \eqref{norm-fB}. Thus, the above proposition implies that a neighborhood of $[A,u]$ in $\bB(\alpha,\beta)$ can be parametrized by the kernel of $d_A^*$ acting on $E^l_{(A,u)}$. To complete the proof of Proposition \ref{conf-space-smooth}, we need to show that the operator $d_A^*$ is surjective. To see this note that if $\xi$ is in the $L^2$-orthogonal of the image of $d_A^*$, then $\xi$ is in the kernel of $d_A$. Since $A$ is irreducible, $\xi$ has to be zero, which verifies the claim.

\subsection{Fredholm property of the mixed operator}\label{fred-adjoint}

In this subsection, we study the Fredholm properties of the mixed operator $\mathcal D_{(A,u)}$. As it is mentioned in Section \ref{main-thm-sec}, the domain of $\mathcal D_{(A,u)}$ is $E^l_{(A,u)}$ (equipped with the norm in \eqref{norm-fB}) and its target is given by
\begin{equation}\label{domain-D-A-u}
  L^2_{l-1,\delta}(X,(\Lambda^+\oplus \Lambda^0)\otimes V)\oplus  L^2_{l-1,\delta}(U_+,u^*T\mathcal M(\Sigma,F)).
\end{equation}
For our purposes, it is useful to consider another operator $\mathcal D^*_{(A,u)}$. The following definition is the counterpart of Definition \ref{E-A-u}, and it provides a function space which serves as the domain of $\mathcal D^*_{(A,u)}$. 

\begin{definition}\label{K-A-u}	
	Let $(A,u)\in \bA(\alpha,\beta)$ be a mixed pair which is asymptotic to $(B,q)$ and $(B',q')$ on the mixed ends associated to $(Y,E)$ and $(Y',E')$.
	For any positive integer $k$, define $K^k_{(A,u)}$ as the space of all
	\[
	  (\mu,\xi,z)\in L^2_{k,loc}(X,(\Lambda^+\oplus \Lambda^0)\otimes V)\oplus  L^2_{k,loc}(U_+,u^*T\mathcal M(\Sigma,F))
	\]
	such that 
	\vspace{-8pt} 
	\begin{itemize}
		\item[(i)] $(\mu,\xi)|_{(-\infty,-3]\times Y_\#}$ and $z\vert_{[3,\infty)\times [-1,1]}$ have finite $L^2_{k}$ norms.
		\item[(ii)] The restrictions of $(\mu,\xi)$ to $[2,\infty)\times Y$ and $(-\infty,-2]\times Y'$ and the restrictions of $z$ to $[0,2]\times [2,\infty)$ and $[0,2]\times (-\infty,-2]$ have finite $L^2_{k,\delta}$ norms. 
		\item[(iii)] The restriction of $\mu$ to $U_\partial\times \Sigma$ has the form $\frac{1}{2}d\theta \wedge c$, where $c$ is a section of $\Lambda^1\Sigma\otimes F$ over $U_\partial\times \Sigma$. Moreover, if 
		$c_\theta$ denotes the restriction of $c$ to $\{(0,\theta)\}\times \Sigma\subset X$, then $d_{A_{\theta}}c_\theta=0$ and $z(0,\theta)$ is equal to the element of $T_{u(0,\theta)}\mathcal M(\Sigma,F)$ represented by $c_\theta$.
		\item[(iv)] The $(0,1)$-form $zd\theta+J_{s,\theta}zds$ maps $T\eta_+$ to $u^*TL(Y,E)$ and $T\eta_+'$ to $u^*TL(Y',E')$.
	\end{itemize}
	The norm on $ K^k_{(A,u)}$ is given by the weighted Sobolev norm $L^2_{k,\delta}$.
\end{definition}
For $(\mu,\xi,z)\in K^l_{(A,u)}$, define 
\begin{equation}\label{mixed-op-dual}
	\mathcal D^*_{(A,u)}(\mu,\xi,z):=(\mathcal D^*_A(\mu,\xi),\mathcal D_u^*(z)),
\end{equation}
where $\mathcal D^*_A$ and $\mathcal D_u^*$ are formal adjoints of the ASD operator and the Cauchy-Riemann operator.  Thus, these are the unique operators satisfying
\begin{equation}\label{L-2-adjoint}
    \int_{X}\langle \zeta, \mathcal D^*_A(\mu,\xi)\rangle=\int_{X}\langle \mathcal D_A(\zeta), (\mu,\xi)\rangle,\hspace{1cm}\int_{U_+}\langle \nu, \mathcal D_u^*(z)\rangle=\int_{U_+}\langle \mathcal D_u(\nu), z\rangle,
\end{equation}
for any $\xi$, $\zeta$, $\mu$, which are respectively smooth sections of $V$, $\Lambda^1\otimes V$, $\Lambda^+\otimes V$ compactly supported in the interior of $X$, and any $\nu$, $z$, which are smooth sections of $u^*T\mathcal M(\Sigma,F)$ compactly supported in the interior of $U_+$. To be more specific, the $L^2$ pairing for the second term in \eqref{L-2-adjoint} is defined using the metric 
\[
  \int_{U_+}\langle \nu,\nu'\rangle:=\int_{U_+}\Omega(\nu,J_{s,\theta} \nu') ds\wedge d\theta,
\]
where $\nu$ and $\nu'$ are sections of $u^*T\mathcal M(\Sigma,F)$. We have
\[
 \mathcal D^*_A(\mu,\zeta)=d_{A}\xi+d_{A}^*\mu+{\rm Hess}_{A_t}h(*_3\mu_t)+{\rm Hess}_{A_t'}h'(*_3\mu_t'),
\] 
with $\mu_t$, $\mu_t'$ being the restrictions of $\mu$ to $\{t\}\times Y_0$, $\{t\}\times -Y'_0$. Using this notation, we may write the self-dual 2-form $\mu$ on $\R\times Y_0$ and $\R\times -Y'_0$ as $\mu_t-*_3\mu_t\wedge dt$ and $\mu_t'-*_3\mu_t'\wedge dt$. The target of the operator $\mathcal D^*_{(A,u)}$ is $L^2_{l-1,\delta}(X,\Lambda^1\otimes V)\oplus L^2_{l-1,\delta}(U_+,u^*T\mathcal M(\Sigma,F))$ where our convention for the weighted Sobolev space $L^2_{l-1,\delta}$ is fixed in Definition \ref{eighted-sob}.
Proposition \ref{fred-prop-main-thm} is a consequence of the following theorem.
\begin{theorem}\label{Fredholm-D-}
	There is $\delta_0$ such that the following holds. For $\delta<\delta_0$, suppose$\bA(\alpha,\beta)$ is defined using $\delta$ and $[A,u]$ is a smooth element of $\bA(\alpha,\beta)$ that satisfies property (iii) of
	Theorem \ref{exp-decay} for $\delta_0$. Then the operators $\mathcal D_{(A,u)}$ and $\mathcal D^*_{(A,u)}$ are Fredholm.
	Furthermore, the cokernel (resp. the kernel) of $\mathcal D^*_{(A,u)}$ can be identified with the kernel (resp. the cokernel) of $\mathcal D_{(A,u)}$. In particular, 
	$\ind(\mathcal D_{(A,u)})=-\ind(\mathcal D^*_{(A,u)})$.
\end{theorem}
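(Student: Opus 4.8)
The plan is to realize $\mathcal D_{(A,u)}$ as an elliptic problem on the pair $(X,U_+)$, coupled along the matching line $U_\partial\times\Sigma$ and carrying the Lagrangian boundary conditions along $\eta_+$, $\eta_+'$, and then to upgrade the local (compact-core) theory to a global Fredholm statement by analyzing the four cylindrical ends: the gauge-theoretic end $(-\infty,-3]\times Y_\#$, the symplectic end $[3,\infty)\times[-1,1]$, and the two mixed ends, namely $[2,\infty)\times Y$ paired with $[0,2]\times[2,\infty)\subset U_+$ and $(-\infty,-2]\times Y'$ paired with $[0,2]\times(-\infty,-2]\subset U_+$. On a compact exhaustion of $(X,U_+)$ the linear theory of the mixed equation developed in \cite{DFL:mix} — interior ellipticity together with the boundary elliptic estimates along $\eta_\pm$ and the matching estimates along $U_\partial\times\Sigma$, and the corresponding elliptic regularity up to those loci — supplies the required a priori estimates. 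Since $d_A^*$ is surjective (as shown at the end of Subsection \ref{conf-mixed}), it suffices to treat $\mathcal D_{(A,u)}$ itself, and granting the compact-core estimates, $\mathcal D_{(A,u)}$ is Fredholm as soon as each end contributes a translation-invariant model operator that is invertible on the weighted space in which the end-behaviour of $E^l_{(A,u)}$ is measured.

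First I would identify the model operators. On the gauge-theoretic end, after putting $A$ in temporal gauge, the model is $\partial_t + L_\alpha$ on $(-\infty,-3]$ with $L_\alpha$ the perturbed extended Hessian of $Y_\#$; its kernel is $\mathcal H^1_{h,h'}(Y_\#;\alpha)$, which vanishes because $\alpha,\beta\in\fC_G$ were chosen regular (Lemma \ref{comp-per-reg}), so the model is an isomorphism $L^2_l\to L^2_{l-1}$. On the symplectic end the model is the constant-coefficient version of the linearized Cauchy--Riemann operator \eqref{linear-CR} with boundary conditions $TL(Y,E)$, $TL(Y',E')$; its invertibility is exactly the transversality of the intersection $L(Y,E)\cap L(Y',E')$ at $\beta$. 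On a mixed end — say the one associated to $(Y,E)$ — the model couples the $3$-dimensional operator $*_3 d_B + \Hess_B h$ on $Y$ to the $2$-dimensional Cauchy--Riemann model on $[0,2]\times[2,\infty)$ through the matching condition on the common $\Sigma$-slice; on the \emph{unweighted} space its kernel is precisely $\mathcal H^1_h(Y;B)$, which is the reason the domain $E^l_{(A,u)}$ of Definition \ref{E-A-u} is enlarged by the finite-dimensional summands spanned by $\pi^*(b)$ and $\pi^*(b')$. On this enlarged domain, and after conjugation by $e^{\delta\tau}$, the mixed-end model becomes invertible for every sufficiently small $\delta>0$; taking $\delta_0$ below the first positive critical weight of all four models then gives the Fredholm property of $\mathcal D_{(A,u)}$ for all $\delta<\delta_0$. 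The identical analysis with $K^l_{(A,u)}$ (Definition \ref{K-A-u}) in place of $E^l_{(A,u)}$ — the relevant models are now the formal adjoints of the ones above, invertible under the same weight constraint — shows $\mathcal D^*_{(A,u)}$ is Fredholm as well.

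Next I would prove the duality. A class in $\coker\mathcal D_{(A,u)}$ is represented by a triple $(\mu,\xi,z)$ that is $L^2$-orthogonal, with respect to the pairings in \eqref{L-2-adjoint}, to the image of $\mathcal D_{(A,u)}$. Testing against compactly supported sections shows that $(\mu,\xi,z)$ is a weak solution of $\mathcal D^*_{(A,u)}(\mu,\xi,z)=0$; elliptic regularity up to $\eta_\pm$ and $U_\partial\times\Sigma$ (again from \cite{DFL:mix}) makes it smooth, and orthogonality against the finite-dimensional summands together with the decay estimates on the ends places it in $K^l_{(A,u)}$. The point requiring care is that the Green's formula relating $\langle \mathcal D_{(A,u)}(\zeta,\nu),(\mu,\xi,z)\rangle$ and $\langle (\zeta,\nu),\mathcal D^*_{(A,u)}(\mu,\xi,z)\rangle$ has no residual boundary terms: the contribution along $\eta_+$ (resp. $\eta_+'$) vanishes because $\nu|_{\eta_+}$ and the component of $z$ selected by condition (iv) of Definition \ref{K-A-u} are both tangent to the Lagrangian $L(Y,E)$ (resp. $L(Y',E')$), which is $\Omega$-isotropic; the contribution along $U_\partial\times\Sigma$ cancels because of the compatible matching conditions $d_{A_\theta}\zeta_\theta=0=d_{A_\theta}c_\theta$, $[\zeta_\theta]=\nu(0,\theta)$, $z(0,\theta)=[c_\theta]$, via the same integration-by-parts identity $\int_\Sigma\tr(\partial_\theta A_\theta\wedge\zeta_\theta)$ already computed in the proof of Lemma \ref{top-energy-constant}; and the Hessian terms pair correctly by the symmetry in Lemma \ref{H-prop}. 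Running the same argument with the roles of $\mathcal D_{(A,u)}$ and $\mathcal D^*_{(A,u)}$ exchanged identifies $\coker\mathcal D^*_{(A,u)}$ with $\ker\mathcal D_{(A,u)}$, and the index identity $\ind(\mathcal D_{(A,u)})=-\ind(\mathcal D^*_{(A,u)})$ follows formally from $\dim\ker\mathcal D_{(A,u)}-\dim\coker\mathcal D_{(A,u)}=\dim\coker\mathcal D^*_{(A,u)}-\dim\ker\mathcal D^*_{(A,u)}$.

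I expect the main obstacle to be the mixed-end model operator. Unlike the gauge-theoretic and symplectic ends, whose models are the familiar operators of Donaldson--Floer and of Lagrangian Floer theory, the mixed-end model genuinely couples a $3$-dimensional gauge-theoretic operator on $Y$ to a $2$-dimensional Cauchy--Riemann operator across the common $\Sigma$-slice, and establishing that enlarging the domain by $\mathcal H^1_h(Y;B)$ makes it invertible uniformly for all small weights is the technical heart of the linear theory of \cite{DFL:mix}, which I would invoke rather than reprove. A secondary, bookkeeping-type difficulty is checking that the weighted Sobolev set-up is consistent across all four ends simultaneously — so that a single $\delta_0$ works — and that the boundary-term cancellations in the Green's formula persist verbatim in the weighted, non-compact setting.
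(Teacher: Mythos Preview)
Your proposal is correct and follows essentially the same route as the paper: compact-core elliptic estimates from \cite{DFL:mix}, model operators on the four ends (with the mixed-end model having kernel $\mathcal H^1_h(Y;B)$, absorbed by the extra summand in $E^l_{(A,u)}$), and the duality via integration by parts with the boundary/matching conditions in Definitions~\ref{E-A-u} and~\ref{K-A-u} arranged to kill the finite-boundary terms. The one place the paper is more explicit than your sketch is the mechanism for showing a cokernel element actually lands in $K^l_{(A,u)}$ on the mixed ends: a priori such an element lies only in $L^2_{-\delta}$ there, and the paper uses Corollary~\ref{cyl-cokernel} to produce an asymptotic limit $(\fb,\fs)\in\ker\fD_q$, then pairs against test elements $(\zeta,\nu)\in E^1_{(A,u)}$ with prescribed asymptotics $(b,s)$ to get $\langle(b,s),(\fb,\fs)\rangle=0$ for all $(b,s)$, forcing $\fb=\fs=0$; your phrase ``orthogonality against the finite-dimensional summands together with the decay estimates'' is exactly this, but you should be aware that the relevant Stokes contribution here is the boundary-at-infinity on the mixed end, not the matching-line or Lagrangian boundaries you emphasize.
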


In order to fix the constant $\delta_0$ in Theorem \ref{Fredholm-D-}, we need to look more closely at the mixed operator on the mixed ends. This will be addressed in Subsection \ref{Fredholm-mixed-cyl}, where we also review some of the results of \cite{DFL:mix} relevant to the Fredholm property of mixed operators. We will come back to the proof of Theorem \ref{Fredholm-D-} in Subsection \ref{proof-thm-Fredholm-D}.

\subsubsection{Mixed cylinders and mixed operators}\label{Fredholm-mixed-cyl}

Suppose $(Y,E)$ is as in the previous sections and $I$ is a Riemannian connected 1-dimensional manifold. Thus, $I$ is either an open interval in $\R$ or $S^1$ with a fixed length. The {\it cylinder quintuple} associated to $I$ is given as 
\[
  \fc_I:=(I\times Y,E\times I,[0,2]\times I,\mathcal M(\Sigma,F),L(Y,E)).
\]
We fix the product metric on $I\times Y$ and a family of almost complex structures $\{J_s\}_{s\in [0,2]}$ on $\mathcal M(\Sigma,F)$ such that $J_s=J_*$ for $s<1$. This family induces a family of almost complex structures parametrized by $[0,2]\times I$, which is constant with respect to the second component. For instance, the restriction of a mixed pair for the special quintuple to the mixed end associated to $Y$ determines a mixed pair for the cylinder quintuples $\fc_{(2,\infty)}$. 

As in the case of special quintuples, we may associate a mixed operator $\mathcal D_{(A,u)}$ to any mixed pair $(A,u)$ on the cylinder quintuple $\fc_I$. The domain $E_{(A,u)}^k(I)$ of this operator consists of 
\begin{equation}\label{domain-cylinder-mixed-op}
	\zeta \in L^2_{k,loc}(I\times Y,\Lambda^1\otimes E), \hspace{1cm} \nu\in L^2_{k,loc}([0,2]\times I,u^*T\mathcal M(\Sigma,F)),
\end{equation}	
such that $*\zeta|_{\Sigma\times I}=0$, and for any $\theta\in I$, we have
\begin{equation}\label{bdry-matching-cylinder}
  \nu(2,\theta)\in T_{u(2,\theta)}L(Y,E),\hspace{1cm} d_A\zeta\vert_{\Sigma\times \{(0,\theta)\}}=0,\hspace{1cm} [\zeta\vert_{\Sigma\times \{(0,\theta)\}}]=\nu(0,\theta)
\end{equation}
In the case that $I$ is an infinite interval, we demand that an element $(\zeta,\nu)\in E_{(A,u)}^k(I)$ has a finite weighted Sobolev norm with respect to the weight $e^{\delta \tau}$. Here $\delta$ is a real number, and $\tau:I\times Y\sqcup [0,2]\times I\to \R$, is the projection map  to $I$. 
For any $(\zeta,\nu)\in E_{(A,u)}^k(I)$, we have
\[
  \mathcal D_{(A,u)}(\zeta,\nu)=(d_{A}^*\zeta, d_{A}^+\zeta+(*_3{\rm Hess}_{A_\theta}h(\zeta_\theta))^+,\mathcal D_u\nu).
\]
where the Cauchy-Riemann operator $\mathcal D_u$ is defined as in \eqref{linear-CR}. 
The target of $\mathcal D_{(A,u)}$ is the space 
\begin{equation}\label{target-cylinder-mixed-op}
  L^2_{k-1,\delta}(I\times Y,(\Lambda^0\oplus \Lambda^+)\otimes E)\oplus L^2_{k-1,\delta}([0,2]\times I,u^*T\mathcal M(\Sigma,F)).
\end{equation}
where the wighted Sobolev space is defined again using the weight $e^{\delta \tau}$. Of course, if $I$ is a finite interval the weight does not play any role and we may replace the weighted Sobolev space $L^2_{k-1,\delta}$ with $L^2_{k-1}$.

There is a useful reparametrization of the target of $\mathcal D_{(A,u)}$ in the case of cylinder quintuples. Any section $\zeta$ of $(\Lambda^0\oplus \Lambda^+)\otimes E$ over $I\times Y$ has the form $(\varphi,\frac{1}{2}(d\theta\wedge b+*_3b))$ where $b$ is a section of $\Lambda^1(Y)\otimes E$ over $I\times Y$. In particular, we may associate $b-\varphi d\theta$, a section of  $\Lambda^1\otimes E$, to $\zeta$. This allows us to identify the target of $\mathcal D_{(A,u)}$ in \eqref{target-cylinder-mixed-op} with 
\begin{equation}\label{target-cylinder-mixed-op-repar}
  L^2_{k-1,\delta}(I\times Y,\Lambda^1\otimes E)\oplus L^2_{k-1,\delta}([0,2]\times I,u^*T\mathcal M(\Sigma,F)).
\end{equation}
We will use this reparametrization of the target of $\mathcal D_{(A,u)}$ in the rest of this subsection.

\begin{theorem}\label{Fred-mixed-op}
	Suppose $I=(a,b)$, $J=(c,d)$ are finite intervals with $a<c<d<b$. Suppose $(A,u)$ is a smooth mixed pair associated to the cylinder quintuple $\fc_I$.
	\vspace{-5pt}
	\begin{itemize}
		\item[(i)] Suppose $k\geq 1$ is an integer, $(\zeta,\nu)\in E^1_{(A,u)}(I)$ and $\mathcal D_{(A,u)}(\zeta,\nu)$ is in $L^2_{k-1}$. Then $(\zeta,\nu)\in E^k_{(A,u)}(J)$. There is also a constant $C$, independent of $(\zeta,\nu)$, such that
		\begin{equation}\label{Fred-ineq}
			|\!|(\zeta,\nu)|\!|_{L^2_{k}(J)}\leq C\left(|\!|\mathcal D_{(A,u)}(\zeta,\nu)|\!|_{L^2_{k-1}(I)}+|\!|(\zeta,\nu)|\!|_{L^2(I)}\right).
		\end{equation}
		\item[(ii)] Suppose $\zeta \in L^2(I\times Y,\Lambda^1\otimes E)$ and $\nu\in L^2([0,2]\times I,u^*T\mathcal M(\Sigma,F))$ satisfy
		\[
		  |\langle (\zeta,\nu),\mathcal D_{(A,u)}(\zeta',\nu')\rangle| \leq \kappa |\!|(\zeta',\nu') |\!|_{L^2}
		\]
		for any smooth element $(\zeta',\nu')$ of $ E^1_{(A,u)}(I)$ with compact support and for a fixed constant $\kappa$. Then $(\zeta,\nu)\in E^1_{(A,u)}(J)$.
	\end{itemize}
\end{theorem}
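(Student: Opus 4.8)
The statement to prove, Theorem \ref{Fred-mixed-op}, is a pair of elliptic estimates for the mixed operator $\mathcal D_{(A,u)}$ on a finite cylinder quintuple $\fc_I$: an interior elliptic regularity/bootstrapping inequality (i) and a weak-to-strong regularity statement (ii). The plan is to treat the two components of $\mathcal D_{(A,u)}$ --- the ASD part acting on $\zeta$ over $I\times Y$ and the Cauchy-Riemann part acting on $\nu$ over $[0,2]\times I$ --- by the standard elliptic theory for each, and then to control the coupling along the matching hypersurface $U_\partial\times\Sigma$ (here $\{0\}\times\Sigma\times I$) and the Lagrangian boundary $\eta_+$ (here $\{2\}\times\Sigma\times I$). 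The key point is that the two equations are genuinely decoupled in the interior, and are linked only through the boundary conditions \eqref{bdry-matching-cylinder}, so the analysis is local near those boundary components.

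\textbf{Step 1: interior regularity away from matching and Lagrangian boundaries.} On a subcylinder $J'\times Y$ with $\overline{J'}\subset I$, the operator $\zeta\mapsto (d_A^*\zeta, d_A^+\zeta+(\ast_3\Hess_{A_\theta}h(\zeta_\theta))^+)$ is the standard ASD deformation operator perturbed by a compact (zeroth order in the relevant sense, since $\Hess h$ depends only on $A|_{\mathrm{int}\,Y_0}$ and is a bounded operator) term; the standard elliptic estimate for the ASD operator with the boundary condition $\ast\zeta|_{\Sigma\times I}=0$ applies, giving the $L^2_k$ bound on $\zeta$ over the interior in terms of $\mathcal D_{(A,u)}(\zeta,\nu)$ and the $L^2$ norm of $\zeta$. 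Similarly, $\mathcal D_u$ is a first-order elliptic operator (a zeroth-order perturbation of $\overline\partial$), and over the interior of $[0,2]\times J'$ interior elliptic regularity gives the corresponding $L^2_k$ bound on $\nu$. The perturbation term involving $\Hess h$ is handled exactly as in Proposition \ref{linearized-3-man-Lag} and Lemma \ref{H-prop}: it is smoothing on the relevant Sobolev scale and contributes only a lower-order term to the estimate.

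\textbf{Step 2: regularity at the Lagrangian boundary $\eta_+$.} Near $\{2\}\times\Sigma\times I$ the condition $\nu(2,\theta)\in T_{u(2,\theta)}L(Y,E)$ is a totally real (Lagrangian) boundary condition for the Cauchy-Riemann operator $\mathcal D_u$; this is an elliptic boundary value problem (it satisfies the Lopatinski-Shapiro condition), so the standard boundary elliptic estimate for $\overline\partial$ with totally real boundary conditions upgrades $\nu\in L^2_1$ with $\mathcal D_u\nu\in L^2_{k-1}$ to $\nu\in L^2_k$ up to the boundary, with the estimate \eqref{Fred-ineq}. No coupling to $\zeta$ occurs here since $\eta_+$ is a boundary component of $S=[0,2]\times I$ not meeting $\partial(I\times Y)$.

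\textbf{Step 3: the matching boundary $\{0\}\times\Sigma\times I$ --- the main obstacle.} This is where the two equations are coupled: by \eqref{bdry-matching-cylinder}, $d_A\zeta|_{\Sigma\times\{0,\theta\}}=0$ and $[\zeta|_{\Sigma\times\{0,\theta\}}]=\nu(0,\theta)$, so the trace of $\zeta$ on the matching hypersurface is (up to gauge) the harmonic representative determined by $\nu(0,\cdot)$, viewed via the identification of $T\mathcal M(\Sigma,F)$ with $\mathcal H^1(\Sigma;\cdot)$. The strategy is to show that the combined boundary condition for the pair $(\mathcal D_A,\mathcal D_u)$ along the matching line is elliptic --- this is precisely what was established in \cite{DFL:mix} for the general mixed equation, and I would invoke that linear theory rather than reprove it. Concretely: restricting $\zeta$ to $\Sigma\times I$ and using the Hodge decomposition on $\Sigma$, the data $\zeta|_{\Sigma\times I}$ splits into the harmonic part (which is literally the trace of $\nu$ along $\{0\}\times I$) plus exact and coexact parts controlled by the gauge-fixing condition $d_A^*\zeta=0$ and the condition $\ast\zeta|_{\Sigma\times I}=0$; one then checks that the resulting model operator on the half-space (ASD on one side matched to $\overline\partial$ on the other through the shared surface $\Sigma$) has the required coercivity. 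Granting the ellipticity of this matched boundary problem (from \cite{DFL:mix}), part (i) follows by the usual argument: difference quotients / a priori estimates in the tangential directions plus the equation in the normal direction give $L^2_k$ control, iterated up from $k=1$.

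\textbf{Step 4: part (ii), weak solutions are strong.} Given $(\zeta,\nu)\in L^2$ satisfying the bound $|\langle(\zeta,\nu),\mathcal D_{(A,u)}(\zeta',\nu')\rangle|\le\kappa\|(\zeta',\nu')\|_{L^2}$ for all compactly supported smooth $(\zeta',\nu')\in E^1_{(A,u)}(I)$, the conclusion $(\zeta,\nu)\in E^1_{(A,u)}(J)$ is the statement that $(\zeta,\nu)$ is a weak solution of $\mathcal D^*_{(A,u)}$ applied to something in $L^2$, hence lies in the domain of $\mathcal D_{(A,u)}$. I would prove this by a mollification argument: convolve in the tangential ($I$- and $\Sigma$-) directions, use the hypothesis with test functions built from the mollified $(\zeta,\nu)$ to obtain uniform $L^2_1$ bounds on the mollifications (the boundary terms from integration by parts along the matching and Lagrangian hypersurfaces are exactly the ones that the definition of $E^1_{(A,u)}$ is designed to make vanish, again using the ellipticity from Step 3 and \cite{DFL:mix}), and pass to the limit; the boundary conditions \eqref{bdry-matching-cylinder} are recovered in the trace sense. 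The main obstacle throughout is bookkeeping the matching/Lagrangian boundary contributions in the integration by parts so that they either cancel (matching condition, as in the computation in the proof of Lemma \ref{top-energy-constant}, where $\int\tr(\partial_\theta A_\theta\wedge\zeta_\theta)$ and $\int\tr(\partial_\theta u\wedge\nu(0,\theta))$ cancel) or have a sign (Lagrangian condition); once these are under control the estimates are the routine ones, and the heavy lifting is cited from \cite{DFL:mix}.
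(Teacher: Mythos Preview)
Your proposal is correct and takes essentially the same approach as the paper: cite \cite{DFL:mix} for the unperturbed mixed operator and observe that the holonomy perturbation term $(*_3\Hess_{A_\theta}h(\zeta_\theta))^+$ is lower order. The paper's proof is in fact a two-line reduction---it does not sketch any of the content you put in Steps~1--4, but simply invokes \cite[Theorem~5]{DFL:mix} for the case $h=0$ and then appeals to part~(iii) of Proposition~\ref{hol-pert-properties} (the Hessian of a cylinder function extends to a bounded operator on each Sobolev scale $\mathcal T_k$) to absorb the perturbation; your Steps~1--4 are effectively an outline of what is proved inside \cite{DFL:mix} rather than something the present paper carries out.
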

\begin{proof}
	This theorem in the absence of the perturbation term $(*_3{\rm Hess}_{A_\theta}h(\zeta_\theta))^+$ is proved in \cite[Theorem 5]{DFL:mix}. 
	This special case and the property of 
	cylinder functions mentioned in part (iii) of Proposition \ref{hol-pert-properties} allows us to conclude the general case.
\end{proof}

\begin{remark}\label{fred-st-family}
	The mixed operators associated to special quintuples satisfy a uniform version of Theorem \ref{Fred-mixed-op}.
	To be more precise, suppose $(A_i,u_i)$ is a sequence of smooth mixed pairs associated to the cylinder quintuple $\fc_I$ that are $C^\infty$-convergent to $(A,u)$. Then there is a constant $C$ such that for any $i$ and any 
	$(\zeta,\nu)\in E^k_{(A_i,u_i)}(I)$, we have
		\begin{equation*}
			|\!|(\zeta,\nu)|\!|_{L^2_{k}(J)}\leq C\left(|\!|\mathcal D_{(A_i,u_i)}(\zeta,\nu)|\!|_{L^2_{k-1}(I)}+|\!|(\zeta,\nu)|\!|_{L^2(I)}\right).
		\end{equation*}
	As in Theorem \ref{Fred-mixed-op}, this is again a consequence of the results of \cite{DFL:mix}	 and Proposition \ref{hol-pert-properties}. (See \cite[Remarks 5.73 and 5.76]{DFL:mix}.) 
\end{remark}

The required result from \cite{DFL:mix} in the proof of Theorem \ref{Fred-mixed-op} uses a description of the mixed operator in terms of a {\it dimensionally reduced mixed operator}. First we review this description in a simpler case. Any $q\in L(Y,E)$ determines a mixed pair associated to the cylinder quintuple $\fc_I$. Suppose $B_q$ is a connection on $E$ that represents $q$, and its restriction to the boundary is $\alpha_q$. Let $A_q$ be the pullback of $B_q$ to $I\times Y$ and $u_q:[0,2]\times I\to \mathcal M(\Sigma,F)$ be the constant map to $q$. The pair $(A_q,u_q)$ defines a mixed pair for the cylinder quintuple $\fc_I$, which can be regarded as the counterpart of constant pairs for special quintuples. In fact, the restriction of a constant pair to the mixed end associated to $Y$ determines an element of the form $(A_q,u_q)$ for the interval $I=(2,\infty)$. For any 	
\begin{equation}\label{triple-Hilbert}
	  (\varphi,b,\nu)\in \Omega^0(Y,E)\oplus \Omega^1(Y,E)\oplus \Omega^0([0,1],T_q\mathcal M(\Sigma,F)),
\end{equation}
define
\[
  \fD_{q}(\varphi,b,\nu)=(d_{B_q}^*b,-*_3d_{B_q}b+ {\rm Hess}_{B_q}h(b)+d_{B_q}\varphi ,J_s\frac{d\nu}{ds}),
\]
which is again an element of $\Omega^0(Y,E)\oplus \Omega^1(Y,E)\oplus \Omega^0([0,1],T_q\mathcal M(\Sigma,F))$. Then we have
\[
  \mathcal D_{(A_q,u_q)}=\frac{d}{d\theta}-\fD_{q}.
\] 
Here we again use the identification of a $1$-form on $I\times Y$ with a map from $I$ to the space of sections of $\Lambda^0\oplus \Lambda^1$ on $Y$.

\begin{prop}\label{inv-mixed-op-const-sol}
	There is a positive constant $\delta_0$ such that if $0<\delta<\delta_0$ or $-\delta_0<\delta<0$ , then the operator 
	\[
	  \mathcal D_{(A_q,u_q)}:E_{(A_q,u_q)}^1(\R) \to L^2_{\delta}(\R\times Y ,\Lambda^1\otimes E)\oplus L^2_{\delta}([0,2]\times \R,u^*T\mathcal M(\Sigma,F)).\] 
	 is an isomorphism. Moreover, there is $C$ such that for any $(\zeta,\nu) \in E_{(A_q,u_q)}^1(\R)$, we have
	\[
	  C^{-1}|\!|(\zeta,\nu)|\!|_{L^2_{1,\delta}} \leq |\!|\mathcal D_{(A_q,u_q)}(\zeta,\nu)|\!|_{L^2_\delta}\leq C|\!|(\zeta,\nu)|\!|_{L^2_{1,\delta}}.
	\] 
\end{prop}

\begin{proof}
	Suppose $\mathcal H_q$ is the completion of the space of $(\varphi,b,\nu)$ as in \eqref{triple-Hilbert} with respect to the $L^2$ norm
	\begin{equation}
		 \langle (\varphi_0,b_0,\nu_0),(\varphi_1,b_1,\nu_1)\rangle_{L^2}:=\int_Y\tr\left(\varphi_0\wedge *\varphi_1+b_0\wedge *b_1\right)
		 +\int_{0}	^{1}\Omega(\nu_0(s),J_s\nu_1(s) )ds.
	\end{equation}
	Let $\mathcal W_q$ denote the $L^2_1$ completion of the space of all triples $(\varphi,b,\nu)$ as in \eqref{triple-Hilbert} such that
	\begin{equation}\label{match-bdry-cond}
		*b|_{\Sigma}=0,\hspace{1cm}d_{\alpha_q}b|_{\Sigma}=0,\hspace{1cm} [b|_{\Sigma}]=\nu(0),\hspace{1cm} \nu(1)\in T_qL(Y,E).
	\end{equation}
	Given a 1-parameter family $\{(\varphi_\theta,b_\theta)\}_{\theta\in \R}$, we may define a $1$-form on $\R\times Y $ as $b_\theta-\varphi_\theta d\theta$.
	Using this identification, we have 
	\[
	  E_{(A_q,u_q)}^1(\R)=L^2_{1,\delta}(\R,\mathcal H_q)\cap L^2_\delta(\R,\mathcal W_q),
	\]
	and 
	\[
	 L^2_{\delta}(\R\times Y ,\Lambda^1\otimes E)\oplus L^2_{\delta}([0,2]\times \R,T_q\mathcal M(\Sigma,F))=L^2_\delta(\R,\mathcal H_q).
	\]
	It is shown in \cite{DFL:mix} that $\fD_q:\mathcal H_q\to \mathcal H_q$ is an (unbounded) self-adjoint Fredholm operator with domain $\mathcal W_q$ and a discrete spectrum that has a 
	finite intersection with any finite interval. The kernel of $\fD_{q}$ can be identified with $T_qL(Y,E)$.
	Moreover, the operator $\mathcal D_{(A_q,u_q)}$ is invertible if and only if $\delta$ is not in the spectrum of $\fD_q$ (see \cite[Proposition 5.79]{DFL:mix}).
	Thus, it suffices to show that 
	\begin{equation}\label{delta-0-pos}
	  \delta_0:=\inf_{B_q}\{\delta_q\mid \delta_q \text{ is the smallest magnitude of a non-zero eigenvalue of $\fD_q$}\}>0
	\end{equation}
	Although $\fD_q$ is defined in terms of $B_q$, it essentially depends only on $q$, the gauge equivalence class of $B_q$ up to conjugation. In fact, the Hilbert spaces
	$\mathcal H_q$ and $\mathcal W_q$ define Hilbert space bundles $\bH$ and $\bW$ on $L(Y,E)$. (It is clear that $\bH$ is locally trivial, and local trivializations of $\bW$ is given by \cite[Proposition 5.27]{DFL:mix}.) 
	Then $\{\fD_q\}_q$ define a smooth family of Fredholm operators from the fibers of 
	$\bW$ to the fibers of $\bH$. The claim in \eqref{delta-0-pos} follows because the dimension of the kernels of these operators is 
	independent of $q$ and $L(Y,E)$ is compact.
\end{proof}

\begin{cor}\label{cyl-cokernel-const-pair}
	Suppose $\delta_0$ is as in Proposition \ref{inv-mixed-op-const-sol} and $0<\delta<\delta_0$. Suppose 
	\begin{equation}\label{domain-cylinder-mixed-op-loc-cor}
		\zeta \in L^2_{1,loc}(\R\times Y ,\Lambda^1\otimes E), \hspace{1cm} \nu\in L^2_{1,loc}([0,2]\times \R,T_q\mathcal M(\Sigma,F)),
	\end{equation}	
	such that $*\zeta|_{\Sigma\times \R}=0$, and the identities in \eqref{bdry-matching-cylinder} hold for any $\theta\in \R$ and the mixed pair $(A_q,u_q)$.
	Suppose also $(\zeta,\nu)\in L^2_{-\delta}$ and $(\zeta',\nu'):=\mathcal D_{(A_q,u_q)}(\zeta,\nu)\in L^2_\delta$, where $L^2_{-\delta}$ is the weighted Sobolev norm defined using the negative exponent $-\delta$. 
	Then there is $(\zeta_1,\nu_1)$, which is the pullback of an element of the kernel of $\fD_{q}$, such that $(\zeta-\zeta_1,\nu-\nu_1)$
	has finite $L^2_{1,\delta}$ norm.
\end{cor}
\begin{proof}
	Proposition \ref{inv-mixed-op-const-sol} implies that there is $(\zeta_0,\nu_0)\in E_{(A_q,u_q)}^1(\R)$ such that $\mathcal D_{(A_q,u_q)}(\zeta_0,\nu_0)=(\zeta',\nu')$.
	In particular, $(\zeta_1,\eta_1):=(\zeta-\zeta_0,\nu-\nu_0)$ belongs to the kernel of $\mathcal D_{(A_q,u_q)}$. Moreover, $e^{-|\theta|\delta}(\zeta_1,\eta_1)$ has a finite $L^2$ norm.
	There is a complete eigenspace
	decomposition $\{f_i\}_i$ associated to the operator $\fD_{q}$ which provides an orthonormal basis for $\mathcal H$. Using this eigenspace decomposition, we have 
	\begin{equation}\label{zeta-nu-ev-decom}
	  (\zeta_1,\nu_1)=\sum_{i}c_ie^{\lambda_i\theta}f_i
	\end{equation}
	where $\lambda_i$ is the eigenvalue of $f_i$ and $c_i\in \R$. Our assumption on $e^{-|\theta|\delta}(\zeta_1,\eta_1)$ implies that $c_i=0$ unless $\lambda_i=0$. This gives the claim.
\end{proof}

In fact, we shall need a generalization of Corollary \ref{cyl-cokernel-const-pair} where $(A_q,u_q)$ is replaced with a more general mixed pair $(A,u)$.

\begin{cor}\label{cyl-cokernel}
	Suppose $\delta_0$ is as in Proposition \ref{inv-mixed-op-const-sol} and $0<\delta<\delta_0$. For $q\in L(Y,E)$, suppose $(A,u)$ is a smooth mixed pair for the cylinder quintuple $\fc_{(0,\infty)}$ such that $A-A_q\in L^2_{l,\delta}$, $u$
	converges to $q$ as $\theta\to \infty$ and $d u$ is in $L^2_{l-1,\delta}$. Suppose 
	\begin{equation}\label{domain-cylinder-mixed-op-loc-cor}
		\zeta \in L^2_{1,loc}((0,\infty)\times Y,\Lambda^1\otimes E), \hspace{1cm} \nu\in L^2_{1,loc}([0,2]\times (0,\infty),u^*T\mathcal M(\Sigma,F)),
	\end{equation}	
	such that $*\zeta|_{\Sigma\times (0,\infty)}=0$, and the identities in \eqref{bdry-matching-cylinder} hold for the mixed pair $(A,u)$ and any $\theta\in (0,\infty)$.
	Suppose also $(\zeta,\nu)\in L^2_{1,-\delta}$ and $(\zeta',\nu'):=\mathcal D_{(A,u)}(\zeta,\nu)\in L^2_\delta$. 
	Then there is $(\zeta_1,\nu_1)$, which is the pullback of an element of the kernel of $\fD_{q}$, such that $(\zeta-\zeta_1,\nu-\nu_1) \in E^1_{(A,u)}((0,\infty))$.
\end{cor}
\begin{proof}
	Suppose $(B_\theta,u_\theta)$ denotes the restriction of $A$, $u$ to $\{\theta\}\times Y$ and $[0,2]\times\{\theta\}$. Analogous to $\mathcal H_q$ and $\mathcal W_q$ in the proof of 
	Proposition \ref{inv-mixed-op-const-sol}, we may use $(B_\theta,u_\theta)$ to define the completions $\mathcal H_\theta$ and and $\mathcal W_\theta$ of 
	\[
	  \Omega^0(Y,E)\oplus \Omega^1(Y,E)\oplus \Omega^0([0,1],u_\theta^*T\mathcal M(\Sigma,F)).
	\]
	We may use local trivializations of the Hermitian bundles $(T\mathcal M(\Sigma,F),\Omega,J_s)$ in a neighborhood of $q$, to identity $\mathcal H_\theta$ and $\mathcal H_q$ in the obvious way.
	This allows us to drop $\theta$ from our notation for $\mathcal H_\theta$, and denote it by $\mathcal H$.
	
	 To prove the claim, it suffices to show that there is $(\zeta_1,\nu_1)$ as above such that for some $T_0>0$ the restriction of $(\zeta-\zeta_1,\nu-\nu_1)$ to $(T_0,\infty)\times Y$ and $[0,2]\times (T_0,\infty)$ is in $L^2_{1,\delta}$. In particular, by taking 
	$T_0$ large enough, we may assume that $(B_\theta,u_\theta)$ is in a neighborhood of $(B_q,u_q)$ such that we can apply \cite[Proposition 5.27]{DFL:mix} 
	and show that there are isomorphisms 
	\[
	  Q_\theta: \mathcal H \to \mathcal H
	\]
	such that $Q_\theta$ maps $\mathcal W_q$ to $\mathcal W_\theta$. 
	Moreover, $Q_\theta$ maps the subspace of $L^2_k$ elements of $\mathcal H_\theta$ isomorphically onto the subspace of $L^2_k$ elements of $\mathcal H_q$, 
	and satisfies
	\begin{equation}\label{Q-theta-}
		C_k^{-1}|\!|(\varphi,b,\nu) |\!|_{L^2_k}  \leq |\!|Q_\theta(\varphi,b,\nu)|\!|_{L^2_k} \leq C|\!|(\varphi,b,\nu)|\!|_{L^2_k}
	\end{equation}	
	for a constant $C_k$  independent of $\theta$ and for any $(\varphi,b,\nu)$. 
	In fact, the operator norm of $Q_\theta-{\rm Id}$ with respect to the $L^2_k$ norm is bounded by $C_ke^{-\delta \theta}$. The map $\theta \to Q_\theta$ as a map from $(T_0,\infty)$ to the space $B(\mathcal H)$
	of bounded operators of $\mathcal H$ is smooth and its derivatives satisfy the analogue of \eqref{Q-theta-}. In particular, the operators $Q_\theta$ can be put together to define 
	\begin{align}
	  \bQ:L^2_{k,loc}((T_0,\infty)\times Y,\Lambda^1\otimes E)\oplus L^2_{k,loc}([0,2]\times (T_0,\infty),T_q\mathcal M(\Sigma,F)) \to &\hspace{4cm}\nonumber\\
	  \hspace{3cm} \to L^2_{k,loc}((T_0,\infty)\times Y,\Lambda^1\otimes E)\oplus L^2_{k,loc}([0,2]\times (T_0,\infty),u^*T&\mathcal M(\Sigma,F))\label{bQ}
	\end{align}
	for any $k$  \cite[Lemma 5.67]{DFL:mix}. 
	
	The operator $\bQ$ maps the domain and the target of $\mathcal D_{(A_q,u_q)}$ respectively to the the domain and the target of $\mathcal D_{(A,u)}$, and we have
	\begin{equation}\label{change-basis-mixed-op}
	  \bQ^{-1} \circ \mathcal D_{(A,u)}\circ \bQ=\frac{d}{d\theta}-\fD_{q}-S_\theta,
	\end{equation}
	where $S_\theta:\mathcal W_q\to \mathcal H_q$ is a bounded linear operator whose norm is bounded by $C e^{-\delta \theta}$.
	
	Using \eqref{change-basis-mixed-op}, we may write
	\[
	  \mathcal D_{(A_q,u_q)}(\bQ^{-1} (\zeta,\nu))=\bQ^{-1}(\zeta',\nu')+\bS \bQ^{-1} (\zeta,\nu).
	\]
	where $\bS$ is defined using the operators $S_\theta$. By assumption the first term on the left hand side has a finite $L^2_\delta$ norm and the second term has a finite $L^2$ norm. 
	Using a cutoff function $\rho:(T_0,\infty)\to \R$
	which vanishes for $\theta<T_0+\frac{1}{2}$ and is equal to $1$ for $\theta>T_0+1$, we may extend $\bQ(\zeta,\nu)$ to 
	\[
	  (\widetilde \zeta,\widetilde \nu) \in L^2_{1,loc}(\R\times Y ,\Lambda^1\otimes E) \oplus L^2_{1,loc}([0,2]\times \R,T_q\mathcal M(\Sigma,F)),
	\]
	such that $\mathcal D_{(A_q,u_q)}(\widetilde \zeta,\widetilde \nu)$ has a finite $L^2_{-\delta/2}$ norm. (In fact, $L^2_{-\delta/2}$ can be replaced with $L^2$.) By applying the argument in 
	the proof of Corollary \ref{cyl-cokernel-const-pair}, we may conclude that $(\widetilde \zeta,\widetilde \nu)$ has finite $L^2_{1,-\delta/2}$ norm. Thus, the same claim holds for $(\zeta,\nu)$.
	By iterating the same argument, we can show that now that $\mathcal D_{(A_q,u_q)}(\bQ^{-1}(\zeta,\nu))$ has a finite $L^2_{\delta/2}$ norm. Using 
	Corollary \ref{cyl-cokernel-const-pair} again we may conclude that there is $(\zeta_1,\nu_1)$, which is the pullback of an element of the kernel of $\fD_{q}$, such that $\bQ^{-1}(\zeta,\nu)-(\zeta_1,\nu_1)$
	has finite $L^2_{\delta/2}$ norm. Iterating this argument once more, we conclude that $\bQ^{-1}(\zeta,\nu)-(\zeta_1,\nu_1)$ has finite $L^2_{\delta}$ norm. Our assumption on $\bQ$ implies that 
	$(\zeta,\nu)-(\zeta_1,\nu_1)$ also has finite $L^2_{\delta}$ norm.
\end{proof}

\begin{remark}\label{adjoint-Fred}
	The analogues of the results of this subsection hold for the adjoint of the mixed operator. In fact, the adjoint of the mixed 
	operator for cylinder quintuples have a similar form as 
	the mixed operator (see \cite[Section 5]{DFL:mix} for more details), and the results of this section would immediately 
	imply the corresponding results for the adjoint of the mixed operator.
\end{remark}

\subsubsection{Proof of Theorem \ref{Fredholm-D-}}\label{proof-thm-Fredholm-D}

In this subsection, we prove Theorem \ref{Fredholm-D-} on Fredholmness of mixed operators associated to the special quintuple where $\delta_0$ is given by Proposition \ref{inv-mixed-op-const-sol}. Suppose $(A,u)$ is a smooth mixed operator that satisfies the assumption of Theorem \ref{Fredholm-D-}.  Let also $X_T$ denote the compact subspace of $X$ given as the complement of the subspaces $(T,\infty)\times Y$, $(-\infty,-T)\times Y'$ and $(-\infty,-T)\times Y_\#$ in $X$. Similarly, let $U_T$ be the compact subspace of $U_+$ given as the complement of $[0,2]\times (T,\infty)$, $[0,2]\times (-\infty,-T)$ and $(T,\infty)\times [-1,1]$.

\begin{lemma}\label{mixed-op-reg-lemma}
	For $(A,u)$ as above and any $k\geq 1$, there are constants $C$ and $T_0$ such that the following holds.
	Suppose $(\zeta,\nu)\in E^1_{(A,u)}$ and $\mathcal D_{(A,u)}(\zeta,\nu)\in L^2_{k-1,\delta}$. Then $(\zeta,\nu)\in E^k_{(A,u)}$, and we have
	\begin{equation}\label{mixed-op-reg}
		|\!|(\zeta,\nu)|\!|_{E^k_{(A,u)}} \leq C(|\!|\mathcal D_{(A,u)}(\zeta,\nu)|\!|_{L^2_{k-1,\delta}}+ |\!|\zeta|\!|_{L^2(X_{T_0})}+|\!|\nu|\!|_{L^2(U_{T_0})}+|s|+|s'|).
	\end{equation}
	A similar result holds for $\mathcal D^*_{(A,u)}$.
\end{lemma}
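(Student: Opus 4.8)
The plan is to prove the elliptic estimate \eqref{mixed-op-reg} by the standard ``interior plus ends'' strategy: patch together the finite-cylinder estimate of Theorem \ref{Fred-mixed-op}(i) on a compact core of $X$ with the model estimates on the four ends, using a partition of unity subordinate to this cover. First I would fix a covering of $X$ (and the matching $U_+$) by a compact piece containing $X_{T_0}$ (resp. $U_{T_0}$) together with the gauge theoretic end $(-\infty,-T_0-1)\times Y_\#$, the two mixed ends $(T_0,\infty)\times Y$ and $(-\infty,-T_0)\times Y'$, and the symplectic end $(T_0,\infty)\times[-1,1]$; choose cutoff functions $\beta_{\mathrm{core}}, \beta_G, \beta_+, \beta_-, \beta_S$ adapted to this cover. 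The point of splitting off the ends is that on each of them the operator $\mathcal D_{(A,u)}$ is a small perturbation (in the appropriate weighted sense, because $A-A_q\in L^2_{l,\delta}$ and $du\in L^2_{l-1,\delta}$ by hypothesis (iii) of Theorem \ref{exp-decay}) of an invertible model operator: on the mixed ends this is precisely $\tfrac{d}{d\theta}-\fD_q$, which is invertible with the uniform estimate of Proposition \ref{inv-mixed-op-const-sol} once $0<\delta<\delta_0$; on the gauge-theoretic end it is the (deformed) ASD operator on a cylinder over $Y_\#$, invertible because $\alpha$ is regular (this is where $\mathcal H^1_{h,h'}(Y_\#;B)=0$, Lemma \ref{comp-per-reg}, enters); and on the symplectic end it is the linearized Cauchy--Riemann operator on a strip whose ends converge to a transverse intersection point $\beta\in\fC_S$, hence again invertible with exponential weight.

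Next I would run the bootstrap to get regularity: starting from $(\zeta,\nu)\in E^1_{(A,u)}$ with $\mathcal D_{(A,u)}(\zeta,\nu)\in L^2_{k-1,\delta}$, apply Theorem \ref{Fred-mixed-op}(i) to a chain of overlapping finite subcylinders exhausting the core to conclude $(\zeta,\nu)\in E^k$ there, and apply the weighted elliptic regularity of the model operators (together with Remark \ref{adjoint-Fred} for the adjoint case, and the fact, from Theorem \ref{Fred-mixed-op} and Remark \ref{fred-st-family}, that the perturbation terms ${\rm Hess}_{A_t}h$ are compact and controlled) on each end to conclude $(\zeta,\nu)\in E^k$ there as well; since the covering is finite, $(\zeta,\nu)\in E^k_{(A,u)}$. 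The estimate \eqref{mixed-op-reg} then follows by writing $(\zeta,\nu)=\sum \beta_\bullet(\zeta,\nu)$, applying the inhomogeneous estimate for the relevant model operator to each piece $\beta_\bullet(\zeta,\nu)$ — on the four ends the model is invertible, so no $L^2$ error term on the full end is needed and the commutator $[\mathcal D_{(A,u)},\beta_\bullet]$ is supported in the overlap with the core and hence absorbed into $\|\zeta\|_{L^2(X_{T_0})}+\|\nu\|_{L^2(U_{T_0})}$; on the core one uses \eqref{Fred-ineq}, whose $L^2(I)$ error term is exactly the core term on the right of \eqref{mixed-op-reg} — and summing. The terms $|s|,|s'|$ appear because the $E^k$-norm \eqref{norm-fB} explicitly includes $|s|+|s'|$, the asymptotic values of $\nu$ along the mixed ends, and these are not controlled by the $L^2_{k,\delta}$-norms over the ends alone; one extracts $s,s'$ from $\nu$ via the projection onto $\ker\fD_q\cong T_qL(Y,E)$ supplied by Proposition \ref{inv-mixed-op-const-sol} and Corollary \ref{cyl-cokernel}.

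The main obstacle I anticipate is handling the two mixed ends correctly: there $(A,u)$ is only asymptotic to a constant pair $(A_q,u_q)$ with $L^2_{l,\delta}$-decay, so I must pass from the model operator $\mathcal D_{(A_q,u_q)}$ to $\mathcal D_{(A,u)}$ via the bundle isomorphisms $\bQ$ of Corollary \ref{cyl-cokernel} and control the perturbation operator $S_\theta$ appearing in $\bQ^{-1}\mathcal D_{(A,u)}\bQ=\tfrac{d}{d\theta}-\fD_q-S_\theta$, whose norm decays like $Ce^{-\delta\theta}$; choosing $T_0$ large enough that $\|S_\theta\|$ is small on $\theta>T_0$ lets the model estimate absorb the $S_\theta$ term, exactly as in the proof of Corollary \ref{cyl-cokernel}. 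Once this is in place the remaining ends and the core are routine, and the adjoint statement follows by the symmetry noted in Remark \ref{adjoint-Fred} together with Theorem \ref{Fred-mixed-op}(ii).
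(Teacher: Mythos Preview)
Your proposal is correct and follows essentially the same route as the paper: split into a compact core plus the four ends, use Theorem \ref{Fred-mixed-op} on the core, and on each end treat $\mathcal D_{(A,u)}$ as a small perturbation of the relevant invertible model operator (via the conjugation $\bQ$ and Proposition \ref{inv-mixed-op-const-sol} on the mixed ends, and non-degeneracy of $\alpha,\beta$ on the gauge-theoretic and symplectic ends), choosing $T_0$ large enough to absorb the perturbation $S_\theta$. The paper does not phrase the patching via a partition of unity but simply adds up estimates on overlapping regions; this is a cosmetic difference only.
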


\begin{proof}
	Theorem \ref{Fred-mixed-op} and standard regularity results about the linearized ASD and CR equations imply that $(\zeta,\nu)\in L^2_{k,loc}$. Moreover, for any $T_0>1$, there is $C$ such that 
	\begin{equation}\label{mixed-op-reg-comp-sub}
	  |\!|\zeta|\!|_{L^2_k(X_{T_0+1})}+|\!|\nu|\!|_{L^2_k(U_{T_0+1})}\leq C(|\!|\mathcal D_{(A,u)}(\zeta,\nu)|\!|_{L^2_{k-1,\delta}}+ |\!|\zeta|\!|_{L^2(X_{T_0+2})}+|\!|\nu|\!|_{L^2(U_{T_0+2})}).
	\end{equation}
	Next, we obtain control over the decay of $(\zeta,\nu)$ on the mixed end associated to 
	$Y$. Suppose $(A,u)$ is asymptotic to $(B_q,q)$ on this mixed end for $q\in L(Y,E)$. 
	We use a similar construction as in the proof of Corollary \ref{cyl-cokernel}. 
	Suppose $(B_\theta,u_\theta)$, $\mathcal H$ and $\mathcal W_\theta$
	are as in there, and for any $\theta\in (T_0,\infty)$, let the isomorphism  
	$Q_\theta:\mathcal H\to \mathcal H$ be given by \cite[Proposition 5.27]{DFL:mix}. 
	Suppose $\bQ$ is obtained from $Q_\theta$ analogous to \eqref{bQ}. Let $(\widehat \zeta,\widehat \nu)$ be the result of applying $\bQ^{-1}$ to the restriction of $(\zeta,\nu)$ to $(T_0,\infty)\times Y$ 
	and $[0,2]\times (T_0,\infty)$. We have
	\[
	  \mathcal D_{(A_q,u_q)}(\widehat \zeta,\widehat \nu)=\bQ^{-1} \mathcal D_{(A,u)}(\zeta,\nu)+\bS\bQ^{-1}(\zeta,\nu),
	\]
	where $\mathcal D_{(A_q,u_q)}$ is the mixed operator associated to the pullback of $(B_q,q)$ on the cylinder quintuple, and $\bS$ is given by a family of operators $S_\theta$ defined as in \eqref{change-basis-mixed-op}. 
	In particular, for a given positive constant $\epsilon$, we may assume that $T_0$ is chosen such that
	\begin{equation}\label{fixing-T}
	  |\!|\mathcal D_{(A_q,u_q)}(\widehat \zeta, \widehat \nu)|\!|_{L^2_{k-1,\delta}(T_0,\infty)}\leq C_0 |\!|\mathcal D_{(A,u)}(\zeta,\nu)|\!|_{L^2_{k-1,\delta}(T_0,\infty)}+\epsilon|\!|(\zeta,\nu)|\!|_{E^k_{(A,u)}(T_0,\infty)},
	\end{equation}
	where $C$ is a constant independent of $(\zeta,\nu)$. Here $|\!|\mathcal D_{(A,u)}(\zeta,\nu)|\!|_{L^2_{k-1,\delta}(T_0,\infty)}$ denotes the $L^2_{k,\delta}$ norm of the restriction of $\mathcal D_{(A,u)}(\zeta,\nu)$ to $(T_0,\infty)\times Y$. 

	Suppose $(\zeta,\nu)$ is asymptotic to $(b,s)$ on the mixed end associated to $Y$. Theorem \ref{Fred-mixed-op} for the pair $(A_q,u_q)$ implies that there is a constant 
	$C$ such that for any $T$, we have
	\begin{align*}
	  |\!|(\widehat \zeta-\pi^*b,\widehat \nu-\pi^*s)&|\!|_{L^2_{k}(T-1,T+1)}\leq \\
	  &C(|\!|\mathcal D_{(A_q,u_q)}(\widehat \zeta,\widehat \nu)|\!|_{L^2_{k-1}(T-2,T+2)}+|\!|(\widehat \zeta-\pi^*b,\widehat \nu-\pi^*s)|\!|_{L^2(T-2,T+2)}).
	\end{align*}	
	A weighted sum of these inequalities imply that
	\begin{align*}
	  |\!|(\widehat \zeta-\pi^*b,\widehat \nu-\pi^*s)&|\!|_{L^2_{k,\delta}(T_0+1,\infty)}\leq C(|\!|\mathcal D_{(A_q,u_q)}(\widehat \zeta,\widehat \nu)|\!|_{L^2_{k-1,\delta}(T_0,\infty)}+|\!|(\widehat \zeta-\pi^*b,\widehat \nu-\pi^*s)|\!|_{L^2_\delta(T_0,\infty)}).
	\end{align*}		
	The last term in the above inequality can be controlled by $\mathcal D_{(A_q,u_q)}(\widehat \zeta,\widehat \nu)$. In fact, by multiplying $(\widehat \zeta-\pi^*b,\widehat \nu-\pi^*s)$
	by a cutoff function $g:(T_0,\infty)\to \R$ satisfying $g(\theta)=1$ for $\theta\geq T_0+1$ and $g(\theta)=0$ for $\theta\leq T_0+1/2$, we may regard it as an element of $E_{(A_q,u_q)}^k(\R)$. In particular, applying 
	Proposition \ref{inv-mixed-op-const-sol} implies that 
	\begin{align*}
	  |\!|(\widehat \zeta-\pi^*b,\widehat \nu-\pi^*s)&|\!|_{L^2_{k,\delta}(T_0+1,\infty)}\leq \\
	  &C(|\!|\mathcal D_{(A_q,u_q)}(\widehat \zeta,\widehat \nu)|\!|_{L^2_{k-1,\delta}(T_0,\infty)}+|\!|(\widehat \zeta-\pi^*b,\widehat \nu-\pi^*s)|\!|_{L^2(T_0,T_0+1)}+|s|).
	\end{align*}	
	
	Our assumption on the exponential decay of $(A,u)$ over the mixed end and the properties of the map $\bQ$ imply that 
	\[
	  |\!|(\zeta-\pi^*b,\nu-\pi^*s)|\!|_{L^2_{k,\delta}(T_0+1,\infty)}\leq C(|\!|(\widehat \zeta-\pi^*b,\widehat \nu-\pi^*s)|\!|_{L^2_{k,\delta}(T_0+1,\infty)}+|s|).
	\]
	In particular, from the previous two inequalities and \eqref{fixing-T}, we conclude that
	\begin{align*}
	  |\!|(\zeta,\nu)&|\!|_{E^k_{(A,u)}(T_0+1,\infty)}
	  \leq C( |\!|\mathcal D_{(A_q,u_q)}(\widehat \zeta,\widehat \nu)|\!|_{L^2_{k-1,\delta}(T_0,\infty)}+|\!|(\widehat \zeta-\pi^*b,\widehat \nu-\pi^*s)|\!|_{L^2(T_0,T_0+1)}+|s|)\\
	  &\leq C( C_0|\!|\mathcal D_{(A,u)}(\zeta,\nu)|\!|_{L^2_{k-1,\delta}}+\epsilon|\!|(\zeta,\nu)|\!|_{E^k_{(A,u)}(T_0,\infty)}+|\!|\zeta|\!|_{L^2(X_{T_0+1})}+|\!|\nu|\!|_{L^2(U_{T_0+1})}+|s|).
	\end{align*}	
	By picking $\epsilon$ small enough, we may rearrange the terms and use \eqref{mixed-op-reg-comp-sub} to remove $\epsilon|\!|(\zeta,\nu)|\!|_{E^k_{(A,u)}(T_0,\infty)}$ from the above inequality. In summary, we have
	\begin{equation}\label{control-mixed-end-compact-part}
	  |\!|(\zeta,\nu)|\!|_{E^k_{(A,u)}(T_0+1,\infty)}\leq C( |\!|\mathcal D_{(A,u)}(\zeta,\nu)|\!|_{L^2_{k-1,\delta}}+|\!|\zeta|\!|_{L^2(X_{T_0+2})}+|\!|\nu|\!|_{L^2(U_{T_0+2})}+|s|).
	\end{equation}
	We obtain a similar inequality for the mixed end associated to $Y'$.

	Non-degeneracy of $\alpha\in \fC_G$, $\beta\in \fC_S$  and standard results about solutions of the ASD and the CR equations on cylinders
	imply that
	\begin{align*}
	  |\!|\zeta|\!|_{L^2_k(-\infty,-T_0-1)\times Y_\#}+|\!|\nu&|\!|_{L^2_k(T_0+1,\infty)\times [-1,1]}\leq \\&C( |\!|\mathcal D_{(A,u)}(\zeta,\nu)|\!|_{L^2_{k-1,\delta}}+|\!|\zeta|\!|_{L^2(X_{T_0+2})}+|\!|\nu|\!|_{L^2(U_{T_0+2})}+|s|).
	\end{align*}
	In fact, this inequality can be verified following a similar strategy analogous to \eqref{control-mixed-end-compact-part}. 
	Combining this inequality, \eqref{control-mixed-end-compact-part} and its counterpart for $Y'$ gives us the desired result after replacing $T_0$ with $T_0+2$.
	The proof of the analogous result for the adjoint operator 
	$\mathcal D^*_{(A,u)}$ where we replace Theorem \ref{Fred-mixed-op} and Proposition \ref{inv-mixed-op-const-sol} with the corresponding result for the adjoint operator (see Remark \ref{adjoint-Fred}).
\end{proof}

As a consequence of Lemma \ref{mixed-op-reg-lemma}, the operators $\mathcal D_{(A,u)}$ and $\mathcal D_{(A,u)}^*$ have finite dimensional kernels and closed images. Moreover, in order to show that the cokernel of $\mathcal D_{(A,u)}$ is finite dimensional, it suffices to show that for $k=1$, the cokernel of $\mathcal D_{(A,u)}$ can be identified with the kernel of $\mathcal D_{(A,u)}^*$. An element of the cokernel of $\mathcal D_{(A,u)}$ in this case is given by $(\mu,\xi,z)$ such that for any $(\zeta,\nu)\in E^1_{(A,u)}$, we have 
\begin{equation}\label{cokernel-identity}
	\langle (\mu,\xi,z),\mathcal D_{(A,u)}(\zeta,\nu)\rangle_{L^2}:=2\int_{X}\langle (\mu,\xi),\mathcal D_A(\zeta)\rangle+\int_{U_+}\langle z,\mathcal D_u(\nu)\rangle=0.
\end{equation}
Moreover, $\xi$, $\mu$ and $z$ belong to $L^2_{loc}$, the restrictions of $\xi$ and $\mu$ to $(-\infty,-3]\times Y_\#$ and $z$ to $[3,\infty)\times [-1,1]$ are in $L^2$, the restrictions of $\xi$, $\mu$ and $z$ to the mixed ends are in $L^2_{-\delta}$. In particular, an element of this space is allowed to have an exponential growth by a controlled quantity over the mixed ends. We included a factor $2$ in our convention for the $L^2$ pairing so that after integration by parts the boundary terms behave in the desired form.

Theorem \ref{Fred-mixed-op} and standard results on Fredholm theory of the adjoints of linearized ASD and CR operators imply that $(\mu,\xi,z)$ is in fact in $L^2_{1,loc}$ and properties (i) (for $k=1$), (iii) and (iv) of Definition \ref{K-A-u} hold for $(\mu,\xi,z)$. The proof of property (i) of Definition \ref{K-A-u} for $(\mu,\xi,z)$ uses the fact that $\alpha$ and $\beta$ are non-degenerate elements of $\fC_G$ and $\fC_S$. Now applying \eqref{cokernel-identity} to $\zeta$ and $\nu$ which are compactly supported in the interior of $X$ and $U_+$ implies that $\mathcal D_{(A,u)}^*(\mu,\xi,z)=0$. In particular, the inequality in \eqref{Fred-ineq} in Theorem \ref{Fred-mixed-op} can be used to show that the restrictions of $\xi$, $\mu$ and $z$ to
the mixed ends are in fact in $L^2_{1,-\delta}$. So, we can apply Corollary \ref{cyl-cokernel} to the restrictions of $(\mu,\xi,z)$ to the mixed ends and conclude that the following holds. There are $\fb\in \mathcal H^1_h(Y;B)$ and $\fb'\in \mathcal H^1_{h'}(Y';B')$ such that 
\[
  \mu-\frac{1}{2}(d\theta \wedge \pi^*(\fb)+*_3\pi^*(\fb))|_{Y\times [2,\infty)}\hspace{1cm}\text{and}\hspace{1cm} \mu-\frac{1}{2}(d\theta \wedge \pi^*(\fb')+*_3\pi^*(\fb'))|_{Y'\times (-\infty,-2])}
\] 
have finite $L^2_{1,\delta}$ norms. Let $\fs$ and $\fs'$ be tangent vectors to $\mathcal M(\Sigma,F)$ at the points $q$ and $q'$ given by restriction of $\fb$ and $\fb'$ to the boundary. Then 
\[z-\pi^*(\fs)|_{[0,2]\times [2,\infty)}\hspace{1cm} \text{and} \hspace{1cm} z-\pi^*(\fs')|_{[0,2]\times (-\infty,-2])}\] 
also have finite $L^2_{1,\delta}$ norms. The Stokes' thoerem together with $\mathcal D_{(A,u)}^*(\mu,\xi,z)=0$ shows that for an arbitrary element $(\zeta,\nu)$ of $E^1_{(A,u)}$ which is asymptotic to $(b,s)$ on the mixed end associated to $Y$ and is asymptotic to $(b',s')$ on the mixed end associated to $Y'$, we have
\[
   \langle (\mu,\xi,z),\mathcal D_{(A,u)}(\zeta,\nu)\rangle_{L^2}=\langle (b,s),(\fb,\fs)\rangle_{L^2}-\langle (b',s'),(\fb',\fs')\rangle_{L^2}.
\]
In particular, \eqref{cokernel-identity} implies that $(\fb,\fs)=0$ and $(\fb',\fs')=0$. Thus, property (ii) of Definition \ref{K-A-u} holds for $(\mu,\xi,z)$, and hence $(\mu,\xi,z)\in K^1_{(A,u)}$. In summery, cokernel of $\mathcal D_{(A,u)}$ can be identified with the kernel of $\mathcal D_{(A,u)}^*$. In particular, it is finite dimensional. This completes the proof of the claim that $\mathcal D_{(A,u)}$ is Fredholm. Similarly, one can show that the cokernel of $\mathcal D_{(A,u)}^*$ can be identified with the kernel of $\mathcal D_{(A,u)}$ and $\mathcal D_{(A,u)}^*$ is Fredholm.

\subsection{Proof of Proposition \ref{cons-map-ind}}\label{cons-map-ind-subs}

The goal of this subsection is to show that the mixed operator $\mathcal D_{(A_\alpha,u_\alpha)}$ associated to a constant solution $(A_\alpha,u_\alpha)$ is an isomorphism. We start with the following general result about the elements of $E^1_{(A_\alpha,u_\alpha)}$. 

\begin{lemma}
	For any $(\zeta,\nu)\in E^1_{(A_\alpha,u_\alpha)}$, the expression
	\begin{equation}\label{lin-top-energy}
		\int_{X}\tr(d_{A_\alpha}^{h,h'}\zeta\wedge d_{A_\alpha}^{h,h'}\zeta)+\int_{U_+}\Omega(d \nu, d\nu)
	\end{equation}
	vanishes, where 
	\begin{equation}\label{pert-ex-der}
		d_{A_\alpha}^{h,h'}\zeta:=d_{A_\alpha}\zeta+*_3{\rm Hess}_{B_\alpha}h(\zeta_t)+
		*_3{\rm Hess}_{B_\alpha'}h'(\zeta_t').
	\end{equation}
	with $B_\alpha$ (resp. $B_\alpha'$) being the the restrictions of $A_\alpha$ to $\{t\}\times Y_0\subset X$ (resp. $\{t\}\times -Y_0'\subset X$), which is independent of $t$. The term $d\nu$ in \eqref{lin-top-energy} denotes the exterior derivative of 
	$\nu:U_+ \to T_\alpha\mathcal M(\Sigma,F)$.
\end{lemma}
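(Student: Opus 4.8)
The identity \eqref{lin-top-energy} is, up to a positive constant, the second variation at the constant pair $(A_\alpha,u_\alpha)$ of the topological energy $\mathcal E$ of Subsection~\ref{bN-definition}: since $\mathcal E$ is locally constant on $\bB(\alpha,\alpha)$ by Lemma~\ref{top-energy-constant}, with $\mathcal E(A_\alpha,u_\alpha)=0$, and the ``linear part'' $\Xi_\alpha:=F_{A_\alpha}+*_3\nabla_{B_\alpha}h+*_3\nabla_{B_\alpha'}h'$ vanishes identically on $X$ (because $\phi_h(B_\alpha)=0$, $\phi_{h'}(B_\alpha')=0$ and $A_\alpha$ is flat on $U_-\times\Sigma$), the quadratic term in the Taylor expansion of $\mathcal E$ along a deformation in the direction $(\zeta,\nu)$ — which is exactly the left side of \eqref{lin-top-energy} — must vanish. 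Rather than organizing this through the chart $P$ of Subsection~\ref{conf-mixed} (which would force a density reduction, as $(\zeta,\nu)$ lies in $E^1$ rather than $E^l$), I would carry it out directly by integration by parts, step by step parallel to the proof of Lemma~\ref{top-energy-constant}. A preliminary remark: writing $\omega:=d^{h,h'}_{A_\alpha}\zeta$ as in \eqref{pert-ex-der}, both $\omega\in L^2(X)$ and $d\nu\in L^2(U_+)$, so the two integrals converge absolutely; the only point is on the mixed ends, where $\zeta$ is asymptotic to $\pi^*b$ with $b\in\mathcal H^1_h(Y;B_\alpha)$, but then $d^{h,h'}_{A_\alpha}(\pi^*b)=\pi^*\bigl(d_{B_\alpha}b+*_3\Hess_{B_\alpha}h(b)\bigr)=0$ by the defining relation of $\mathcal H^1_h(Y;B_\alpha)$, so $\omega$ decays there.

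The plan for the main computation: on $\R\times Y_0$, write $\zeta=a+\varphi\,dt$ with $a$ a $t$-family of $1$-forms and $\varphi$ a $t$-family of $0$-forms on $Y$; then $\omega=\omega_0+dt\wedge\omega_1$ with $\omega_0=d_{B_\alpha}a+*_3\Hess_{B_\alpha}h(a)$ and $\omega_1=\dot a-d_{B_\alpha}\varphi$, both forms on $Y$. Since $\omega_0\wedge\omega_0$ involves only the three $Y$-directions it vanishes, so $\tr(\omega\wedge\omega)=2\,dt\wedge\tr(\omega_0\wedge\omega_1)$ and $\int_{\R\times Y_0}\tr(\omega\wedge\omega)=2\int_\R dt\int_{Y_0}\tr(\omega_0\wedge\omega_1)$. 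Writing $\omega_0=*_3\psi$ with $\psi=*_3d_{B_\alpha}a+\Hess_{B_\alpha}h(a)$ and integrating by parts in $Y_0$ and in $t$, the contribution of $\dot a$ becomes $\tfrac12\tfrac{d}{dt}\langle a,\psi\rangle_{Y_0}$ up to a boundary term over $\partial Y_0$ (using formal self-adjointness of $*_3d_{B_\alpha}$ and Lemma~\ref{H-prop}), and its endpoints vanish — at $t\to+\infty$ because the asymptotic value of $a$ lies in $\mathcal H^1_h(Y;B_\alpha)$, so $\psi\to 0$, and at $t\to-\infty$ by the $L^2_1$-decay of $\zeta$ on the gauge-theoretic end; the contribution of $d_{B_\alpha}\varphi$ becomes $\langle d_{B_\alpha}^*\psi,\varphi\rangle_{Y_0}$ up to a boundary term, and $d_{B_\alpha}^*\psi=0$ by Proposition~\ref{linearized-3-man-Lag}. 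Thus $\int_{\R\times Y_0}\tr(\omega\wedge\omega)$ reduces to an integral over $\R\times\partial Y_0$, and likewise over $\R\times -Y_0'$. On $U_-\times\Sigma$ the Hessian terms vanish and $A_\alpha$ is flat, so $\tr(\omega\wedge\omega)=d\,\tr(\zeta\wedge d_{A_\alpha}\zeta)$ and its integral equals $\int_{\partial(U_-\times\Sigma)}\tr(\zeta\wedge d_{A_\alpha}\zeta)$; the contributions over the interfaces with $\R\times(\pm Y_0)$ cancel the boundary terms just produced, the contribution over the gauge-theoretic end vanishes by decay, and only $\int_{U_\partial\times\Sigma}\tr(\zeta\wedge d_{A_\alpha}\zeta)$ survives. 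Similarly $\int_{U_+}\Omega(d\nu,d\nu)=\int_{\partial U_+}\Omega(\nu,d\nu)$, where the integrals over $\eta_+$ and $\eta_+'$ vanish because $\nu$ is valued in the Lagrangians $u^*TL(Y,E)$, $u^*TL(Y',E')$ there, and those over the symplectic and mixed ends vanish by decay, leaving $\int_{U_\partial}\Omega(\nu,d\nu)$.

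It remains to see that the two surviving terms cancel. Restricting to $U_\partial=\{(0,\theta)\}$ and writing $\zeta=\zeta_\theta+\zeta^\theta\,d\theta$, the matching conditions $d_{A_\alpha}\zeta_\theta=0$, $[\zeta_\theta]=\nu(0,\theta)$, together with $A_\alpha|_{U_\partial\times\Sigma}=\alpha$ being $\theta$-independent and $\Sigma$ closed, give
\[
  \int_{U_\partial\times\Sigma}\tr(\zeta\wedge d_{A_\alpha}\zeta)=-\int_{-\infty}^\infty d\theta\int_\Sigma \tr(\zeta_\theta\wedge\partial_\theta\zeta_\theta),\qquad
  \int_{U_\partial}\Omega(\nu,d\nu)=\int_{-\infty}^\infty d\theta\,\Omega\bigl([\zeta_\theta],[\partial_\theta\zeta_\theta]\bigr)=-\int_{-\infty}^\infty d\theta\int_\Sigma \tr(\zeta_\theta\wedge\partial_\theta\zeta_\theta);
\]
since $U_\partial$ occurs with opposite orientations as a boundary component of $U_-$ and of $U_+$, the two contributions cancel, exactly as \eqref{gauge-theory-exp} and \eqref{symp-theory-exp} cancel in the proof of Lemma~\ref{top-energy-constant}, so \eqref{lin-top-energy} vanishes.

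The main obstacle I expect is the careful accounting of the boundary terms at the four types of ends of $X$ and $U_+$ — the gauge-theoretic end $(-\infty,-3]\times Y_\#$, the symplectic end $[3,\infty)\times[-1,1]$, and the two mixed ends. On the mixed ends nothing decays before one subtracts the asymptotic pullbacks $\pi^*b$, $\pi^*s$, so one must use in an essential way both the defining relations of $\mathcal H^1_h(Y;B_\alpha)$ and the identity $d_{B_\alpha}^*(*_3d_{B_\alpha}+\Hess_{B_\alpha}h)=0$ of Proposition~\ref{linearized-3-man-Lag}; keeping the signs and orientations straight in the final cancellation is the other delicate point, but it is dictated by the corresponding step in the proof of Lemma~\ref{top-energy-constant}.
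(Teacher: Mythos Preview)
Your proposal is correct and follows essentially the same route as the paper: decompose $X$ into $\R\times Y_0$, $\R\times(-Y_0')$, and $U_-\times\Sigma$; on the first two pieces use Lemma~\ref{H-prop} and Proposition~\ref{linearized-3-man-Lag} to reduce to boundary integrals over $\R\times\partial Y_0$ (resp.\ $\R\times\partial Y_0'$); on $U_-\times\Sigma$ use Stokes and flatness of $A_\alpha$; then cancel the interior interfaces and match the surviving $U_\partial\times\Sigma$ term against the $U_\partial$ term coming from $\int_{U_+}\Omega(d\nu,d\nu)$ via the matching conditions. Your added remarks—the second-variation interpretation of \eqref{lin-top-energy} and the explicit check that $d^{h,h'}_{A_\alpha}(\pi^*b)=0$ on the mixed ends so the integrals converge—are correct and make the argument cleaner, but the core computation is the same as the paper's.
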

In \eqref{pert-ex-der},  $*_3{\rm Hess}_{B_\alpha}h(\zeta_t)$ and $*_3{\rm Hess}_{B_\alpha'}h'(\zeta_t')$ are defined as in \eqref{Linearized}, and in what follows, they are respectively denoted by $H(\zeta_t)$ and $H(\zeta_t')$.
\begin{proof}
	Stokes theorem and the decay constraints on $\nu$, given as part of the definition of $E^1_{(A_\alpha,u_\alpha)}$, imply that
        \begin{align}
        \int_{U_+}\Omega(d \nu, d\nu)&=
        \int_{U_\partial}\Omega (\nu, d \nu)+\int_{\eta_+\sqcup\eta_+'} 
        	\Omega(\nu, d \nu)\nonumber\\
        	&=\int_{-\infty}^{\infty}d\theta\int_{\Sigma}\tr(\nu(0,\theta) \wedge \partial_\theta \nu(0,\theta))\label{dAh-s}
        \end{align}
        The second identity is due to the Lagrangian boundary condition satisfied by $\nu$. We shall show 
        that the contribution from the first integral in \eqref{lin-top-energy} cancels out the integral in \eqref{dAh-s}.

        We decompose the domain of the first integral in \eqref{lin-top-energy} into $ \R \times Y_0$, $ \R \times -Y'_0$ 
        and $U_-\times \Sigma$, and study the contributions from each of them separately. 
        The restriction of $\zeta$ to $\{t\}\times Y_0$ has the form
        \begin{equation}\label{decom-R-Y}
          \zeta=\zeta_t+\phi_tdt
        \end{equation}
        where $\phi_t$ is a 0-form with values in $E$. Therefore, we have
        \begin{equation}\label{dAh}
        	d_{A_\alpha}^{h,h'}\zeta=d_{B_\alpha}\zeta_t+*_3{\rm Hess}_{B_\alpha}h(\zeta_t)+dt\wedge \(\frac{d \zeta_t}{dt}-d_{B_\alpha}\phi_t\)
        \end{equation}
        where $d_{B_\alpha}\zeta_t$ and $d_{B_\alpha}\phi_t$ denote the three dimensional exterior derivatives of 
        $\zeta_t$ and $\phi_t$ with respect to $B_\alpha$. Identity \eqref{dAh} implies that
        \begin{equation}\label{dAh-1-1}
          \tr(d_{A_\alpha}^{h,h'}\zeta\wedge d_{A_\alpha}^{h,h'}\zeta)=2dt\wedge \tr\((\frac{d \zeta_t}{dt}-d_{B_\alpha}\phi_t)
          \wedge(d_{B_\alpha}\zeta_t+H(\zeta_t))\).
        \end{equation}
        Using Stokes theorem and Lemma \ref{H-prop}, we have
        \begin{equation}\label{dAh-1-2}
        	2\int_{Y_0} \tr\(\frac{d \zeta_t}{dt}\wedge(d_{B_\alpha}\zeta_t+H(\zeta_t))\)=
        	\frac{d}{dt}\int_{Y_0} \tr\(\zeta_t \wedge(d_{B_\alpha}\zeta_t+H(\zeta_t))\)
        	-\int_{\partial Y_0}\tr(\zeta_t\wedge \frac{d}{dt}\zeta_t).
        \end{equation}
        Stokes theorem, vanishing of $H(\zeta_t)$ on $\partial Y$ and Proposition \ref{linearized-3-man-Lag} give        \begin{align}
        	2\int_{Y_0} \tr\(d_{B_\alpha}\phi_t\wedge(d_{B_\alpha}\zeta_t+H(\zeta_t))\)
	&=2\int_{\partial Y_0} \tr\(\phi_t\wedge d_{B_\alpha}\zeta_t\)\nonumber
        	\\
        	&=\int_{\partial Y_0} \tr\(\phi_t\wedge d_{B_\alpha}\zeta_t\)+
	\int_{\partial Y_0} \tr\(\zeta_t\wedge d_{B_\alpha}\phi_t\).\label{dAh-1-3}
        \end{align}

        We can use \eqref{dAh-1-1}, \eqref{dAh-1-2} and \eqref{dAh-1-3} 
        and the exponential convergence of $\zeta$ to an element of $\mathcal H^1_h(Y;B)$ to conclude that
        \begin{equation}\label{dAh-1}
        	\int_{\R\times Y_0}\tr(d_{A_\alpha}^{h,h'}\zeta\wedge d_{A_\alpha}^{h,h'}\zeta)=
        	\int_{\partial (\R\times Y_0)}\tr(\zeta\wedge d_{A_\alpha}\zeta).
        \end{equation}
        A similar argument shows that
        \begin{equation}\label{dAh-2}
        	\int_{\R\times -Y_0'}\tr(d_{A_\alpha}^{h,h'}\zeta\wedge d_{A_\alpha}^{h,h'}\zeta)=
        	\int_{\partial (\R\times -Y_0')}\tr(\zeta\wedge d_{A_\alpha}\zeta).
        \end{equation}
        Since $A_\alpha$ is flat on $U_-\times \Sigma$, Stokes theorem implies that
        \begin{equation}\label{dAh-3}
        	\int_{U_-\times \Sigma}\tr(d_{A_\alpha}^{h,h'}\zeta\wedge d_{A_\alpha}^{h,h'}\zeta)=
        	\int_{\partial (U_-\times \Sigma)}\tr(\zeta\wedge d_{A_\alpha}\zeta)
        \end{equation}
        By adding up the above three equations, we have the following simple form for the first term in \eqref{lin-top-energy}:
        \begin{align}
        	\int_{X}\tr(d_{A_\alpha}^{h,h'}\zeta\wedge d_{A_\alpha}^{h,h'}\zeta)
        	&=\int_{U_\partial\times \Sigma}\tr(\zeta\wedge d_{A_\alpha}\zeta)\nonumber\\
        	&=-\int_{-\infty}^\infty d\theta\int_\Sigma\tr(a(\theta)\wedge \partial_\theta a(\theta))\label{dAh-123}
        \end{align}
        To clarify the notation in the second line, note that the restriction of $\zeta$ to $U_\partial\times \Sigma$ 
        has the form $\zeta=a(\theta)+\psi(\theta)d\theta$ where $a(\theta)$ and $\psi(\theta)$ 
        are respectively 1- and 0-forms on $\Sigma$ with values in $F$. The second identity is a consequence of the
         assumption that for each $\theta$ the 2-dimensional exterior derivative $d_{\alpha}a(\theta)$ vanishes. 
        Since $a(\theta)-v(0,\theta)$ is $d_{\alpha}$-exact, identities \eqref{dAh-s} and 
        \eqref{dAh-123} imply that \eqref{lin-top-energy} vanishes.
\end{proof}

Now we assume that $(\zeta,\nu)$ belongs to the kernel of the mixed operator $\mathcal D_{(A_\alpha,u_\alpha)}$: 
\[
  d^*_{A_\alpha}\zeta=0,\hspace{1cm} d_{A_\alpha}^+\zeta+(*_3{\rm Hess}_{B_\alpha}h(\zeta_t))^+
	+(*_3{\rm Hess}_{B_\alpha'}h'(\zeta_t'))^+=0,\hspace{1cm}\partial_\theta \nu- J_{s,\theta}\partial_s\nu=0.
\]
Thus we have
\[
  \tr(d_{A_\alpha}^{h,h'}\zeta\wedge d_{A_\alpha}^{h,h'}\zeta)=\vert d_{A_\alpha}^{h,h'}(\zeta)\vert^2 \dvol_X,
\]
and
\[
  \Omega(d \nu, d \nu)(s,\theta)=2\vert\partial_s\nu(s,\theta)\vert^2 ds\wedge d\theta.
\]
We conclude from these identities and the vanishing of \eqref{lin-top-energy} that
\begin{equation}\label{der-vanish}
  d_{A_\alpha}^{h,h'}\zeta=0,\hspace{1cm} d\nu=0.
\end{equation}
In particular, $\nu$ is constant, which implies that $\nu=0$ due to its decay on the symplectic end. In particular, $\zeta$ has exponential decay on the mixed ends associated to $Y$ and $Y'$.

\begin{prop}
	For $\zeta$ as above, there is an $L^2_{1,\delta}$ section $\eta$ of the bundle $V$ over $X$ such that $\zeta=d_{A_\alpha}\eta$.
\end{prop}

\begin{proof}
	We construct $\eta$ on the subspaces $ \R \times Y_0$, $ \R \times -Y_0'$ and $U_-\times \Sigma$ separately. 
	For $(\tau,y)\in \R\times Y_0$, let
	\[
	  \eta_1(\tau,y):=\int_{-\infty}^\tau \phi_t(y) dt,
	\]
	where $\phi_t$ is given in \eqref{decom-R-Y}. Vanishing of $d_{A_\alpha}^{h,h'}\zeta$ implies that 
	$\frac{d \zeta_t}{dt}=d_{B_\alpha}\phi_t$. This observation and the decay of $\zeta$ on the end that 
	$t\to -\infty$ imply that $d_{A_\alpha} \eta_1$ is equal to $\zeta$ over the subspace $\R\times Y_0$. 
	Similarly, we define $\eta_2$ on $ \R \times -Y_0'$. 
	On the subspace $U_-\times \Sigma$, we have $d_{A_\alpha} \zeta=0$. 
	The element of $\mathcal H^1(\Sigma;\alpha)$ represented by the 1-form $\zeta(s,\theta)$ for $(s,\theta)\in U_-$ 
	is independent of the choice of $(s,\theta)$. In particular, this cohomology class is trivial 
	because of the decay assumption on $\zeta$ as $s\to -\infty$. Thus for any $(s,\theta)$, 
	there is a unique $\eta_3(s,\theta)$ such that $d_\alpha\eta_3(s,\theta)=\zeta(s,\theta)$. 
	It is also straightforward to see $d_{A_\alpha}\eta_3=\zeta$ because $d_{A_\alpha}\eta_3-\zeta$ is 
	$d_{A_{\alpha}}$-closed and its restriction to $\{(s,\theta)\}\times \Sigma$ for any $(s,\theta)$ vanishes.
	Since the restriction of $A_\alpha$ to the overlaps of $ \R \times Y$ and $U_-\times \Sigma$ 
	(resp. $ \R \times Y$ and $U_-\times \Sigma$) is still irreducible, the sections $\eta_1$ (resp. $\eta_2$) and 
	$\eta_3$ agree on the overlap regions. In particular, we obtain a section $\eta$ of $V$ over $X$ such that 
	$\zeta=d_{A_\alpha}\eta$. It is straightforward to check that $\eta$ is in $L^2_{1,loc}$. For any $\theta\in [2,\infty)$, if $\eta_\theta$ is the restriction of $\eta$ to $\{\theta\}\times Y$, then $d_{B_\alpha}\eta_\theta$ equals the restriction of 
	$\zeta$ to $\{\theta\}\times Y$. Since $B_\alpha$ is irreducible, we may conclude that $\eta$ on the mixed end associated to $Y$ belongs to $L^2_{1,\delta}$ because $\zeta$ satisfies a similar exponential decay. Similar argument shows the 
	decay of $\zeta$ on the mixed end associated to $Y'$ and the gauge theoretical end.
\end{proof}

The identity $\zeta=d_{A_\alpha}\eta$, Stokes theorem and the boundary condition $*\zeta|_{U_\partial \times \Sigma}=0$ implies that
\[
  \int_{X}\langle \zeta ,\zeta\rangle= \int_{X}\langle d^*_{A_\alpha}d_{A_\alpha}\eta,\eta\rangle.
\]
Since $d^*_{A_\alpha}\zeta=0$, we conclude that $\zeta=0$. Thus, the kernel of $\mathcal D_{(A_\alpha,u_\alpha)}$ is trivial.

Next, we show that the cokernel of the operator $\mathcal D_{(A_\alpha,u_\alpha)}$ is trivial. Let $(\mu,\xi,z)\in K^l_{(A_\alpha,u_\alpha)}$ belongs to the kernel of $\mathcal D^*_{(A_\alpha,u_\alpha)}$. This implies that $z$ is a map from $U_+$ to $\mathcal H^1(\Sigma;\alpha)$. These terms satisfy
\begin{equation}\label{coker-eq}
	d_{A_\alpha}\xi+d_{A_\alpha}^*\mu+{\rm Hess}_{B_\alpha}h(*_3\mu_t)+{\rm Hess}_{B_\alpha'}h'(*_3\mu_t')=0,\hspace{1cm}\partial_\theta(J_{s,\theta}z)-\partial_s z=0.
\end{equation}
Moreover, $(\mu,\xi,z)$ satisfy the conditions spelled out in Definition \ref{K-A-u}.

First we show that $d_{A_\alpha}\xi$ vanishes, which immediately implies that $\xi=0$, because $A_\alpha$ is an irreducible connection. In fact, we have the following identities for the $L^2$ norm of $d_{A_\alpha}\xi$:
\begin{align*}
	\int_{X}\langle d_{A_\alpha}\xi,d_{A_\alpha}\xi \rangle
	&=\int_{X}\tr \(d_{A_\alpha}\xi\wedge *d_{A_\alpha}^*\mu \)+\int_{-\infty}^\infty\int_{Y_0}\tr \(d_{A_\alpha}\xi_t \wedge *_3{\rm Hess}_{A_t}h(*_3\mu_t)\)dt \\
	&+\int_{-\infty}^\infty\int_{-Y'_0}\tr \(d_{A_\alpha}\xi_t \wedge *_3{\rm Hess}_{A_t'}h'(*_3\mu_t') \)dt \\
	&=\int_{X}\tr \(d_{A_\alpha}\xi\wedge d_{A_\alpha}\mu \)-\int_{-\infty}^\infty\int_{Y_0}\tr \(\xi_t \cdot d_{A_\alpha}(*_3{\rm Hess}_{A_t}h(*_3\mu_t))\)dt \\
	&-\int_{-\infty}^\infty\int_{-Y'_0}\tr \(\xi_t \cdot d_{A_\alpha}(*_3{\rm Hess}_{A_t'}h'(*_3\mu_t')) \)dt \\
	&=\int_{U_\partial \times \Sigma}\tr \(\xi\cdot  d_{A_\alpha}\mu \)-\int_{X}\tr \(\xi\cdot  [F_{A_\alpha}, \mu] \)+\int_{-\infty}^\infty\int_{Y_0}\tr \(\xi_t \cdot [F_{A_\alpha}, *_3\mu_t]\)dt \\
	&+\int_{-\infty}^\infty\int_{-Y'_0}\tr \(\xi_t \cdot [F_{A_\alpha}, *_3\mu_t'] \)dt. 
\end{align*}
We use Stokes theorem in the last two identities, and Proposition \ref{linearized-3-man-Lag} is used in the third identity. The assumption on the restriction of $\mu$ to $U_\partial \times \Sigma$ and the assumption that $(A_\alpha,u_\alpha)$ is a constant solution to the mxied equation imply that the last expression is zero. Thus $\xi$ vanishes.

We introduced $d_{A_\alpha}^{h,h'}$ in \eqref{pert-ex-der}, as a deformation of the exterior derivative operator $d_{\alpha}$ acting on sections of $\Lambda^1\otimes V$, and now we define a similar operator for sections of $\Lambda^2\otimes V$. For a section $\kappa$ of $\Lambda^2\otimes V$ over $X$, let
\begin{equation}\label{dAh-2form}
	d_{A_\alpha}^{h,h'}\kappa :=d_{A_\alpha}\kappa-*_3{\rm Hess}_{B_\alpha}h((\iota_{\partial_t}\kappa)_t)\wedge dt-*_3{\rm Hess}_{B_\alpha'}h'((\iota_{\partial_t}\kappa)'_t)\wedge dt,
\end{equation}
where $(\iota_{\partial_t}\kappa)_t$ is obtained by contracting $\kappa\vert_{\R\times Y_0}$ with respect to $\partial_t$ and then restricting it to $\{t\}\times Y_0$. The 1-form $(\iota_{\partial_t}\kappa)_t'$ is defined similarly by replacing $Y_0$ with $-Y_0'$. A straightforward calculation using Proposition \ref{linearized-3-man-Lag} shows that $d_{A_\alpha}^{h,h'}d_{A_\alpha}^{h,h'}\zeta=0$ for a section of $\Lambda^1\otimes V$. As a consequence of the first identity in \eqref{coker-eq} and the vanishing of $\xi$, $d_{A_\alpha}^{h,h'}\mu$ vanishes.

\begin{lemma}
	There is $(\zeta,\nu)\in E^l_{(A_\alpha,u_\alpha)}$ such that:
	\begin{equation}\label{primitive-mu-nu}
		2\mu=d_{A_\alpha}^{h,h'}\zeta,\hspace{1cm} zd\theta+J_{s,\theta}zds=d \nu.
	\end{equation}
\end{lemma}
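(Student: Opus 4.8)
The plan is to produce $\nu$ and $\zeta$ by two separate primitive constructions and then to verify that the resulting pair lies in $E^l_{(A_\alpha,u_\alpha)}$.

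First I would build $\nu$. On $U_+$ consider the $1$-form $\omega:=z\,d\theta+J_{s,\theta}z\,ds$, valued in the fixed vector space $\mathcal H^1(\Sigma;\alpha)=T_\alpha\mathcal M(\Sigma,F)$ (recall $u_\alpha$ is constant, so the pulled-back Levi--Civita connection is trivial). Then $d\omega=(\partial_s z-\partial_\theta(J_{s,\theta}z))\,ds\wedge d\theta$ vanishes by the second identity in \eqref{coker-eq}, and since $U_+$ is simply connected we get $\omega=d\nu$ for a map $\nu\colon U_+\to T_\alpha\mathcal M(\Sigma,F)$, unique up to an additive constant, with $\partial_\theta\nu=z$. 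On the symplectic end $z$ solves the adjoint Cauchy--Riemann equation with asymptotic limit the transversal intersection point $\alpha$, so $z$ and $\omega$ decay exponentially there; I would fix the additive constant so that $\nu\to0$ on the symplectic end. On the mixed ends $z\in L^2_{l,\delta}$ because $(\mu,\xi,z)\in K^l_{(A_\alpha,u_\alpha)}$, so $\omega$ decays exponentially and $\nu$ tends exponentially to limits $s$ and $s'$. Finally, along $\eta_+$ one has $\partial_t(\nu|_{\eta_+})=z|_{\eta_+}\in T_\alpha L(Y,E)$ by property (iv) of Definition \ref{K-A-u} and constancy of $u_\alpha$; since $\eta_+$ joins the symplectic end (where $\nu=0$) to the mixed end associated to $(Y,E)$, integrating along $\eta_+$ gives $\nu(\eta_+)\subset T_\alpha L(Y,E)$ and $s\in T_\alpha L(Y,E)$, and likewise $\nu(\eta_+')\subset T_\alpha L(Y',E')$ and $s'\in T_\alpha L(Y',E')$. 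This would give properties (i) and (iv) of Definition \ref{E-A-u} for $\nu$ and make $s$, $s'$ admissible as the tangent vectors of property (ii).

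Next I would build $\zeta$, using the already established identity $d_{A_\alpha}^{h,h'}\mu=0$. Write $X=(\R\times Y_0)\cup(U_-\times\Sigma)\cup(\R\times -Y_0')$. On $U_-\times\Sigma$ the connection $A_\alpha$ is flat, and since $U_-$ is contractible and $\alpha$ is irreducible on each component of $\Sigma$ we have $H^2(U_-\times\Sigma;\mathrm{ad}\,\alpha)\cong H^2(\Sigma;\mathrm{ad}\,\alpha)=0$, so $2\mu|_{U_-\times\Sigma}=d_{A_\alpha}\zeta^{(-)}$ for some $\zeta^{(-)}$. Because $\mu|_{U_\partial\times\Sigma}=\tfrac12\,d\theta\wedge c$ with $[c_\theta]=z(0,\theta)$ (Definition \ref{K-A-u}(iii)), the slice $\zeta^{(-)}_\theta$ is $d_\alpha$-closed and $\partial_\theta[\zeta^{(-)}_\theta]=[c_\theta]=\partial_\theta\nu(0,\theta)$, so $[\zeta^{(-)}_\theta]-\nu(0,\theta)$ is $\theta$-independent; adding to $\zeta^{(-)}$ the pullback of a suitable $d_\alpha$-harmonic $1$-form on $\Sigma$ (which preserves $d_{A_\alpha}\zeta^{(-)}=2\mu$) I would normalize so that $[\zeta^{(-)}_\theta]=\nu(0,\theta)$ for all $\theta$. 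On $\R\times Y_0$, in temporal gauge and writing $\mu=\mu_t-*_3\mu_t\wedge dt$, the two components of $d_{A_\alpha}^{h,h'}\mu=0$ read $d_{B_\alpha}\mu_t=0$ and $\partial_t\mu_t=d_{B_\alpha}(*_3\mu_t)+*_3{\rm Hess}_{B_\alpha}h(*_3\mu_t)$, and one checks --- exactly as in the primitive construction just given for the kernel case --- that $\zeta_t:=\zeta_{t_\ast}+2\int_{t_\ast}^t *_3\mu_\tau\,d\tau$, matched with $\zeta^{(-)}$ on the overlap, solves $d_{A_\alpha}^{h,h'}\zeta=2\mu$; the exponential decay of $\mu$ on the mixed end then forces $\zeta_t$ to converge exponentially to a limit $b$ with $d_{B_\alpha}b+*_3{\rm Hess}_{B_\alpha}h(b)=0$. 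The $Y'$-piece is handled identically, and since $A_\alpha$ is irreducible on the overlap regions the local primitives glue to a single $\zeta$ on $X$ with $d_{A_\alpha}^{h,h'}\zeta=2\mu$.

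The remaining and most delicate step is to arrange that $\zeta\in E^l_{(A_\alpha,u_\alpha)}$ without disturbing the identity $2\mu=d_{A_\alpha}^{h,h'}\zeta$. Since $d_{A_\alpha}^{h,h'}d_{A_\alpha}=0$, adding $d_{A_\alpha}\eta$ for a $0$-form $\eta$ vanishing near $U_\partial\times\Sigma$ (hence leaving $\zeta_\theta$ and the matching $[\zeta_\theta]=\nu(0,\theta)$ untouched) and asymptotically constant on the mixed ends lets me gauge-fix the limits into the slice $X_{B_\alpha}$, i.e.\ impose $d_{B_\alpha}^*b=0$ and $*_3 b|_\Sigma=0$; together with $d_{B_\alpha}b+*_3{\rm Hess}_{B_\alpha}h(b)=0$ this places $b\in\mathcal H^1_h(Y;B_\alpha)$, and likewise $b'\in\mathcal H^1_{h'}(Y';B_\alpha')$ (here the regularity of the Lagrangians, via Proposition \ref{linearized-3-man-Lag}, is used). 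A further $d_{A_\alpha}\eta$ supported near $U_\partial\times\Sigma$ achieves $*\zeta|_{U_\partial\times\Sigma}=0$; the conditions $d_{A_\theta}\zeta_\theta=0$ and $[\zeta_\theta]=\nu(0,\theta)$ on $U_\partial\times\Sigma$ hold by construction, property (i) of Definition \ref{E-A-u} follows from standard elliptic decay on the gauge-theoretic and symplectic ends, and property (ii) holds with $s=[b|_\Sigma]$, $s'=[b'|_\Sigma]$ because the matching $[\zeta_\theta]=\nu(0,\theta)$ along $U_\partial$ --- which runs into the mixed ends --- forces $[b|_\Sigma]$ to equal the limit $s$ of $\nu(0,\theta)$ found in the first step, and similarly for $s'$. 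Putting this together would give $(\zeta,\nu)\in E^l_{(A_\alpha,u_\alpha)}$ satisfying \eqref{primitive-mu-nu}. I expect the main obstacle to be precisely this last bundle of simultaneous requirements --- the Coulomb condition on $U_\partial\times\Sigma$, the matching of $\zeta$ with $\nu$ there, the Lagrangian conditions along $\eta_+,\eta_+'$, and the prescribed asymptotics on all three ends --- which must be reconciled at once; conceptually it goes through because the normalization of $\zeta^{(-)}$ against $\nu$ on $U_-\times\Sigma$ is the only genuine choice and automatically propagates to the correct mixed-end limits.
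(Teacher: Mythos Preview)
Your construction of $\nu$ is correct and arguably cleaner than the paper's: integrating the closed $T_\alpha\mathcal M(\Sigma,F)$-valued $1$-form $\omega=z\,d\theta+J_{s,\theta}z\,ds$ on the simply connected $U_+$, normalizing on the symplectic end, and reading off the Lagrangian boundary condition from property (iv) of Definition~\ref{K-A-u} is exactly the right idea.

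For $\zeta$, your outline is reasonable but the final gauge-fixing step, which you rightly flag as the crux, has a genuine gap. The two corrections you propose work against each other. The first $\eta$, chosen to vanish near $U_\partial\times\Sigma$, has a mixed-end limit $\eta_\infty$ that vanishes near $\partial Y$ (since on the mixed end $\partial Y$ sits inside $U_\partial\times\Sigma$); hence $d_{B_\alpha}\eta_\infty$ cannot alter $b|_\Sigma$ and cannot enforce $*_3 b|_\Sigma=0$. The second $\eta$, supported near $U_\partial\times\Sigma$, does modify $b$ near $\partial Y$ but then contributes $\Delta_{B_\alpha}\eta_\infty$ to $d_{B_\alpha}^*b$, undoing the Coulomb condition you just arranged. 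A separate issue is the gauge-theoretic end: the constant harmonic $1$-form you add to normalize $\zeta^{(-)}$ on $U_-\times\Sigma$ does not decay as $s\to-\infty$, and you never invoke the non-degeneracy of $\alpha\in\fC_G$, which is precisely what is needed to make $\zeta$ decay there.

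The paper avoids both difficulties by reversing the order of operations. Instead of building a global primitive and then gauge-fixing, it first subtracts explicit pairs $(\zeta_0,\nu_0)$ and $(\zeta_0',\nu_0')$ carrying the mixed-end asymptotics --- this is where transversality at $\alpha$ enters, splitting $\int_{-\infty}^\infty\iota_{\partial_\theta}\eta(0,\theta)\,d\theta$ as $c-c'$ with $c\in T_\alpha L(Y,E)$, $c'\in T_\alpha L(Y',E')$ --- and then subtracts $(\zeta_\partial,\nu_\partial)$ to make the remaining data vanish near the matching line. What remains is supported in a region $K_-\cong(-\infty,3)\times Y_\#$ (and an analogous strip $K_+$ on the symplectic side), where a primitive with the correct decay is obtained by integrating from $t=-\infty$ and using non-degeneracy of $\alpha\in\fC_G$ (respectively of $\beta\in\fC_S$). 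No simultaneous system of boundary and asymptotic constraints has to be solved because each is absorbed in turn.
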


This lemma allows us to conclude the triviality of the kernel of $\mathcal D^*_{(A_\alpha,u_\alpha)}$ as in the case of the kernel of $\mathcal D_{(A_\alpha,u_\alpha)}$ in the following way. On one hand, the expression in \eqref{lin-top-energy} vanishes for the pair $(\zeta,\nu)$ produced by the lemma because $(\zeta,\nu)\in E^l_{(A_\alpha,u_\alpha)}$. On the other hand, $d_{A_\alpha}^{h,h'}\zeta$ is self-dual and $d\nu$ at $(s,\theta)$ is a $(0,1)$-form with respect to $J_{s,\theta}$, and a similar argument as in the previous case shows that the expression in \eqref{lin-top-energy} is non-positive and it is equal to zero if and only if $\zeta$ and $\nu$ vanish. This shows that $(\mu,\xi,z)$ is trivial.

\begin{proof}
	The pair of $\mu$ and $\eta:=zd\theta+J_{s,\theta}zds$ satisfies:
	\begin{itemize}
		\item[(i)] $\mu \in L^2_{l,\delta}(X,\Lambda^2\otimes V)$ and  $\eta \in L^2_{k,\delta}(U_+,\Lambda^1\otimes T_\alpha\mathcal M(\Sigma,F))$;
		\item[(ii)] $d_{A_\alpha}^{h,h'}\mu=0$ and $d \eta=0$;
		\item[(iii)]  At any point $(0,\theta)\in U_\partial$, we have $2\iota_{\partial_\theta} \mu(0,\theta)$ is $d_\alpha$-closed and represents the same cohomology class as $\iota_{\partial_\theta} \eta(0,\theta)$.
	\end{itemize}
	We prove a more general result showing that for any $\mu$ and $\eta$ as above there is $(\zeta,\nu)\in E^l_{(A_\alpha,u_\alpha)}$ such that \eqref{primitive-mu-nu} holds.

	The transversality of the Largrangians $L(Y,E)$ and $L(Y',E')$ implies that there are $c\in T_{\alpha}L(Y,E)$ and $c' \in T_{\alpha}L(Y',E')$ such that
	\begin{equation}\label{eta-int-mathicng-line}
	  \int_{-\infty}^\infty \iota_{\partial_\theta}\eta(0,\theta) d\theta=c-c'.
	\end{equation}
	Note that our assumption on $z$ implies that the integral on the left exists. Then $c$ and $c'$ determine $b\in \mathcal H^1_h(Y;B_\alpha)$ and $b'\in \mathcal H^1_{h'}(Y';B_\alpha')$. 
	We define a section $\zeta_0$ of $\Lambda^1\otimes V$ which is supported in 	
	$(2,\infty)\times Y$, and over this subspace of $X$ it is given by 
	\[
	  \zeta_0(\theta,y)=2 f(\theta) \(\frac{1}{2}\pi^*(b)- \int_\theta^\infty(\iota_{\partial_\theta}\mu(y,\tau)) d\tau\),
	\]
	where $f:\R\to \R$ is a smooth function that is equal to $1$ on $(3,\infty)$ and vanishes  on $(-\infty, 5/2)$. Similarly define
	\[
	  \nu_0(s,\theta)= f(\theta) \(c- \int_\theta^\infty \iota_{\partial_\theta}\eta (s,\tau) d\tau\).
	\]	
	Then $(\zeta_0,\nu_0)\in E^l_{(A_\alpha,u_\alpha)}$. Moreover, the identities in \eqref{coker-eq} and the decay assumptions on $\mu$ and $z$ imply that over the space $Y\times(3,\infty)$ we have $2\mu=d_{A_\alpha}^{h,h'}\zeta_0$ and 
	$\eta=d \nu_0$. Let $f':\R\to \R$ be another bump function that equals $1$ on $(-\infty,-3)$ and vanishes over $(-5/2,\infty)$. Define 
	\[
	  \zeta_0'(\theta,y)=2 f'(\theta) \(\frac{1}{2}\pi^*(b')+\int_{-\infty}^\theta(\iota_{\partial_\theta}\mu(y,\tau)) d\tau\),\hspace{1cm}\nu_0'(s,\theta)= f'(\theta) \(c'+\int_{-\infty}^\theta \iota_{\partial_\theta}\eta (s,\tau) d\tau\).
	\]	
	We again have $(\zeta_0',\nu_0')\in E^l_{(A_\alpha,u_\alpha)}$. Moreover, the pair  
	\[
	  \mu_0:= \mu-d_{A_\alpha}^{h,h'}( \zeta_0+\zeta_0'),\hspace{1cm} \eta_0:=\eta-d(\nu_0+\nu_0'),
	\]
	satisfies properties (i)-(iii) stated above, $\mu_0$ vanishes on $(3,\infty)\times Y$ and $(-\infty,-3)\times Y'$, and $\eta_0$ vanishes on $[0,2]\times (3,\infty)$ and $[0,2]\times (-\infty,-3)$.
	Moreover, there is a section $\lambda$ of $F$ over $\Sigma\times [-2,0] $ such that for any $x\in \Sigma$, $s\in [-2,0]$ and $s'\in [0,2]$, we have
	\begin{equation}\label{mu-0-eta-0}
	  \int_{-\infty}^\infty \iota_{\partial_\theta}\mu_0|_{\{(s,\theta)\}\times \Sigma} d\theta=d_\alpha\lambda(x,s),\hspace{1cm}\int_{-\infty}^\infty \iota_{\partial_\theta}\eta_0(s',\theta) d\theta=0.
	\end{equation}
	The second identity in \eqref{mu-0-eta-0} for $s'=0$ follows readily from \eqref{eta-int-mathicng-line}. We obtain the identity for all values of $s'$ using the assumption that $\eta_0$ is closed. The first identity in \eqref{mu-0-eta-0}
	for $s=0$ follows from the second one and property (iii) of $(\mu_0,\eta_0)$. This can be extended to all values of $s$ using $d_{A_\alpha}^{h,h'}\mu=0$ and the Stokes' theorem.

	Next, we modify $\mu_0$ and $\eta_0$ such that in addition to the properties mentioned in the previous paragraph, they vanish in a neighborhood of the matching line $U_\partial$. 
	Fix a bump function $f_\partial:\R\to\R$ that is equal to $1$ on the interval $[-1,1]$ and vanishes outside the interval $[-2,2]$. Let also 
	$h:\R\to \R$ be a compactly supported bump function with support in $[-2,2]$ whose integral over $\R$ equals $1$. Define
	\[
	  \zeta_\partial(x,s,\theta):=2f_\partial(s) \int_{-\infty}^\theta-h(\tau)d_\alpha\lambda(x,s)+\iota_{\partial_\theta}\mu_0(x,s,\tau) d\tau, \,\,\,\,\nu_\partial(s,\theta):=f_\partial(s) \int_{-\infty}^\theta\iota_{\partial_\theta}\eta_0 (s,\tau)d\tau
	\]
	where $(x,s,\theta)\in \Sigma\times [-2,0]\times[-3,3]$ and $(s,\theta)\in [0,2]\times [-3,3]$. Extend $\zeta_\partial$ in the trivial way to the rest of $X$, and extend $\nu_\partial$ in the trivial way to the rest of $U_+$. Then 
	we can see $(\zeta_\partial,\nu_\partial)$ belongs to $E^l_{(A_\alpha,u_\alpha)}$ using the identities in \eqref{mu-0-eta-0}. From the definition, it is clear that the support of $\zeta_\partial$ is contained in $\Sigma\times [-2,0]\times[-3,3]$ 
	and the support of $\nu_\partial$ is contained in $ [0,2]\times [-3,3]$. 
	If we define
	\[
	  \mu_1:= \mu_0-d_{A_\alpha}^{h,h'}( \zeta_\partial),\hspace{1cm} \eta_1:=\eta_0-d\nu_\partial,
	\]	
	then $(\mu_1,\eta_1)$ satisfies (i)-(iii), $\mu_1$ vanishes on $(3,\infty)\times Y$, $ (-\infty,-3)\times \times$ and the neighborhood $\Sigma\times [-1,0]\times U_\partial$ of the matching line, and $\eta_1$ vanishes on $[0,2]\times (3,\infty)$, 
	$[0,2]\times (-\infty,-3)$ and the neighborhood $[0,1]\times U_\partial$ of the matching line.

	The support of $\mu_1$ is contained in an open subspace $K_-$ of $X$ which is diffeomorphic to $(-\infty,3)\times Y_\#$ (see Figure \ref{support-K}). 
	We may assume that the diffeomorphism from $K_-$ to $(-\infty,3)\times Y_\#$ is given by the identity map on
	 $(-\infty,-3)\times Y_\#$, $(-\infty,3)\times Y_0$ and $(-\infty,3)\times -Y_0'$.  We use this diffeomorphism to 
	 identify $K_-$ with $(-\infty,3)\times Y_\#$ and for any $(t,y_\#)\in (-\infty,3)\times Y_\#$, we define 
	 \[
	   \zeta_\#'(t,y_\#):=2\int_{-\infty}^t(\iota_{\partial_\theta}\mu_1(\tau,y_\#)) d\tau
	 \]
	Then $d_{A_\alpha}^{h,h'}\zeta_\#'=\mu_1$. In particular, the restriction of $\zeta_\#'$ to $\{3\}\times Y_\#$ is in the kernel of $d_{B_\alpha}+*_3 {\rm Hess}_{B_\alpha}h+*_3{\rm Hess}_{B_\alpha'}h'$. Non-degeneracy of $\alpha\in \fC_G$ implies that 
	there is a section $\phi$ of $E_\#$ such that $\zeta_\#'(3,\cdot)=d_{B_\alpha} \phi$. Let $f_\#:(-\infty,3)\to \R$ be a function which equals $0$ on $(-\infty,2)$ and equals $1$ in a neighborhood of $3$, and modify $ \zeta_\#'$ as 
	$ \zeta_\#= \zeta_\#'-d_{A_\alpha}(f_\#\phi)$. Then $\zeta_\#$ vanishes in a neighborhood of $\{3\}\times Y_\#$ and we may extend it to $X$ trivially. Now $(\zeta_\#,0)\in E^l_{(A_\alpha,u_\alpha)}$ and $d_{A_\alpha}^{h,h'}\zeta_\#=\mu_1$.
	Similarly, we may find $\nu_\#$ such that $(0,\nu_\#)\in E^l_{(A_\alpha,u_\alpha)}$ and $d\nu_\#=\eta_1$. Consequently, the pair 
	\[
	  \zeta:=\zeta_0+\zeta_0'+\zeta_\partial+\zeta_\#,\hspace{1cm}\nu:=\nu_0+\nu_0'+\nu_\partial+\nu_\#,
	\]
	gives the desired claim.	
	\begin{figure}
	\begin{center}
	\begin{tikzpicture}[thick]
\pic [scale=0.15](lower) at (-1.15,2) {handle};
\pic [scale=0.15](lower) at (-1.65,2.35) {hole};
\pic [scale=0.15](lower) at (-1.15,-2) {handle};
\pic [scale=0.15](lower) at (-1.65,-1.8) {hole};
\pic [scale=0.15](lower) at (-1.65,-1.4) {hole};
\draw (0,3)  --  (0,-3)
	(-1,3)  --  (-1,1.5)
	(-1,1.5) coordinate (-left) 
	to [out=-90, in=0] (-2,.5)
	 (-2,.5) -- (-3.5,0.5)
	 (-2,-.5) -- (-3.5,-0.5)
	(-2,-.5) coordinate (-left) 
	to [out=0, in=90] (-1,-1.5)
	to (-1,-3);
\draw (1,3)  --  (1,1.5)
	(1,1.5) coordinate (-left) 
	to [out=-90, in=180] (2,.5)
	 (2,.5) -- (3.5,0.5)
	 (2,-.5) -- (3.5,-0.5)
	(2,-.5) coordinate (-left) 
	to [out=180, in=90] (1,-1.5)
	to (1,-3);
\shade[left color=lime,right color=lime] (-1,-1.5)--(-1,1.5)  arc[start angle=40, end angle=-40,radius=2.3] -- (-1,-1.5);
\shade[left color=lime,right color=lime] (-1,-1.5)--(-.75,-1)--(-.75,1)--(-1,1.5) coordinate (-left) 
	to [out=-90, in=0] (-2,.5)--(-2,-.5)coordinate (-left) 
	to [out=0, in=90] (-1,-1.5);
\shade[left color=lime,right color=lime](-3.5,0.5)--(-3.5,-0.5)--(-2,-0.5)--(-2,0.5);
\shade[left color=yellow,right color=yellow] (1,-1.5)--(1,1.5)  arc[start angle=140, end angle=220,radius=2.3] -- (1,-1.5);
\shade[left color=yellow,right color=yellow] (1,-1.5)--(.75,-1)--(.75,1)--(1,1.5) coordinate (-left) 
	to[out=-90, in=180] (2,.5)--(2,-.5)coordinate (-left) 
	to [out=180, in=90] (1,-1.5);
\shade[left color=yellow,right color=yellow](3.5,0.5)--(3.5,-0.5)--(2,-0.5)--(2,0.5);
\node at (-1.25,0){$K_-$};	
\node at (1.25,0){$K_+$};	
\draw[dotted] (-.5,4.5)  --  (-0.5,3.5);	
\draw[dotted] (.5,4.5)  --  (0.5,3.5);	
\draw[dotted] (-.5,-4.5)  --  (-0.5,-3.5);	
\draw[dotted] (.5,-4.5)  --  (0.5,-3.5);	
\draw[dotted] (4,0)  --  (5,0)
		     (-4,0)  --  (-5,0)
		     (-1.5,2.6) -- (-1.5,3)
		     (-1.5,1.4) to [out=-90, in=0] (-2,.8)
		     to (-2.5,0.8)
		     (-1.5,-2.6) -- (-1.5,-3)
		     (-1.5,-1.4) to [out=90, in=0] (-2,-.8)
		     to (-2.5,-0.8);			     
	\end{tikzpicture}
	\end{center}
		\caption{ The support of $\mu_1$ is contained in the set $K_-$ and the support of $\eta_1$ is contained in $K_+$}
	\label{support-K}
\end{figure}
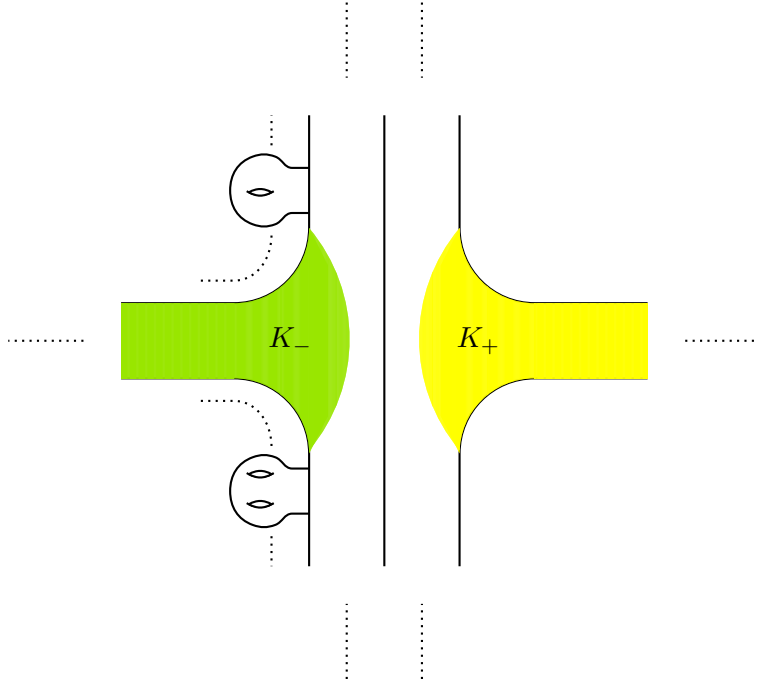
\end{proof}

\subsection{Mixed shifting}\label{mixed-shit-op}

We start this subsection by introducing a special type of mixed pairs.
\begin{definition}
	An element $(A,u)\in \bA(\alpha,\beta)$ is {\it symplectically constant} if $(A,u)$, restricted to the complement of $(-\infty,-3]\times Y_\#$, is equal to a constant pair $(A_\beta,u_\beta)$ associated to $\beta$. 
	In particular, the map $u$ is a constant map to $\beta$.
\end{definition}

\begin{lemma}\label{part-symp-constant}
	For any $(A,u)\in \bA(\alpha,\beta)$, there is a path from $(A,u)$ to a symplectically constant pair in $\bA(\alpha,\beta)$.
\end{lemma}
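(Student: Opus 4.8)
The strategy is to deform $(A,u)$ in three stages, always staying inside $\bA(\alpha,\beta)$, so that successively more of the configuration is brought into the standard constant form. The key observation is that the only ``moduli'' obstructing the deformation come from the three ends where $(A,u)$ is already asymptotic to rigid data; everything else is on a compact region and can be retracted by a straight-line homotopy. The target symplectically constant pair is the one built from $\beta$: take a representative connection $B_\beta$ for $\beta$ (with the usual boundary behaviour), form the constant pair $(A_\beta,u_\beta)$ on the complement of the gauge-theoretic end, and then glue on the cylindrical continuation over $(-\infty,-3]\times Y_\#$ that interpolates between $\alpha$ and $\beta$; this is possible because $(A,u)$ already represents a path from $\alpha$ to $\beta$, so the same topological path data can be realised by a connection that is \emph{translation-invariant} except on a compact slab.

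\textbf{Step 1: straighten the symplectic and mixed ends.} First I would use Theorem \ref{exp-decay}(ii),(iii): on $[3,\infty)\times[-1,1]$ the map $u$ converges exponentially to $\beta$, and on the mixed ends it converges to points $q\in L(Y,E)$, $q'\in L(Y',E')$ with connections $B$, $B'$. Using the exponential-map charts $\rE_{\pm 1}$ and $\re_{\pm 1}$ constructed in Subsection \ref{conf-mixed} (which respect the Lagrangian boundary conditions), one can homotope $(A,u)$ so that, outside a compact set, $u$ is literally the constant $\beta$ on the symplectic end and literally the constants $q$, $q'$ on the mixed ends, and $A$ is literally the pullback of $B$, $B'$, $\beta$ there. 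Concretely, multiply the ``difference from the asymptotic value'' by a cutoff $\rho(t)$ and run $t$ from $1$ down to $0$; finiteness of analytical energy (and hence membership in $\bA(\alpha,\beta)$) is preserved throughout because we only shrink the tails.

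\textbf{Step 2: kill the difference between $q$, $q'$ and $\beta$, and collapse the symplectic cylinder.} Now $u$ takes the three constant values $q$, $q'$, $\beta$ on the three non-gauge ends and is some compactly supported map in between. Since $\mathcal M(\Sigma,F)$ is simply connected and $L(Y,E)$, $L(Y',E')$ are connected Lagrangians meeting transversally, one can choose paths inside $L(Y,E)$ from $q$ to $\beta$ and inside $L(Y',E')$ from $q'$ to $\beta$ (here $\beta\in\fC_S=L(Y,E)\cap L(Y',E')$), and use these to homotope the mixed-end values of $u$ to $\beta$ as well; correspondingly deform the mixed-end connections to the pullbacks of the representative $B_\beta$. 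After this, $u$ equals $\beta$ on all three ends, so $u$ factors through a map of the compact region $U_{T}$ rel boundary into $\mathcal M(\Sigma,F)$; because $\mathcal M(\Sigma,F)$ is simply connected this map is null-homotopic rel the Lagrangian boundary, so we may homotope $u$ to the global constant $\beta$ (the potential obstruction in $\pi_2(\mathcal M(\Sigma,F))$ is absorbed into the choice of path $p$, which we are free to adjust since the lemma only asks for \emph{some} path, not one preserving the homotopy class). Simultaneously, with $u\equiv\beta$ and $A$ flat equal to $\beta$ on all boundary slices, the connection $A$ over the regions $\R\times Y_0$, $\R\times -Y_0'$, $U_-\times\Sigma$ can be retracted, using temporal gauge and a straight-line homotopy on the compact part, to the pullback of $B_\beta$, i.e.\ to $A_\beta$ on the complement of $(-\infty,-3]\times Y_\#$.

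\textbf{Step 3: tidy the gauge-theoretic end.} What remains is a connection agreeing with the constant pair $(A_\beta,u_\beta)$ off the gauge end and agreeing with the pullback of some representative $\alpha$ on $(-\infty,-3]\times Y_\#$; by definition of a symplectically constant pair this is already the desired form, once we note (as in the Remark after Definition \ref{E-A-u}) that $\bB(\alpha,\beta)$ is insensitive to the choice of representative of $\alpha$ because any element of $\mathcal G(E_\#)$ and the involution $\iota$ extend over $V$, so any residual discrepancy on the gauge end is removed by a gauge transformation (a path in $\mathcal G(V)$). The main obstacle is Step 2: one must be careful that the homotopies of $u$ near the mixed ends drag the connection along consistently through the matching condition (the restriction of $A$ to each slice $\{(s,\theta)\}\times\Sigma$ must stay flat and represent $u(s,\theta)$), and that the Lagrangian boundary conditions on $\eta_+$, $\eta_+'$ are never violated — this is exactly what the families $\rE_{\pm 1}$, $\re_{\pm 1}$ of Subsection \ref{conf-mixed} were designed to guarantee, so the argument is bookkeeping-heavy but structurally routine. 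I would present it at the level of ``choose cutoffs, concatenate the three homotopies, check each stays in $\bA(\alpha,\beta)$,'' leaving the Sobolev estimates implicit.
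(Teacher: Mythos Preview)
Your Step 2 contains a genuine gap. After you have made $u$ equal to $\beta$ on the entire boundary $\partial U_+$ and on all three ends, the map $u$ represents an element of $\pi_2(\mathcal M(\Sigma,F),\beta)$, and this element can be non-trivial (recall $\pi_2(\mathcal M(\Sigma_g,F_g))=\Z$ for $g\geq 2$). Your claim that ``the potential obstruction in $\pi_2(\mathcal M(\Sigma,F))$ is absorbed into the choice of path $p$'' is not a fix: the lemma asks for a \emph{path in $\bA(\alpha,\beta)$}, i.e.\ a continuous deformation, and such a deformation cannot leave the connected component of $(A,u)$. You seem to be conflating ``path in $\bA(\alpha,\beta)$'' with the gauge-theoretic path label $p$ in $\mathcal B_G(\alpha,\beta)_p$; the latter is not something you are free to change mid-homotopy. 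So if $u$ carries non-trivial $\pi_2$ content, your retraction of $u$ to the constant map simply fails, and with it the retraction of $A$ to $A_\beta$ on the complement of the gauge end.

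The paper's proof avoids this obstruction by a completely different mechanism, called \emph{mixed shifting}. One first reparametrizes $U_+$ and $X$ so that $U_+\cong[0,\infty)\times[-1,1]$ and $X\cong(-\infty,0]\times Y_\#$ (minus boundary pieces). The key step is then to \emph{lift} $u$ to a connection $A_u$ on $[0,\infty)\times Y_\#$: for each $(s,\theta)$, choose a flat connection on $F$ representing $u(s,\theta)$, and extend over the handlebody pieces $Y_0$, $-Y_0'$ using representatives of $u(s,1)\in L(Y,E)$ and $u(s,-1)\in L(Y',E')$. This converts the topological content of $u$, including any $\pi_2$ contribution, into a connection. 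One then defines $(A_\tau,u_\tau)$ by shifting everything by $\tau$ toward the gauge side; as $\tau\to\infty$ the symplectic data becomes constant equal to $\beta$ while all the topology has been pushed into the connection on $(-\infty,-3]\times Y_\#$. Your approach of independently null-homotoping $u$ and then retracting $A$ cannot substitute for this lifting-and-shifting, precisely because the lifting is what transfers the $\pi_2$ data across the matching line.
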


To verify the lemma, it is helpful to give a different parametrization of the $4$-manifold $X$ and the 2-dimensional domain $U_+$. Identify $\R \times [-1,1]$ in the standard way with the subspace of $\C$ given by numbers whose imaginary parts are in $[-1,1]$. There is a diffeomorphism $\Phi_+:U_+\to \([0,\infty)\times [-1,1]\)\setminus \{\pm \bi\}$ which satisfy the following conditions (see Figure \ref{special-quin-old-new-par}).
\begin{itemize}
	\item[(i)] The restriction of $\Phi_+$ to the subspace $[3,\infty)\times [-1,1]$ of $U_+$ is given by the identity map.
	\item[(ii)] On the subspace $[0,2]\times [2,\infty)$ of $U_+$, we have
	\[\Phi_+(s,\theta)=\bi+e^{-\theta+\pi \bi \frac{s-2}{4}}.\]
	\item[(iii)] For any $(s,\theta)\in U_+$, $\Phi_+(s,-\theta)$ is equal to the complex conjugate of $\Phi_+(s,\theta)$.
	In particular, on the subspace $[0,2]\times (-\infty,-2]$ of $U_+$, we have: 
	\[\Phi_+(s,\theta)=-\bi+e^{\theta+\pi \bi \frac{2-s}{4}}.\]
\end{itemize}
There is also a diffeomorphism $\Phi_-:X\to \left((-\infty,0]\times Y_\#\right)\setminus \left( \{0\}\times Y_0\cup \{0\}\times  -Y_0'\right)$ such that the following hold.
\begin{itemize}
	\item[(i)] The restriction of $\Phi_-$ to the subspace $U_-\times \Sigma$ of $X$ is given by
	\[\Phi_-(s,\theta,x)=(-\Phi_{+,s}(-s,\theta),\Phi_{+,\theta}(-s,\theta),x),\]
	where $\Phi_+(s,\theta)=(\Phi_{+,s}(s,\theta),\Phi_{+,\theta}(s,\theta))\in [0,\infty)\times [-1,1]$.
	\item[(ii)] On the subspace $\R\times Y_0$ of $X$, we have
	\[\Phi_-(\tau,y)=(f(\tau),y),\]
	where $f:\R\to (-\infty,0)$ is an increasing smooth function which is determined by the 
	restriction of $\Phi_-$ to the subspace $\eta_-\times \Sigma$ and satisfies
	\begin{equation*}
		f(\tau)=\left\{
		\begin{array}{ll}
			\tau&\tau\leq -3,\\
			-e^{-\tau}&\tau\geq 2.\\
		\end{array}
		\right.
	\end{equation*}
	\item[(iii)] On the subspace $\R\times -Y_0'$ of $X$, we have
	\[\Phi_-(\tau,y')=(f(\tau),y').\]
\end{itemize}

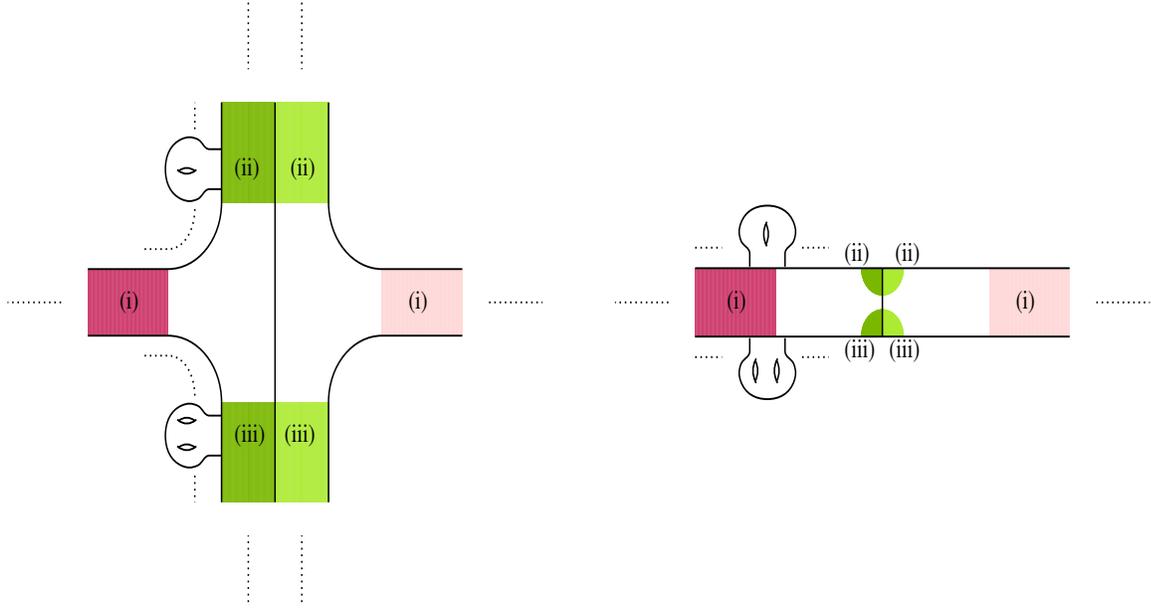
\begin{figure}
	\begin{minipage}{.5\linewidth}
	\resizebox{0.9\linewidth}{8cm}{
	\begin{tikzpicture}[thick]
\pic [scale=0.15](lower) at (-1.15,2) {handle};
\pic [scale=0.15](lower) at (-1.65,2.35) {hole};
\pic [scale=0.15](lower) at (-1.15,-2) {handle};
\pic [scale=0.15](lower) at (-1.65,-1.8) {hole};
\pic [scale=0.15](lower) at (-1.65,-1.4) {hole};
\shade[left color=black!20!lime,right color=black!20!lime](-1,3)--(-1,1.5)--(0,1.5)--(0,3);	
\shade[left color=white!20!lime,right color=white!20!lime](1,3)--(1,1.5)--(0,1.5)--(0,3);	
\shade[left color=black!20!lime,right color=black!20!lime](-1,-3)--(-1,-1.5)--(0,-1.5)--(0,-3);	
\shade[left color=white!20!lime,right color=white!20!lime](1,-3)--(1,-1.5)--(0,-1.5)--(0,-3);	
\shade[left color=white!25!purple,right color=white!25!purple](-3.5,0.5)--(-3.5,-0.5)--(-2,-0.5)--(-2,0.5);
\shade[left color=white!40!pink,right color=white!40!pink](3.5,0.5)--(3.5,-0.5)--(2,-0.5)--(2,0.5);
\draw (0,3)  --  (0,-3)
	(-1,3)  --  (-1,1.5)
	(-1,1.5) coordinate (-left) 
	to [out=-90, in=0] (-2,.5)
	 (-2,.5) -- (-3.5,0.5)
	 (-2,-.5) -- (-3.5,-0.5)
	(-2,-.5) coordinate (-left) 
	to [out=0, in=90] (-1,-1.5)
	to (-1,-3);
\draw (1,3)  --  (1,1.5)
	(1,1.5) coordinate (-left) 
	to [out=-90, in=180] (2,.5)
	 (2,.5) -- (3.5,0.5)
	 (2,-.5) -- (3.5,-0.5)
	(2,-.5) coordinate (-left) 
	to [out=180, in=90] (1,-1.5)
	to (1,-3);
\draw[dotted] (-.5,4.5)  --  (-0.5,3.5);	
\draw[dotted] (.5,4.5)  --  (0.5,3.5);	
\draw[dotted] (-.5,-4.5)  --  (-0.5,-3.5);	
\draw[dotted] (.5,-4.5)  --  (0.5,-3.5);	
\draw[dotted] (4,0)  --  (5,0)
		     (-4,0)  --  (-5,0)
		     (-1.5,2.6) -- (-1.5,3)
		     (-1.5,1.4) to [out=-90, in=0] (-2,.8)
		     to (-2.5,0.8)
		     (-1.5,-2.6) -- (-1.5,-3)
		     (-1.5,-1.4) to [out=90, in=0] (-2,-.8)
		     to (-2.5,-0.8);	
\node[left] at (3,0){(i)};	
\node[left] at (-2.4,0){(i)};	
\node[left] at (.9,2){(ii)};
\node[right] at (-.9,2){(ii)};	
\node[left] at (.9,-2){(iii)};
\node[right] at (-.9,-2){(iii)};		     
	\end{tikzpicture}}
	\end{minipage}
	\begin{minipage}{.5\linewidth}
		\resizebox{0.9\linewidth}{2.6cm}{
	\begin{tikzpicture}[thick]
\pic [scale=0.15,rotate=-90,yscale=1.1,xscale=0.85](lower) at (-2.15,.65) {handle};
\pic [scale=0.15](lower) at (-1.8,1) {thole};
\pic [scale=0.15,rotate=90,yscale=1.1,xscale=0.85](lower) at (-2.15,-.65) {handle};
\pic [scale=0.15](lower) at (-1.6,-1) {thole};
\pic [scale=0.15](lower) at (-2,-1) {thole};
\shade[left color=white!25!purple,right color=white!25!purple](-3.5,0.5)--(-3.5,-0.5)--(-2,-0.5)--(-2,0.5);
\shade[left color=white!40!pink,right color=white!40!pink](3.5,0.5)--(3.5,-0.5)--(2,-0.5)--(2,0.5);
\shade[left color=white!20!lime,right color=white!20!lime] (0,0.5) -- (.4,0.5) arc[start angle=0, end angle=-90,radius=0.4] -- (0,0);
\shade[left color=white!20!lime,right color=white!20!lime] (0,-0.5) -- (.4,-0.5) arc[start angle=0, end angle=90,radius=0.4] -- (0,0);
\shade[left color=black!20!lime,right color=black!20!lime] (0,0.5) -- (-.4,0.5) arc[start angle=-180, end angle=-90,radius=0.4] -- (0,0);
\shade[left color=black!20!lime,right color=black!20!lime] (0,-0.5) -- (-.4,-0.5) arc[start angle=180, end angle=90,radius=0.4] -- (0,0);
\draw (0,0.5)  --  (0,-0.5)
	 (3.5,.5) -- (-3.5,0.5)
	 (3.5,-.5) -- (-3.5,-0.5);
\draw[dotted] (4,0)  --  (5,0)
		     (-4,0)  --  (-5,0)
		     (-3.5,0.8) -- (-3,.8)
		      (-1.5,0.8) -- (-1,.8)
		     (-3.5,-0.8) -- (-3,-.8)
		      (-1.5,-0.8) -- (-1,-.8);	
\node[left] at (3,0){(i)};	
\node[left] at (-2.4,0){(i)};		     
\node[left] at (.85,.7){(ii)};
\node[right] at (-.85,.7){(ii)};
\node[left] at (.85,-.7){(iii)};
\node[right] at (-.85,-.7){(iii)};
	\end{tikzpicture}}
	\end{minipage}
	\caption{The old and the new parametrizations of $U_+$ and $X$: The diffeomorphisms $\Phi_+$ and $\Phi_-$ map the spaces on the left to the spaces on the right while mapping each colored region with a label to a region with the same color and label.}
	\label{special-quin-old-new-par}
\end{figure}

Before delving into the technical aspects of the proof of Lemma \ref{part-symp-constant}, we discuss the main idea of the construction of a path from a mixed pair $(A,u)\in \bA(\alpha,\beta)$ to a symplectically constant pair. Using the above reparametrization, we may regard $u$ as a map from $[0,\infty)\times [-1,1]$ to $\mathcal M(\Sigma,F)$ and $A$ as a connection on $(-\infty,0]\times Y_\#$. (Strictly speaking, we have to remove $\pm \bi$ from the domain of $u$ and the subspace $\{0\}\times Y_0\cup \{0\}\times  -Y_0'$ from $(-\infty,0]\times Y_\#$.) Let $A_u$ be a connection on $[0,\infty)\times Y_\#$ such that for any $(s,\theta)\in [0,\infty)\times [-1,1]$, the restriction of $A_u$ to $\{(s,\theta)\}\times \Sigma\subset \{s\}\times Y_\#$ represents $u(s,\theta)$, its restriction to $\{0\}\times Y_\#$ agrees with the restriction of $A$ to $\{0\}\times Y_\#$ and for any $s\in[0,\infty)$, the restriction of $A_u$ to $\{s\}\times Y_0\subset \{s\}\times Y_\#$ (resp. $\{s\}\times -Y_0'\subset \{s\}\times Y_\#$) represents $u(s,1)\in L(Y,E)$ (resp. $u(s,-1)\in L(Y',E')$). Then we may shift the mixed pair to the gauge theory side and define the pair $(A_\tau,u_\tau)$ for any $\tau\in[0,\infty)$
\begin{equation}\label{shifting-op}
  A_\tau(s,y):=\left\{
  \begin{array}{cc}
  	A (s+\tau,y) & s\leq -\tau\\
	A_u(s+\tau,y)& s> -\tau
  \end{array}
  \right.,
  \hspace{1cm}
  u_\tau(s,y):= u(s+\tau,y).
\end{equation}
As $\tau$ tends to infinity, the pair $(A_\tau,u_\tau)$ converges to a symplectically constant pair.

The above argument needs to be modified to guarantee that the mixed pairs $(A_\tau,u_\tau)$ belong to the function space used in the definition of $\bA(\alpha,\beta)$. Before applying the above shifting construction, we pick a path from $(A,u)$ to a smooth mixed pair $(A',u')$ which satisfies the following additional assumptions. (In the following, we use the old parametrization of the spaces $X$ and $U_+$.)
\begin{itemize}
	\item[(i)] The restriction of $u'$ to the subspace $[3,\infty)\times [-1,1]$ of $U_+$ is the constant map 
	to $\beta$.
	\item[(ii)] On a tubular neighborhood of $\eta_+$ (resp. $\eta_+'$) identified with 
	$\eta_+\times (\frac{1}{2},1]$ (resp. $\eta_+'\times [-1,-\frac{1}{2})$) the map $u$ is equal to the 
	pullback of a smooth map from $\eta_+$ to $L(Y,E)$ (resp. $\eta_+'$ to $L(Y',E')$). 
	\item[(iii)] The restriction to $[0,2]\times [2,\infty)$ (resp. $[0,2]\times (-\infty,-2]$) of $u$ is the 
	constant map to an element $q\in L(Y,E)$ (resp. $q'\in L(Y',E')$).
	\item[(iv)] There is a smooth function $w:[-2,2]\to \mathcal M(\Sigma,F)$ such that for 
	$(s,\theta)\in [0,1]\times [-2,2]$, 
	$u(s,\theta)=w(\theta)$.
	\item[(v)] The restriction of $A$ to the subspaces $[2,\infty)\times Y$ 
	(resp. $(-\infty,-2]\times Y'$) is the 
	pull-back of a connection $B$ (resp. $B'$) on $E$ (resp. $E'$) which
	is a representative for $q$ (resp. $q'$). 
	\item[(vi)] The restriction of $A$ to $[-1,0]\times [-2,2]\times \Sigma$ is the pullback of
	 a smooth connection $B_\#$ on$[-2,2]\times \Sigma$ with a vanishing $d\theta$ component. In particular, $B_\#\vert_{\{\theta\}\times \Sigma}$ is flat and is a representative for 
	 $w(\theta)$.
\end{itemize}

Next, we wish to lift $u'$ to a connection $A_u'$ on $[0,\infty)\times Y_\#$. In the following, we use the new reparametrization of $X$ and $U_+$. In particular, $A'$ can be identified as a connection on $((-\infty,0]\times Y_\#)\setminus (\{0\}\times Y_0\cup \{0\}\times  -Y_0')$, which can be extended smoothly to a connection on $(-\infty,0]\times Y_\#$. For any $(r,t)\in [0,\infty)\times [-1,1]$, let $\alpha(r,t)$ be the unique connection on $F$ which satisfies 
\begin{itemize}
	\item[(i)] $\alpha(r,t)$ is a flat connection representing $u(r,t)$;
	\item[(ii)] $\alpha(0,t)=A'\vert_{ \Sigma\times \{(0,t)\}}$ for any $t$;
	\item[(iii)] $d_{\alpha(r,t)}^*\partial_r\alpha(r,t)=0$.
\end{itemize}
For any $r\in [0,\infty)$, we fix smooth connections $B(r)$ and $B'(r)$ on $Y$ and $Y'$ such that 
\begin{itemize}
	\item[(i)] $B(r)$ (resp. $B'(r)$) represents an element of $L(Y,E)$ (resp. $L(Y',E')$) whose restriction to the tubular neighborhood of the boundary of $Y$ (resp. $Y'$) is determined by $\alpha(r,1)$ (resp. $\alpha(r,-1)$);
	\item[(ii)] the restriction of $B(0)$ to $Y_0$ (resp. $B'(0)$ to $Y'_0$) is equal to the restriction of $A'$ to $\{0\}\times Y_0$ (resp. $\{0\}\times -Y_0'$).
\end{itemize}
The flat connections $\alpha(r,t)$ determine a smooth connection on $\Sigma\times [0,\infty)\times [-1,1]$ with vanishing $dr$ and $dt$ components, and the connections $B(r)$, $B'(r)$ determine connections on $[0,\infty)\times Y_0$, $[0,\infty)\times -Y_0'$ with vanishing $dr$ components. Gluing these connections determines  the desired connection $A_u'$ on $[0,\infty)\times Y_\#$. Now it is easy to see that the above shifting operation in \eqref{shifting-op} applied to $A'$ and $A'_u$ provides a smooth path in $\bA(\alpha,\beta)$ from $(A',u')$ to a symplectically constant pair. In fact, the same argument addresses the family version of Lemma \ref{part-symp-constant}.
 
\begin{lemma}\label{part-symp-constant-family}
	For a compact space $T$, suppose $f:T\to \bB(\alpha,\beta)$ is a smooth map.
	Then there is a smooth map $F:T\times [0,1]\to \bB(\alpha,\beta)$
	such that for any $x\in T$, $F(x,0)=f(x)$ and $F(x,1)$ 
	is a symplectically constant pair. Moreover, if $f(x)$ is already symplectically constant pair,
	then $F(x,t)$ is a symplecically constant pair for any $t$.
\end{lemma}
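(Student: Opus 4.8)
\textbf{Proof proposal for Lemma \ref{part-symp-constant-family}.}

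The plan is to carry out the shifting construction used in the proof of Lemma \ref{part-symp-constant}, but now uniformly in the parameter $x \in T$. First I would pass, by an $x$-parametrized version of the preliminary step in the proof of Lemma \ref{part-symp-constant}, from $f$ to a smooth map $\tilde f: T \to \bB(\alpha,\beta)$ together with a homotopy from $f$ to $\tilde f$, so that each $\tilde f(x)$ is represented by a smooth mixed pair $(A'_x, u'_x)$ satisfying properties (i)--(vi) of that proof (constant symplectic end, product form near $\eta_+$, $\eta_+'$, along the matching neighborhood, etc.), with all choices depending smoothly on $x$. Since $T$ is compact, the constants and cutoff functions involved can be chosen uniformly; the only subtlety is that the relevant objects ($B(r)$, $B'(r)$, the flat representatives $\alpha(r,t)$, the lift $A'_{u,x}$) depend on finitely many gauge-fixing and slice conditions that solve smoothly in families by the implicit function theorem, using compactness of $\mathcal M(\Sigma,F)$, $L(Y,E)$, $L(Y',E')$ and $T$. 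Crucially, if $f(x)$ is already a symplectically constant pair, I would arrange the preliminary homotopy to be the constant homotopy at $x$, which is possible because a symplectically constant pair already satisfies (i)--(vi) on the nose; so $\tilde f(x) = f(x)$ in that case.

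Next I would apply the shifting operator \eqref{shifting-op} fiberwise. For each $x$ and each $\tau \in [0,\infty)$ set $(A_{\tau,x}, u_{\tau,x})$ using $(A'_x, u'_x)$ and the lift $A'_{u,x}$ exactly as in \eqref{shifting-op}; this produces a smooth map $[0,\infty) \times T \to \bA(\alpha,\beta)$ after dividing by the gauge group (the function-space membership is guaranteed by properties (i)--(vi), as in the non-parametrized case). Reparametrizing $[0,\infty]$ to $[0,1]$ — for instance composing with an increasing diffeomorphism $[0,1) \to [0,\infty)$ and checking that $(A_{\tau,x}, u_{\tau,x})$ converges in $\bB(\alpha,\beta)$ as $\tau \to \infty$ to a symplectically constant pair $F(x,1)$ depending smoothly on $x$ — and then concatenating with the preliminary homotopy from $f$ to $\tilde f$, yields the desired $F: T \times [0,1] \to \bB(\alpha,\beta)$ with $F(x,0) = f(x)$ and $F(x,1)$ symplectically constant. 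If $f(x)$ was symplectically constant, then $\tilde f(x) = f(x)$, the shift \eqref{shifting-op} leaves it essentially unchanged (the map $u$ is already constant to $\beta$ and $A$ is already in the product form, so $A_{\tau,x} = A_x$ up to the identification), and $F(x,t)$ is symplectically constant for all $t$; to make this literal rather than just "up to gauge/reparametrization," I would check that the preliminary homotopy can be taken stationary there and that the shift formula literally fixes pairs whose restriction to the complement of the gauge end is already a constant pair.

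The main obstacle I anticipate is purely technical: verifying that all the choices made in the proof of Lemma \ref{part-symp-constant} (the flat-slice connections $\alpha(r,t)$ solving $d^*_{\alpha(r,t)}\partial_r \alpha(r,t)=0$, the auxiliary connections $B(r)$, $B'(r)$ representing points of the Lagrangians with prescribed boundary restrictions, and the interpolation/cutoff data bringing a general pair into the normal form (i)--(vi)) can be made to depend smoothly on $x \in T$ and, simultaneously, to be \emph{stationary} over the locus where $f(x)$ is already symplectically constant. Both requirements follow from the same implicit-function-theorem input used in the non-parametrized argument together with compactness, but bookkeeping the smooth dependence while keeping the construction natural (so that it does nothing on already-constant pairs) is where the care is needed. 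Everything else — the shifting estimate, the convergence as $\tau\to\infty$, and membership in the configuration space — is identical to the proof of Lemma \ref{part-symp-constant} with $T$ along for the ride.
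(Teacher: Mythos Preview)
Your proposal is correct and matches the paper's approach exactly: the paper does not give a separate proof of this lemma, but simply remarks (immediately before stating it) that ``the same argument addresses the family version of Lemma \ref{part-symp-constant}.'' You have correctly fleshed out what that means---run the preliminary normalization and the shifting construction of Lemma \ref{part-symp-constant} uniformly in $x\in T$, using compactness and the implicit function theorem for smooth dependence, and observe that a symplectically constant pair already satisfies conditions (i)--(vi) so the construction is stationary there.
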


\subsection{Proof of Proposition \ref{ind-mixed-op}} \label{ind-mixed-op-subs}
Suppose $A\in \mathcal A_G(\alpha,\beta)$ is a connection on the cylindrical manifold $\R\times Y$ and $(A',u)\in \bA(\beta,\gamma)$ is a mixed pair. We assume that $\mathcal A_G(\alpha,\beta)$ and $\bA(\beta,\gamma)$ are defined using the same representative for $\beta$. For any $T\in [3,\infty)$, we can glue $A$ and $(A',u)$ to define an element $(A_T,u)\in  \bA(\alpha,\gamma)$. The connection $A_T$ is defined as follows.
\begin{itemize}
	\item[(i)] On the cylinder  $(-\infty,-2T]\times Y_\#$, $A_T$ is equal to $\tau_{4T}^*(A)$, the translation of the restriction $A$ over $(-\infty,2T]\times Y_\#$ by $4T$.
	\item[(ii)] On the complement of $(-\infty,-T]\times Y_\#$, $A_T$ is equal to $A'$.
	\item[(iii)] On the cylinder $(-2T,T)\times Y_\#$, $A_T$ is equal to $\rho(\frac{s+2T}{T})\cdot \tau_{4T}^*(A)+(1-\rho(\frac{s+2T}{T}))\cdot A'$ where $\rho:[0,3]\to [0,1]$ is a fixed smooth function with $\rho(t)=1$ if $t\leq 1$ and 
	$\rho(t)=0$ if $t\geq 2$.
\end{itemize}
By putting the connections $A$ and $A'\vert_{(-\infty,-3]\times Y_\#}$ in the temporal gauge, the above gluing construction descends to gluing an element of $\mathcal B_G(\alpha,\beta)$ and $\bB(\alpha,\beta)$.

\begin{prop}\label{additivity-gauge}
	The topological energy of $(A_T,u)$ and the index of  $\mathcal D_{(A_T,u)}$ are given by
	\begin{equation}\label{top-energy-glu-gauge}
	  \mathcal E(A_T,u)=\mathcal E(A)+\mathcal E(A',u),
	\end{equation}
	and 
	\begin{equation}\label{ind-glu-gauge}
	  \ind(\mathcal D_{(A_T,u)})=\ind(\mathcal D_A)+\ind(\mathcal D_{(A',u)}).
	\end{equation}
\end{prop}
\begin{proof}
	Lemma \ref{top-energy-constant} implies that $\mathcal E(A_T)$ is independent of $T$. Thus the identity in \eqref{top-energy-glu-gauge} can be obtained by taking the limit $T\to \infty$.
	The additivity formula in \eqref{ind-glu-gauge} is the counterpart of the additivity of the index of the ASD operator with respect to gluing \cite[Section 3.3]{Don:YM-Floer} and a similar argument can be used to prove \eqref{ind-glu-gauge}.
\end{proof}

\begin{proof}[Proof of Proposition \ref{ind-mixed-op}]
	Since the index of the mixed operator and topological energy of mixed pairs are locally constant, Lemma \ref{part-symp-constant} implies that it suffices to prove Proposition \ref{ind-mixed-op} 
	for symplectically constant pairs. A symplectically constant pair $(A,u)\in \bA(\alpha,\beta)$ can be obtained by gluing a constant pair $(A_\beta,u_\beta)$ and a connection $A\in \mathcal A_G(\alpha,\beta)$. 
	Now Propositions \ref{top-energy-index-gauge}, \ref{cons-map-ind} and \ref{additivity-gauge} give the index formula for mixed operators.
\end{proof}

\begin{proof}[Proof of Lemma \ref{top-energy-constant-reverse}]
	Suppose $[A,u],[A',u']\in \bB(\alpha,\beta)$. 
	Using Lemma \ref{part-symp-constant} we may assume that 
	$[A,u]$ and $[A',u']$ are symplectically constant pairs without changing their topological energies. Thus, after picking appropriate representatives for 
	the connections $A$ and $A'$, we may assume that they agree on the complement of 
	$(-\infty,-3]\times Y_\#$. In particular, these two connections induce connections 
	$A_G$ and $A_G'$ on $\R\times Y_\#$ which represent elements of $\mathcal B_G(\alpha,\beta)$. Characterization of the components of 
	$\mathcal B_G(\alpha,\beta)$ implies that $2(\mathcal E(A_G)-\mathcal E(A_G'))$ is an integer, and hence a similar result holds for $2(\mathcal E(A,u)-\mathcal E(A',u'))$.
	Moreover, if $\mathcal E(A_G)=\mathcal E(A_G')$, then $A_G$ and $A_G'$ can be connected to each other by a path 
	of connections which is fixed on $[-3,\infty)\times Y_\#$. This induces a path between the mixed pairs 
	$[A,u]$ and $[A',u']$.
\end{proof}

The following is a consequence of Proposition \ref{ind-mixed-op} and Lemma \ref{top-energy-constant-reverse}.
\begin{cor} \label{ind-loop-four}
	For any smooth $(A,u)\in \bA(\alpha,\alpha)$, the index $\mathcal D_{(A,u)}$ is a multiple of $4$.
\end{cor}

There is a variant of Proposition \ref{additivity-gauge} where a mixed pair $(A,u)\in \bA(\alpha,\beta)$ is glued to a map $u':\R\times [-1,1]\to \mathcal M(\Sigma,F)$ representing a path from $\beta\in \fC_S$ to $\gamma\in \fC_S$. After arranging an appropriate chart for a neighborhood of $\beta$ in $\mathcal M(\Sigma,F)$ and the Lagrangians $L(Y,E)$ and $L(Y',E')$, we may follow a similar process as in the previous case to define $(A,u_T)\in \bA(\alpha,\gamma)$ for $T$ large enough. The proof of the following proposition is similar to Proposition \ref{additivity-gauge}. 
\begin{prop}\label{additivity-symplectic}
	The topological energy of $(A,u_T)$ and the index of the mixed operator $\mathcal D_{(A,u_T)}$ is given by
	\begin{equation}\label{top-energy-glu-symp}
	  \mathcal E(A,u_T)=\mathcal E(A,u)+\frac{1}{4\pi^2}\int_{\R\times Y}(u')^*\Omega
	\end{equation}
	and 
	\begin{equation}\label{ind-glu-symp}
	  \ind(\mathcal D_{(A,u_T)})=\ind(\mathcal D_{(A,u)})+\ind(\mathcal D_{u'}).
	\end{equation}
\end{prop}

\subsection{Proof of Propositions \ref{monotone-lag} and \ref{linear-AF-gradings}}\label{monotone-lag-subs}

\begin{proof}[Proof of Proposition \ref{monotone-lag}]
	Suppose $\alpha\in \fC_S$ and $u:\R\times [-1,1]\to \mathcal M(\Sigma,F)$ is a smooth map that is the constant map to $\alpha$ on the complement of the compact region $[-1,1]\times [-1,1]$.
	This map determines an element $\gamma_u$ of $\pi_1(\Omega(L,L'),o_\alpha)$ in an obvious way and we have
	\begin{equation}\label{rel-closed}
	  [\Omega](\gamma_u)=\int_{\R\times [-1,1]}u^*\Omega,\hspace{1cm}\mu(\gamma_u)=\ind(\mathcal D_u).
	\end{equation}
	We may glue $u_\alpha$ to the constant map $(A_\alpha,u_\alpha)$ to define a mixed pair $(A_\alpha,u')\in \bA(\alpha,\alpha)$ which satisfies the following properties by 
	Propositions \ref{cons-map-ind},
	\ref{additivity-symplectic} and the identities in \eqref{rel-closed}
	\begin{equation}\label{rel-to-mixed}
	   \mathcal E(A_\alpha,u')= \frac{1}{4\pi^2}[\Omega](\gamma_u), \hspace{1cm} 
	    \ind(\mathcal D_{(A_\alpha,u')})=\mu(\gamma_u).
	\end{equation}
	As a consequence of Proposition \ref{ind-mixed-op} we can conclude that
	\[
	  \mu(\gamma_u)= \frac{2}{\pi^2}[\Omega](\gamma_u).
	\]
	This implies that $(L(Y,E),L(Y',E'))$ is a monotone pair. The second identity in \eqref{rel-to-mixed} and 
	Corollary \ref{ind-loop-four} imply that the minimal Masolv number of the pair is divisible by $4$. 
	
	The minimal Masolv number of the pair $(L(Y,E),L(Y',E'))$ is in fact equal to $4$. This follows from the well-known
	fact that $c_2(\mathcal M(\Sigma,F))$ is twice the generator of $H^2(\mathcal M(\Sigma,F),\Z)$ \cite{Ram:can-cl,AB:YM}. 
	(This fact can be also derived from the arguments used in this section.)
	Since $\mathcal M(\Sigma,F)$ is simply connected, there is an element of $\pi_2(\mathcal M(\Sigma,F))$ whose
	pairing with $c_2(\mathcal M(\Sigma,F))$ is any given even integer. Thus we may change the Maslov number of 
	$u:\R\times [-1,1]\to \mathcal M(\Sigma,F)$ as above by any multiple of $4$ after gluing to a sphere in 
	$\mathcal M(\Sigma,F)$.
	
	Assuming that $\fC_S$ is non-empty, monotonicity of the pair $(L(Y,E),L(Y',E'))$ implies 
	that each of $L(Y,E)$ and $L(Y',E')$ is an oriented monotone Lagrangian with minimal Maslov number $4$.  
	To avoid the assumption on $\fC_S$, we may assume $(Y',E')=(Y,E)$ and use two different perturbation terms 
	$h$, $h'$ for $L(Y,E)$ such that the corresponding Lagrangians intersect non-trivially and transversely. 
	In any case, replacing $L(Y',E')$ with $L(Y,E)$ turns out to be unnecessary because $\rI_*(Y_\#,E_\#)$ is 
	always non-trivial \cite{km-propertyp,km-sutures}. Next, let $l$ be a loop in $L(Y,E)$. Let 
	$v:D^2\to \mathcal M(\Sigma,F)$ with $v\vert_{\partial D^2}=l$. Since the Maslov index of the disc 
	$v$ is an even integer, $TL(Y,E)$ is orientable. Thus $L(Y,E)$ and similarly $L(Y',E')$ are orientable.
\end{proof}

\begin{proof}[Proof of Proposition \ref{linear-AF-gradings}]
	Suppose $\alpha,\beta\in \fC_{S,o}$ where $o$ denotes a connected component of the path space $\Omega(L(Y,E),L(Y',E'))$, and $u:\R\times [-1,1]\to \mathcal M(\Sigma,F)$ is a smooth map 
	representing a path from $\alpha$ to $\beta$. Thus $u$ satisfies
	\begin{equation}\label{constant-ends}
	  \hspace{2cm}  u(-s,\theta)=\alpha,\hspace{.5cm}u(s,\theta)=\beta, \hspace{2cm}\forall (s,\theta)\in [1,\infty)\times [-1,1]
	\end{equation}
	and
	\begin{equation}\label{BC-2}
	  u\vert_{\R\times \{1\}}\subset L(Y,E),\hspace{1cm} u\vert_{\R\times \{-1\}}\subset L(Y',E').
	\end{equation}
	Gluing the constant pair $(A_{\alpha},u_{\alpha})$ to $u$
	produces a constant pair $(A_{\alpha},u_T)$ which can be connected to a symplectically constant pair $(A,u_{\beta})$ by Lemma \ref{part-symp-constant}. 
	The latter mixed pair is obtained by gluing a connection 
	$A'\in \mathcal A_G(\alpha,\beta)$ to the constant pair $(A_{\beta},u_{\beta})$. A similar argument as above using Propositions \ref{cons-map-ind}, 
	\ref{additivity-gauge} and \ref{additivity-symplectic} shows
	\[
	  \ind(\mathcal D_u)=\ind(\mathcal D_{A'}).
	\]
	The above identity implies that the relative grading of $\alpha$ and $\beta$ with respect to $\deg_S$ and $\deg_G$ agree with each other. This completes the proof of Proposition \ref{linear-AF-gradings}.
\end{proof}

\subsection{Orientability of the mixed determinant lines}\label{subsection-ori}

The smooth elements of $\bB(\alpha,\beta)$ parametrize a family of Fredholm operators given by the mixed operators. Associated to this family of Fredholm operators, we can associate a determinant line bundle $\delta^M$ over the subspace of $\bB(\alpha,\beta)$ given by smooth elements, where the fiber of $\delta^M$ over $[A,u]$ can be identified with
\[
  \Lambda^{\rm max}\ker(\mathcal D_{(A,u)})\otimes (\Lambda^{\rm max}\coker(\mathcal D_{(A,u)}))^*.
\]
We shall show in the next section that the elements of $\bM(\alpha,\beta)$ are smooth. In particular, $\delta^M$ induces a line bundle on $\bM(\alpha,\beta)$ whose restriction to the open subspace $\bM^{\rm reg}(\alpha,\beta)$ of regular elements of $\bM(\alpha,\beta)$ can be naturally identified with the orientation bundle of the manifold $\bM^{\rm reg}(\alpha,\beta)$. Therefore, we are interested in trivializing $\delta^M$ to orient the moduli spaces of solutions to the mixed equation. Moreover, we use orientability of $\delta^M$ to verify the claim in Proposition \ref{orientaion-delta-p}.

To prove triviality of $\delta^M$, it suffices to show that its restriction to any loop $\gamma:S^1\to \bB(\alpha,\beta)$ is orientable. Using Lemma \ref{part-symp-constant-family}, we may assume that $\gamma$ parametrizes an $S^1$-family of symplectically constant pairs. In particular, there is a connection $A_\beta$ representing $\beta$ and a loop $\gamma_G:S^1\to \mathcal B_G(\alpha,\beta)_p$ such that $\gamma$ is obtained by gluing $\gamma_\beta$ to $[A_\beta,u_\beta]$ in the same way as in Subsection \ref{ind-mixed-op-subs}. The family version of \eqref{ind-glu-gauge} in Proposition \ref{additivity-gauge}, which can be proved again using essentially the same arguments as in \cite[Section 3.3]{Don:YM-Floer}, implies that the restriction of $\delta^M$ to the family given by the loop $\gamma$ can be identified with the the tensor product $\delta_p^G\vert_{\gamma_G}\otimes \delta^M|_{[A_\beta,u_\beta]}$.  In particular, orientability of $\delta_p^G$ implies that $\delta^M$ is orientable.

The above argument can be also used to fix an orientation of $\delta^M$. First we fix an orientation of the lines bundles $\delta^G_p$ over the configuration spaces $\mathcal B_G(\alpha,\beta)_p$. For any connected component of $\bB(\alpha,\beta)$, we fix a symplectically constant pair $[A,u]$ which exists according to Lemma \ref{part-symp-constant}. Thus, $(A,u)$ is obtained from gluing a connection $A_G\in \mathcal A_G(\alpha,\beta)_p$ over $\R\times Y_\#$ to a constant pair $(A_\beta,u_\beta)$. The fiber of $\delta^M$ over $[A,u]$ is isomorphic to $\delta_p^G\vert_{[A_G]}\otimes \delta^M|_{[A_\beta,u_\beta]}$, and the isomorphism is canonical up to multiplication by a positive constant. Since the kernel and the cokernel of the mixed operator associated to the mixed solution are trivial, $\delta^M|_{[A_\beta,u_\beta]}$ can be naturally identified with $\R$. Therefore, the fixed orientation of $\delta^G_p$ determines an orientation of $\delta^M\vert_{[A,u]}$. This orientation is independent of the choice of $[A,u]$. If $[A',u']$ is another element in the same connected component of $\bB(\alpha,\beta)$, then there is a path $\gamma:[0,1]\to \bB(\alpha,\beta)$ from $[A,u]$ to $[A',u']$. Using Lemma \ref{part-symp-constant-family}, we may assume that $\gamma$ is in fact a path in the subspace of symplectically constant pairs. Therefore, orientations of $\delta^M$ induced by $[A,u]$ and $[A',u']$ agree with each other.

We use a similar trick to show that the line bundles $\delta_p^S$ over the configuration spaces of strips $\mathcal B_S(\alpha,\beta)_{p}$ are trivial, and then fix a trivialization of these line bundles. Let $\gamma_S:S^1\to \mathcal B_S(\alpha,\beta)_{p}$ be a loop. By changing this loop using a homotopy, we may assume that $\gamma_S$ is represented by a smooth map $U:\R\times [-1,1]\times S^1\to \mathcal M(\Sigma,F)$ such that for $s\geq 1$, we have $U(-s,\theta,t)=\alpha$ and $U(s,\theta,t)=\beta$. We may glue this loop to the constant mixed pair $(A_\alpha,u_\alpha)$ as in Subsection \ref{ind-mixed-op-subs} to define $\gamma^T:S^1\to \bA(\alpha,\beta)$ for $T$ large enough. The family version of Proposition \ref{additivity-symplectic} implies that $\delta^M\vert_{\gamma^T}$  is isomorphic to the tensor product $ \delta^M|_{[A_\alpha,u_\alpha]}\otimes \delta_p^S\vert_{\gamma}$. In particular, $ \delta_p^S\vert_{\gamma}$ is trivial, which verifies our claim.

We may fix an orientation of $ \delta_p^S$ in the same was as in the case of $\delta^M$. Given a strip $u:\R\times [-1,1]\to \mathcal M(\Sigma,F)$ satisfying \eqref{constant-ends} and \eqref{BC-2}, we may glue  the constant mixed pair $(A_\alpha,u_\alpha)$  to $u$ to define another mixed pair $(A_\alpha,u')$. Since $\delta^M\vert_{[A_\alpha,u']}$ and $\delta^M|_{[A_\alpha,u_\alpha]}\otimes \delta_p^S\vert_{u}$ are isomorphic and $\delta^M\vert_{[A_\alpha,u']}$ and $ \delta^M|_{[A_\alpha,u_\alpha]}$ have fixed orientations, we obtain an orientation of $\delta_p^S|_{u}$. This induces a well-defined orientation of $\delta_p^S$. These orientations are compatible with the strip gluing maps in \eqref{strip-gluing-ori} because the fixed orientations on $\delta_p^G$ are compatible with the cylinder gluing maps in \eqref{cyl-gluing-ori}. In summary, we obtain a coherent system of orientations for the line bundles $\delta_p^S$.

\begin{remark}\label{comp-Z-summand}
	We use gluing theory of various indices to define orientations of the line bundles $\delta^M$ and $\delta_p^S$ in terms of the orientations of the line bundles $\delta_p^G$. Recall that we had a degree of 
	freedom to orient $\delta_p^G$. To define this orientation, we fixed an orientation of $\delta_{p_0}^G$ where $p_0$ is a path from a fixed $\alpha_0\in \fC_G$ to itself, whose index has the form $8k+4$. 
	One such path $p_0$ can be fixed as follows. Let $s:S^2\to \mathcal M(\Sigma,F)$ represent an element of $\pi_2(\mathcal M(\Sigma,F))$ which is associated to one of the connected components of $\Sigma$ 
	and is introduced at the end of Subsection \ref{flat-surface}. 
	Gluing $s$ to the constant strip mapped to $\alpha_0$ determines $u:\R\times [-1,1]\to \mathcal M(\Sigma,F)$ with $\ind(\mathcal D_u)=4$. Applying the mixed shifting operation of Subsection \ref{mixed-shit-op} gives a connection on $\R\times Y_\#$, which represents the desired path $p_0$.
	The sphere gluing map $\Psi_{p,s}$ in Remark \ref{deficiency-ori} allows us to define an orientation of the index of $\mathcal D_u$. Then using the construction of this subsection in the reverse order, 
	we may fix an orientation of $\delta^G_{p_0}$. Using this orientation of $\delta^G_{p_0}$, one may easily see that the induced coherent system of orientations of the line bundles $\delta_p^S$ is 
	compatible with $\Psi_{p,s}$ when $s$ is the above element of $\pi_2(\mathcal M(\Sigma,F))$.
\end{remark}

\section{Non-linear analysis}\label{reg-comp-exp-dec-sec}
In this section we shall prove Proposition \ref{exp-decay} and part of Proposition \ref{comp}. Our primary tools are the compactness and regularity theorems of \cite{DFL:mix} together with some standard results about the solutions of ASD and pseudo-holomorphic curve equations. As in the previous section, $(Y,E)$ and $(Y',E')$ are fixed as in Subsection \ref{3-man-bdles}, and we fix Lagrangian 3-manifolds associated to these pairs that have transversal intersection and the claim of Lemma \ref{reg-ASD} holds. We continue to drop $h$ and $h'$ from our notations for the 3-manifolds Lagrangians, and denote them by $L(Y,E)$ and $L(Y',E')$.

\subsection{Review of the results of \cite{DFL:mix} on regularity and compactness}\label{review-DFL:mix}

The following compactness theorem from \cite[Theorem 3]{DFL:mix} can be regarded as a common generalization of Uhlenbeck and Gromov compactness theorems to moduli spaces of solutions to the mixed equation.

\begin{theorem}\label{compactness-X-S}
	Suppose $\fq=(X,V,S,\mathcal M(\Sigma,F),{\mathbb L})$ is a quintuple as in Subsection \ref{quintuples}.  
	There is a constant $\hbar$ such that the following holds. Suppose $\{(A_i,u_i)\}$ is a sequence of solution of the mixed equation \eqref{mixed-eq-pre} associated to $\fq$ such that 
	\[
	  \mathcal E(A_i,u_i)\leq \kappa
	\]
	for a fixed constant $\kappa$. Then there are
	\begin{itemize}
		\item[(i)]  a subsequence $\{(A_i^\pi,u_i^\pi)\}$ of $\{(A_i,u_i)\}$,
		\item[(ii)] a solution of the mixed equation $(A_0,u_0)$ for the quintuple $\fq$,
		\item[(iii)] finite sets $\sigma_-\subset {\rm int}(X)$, $\sigma_\partial \subset \gamma$ and $\sigma_+\subset S\setminus \gamma$,
	\end{itemize}
	such that the following holds.
	\begin{itemize}
		\item[(i)] The pair $(A_0,u_0)$ satisfies the energy bound \[\mathcal E(A_0,u_0)\leq \limsup_{i}\mathcal E(A_i,u_i).\]
		If any of the sets $\sigma_-$, $\sigma_\partial$ and $\sigma_+$ is nonempty, then the above inequality can be improved by subtracting $\hbar$ from the right hand side.
		\item[(ii)] $u_i^\pi$ is $C^\infty$-convergent to $u_0$ on any compact subspace of $S\setminus (\sigma_+\cup \sigma_\partial)$.
		\item[(iii)] There are gauge transformations $g_i^\pi$ defined over $X\setminus (\sigma_\partial\times \Sigma\cup \sigma_-)$ such that $(g_i^\pi)^*A_i^\pi$ is $C^\infty$ convergent to
		$A_0$ on any compact subspace of  $X\setminus (\sigma_\partial\times \Sigma\cup \sigma_-)$.
	\end{itemize}
\end{theorem}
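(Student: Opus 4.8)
The plan is to deduce Theorem~\ref{compactness-X-S} from the general compactness theorem for quintuples established in \cite[Theorem~3]{DFL:mix}; the only real task is to check that the quintuple $\fq=(X,V,S,\mathcal M(\Sigma,F),\mathbb L)$ of this section lies in the class of quintuples treated there. In \cite{DFL:mix} the symplectic target was allowed to be an arbitrary symplectic manifold together with a Lagrangian correspondence from $\mathcal A(\Sigma,F)$, and here the correspondence is the tautological one attached to $\mathcal M(\Sigma,F)$ itself, which is the simplest instance. So first I would recall the hypotheses of \cite[Theorem~3]{DFL:mix} --- the cylindrical structure of $X$ and of $S$ near their ends, admissibility of the bundle, compactness (and monotonicity) of the Lagrangians, smoothness of the symplectic quotient --- and verify each against the data fixed in Subsections~\ref{3-man-bdles}--\ref{3-man-lag-sec}: $\mathcal M(\Sigma,F)$ is a closed symplectic manifold with no singular points, the $3$-manifold Lagrangians $L(Y,E)$ and $L(Y',E')$ are compact, $V|_{\partial X}=\pi^*F$ on $\gamma\times\Sigma$, and $X$, $S$ have exactly the cylindrical and ``mixed'' ends required there.

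Second, I would isolate the energy-quantization constant $\hbar$. Following \cite{DFL:mix}, $\hbar$ is the minimum of four positive quantities: the least energy of a non-constant $J_*$-holomorphic sphere in $\mathcal M(\Sigma,F)$; the least energy of a non-constant $J_*$-holomorphic disc with boundary on $L(Y,E)$ or on $L(Y',E')$; the least topological energy of a non-flat finite-energy anti-self-dual connection on $\R^4$ or on a half-cylinder; and the least energy of a bubble forming along the matching curve $\gamma$. Positivity of the first two follows from monotonicity of $\mathcal M(\Sigma,F)$ and of the Lagrangians (Proposition~\ref{monotone-lag}), of the third from the classical instanton energy gap, and of the last from the concentration-compactness analysis near $\gamma$ carried out in \cite{DFL:mix}.

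Third, I would run the bubbling-tree argument. Given the uniform bound $\mathcal E(A_i,u_i)\le\kappa$ one passes to a subsequence; away from a finite set one obtains, after suitable gauge transformations on the gauge-theoretic part, $C^\infty_{\loc}$-convergence to a limiting mixed pair $(A_0,u_0)$, by combining Uhlenbeck compactness on $X$, Gromov compactness on $S$, and the matching-curve estimates of \cite{DFL:mix}; the bubbling set splits as $\sigma_-\subset\mathrm{int}(X)$, $\sigma_+\subset S\setminus\gamma$, $\sigma_\partial\subset\gamma$. The energy inequality, improved by $\hbar$ whenever any of $\sigma_-,\sigma_+,\sigma_\partial$ is nonempty, follows from lower semicontinuity of energy under these convergences together with the lower bound on bubble energy. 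Finally, I would note that the perturbation terms $*_3\nabla_{A_t}h$ and $*_3\nabla_{A_t'}h'$ occurring in the equations actually used (e.g.\ \eqref{mixed-eq}, \eqref{mixed-eq-pert}) are compactly supported away from $\gamma$ and depend on the connection only through its restriction to a compact region, hence are tame zeroth-order perturbations which affect no step, and likewise the domain-dependent almost complex structures $J_{(s,\theta)}$ are a compact deformation of $J_*$; both are exactly the kinds of perturbations already permitted in \cite{DFL:mix}.

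The main obstacle is the matching-curve analysis: ruling out, or accounting for, concentration of energy along $\gamma$, where a gauge-theoretic bubble and a holomorphic bubble can interact, and establishing the positive lower bound $\hbar$ for such hybrid configurations. This is the genuinely new phenomenon beyond classical Uhlenbeck and Gromov compactness, and it is precisely what is imported from \cite{DFL:mix}; rather than reprove it, I would cite the relevant statements there (the compactness theorem and the quantization of energy near $\gamma$) and spell out how the present quintuple specializes their set-up.
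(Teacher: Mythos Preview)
Your approach is correct and matches the paper's: this theorem is simply \cite[Theorem~3]{DFL:mix}, quoted verbatim for the class of quintuples defined in Subsection~\ref{quintuples}, and the paper gives no proof beyond that citation. Your additional unpacking of the hypotheses and the origin of $\hbar$ is reasonable but not something the paper itself carries out. One small point: your final paragraph, on the holonomy perturbations and the domain-dependent almost complex structures, does not belong here---Theorem~\ref{compactness-X-S} is stated for the \emph{unperturbed} mixed equation \eqref{mixed-eq-pre}; the perturbed version is the separate Theorem~\ref{compactness-X-S-pert}, and that is where the paper actually argues (briefly) that standard perturbations trivial near the matching line do not disturb the local compactness analysis.
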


In the above theorem, one should think about $\sigma_-$, $\sigma_+$ and $\sigma_\partial$ as the sets where the bubbling phenomenon happens. We have bubbling of the ASD equation on $\sigma_-$, bubbling of the holomorphic curve equation on $\sigma_+$ and mixed bubbling on $\sigma_\partial$. We need a slightly more general version of this compactness theorem where the mixed equation is perturbed by terms similar to the perturbation terms that appear in \eqref{mixed-eq-pert}. To be more specific, we consider a generalization of the mixed equation where holomorphic equation part of the mixed equation is defined using a family of domain dependent almost complex structures and the ASD equation is deformed by holonomy perturbations. We shall make the precise type of such perturbations clear in the subsequent section. For now, we just point out that we only consider perturbations that in a neighborhood of $\gamma$ in $S$ the almost complex structure is the standard one $J_*$, and in a neighborhood of $\gamma \times \Sigma$ in $X$, the holonomy perturbation of the ASD equation is trivial. We call any such perturbation a {\it standard perturbation} of the mixed equation, which is {\it trivial in a neighborhood of the matching line}.

\begin{theorem}\label{compactness-X-S-pert}
	Suppose $\fq=(X,V,S,\mathcal M(\Sigma,F),{\mathbb L})$ is given as in Theorem \ref{compactness-X-S}. Suppose the mixed equation associated to $\fq$ is deformed by a standard perturbation,
	which is trivial in a neighborhood of the matching line, and $\{(A_i,u_i)\}$ is a sequence of solutions to the perturbed mixed equation. 
	Then the same claim as in Theorem \ref{compactness-X-S} holds except that the last part of the claim should be replaced with 
	\vspace{-.25cm}
	\begin{itemize}
		\item[(iii)'] There are gauge transformations $g_i^\pi$ defined over $X\setminus (\sigma_\partial\times \Sigma\cup \sigma_-)$ such that for any $p$,
		the connections $(g_i^\pi)^*A_i^\pi$ are $L^p_1$ convergent to
		$A_0$ on any compact subspace of  $X\setminus (\sigma_\partial\times \Sigma\cup \sigma_-)$. This convergence can be improved to $C^\infty$ if $\sigma_-$ is empty.
	\end{itemize}
\end{theorem}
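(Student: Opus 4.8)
The plan is to reduce Theorem \ref{compactness-X-S-pert} to the already established Theorem \ref{compactness-X-S} by a limiting argument in the perturbation parameter, isolating the effect of the two perturbations (domain-dependent almost complex structures and holonomy perturbations of the ASD equation) from the underlying mixed equation. First I would set up the analytic framework for perturbed mixed pairs: a standard perturbation trivial in a neighborhood of the matching line consists of a family $\{J_{(s,\theta)}\}$ of almost complex structures equal to $J_*$ near $\gamma\subset S$, together with a holonomy perturbation term $\eta$ supported away from $\gamma\times\Sigma\subset X$. The key structural point is that both perturbation terms are \emph{zeroth order} and \emph{uniformly bounded} in $C^\infty$: replacing $F^+_A=0$ by $F^+_A+\eta(A)=0$ changes neither the Uhlenbeck small-energy regularity theory nor the bubbling analysis, because $\eta(A)$ is smoothing (it depends on holonomies) and bounded pointwise by the curvature-free quantities, while deforming $J_*$ to $J_{(s,\theta)}$ only affects the holomorphic curve side, where the Gromov compactness machinery is insensitive to smooth domain-dependent deformations. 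The same energy quantum $\hbar$ works, up to shrinking, since the a priori lower bound on the energy of a nonconstant bubble is a statement about the target geometry $(\mathcal M(\Sigma,F),\Omega)$, which is unchanged.

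The main body of the argument proceeds exactly as in the proof of Theorem \ref{compactness-X-S} in \cite{DFL:mix}, with three adjustments. (1) The interior elliptic estimates for the ASD equation become estimates for the perturbed equation $F^+_A+\eta(A)=0$; since $\eta$ is a compact (holonomy) perturbation, bootstrapping still works but only in $L^p_1$ on compact sets away from the bubbling locus unless the ASD bubbling set $\sigma_-$ is empty. This is precisely the weakening recorded in (iii)': away from $\sigma_-$ one loses the clean $C^\infty$ convergence because the perturbation term, while smooth, only gives Sobolev control that is uniform in the sequence after gauge fixing; full $C^\infty$ convergence is recovered when $\sigma_-=\emptyset$ because then the curvature is uniformly bounded on compact sets and elliptic bootstrapping for the perturbed operator goes through to all orders. (2) On the holomorphic side, the map $u_0$ obtained as a limit solves $\bar\partial_{J_{(s,\theta)}}u_0=0$ rather than $\bar\partial_{J_*}u_0=0$; $C^\infty$ convergence of $u_i^\pi$ to $u_0$ away from $\sigma_+\cup\sigma_\partial$ is unaffected since the Gromov compactness estimates hold verbatim for a fixed smooth family of almost complex structures. (3) The matching condition along $\gamma$ is preserved in the limit: here one uses that the perturbation is \emph{trivial in a neighborhood of the matching line}, so near $U_\partial\times\Sigma$ the equations are literally the unperturbed ones, and the matching-line analysis of \cite{DFL:mix} — including the control on mixed bubbling at points of $\sigma_\partial$ — applies without modification.

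Concretely, I would (i) state the uniform small-energy regularity lemma for the perturbed mixed equation (Uhlenbeck-type on the four-manifold, $\epsilon$-regularity on the surface), citing \cite[Section 3]{DFL:mix} and noting the perturbations only enter through bounded zeroth-order terms; (ii) run the bubbling-extraction argument to produce $\sigma_-$, $\sigma_\partial$, $\sigma_+$ and the finitely many rescaled limits, with the energy-drop-by-$\hbar$ statement following from the target-geometry energy quantum; (iii) patch the gauge transformations $g_i^\pi$ over $X\setminus(\sigma_\partial\times\Sigma\cup\sigma_-)$ using the perturbed Coulomb gauge-fixing, obtaining $L^p_1$ convergence in general and $C^\infty$ convergence when $\sigma_-=\emptyset$; (iv) verify that $(A_0,u_0)$ satisfies the perturbed mixed equation, the matching condition, and the Lagrangian boundary conditions, by passing to the limit in each equation on compact subsets. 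The step I expect to be the main obstacle is (iii): obtaining \emph{uniform} $L^p_1$ control on the gauge-transformed connections near $\sigma_-$ when the ASD equation carries a holonomy perturbation. The holonomy perturbation is nonlocal, so one must check that the standard patching-of-gauges argument still produces a limit gauge transformation with the stated regularity; the resolution is that on any compact set disjoint from $\sigma_-$ the curvatures $|F_{A_i^\pi}|$ are uniformly bounded (no ASD bubbling there), hence after Uhlenbeck gauge fixing the $(g_i^\pi)^*A_i^\pi$ are bounded in $L^p_1$, and $\eta$ being a bounded operator into smooth forms upgrades this to the convergence claimed, with the full $C^\infty$ statement in the $\sigma_-=\emptyset$ case following from elliptic regularity for the perturbed gauge-fixed equation together with the fact that holonomy perturbations preserve smoothness of solutions.
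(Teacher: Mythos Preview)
Your proposal follows essentially the same approach as the paper, which also runs the local compactness analysis of Theorem~\ref{compactness-X-S} region by region: unperturbed near $\gamma$ (since the standard perturbation is trivial there), Gromov compactness for domain-dependent $J$ on $S\setminus\gamma$, and perturbed-ASD compactness on ${\rm int}(X)$, with the $L^p_1$-versus-$C^\infty$ distinction attributed to the nonlocality of holonomy perturbations (the paper simply cites \cite{K:higher} for this point). Your opening phrase about a ``limiting argument in the perturbation parameter'' is misleading---neither you nor the paper actually takes such a limit---but the body of your proposal matches the paper's direct adaptation.
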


\begin{proof}
	Theorem \ref{compactness-X-S} has a local nature. First, one obtains a compactness theorem for {\it nice} neighborhoods of points in ${\rm int}(X)$, $\gamma$ and 
	$S\setminus \gamma$. (By a neighborhood around a point $p\in \gamma$, we mean the disjoint union of a neighborhood of $\{p\}\times \Sigma$ in $X$ and a neighborhood of $p$ in $S$.)
	A neighborhood around a given point is nice if $|\!|F_{A_i})|\!|_{L^2}$ and $|\!|\nabla u_i|\!|_{L^2}$ are universally bounded by a specific constant $\hbar$ in the neighborhood. Then a patching 
	argument as in \cite[Chapter 4]{DK} allows us to obtain the global compactness theorem. As a result the argument of the proof of Theorem \ref{compactness-X-S} can be easily adapted to prove this variation.
	For points in $\gamma$, we may use the assumption to find neighborhoods where the ASD equation is not deformed and the chosen family of complex structures on $\mathcal M(\Sigma,F)$ is 
	the constant family given by $J_*$. For points in $S\setminus \gamma$, we may find neighborhoods contained in $S\setminus \gamma$ where we can use the Gromov compactness theorem 
	for the pseudo-holomorphic curve equation with respect to a domain dependent almost complex structure (see, for example, \cite{Gr:comp}). For points in ${\rm int}(X)$, we may use compactness 
	theorem for the deformation of the ASD equation (see \cite{Uh:com,Uh:remov-sing,DK,K:higher}). Here due to the non-local nature of holonomy perturbations one can only obtain $L^p_1$ convergence
	in the presence of bubbles. A detailed treatment of this issue can be found in \cite{K:higher} (in the more general case of ${\bf PU}(N)$-connections.)
\end{proof}

Next, we turn to regularity of solutions of the mixed equation. First we focus on quintuples which capture all novel issues for the moduli of solutions to the mixed equation. Suppose $B_r$ is the unit disc of radius $r$ centered at the origin in the $(s,\theta)$-plane, and $D_+(r)$, $D_-(r)$ denote the intersections with the half planes $s\geq 0$ and $s\leq 0$. Let also $U_\partial(r)$ denote the intersection of $D_+(r)$ and $D_-(r)$. Consider the quintuple
\[
  \fQ(r):=(D_-(r)\times \Sigma,D_-(r)\times F,D_+(r),\mathcal M(\Sigma,F),\emptyset).
\]
The standard metric on $D_-(r)$ and the fixed metric on $\Sigma$ induce the product metric on $D_-(r)\times \Sigma$. Suppose $(A,u)$ is a solution of the mixed equation \eqref{mixed-eq} associated to the quintuple $\fQ(r)$ such that $A$ satisfies the Coulomb gauge fixing condition
\begin{equation}\label{coulomb-gauge}
  d^*_{A_0}(A-A_0)=0,\hspace{1cm}*(A-A_0)\vert_{U_\partial(r)\times \Sigma}=0.
\end{equation}
Here $A_0$ is an auxiliary smooth connection on $D_-(r)\times \Sigma$. The following is Theorem 1 in \cite{DFL:mix}.

\begin{theorem}\label{reg-mixed}
	Suppose $p>2$ and $(A,u)$ is an $L^p_1$ solution of the mixed equation associated to $\fQ(r)$ satisfying \eqref{coulomb-gauge}.
	Then $(A,u)$ is smooth. 
\end{theorem}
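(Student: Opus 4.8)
\textbf{Proof proposal for Theorem \ref{reg-mixed}.}
The plan is to promote an $L^p_1$ solution $(A,u)$ of the mixed equation on $\fQ(r)$ to a smooth one by a bootstrapping argument that treats the three pieces of the equation -- the ASD equation on $D_-(r)\times\Sigma$, the Cauchy--Riemann equation on $D_+(r)$, and the matching condition along $U_\partial(r)\times\Sigma$ -- in an interlocking way. The Coulomb gauge condition \eqref{coulomb-gauge} is what makes the gauge-theoretic half an elliptic system for $A$ rather than merely the curvature equation; together with $*(A-A_0)\vert_{U_\partial(r)\times\Sigma}=0$ it gives an elliptic boundary value problem on the manifold-with-boundary $D_-(r)\times\Sigma$. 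The key point is that the matching condition couples the boundary value of the connection on $D_-(r)\times\Sigma$ to the map $u$ on $\partial D_+(r)=U_\partial(r)$: the restriction of $A$ to $\{(0,\theta)\}\times\Sigma$ must be a flat representative of $u(0,\theta)$. So higher regularity of $u$ near $U_\partial(r)$ feeds into a better boundary value for $A$, and higher regularity of $A$ feeds back into the boundary data of the Cauchy--Riemann equation for $u$.

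First I would set up the elliptic estimates on each side separately. On $D_+(r)$, away from the boundary $U_\partial(r)$, the map $u$ solves a standard perturbed Cauchy--Riemann equation $\partial u/\partial s + J_{(s,\theta)}\partial u/\partial\theta = 0$ with a smooth domain-dependent almost complex structure (which near $U_\partial(r)$ equals the integrable $J_*$), so interior elliptic regularity for pseudo-holomorphic maps gives $u\in C^\infty$ on the interior, and one gets $L^p_k$ control up to $U_\partial(r)$ once the boundary value of $u$ is controlled. On $D_-(r)\times\Sigma$, writing $a=A-A_0$, the pair of equations $F_A^+ + (\text{pert})=0$ and $d_{A_0}^*a=0$ is, modulo lower-order terms absorbed by the bootstrap, an elliptic first-order system $d^+ \oplus d^*$ for $a$; together with the boundary condition $*a\vert_{U_\partial(r)\times\Sigma}=0$ this is an elliptic boundary value problem (this is the local model underlying the Fredholm theory already used for the mixed operator). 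Elliptic regularity for this boundary value problem upgrades $A$ from $L^p_1$ to $L^p_2$ provided the boundary restriction $A\vert_{U_\partial(r)\times\Sigma}$ lies in $L^p_{3/2}$ on the boundary, which is where the matching condition enters: that restriction is a flat connection representing $u(0,\cdot)$, so its regularity is governed by the regularity of $u$ along $U_\partial(r)$ via the (smooth, $\mathcal G(F)$-equivariant) slice parametrization of $\mathcal M(\Sigma,F)$ by flat connections from Subsection \ref{flat-surface}. Conversely, once $A$ is more regular, its restriction provides a more regular Lagrangian-type boundary datum (through the matching identity $[\zeta_\theta]=\nu(0,\theta)$ linearized) for the Cauchy--Riemann problem for $u$, so $u$ gains a derivative, and one iterates.

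The bootstrap then runs as follows: assume $(A,u)\in L^p_k$ on slightly shrunk domains $\fQ(r')$, $r'<r$; use the matching condition plus the flat-connection slice to transfer the $L^p_k$ regularity of $u$ along $U_\partial(r')$ to $L^p_{k-1/2}$ regularity of $A\vert_{U_\partial(r')\times\Sigma}$; feed this into the elliptic boundary estimate for the Coulomb-gauge-fixed ASD system to obtain $A\in L^p_{k+1}$ on $\fQ(r'')$; restrict $A$ to the matching line, use the matching identity to obtain $L^p_{k+1/2}$ boundary data for the CR equation for $u$; apply boundary elliptic regularity for pseudo-holomorphic maps to get $u\in L^p_{k+1}$ on $\fQ(r''')$. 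Since $p>2$, Sobolev multiplication $L^p_k\times L^p_k\to L^p_k$ for $k\ge1$ handles all the nonlinear terms (the quadratic curvature term, the perturbation terms $\nabla_{A_t}h$ which depend smoothly on $A$ by the properties of cylinder functions, and the $J_{(s,\theta)}$-dependence), so there is no loss of derivatives beyond what the linear elliptic theory already accounts for, and after finitely many steps $(A,u)$ is $C^\infty$ on every compact subset of the interior and up to all boundary faces; a covering/exhaustion argument then gives smoothness on $\fQ(r)$ itself. (Alternatively, one invokes the already-cited \cite[Theorem 1]{DFL:mix}, of which this statement is a verbatim restatement, since $\fQ(r)$ is the local model treated there; the argument above is the shape of that proof.)

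The main obstacle is the matching line $U_\partial(r)\times\Sigma$, where the ASD boundary value problem and the pseudo-holomorphic boundary value problem are glued through a nonlinear, gauge-equivariant identification. The difficulty is twofold: first, one must verify that the transfer map between the boundary data -- flat connection on $\Sigma$ versus point of $\mathcal M(\Sigma,F)$ together with the linearized matching $[\zeta_\theta]=\nu$ -- is a bounded map on the relevant Sobolev scales and does not lose derivatives, which relies on the fact (from Lemma \ref{exp-flat-matching} and the slice construction) that the parametrization of flat connections near a given flat connection is as regular as its input; second, one must check that the combined boundary condition satisfies the appropriate ellipticity (Lopatinski--Shapiro-type) condition so that the standard a priori estimates for boundary value problems apply uniformly on the shrinking family of domains. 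Once this coupling is shown to be "elliptic-compatible," the rest is the routine interior-plus-boundary bootstrap sketched above.
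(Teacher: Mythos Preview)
The paper does not give a proof of this theorem here; it is quoted verbatim as Theorem~1 of the companion paper \cite{DFL:mix}, and you acknowledge this yourself. So there is no ``paper's own proof'' to compare against in this document.

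That said, a few corrections to your sketch. The quintuple $\fQ(r)$ carries no perturbation terms and no Lagrangians: the mixed equation here is the bare one \eqref{mixed-eq-pre} with the integrable complex structure $J_*$ on all of $D_+(r)$, so your references to cylinder-function perturbations $\nabla_{A_t}h$ and to a domain-dependent $J_{(s,\theta)}$ are out of place in this local model. More substantively, the bootstrap step ``transfer the $L^p_k$ regularity of $u$ along $U_\partial(r')$ to $L^p_{k-1/2}$ regularity of $A\vert_{U_\partial(r')\times\Sigma}$'' is too optimistic as stated: the matching condition only says that $A\vert_{\{(0,\theta)\}\times\Sigma}$ is \emph{some} flat representative of $u(0,\theta)$, so regularity of $u$ does not by itself control the full boundary trace of $A$ on $U_\partial(r)\times\Sigma$ (which also has a $ds$-component and a gauge degree of freedom in the $\Sigma$-direction). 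The actual argument in \cite{DFL:mix} treats the coupled system as a single elliptic boundary problem for $(A,u)$ together, with the matching condition playing the role of a transmission condition, rather than ping-ponging regularity across the matching line. Your identification of the main obstacle---verifying that the combined boundary/matching condition is elliptic in the Lopatinski--Shapiro sense---is exactly right, and is where the real work lies.
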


Suppose that $(A,u)$ is a solution of the mixed equation given as in the statement of Proposition \ref{exp-decay}. Let $x$ be a point in the matching line $U_\partial$, and $D_-(r)\times \Sigma$ (resp. $D_+(r)$) is a neighborhood of $\{x\}\times \Sigma$ (resp. $x$) which embeds into $X$ (resp. $S$). For a fixed $2<p<4$, we may find an $L^p_2$ gauge transformation $h$ and a smooth connection $A_0$ on $D_-(r)\times \Sigma$ such that $h^*A$ satisfies \eqref{coulomb-gauge} with respect to the connection $A_0$. Then $(h^*A|_{D_-(r)\times \Sigma},u|_{D_+(r)})$ satisfies the assumptions of Theorem \ref{reg-mixed}, and hence this pair is smooth. Standard regularity results for the solutions of the ASD equation (perturbed by a holonomy perturbation) and pseudo-holomorphic curves (with respect to a domain dependent almost complex structure) allow us to obtain similar results for the interior points of $X$ and $S$. Thus, $u$ is smooth and for any point $x\in X$, there is a gauge transformation $g$ on a neighborhood of $x$ such that $\widetilde A:=g^*A$ is smooth. We may use the patching argument of \cite{DK} to obtain a global gauge transformation $g$ on $X$ such that $g^*A$ is smooth. 

In the process of the construction of the gauge transformation $g$, we may obtain a stronger regularity result on the the gauge theoretic end of $X$. Since the restriction of $A$ to $(-\infty,-3]\times Y_\#$ is an ASD connection, we may find a gauge transformation $h$ on this end and a connection $\alpha$ representing an element of $\fC_G$ such that $h^*A-\pi_\#^*(\alpha)$ is in $L^2_l$ for any $l$ \cite[Chapter 4]{Don:YM-Floer}. Since the Lagrangians $L(Y,E)$ and $L(Y',E')$ intersect transversely, there is $\beta\in \fC_S$ such that $u(s,\theta)\to \beta$ as $s\to \infty$ and the restriction of $du$ to the symplectic end $[3,\infty)\times [-1,1]$ is in $L^2_{l-1}$ for the given $l$ \cite{Fl:HF}. This verifies all parts of Proposition \ref{exp-decay} except the last part about the behavior of the solutions of the mixed equation on the mixed ends, which will be taken up in Subsection \ref{exp-dec-subsec}. In fact, the same argument proves the generalization  of these parts of Proposition \ref{exp-decay} in the case that the mixed equation in \eqref{mixed-eq} is perturbed by a standard perturbation, which is trivial in a neighborhood of the matching line.

\subsection{Mixed Chern-Simons functional}
We start this part by defining the 3-dimensional analogue of the configuration space of mixed pairs. Suppose $c_0$ denotes one of the connected components of $L(Y,E)$. Let $\bA_{c_0}(Y,E)$ be the space of all pairs $(B,q)$ where $B$ is an $L^2_l$ connections on the bundle $E$ over $Y$, and $q:[0,2]\to \mathcal M(\Sigma,F)$ is an $L^2_l$ path such that the restriction of $B$ to $\partial Y=\Sigma$ is flat and represents the flat connection $q(0)$, and $q(2)$ belongs to the connected component $c_0$ of $\mathcal M(\Sigma,F)$. The space of $L^2_{l+1}$ automorphisms of the bundle $E$ acts on $\bA_{c_0}(Y,E)$ in the obvious way, and we let $\bB_{c_0}(Y,E)$ be the quotient space.  A pair $(B,q)$ representing an element of $\bB_{c_0}(Y,E)$ is called a {\it flat mixed pair} if $q$ is a constant map to an element $z\in L(Y,E)$ and $B$ represents $z$. In particular, $B$ satisfies the equation 
\[
    *_3F_B+\nabla_{B} h=0,
\]
and the subspace of $\bB_{c_0}(Y,E)$ given by flat mixed pairs can be identified with the connected component $c_0$ of $L(Y,E)$.

Fix an arbitrary flat mixed pair $(B_0,q_0) \in \bA_{c_0}(Y,E)$. Given $(B,q) \in \bA_{c_0}(Y,E)$, there is a connection $A$ on $[-1,1]\times Y$ and a map $u:[0,2]\times [-1,1]\to \mathcal M(\Sigma,F)$ such that
\begin{itemize}
	\item[(i)] $A|_{ \{-1\}\times Y}=B$ and $A|_{\{1\}\times Y}=B_0$;
	\item[(ii)] $u|_{[0,2]\times \{-1\}}=q$ and $u|_{[0,2]\times \{1\}}=q_0$;
	\item[(iii)] $q|_{\{2\}\times I}\subset L(Y,E)$;
	\item[(iv)] For any $\theta\in [-1,1]$, $A|_{\Sigma\times \{\theta\}}$ is flat and represents $u(0,\theta)$.
\end{itemize}
Define the {\it mixed Chern-Simons} of the pair $(B,q)$ as 
\begin{equation}\label{mixed-CS}
	CS_h(B,q)=\frac{1}{8\pi^2}\int_{[-1,1]\times Y}\tr\((F_A+*_3\nabla_{A_t}h)\wedge (F_A+*_3\nabla_{A_t}h)\)+\frac{1}{4\pi^2}\int_{ [0,2]\times [-1,1]}u^*\Omega,
\end{equation}
where $A_t$ denotes the restriction of $A$ to $\{t\}\times Y_0$. A priori, the value of $CS_h(B,q)$ depends on $(A,u)$. However, the following lemma asserts that $CS_h(B,q)$ does not change by a local deformation of $(A,u)$.

\begin{lemma}
	The mixed Chern-Simons functional $CS_h(B,q)$ depends only on the homotopy class of the path $\theta\in [-1,1]\to [A|_{\{\theta\}\times Y},u|_{[0,2]\times \{\theta\}}]\in \bB_{c_0}(Y,E)$ 
	among the paths from $[B,q]$ to $c_0$. 
\end{lemma}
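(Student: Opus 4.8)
The plan is to show that $CS_h(B,q)$, as computed from bounding data $(A,u)$ satisfying conditions (i)--(iv), is constant along smooth $1$-parameter families of such data, and then to observe that a homotopy of the associated path $t\mapsto[A|_{\{t\}\times Y},u|_{[0,2]\times\{t\}}]$ in $\bB_{c_0}(Y,E)$ (rel the start $[B,q]$, with the end allowed to move inside $c_0$) induces exactly such a family. For the second point I would use that $\bA_{c_0}(Y,E)\to\bB_{c_0}(Y,E)$ is a principal bundle for the group of automorphisms of $E$ modulo $\{\pm1\}$: a homotopy $H\colon[-1,1]\times[0,1]\to\bB_{c_0}(Y,E)$ pulls this bundle back to a trivial bundle over the contractible square, hence admits a section, i.e.\ a lift $\widetilde H\colon[-1,1]\times[0,1]\to\bA_{c_0}(Y,E)$; after normalising the lift we may assume $\widetilde H(-1,r)=(B,q)$ and that $\widetilde H(1,r)$ is a flat mixed pair for every $r$. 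Assembling $\widetilde H(\cdot,r)$ along the $t$-direction and putting the resulting connection in temporal gauge in that direction produces, for each $r$, bounding data $(A_r,u_r)$ obeying (i)--(iv) and interpolating between the two given choices. So it suffices to prove $\frac{d}{dr}CS_h(A_r,u_r)=0$.

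This variational computation is the compact-domain analogue of the proof of Lemma \ref{top-energy-constant}, and I would carry it out in the same way. Writing $\zeta=\partial_rA_r$ and $\nu=\partial_ru_r$, the derivative of $CS_h(A_r,u_r)$ (see \eqref{mixed-CS}) equals
\[
  \frac{1}{4\pi^2}\int_{[-1,1]\times Y}\tr\big((F_{A}+*_3\nabla_{A_t}h)\wedge(d_{A}\zeta+*_3\Hess_{A_t}h(\zeta_t))\big)
  +\frac{1}{4\pi^2}\int_{[0,2]\times[-1,1]}d\,\iota_\nu u^*\Omega .
\]
Stokes' theorem, the Bianchi identity, Lemma \ref{H-prop} and Proposition \ref{linearized-3-man-Lag} collapse both integrals to boundary integrals, exactly as in \eqref{top-energy-change}--\eqref{symp-theory-exp}. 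On $\{-1\}\times Y$ and $[0,2]\times\{-1\}$ the family is constant, so $\zeta$ and $\nu$ vanish there. On $\{1\}\times Y$ the contribution is a multiple of $\int_Y\tr\big((F_{B}+*_3\nabla_{B}h)\wedge\zeta\big)$, which vanishes because $(A_r,u_r)$ restricts to a flat mixed pair at $t=1$; on $[0,2]\times\{1\}$ the path $u_r(\cdot,1)$ is constant in $s$, so $\iota_\nu u^*\Omega$ vanishes. On the Lagrangian face $\{2\}\times[-1,1]$ both $\partial_t u$ and $\nu$ are tangent to $L(Y,E)$, so $\Omega(\nu,\partial_t u)=0$. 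Finally the matching boundary $[-1,1]\times\partial Y=[-1,1]\times\Sigma$ of the first term cancels the matching boundary $\{0\}\times[-1,1]$ of the second: by condition (iv) the restriction of $\zeta$ to $\{t\}\times\Sigma$ represents $\nu(0,t)$ and $\partial_t A_t|_\Sigma$ represents $\partial_t u(0,t)$, so the pairing $-\int_\Sigma\tr(\,\cdot\wedge\cdot\,)=\Omega(\,\cdot,\cdot\,)$ identifies one boundary integral with the negative of the other, with the signs matching as in the proof of Lemma \ref{top-energy-constant}. Hence $\frac{d}{dr}CS_h(A_r,u_r)=0$, and integrating over $r\in[0,1]$ yields the lemma.

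The main obstacle I anticipate is purely bookkeeping: arranging the lift $\widetilde H$ so that it is simultaneously in temporal gauge, equal to $(B,q)$ at $t=-1$, and a flat mixed pair at $t=1$ for every $r$ --- a routine use of the slice theorem for the gauge action together with the local structure of $L(Y,E)$ inside $\mathcal M(\Sigma,F)$ --- and tracking the orientations of the various boundary pieces so that the gauge-theoretic and symplectic matching terms really cancel rather than add, which is handled verbatim as in the proof of Lemma \ref{top-energy-constant}. Once this is in place the statement follows; note in particular that applying the argument to two different choices of $(A,u)$ for the \emph{same} $(B,q)$ shows that $CS_h(B,q)$ is independent of the auxiliary bounding data within a fixed homotopy class of the associated path.
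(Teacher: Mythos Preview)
Your proof is correct and follows essentially the same variational approach as the paper: differentiate the defining integral \eqref{mixed-CS} along a one-parameter family of bounding data and reduce everything to boundary terms via Stokes' theorem, Bianchi, and Lemma~\ref{H-prop}, exactly as in Lemma~\ref{top-energy-constant}. The only organizational difference is that the paper separates the argument into two pieces---first showing invariance under deformations with \emph{both} endpoints fixed (so that the $t=\pm1$ boundary terms vanish because $\zeta,\nu$ vanish there), and then observing directly that $CS_h$ vanishes on any path consisting entirely of flat mixed pairs (so that concatenating with such a path lets one slide the $t=1$ endpoint inside $c_0$ for free)---whereas you do both at once by allowing the $t=1$ endpoint to move in $c_0$ and killing the resulting boundary term using $F_{A_r|_{\{1\}\times Y}}+*_3\nabla h=0$. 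Your route is slightly more direct; the paper's is slightly more modular (the fixed-endpoint step is literally the compact-domain version of Lemma~\ref{top-energy-constant}, and the vanishing on $c_0$-paths is a one-line computation). Either way the content is the same.
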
 
\begin{proof}
	As in Lemma \ref{top-energy-constant}, 
	if we vary $(A,u)$ while preserving $(A\vert_{\{\pm 1\}\times Y},u|_{[0,2]\times \{\pm 1\}})$, the expression on the left hand side of \eqref{mixed-CS} does not change.
	Next, note that for any path $\gamma:[-1,1] \to L(Y,E)$ there is a pair $(A,u)$
	satisfying (i)-(iv) such that for $(s,\theta)\in [0,2]\times [-1,1]$, 
	$u(s,\theta)=\gamma(\theta)$ and $(A_{\{\theta\}\times Y},u|_{\{[0,2]\times \{\theta\}})$ is a flat mixed pair. For any such pair, the left hand side 
	of \eqref{mixed-CS} vanishes. These two observations allow us to complete the proof.
\end{proof}

\begin{remark}
	We may alter the definition of $CS_h(B,q)$ by dropping the terms involving $h$ from the first integrand in \eqref{mixed-CS}. Then the change in $CS_h(B,q)$ equals $\frac{1}{4\pi^2}(\widetilde h(B_0)-\widetilde h(B))$ which depends only
	$[B,q]\in \bB_{c_0}(Y,E)$. Thus, in studying the dependence of $CS_h(B,q)$ on the homotopy class of the path from $[B,q]$ to $c_0$, which is our next goal, we may work with this alternative definition of $CS_h(B,q)$ that has a simpler form.
	However, we keep working with our original definition $CS_h(B,q)$, which has a more canonical role.
\end{remark}

To see how $CS_h(B,q)$ depends on the homotopy class of the path from $[B,q]$ to $c_0$, it is helpful to introduce an alternative characterization of the mixed Chern-Simons functional. Suppose the pair $(\widetilde Y,\widetilde E)$ of a closed Riemannian 3-manifold and an $\SO(3)$-bundle is defined in the same way as $(Y_\#,E_\#)$ in Subsection \ref{3-man-bdles} except that we replace $(Y',E')$ with $(Y,E)$. In particular, we have the 3-manifold decomposition  
\begin{equation}\label{deco-tilde-Y}
  \widetilde Y=Y_0\cup [-2,2]\times \Sigma\cup -Y_0.
\end{equation}
In \eqref{deco-tilde-Y}, we slightly diverged from our convention in Subsection \ref{3-man-bdles}. The intersection of $Y_0$ (resp. $-Y_0$) with $[-2,2]\times \Sigma$ is $\{-2\}\times \Sigma$ (resp. $\{2\}\times \Sigma$), and we identify $Y$ with the subset $Y_0\cup [-2,0]\times \Sigma$ of $\widetilde Y$. There is an obvious orientation reversing involution $\iota:\widetilde Y\to \widetilde Y$, and the Riemannian metric on $\widetilde Y$ is invariant with respect to this involution.

The function $h:\mathcal B(Y,E)\to \R$ induces a function $\widetilde h$ on the space of connections on $\widetilde E$ whose value at a connection $B$ on $\widetilde E$ depends only on the gauge equivalence class of $B$ over $Y_0$ and $-Y_0$. Any connection $\widetilde B$ on $\widetilde Y$ induces connections $B$ and $B'$ on $Y_0$ and $-Y_0$, and $\widetilde h(\widetilde B)$ is equal to $h(B)-h(B')$. We perturb the Chern-Simons functional of the closed pair $(\widetilde Y,\widetilde E)$ using $\widetilde h$. This induces a perturbation of the flat equation on the space of connections on $\widetilde E$ as
\begin{equation}\label{flat-3d-closed-double}
	\phi_{h}(\widetilde B)=*_3F_{\widetilde B}+\nabla_{\widetilde B} \widetilde h
\end{equation}
in the same way as in \eqref{flat-3d-closed}, which satisfies $\phi_h\circ \iota^*=-\iota^*\circ \phi_h$. The solutions of \eqref{flat-3d-closed-double}, $\fC(\widetilde Y,\widetilde E)$, can be identified with the intersection of $L(Y,E)$ with itself, and hence, is equal to $L(Y,E)$.

Let $(A,u)$ be a pair as above which connects $(B,q)$ to a flat mixed pair $(B_0,q_0)$. We may use $(A,u)$ to define a connection $\widetilde A$ on $[-1,1]\times \widetilde Y$. First we regard $A$ as a connection on $[-1,1] \times Y\subset [-1,1]\times \widetilde Y$. By applying a gauge transformation to $A$ if necessary, we can assume that the $d\theta$ component of $A$ on $[-1,1]\times \{0\}\times \Sigma\subset [-1,1] \times Y$ and the $ds$ component of $A$ on $[-1,1]\times [-2,0]\times \Sigma \subset [-1,1]\times Y$ vanish. (As before, $s$ denotes the coordinate on the interval $[-2,0]$ and $\theta$ denotes the coordinate on $[-1,1]$.) For any $(s,\theta)\in [0,2]\times [-1,1]$, let $\alpha(s,\theta)$ be the connection on $F$ such that
\begin{itemize}
	\item[(i)] $\alpha(s,\theta)$ is a flat connection representing $u(s,\theta)$;
	\item[(ii)] $\alpha(0,\theta)=A\vert_{\{0\}\times \Sigma\times \{\theta\}}$ for any $\theta$;
	\item[(iii)] $d_{\alpha(s,\theta)}^*\partial_s\alpha(s,\theta)=0$.
\end{itemize}
For any $\theta\in [-1,1]$, $\alpha(2,\theta)\in L(Y,E)$, and we fix a representative connection $B'(\theta)$ on $-Y_0\subset \widetilde Y$ for $\alpha(2,\theta)$ such that $B'(\theta)$ satisfies \eqref{flat-3d-closed-double} over $-Y_0$, it has a vanishing $ds$ component along the boundary of $-Y_0$ and $B'(1)=\iota^*B_0$. The connections $A$, $\alpha(s,t)$ and $B'(t)$ determine a connection $\widetilde A$ on $[-1,1]\times \widetilde Y$ which has vanishing $ds$ and $d\theta$ components on $[0,2]\times \Sigma\times [-1,1]$ and vanishing $d\theta$ component on $[-1,1]\times -Y_0$. Although $\widetilde A$ might not be smooth, it is in $L^p_1$ for any $p$.

\begin{lemma}\label{CS-alternative-lemma}
	Suppose $CS_h(B,q)$ is defined using $(A,u)$. Then we have
	\begin{equation}\label{CS-alternative}
	  CS_h(B,q)=\frac{1}{8\pi^2}\int_{[-1,1]\times \widetilde Y}\tr\((F_{\widetilde A}+*_3\nabla_{\widetilde A_\theta}\widetilde h)\wedge (F_{\widetilde A}+*_3\nabla_{\widetilde A_\theta}\widetilde h)\).
	\end{equation}
	Here $\widetilde A_\theta$ denotes the restriction of $\widetilde A$ to $\widetilde \{\theta\}\times Y$.
	\end{lemma}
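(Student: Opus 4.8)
The plan is to split the integral on the right-hand side of \eqref{CS-alternative} along the decomposition $[-1,1]\times\widetilde Y=([-1,1]\times Y)\cup([-1,1]\times[0,2]\times\Sigma)\cup([-1,1]\times -Y_0)$ induced by \eqref{deco-tilde-Y}, and to evaluate the three pieces separately. Since $\widetilde A\in L^p_1$ for $p>2$, the Chern--Weil integrand $\tr\((F_{\widetilde A}+*_3\nabla_{\widetilde A_\theta}\widetilde h)\wedge(F_{\widetilde A}+*_3\nabla_{\widetilde A_\theta}\widetilde h)\)$ is an integrable $4$-form on $[-1,1]\times\widetilde Y$, the codimension-one overlaps are null, and the integral genuinely decomposes as a sum over the three pieces. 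Because this integrand and the functional $CS_h(B,q)$ are both gauge invariant --- the latter by the homotopy invariance established above --- we may freely use the gauge-fixing normalizations imposed in the construction of $\widetilde A$.

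First I would treat the piece $[-1,1]\times Y$. By construction $\widetilde A$ restricts to $A$ there, and since the cylinder function $h$ is supported in the interior of $Y_0\subset Y$ while $\widetilde h$ is defined so that $\widetilde h(\widetilde B)=h(B)-h(B')$, the one-form $\nabla\widetilde h$ restricts to $\nabla h$ on $Y$ and the perturbation term $*_3\nabla_{\widetilde A_\theta}\widetilde h$ restricts to $*_3\nabla_{A_t}h$. Hence the contribution of this piece is exactly the first summand of \eqref{mixed-CS}.

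Next, on $[-1,1]\times[0,2]\times\Sigma$ the connection $\widetilde A$ is given by the flat connections $\alpha(s,\theta)$ on $\Sigma$, with vanishing $ds$- and $d\theta$-components, and $\nabla\widetilde h$ vanishes there, so $F_{\widetilde A}+*_3\nabla_{\widetilde A_\theta}\widetilde h=F_{\widetilde A}=F^\Sigma_{\alpha(s,\theta)}+ds\wedge\partial_s\alpha+d\theta\wedge\partial_\theta\alpha=ds\wedge\partial_s\alpha+d\theta\wedge\partial_\theta\alpha$ by flatness. A short computation then gives $\tr(F_{\widetilde A}\wedge F_{\widetilde A})=-2\,ds\wedge d\theta\wedge\tr(\partial_s\alpha\wedge\partial_\theta\alpha)$. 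The one-forms $\partial_s\alpha$, $\partial_\theta\alpha$ are $d_\alpha$-closed and represent the tangent vectors $\partial_s u$, $\partial_\theta u$ to $\mathcal M(\Sigma,F)$, and $-\int_\Sigma\tr(c\wedge c')=\Omega(c,c')$ depends only on the cohomology classes of $d_\alpha$-closed one-forms; integrating over $\Sigma$ therefore yields $2\,\Omega(\partial_s u,\partial_\theta u)$, so this piece contributes $\frac{1}{4\pi^2}\int_{[0,2]\times[-1,1]}u^*\Omega$, the second summand of \eqref{mixed-CS}. Keeping track of the orientation of $[-1,1]\times[0,2]\times\Sigma$ inside $[-1,1]\times\widetilde Y$ against that of $[0,2]\times[-1,1]$ in \eqref{mixed-CS} --- so that the signs and the factor $\tfrac18$ versus $\tfrac14$ come out correctly --- is the point requiring the most care.

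Finally, on $[-1,1]\times -Y_0$ the connection $\widetilde A$ is given by the family $B'(\theta)$ with vanishing $d\theta$-component, so $F_{\widetilde A}+*_3\nabla_{\widetilde A_\theta}\widetilde h$ is the sum of the three-dimensional form $F_{B'(\theta)}+*_3\nabla_{B'(\theta)}\widetilde h$ and a term proportional to $d\theta$. The first summand vanishes: $B'(\theta)$ solves \eqref{flat-3d-closed-double} over $-Y_0$, so $\nabla_{B'(\theta)}\widetilde h=-*_3F_{B'(\theta)}$, and $*_3^2=\id$ on two-forms over an oriented Riemannian $3$-manifold gives $*_3\nabla_{B'(\theta)}\widetilde h=-F_{B'(\theta)}$. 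The $d\theta$-term has vanishing wedge square. Hence this piece contributes nothing, and adding the three contributions yields \eqref{CS-alternative}.
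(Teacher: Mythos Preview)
Your proof is correct and follows essentially the same decomposition and strategy as the paper's own proof: split the integral over $[-1,1]\times\widetilde Y$ into the pieces over $Y$, over $[0,2]\times\Sigma$, and over $-Y_0$, identify the first with the gauge-theoretic term in \eqref{mixed-CS}, the second with the symplectic term, and show the third vanishes. The only minor difference is that on the middle piece you invoke the cohomological invariance of $\Omega$ on $d_\alpha$-closed forms, whereas the paper instead appeals to condition~(iii) (namely $d_\alpha^*\partial_s\alpha=0$), which makes $\partial_s\alpha$ literally harmonic and hence equal to $\partial_s u$; both routes are valid and yield the same identification.
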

\begin{proof}
	It is clear that the restriction of the integrand in \eqref{CS-alternative} vanishes over $[-1,1]\times -Y_0$, and its restriction over $I\times Y$ integrates to the first term on the 
	right hand side of \eqref{mixed-CS}. Thus \eqref{CS-alternative} follows if we show that 
	\begin{equation}\label{CS-alternative-middle}
		\int_{[-1,1]\times [0,2]\times \Sigma }\tr\(F_{\widetilde A}\wedge F_{\widetilde A}\)=2\int_{ [0,2]\times [-1,1]}u^*\Omega.
	\end{equation}
	The restriction of $F_{\widetilde A}$ to $[-1,1]\times [0,2]\times \Sigma $ of $[-1,1]\times \widetilde Y $ is given by 
	$ds\wedge \partial_s\alpha(s,\theta)+d\theta\wedge \partial_\theta\alpha(s,\theta)$. Now it is easy to verify \eqref{CS-alternative-middle} using (iii) and the fact that 
	$ \partial_s\alpha(s,\theta)$ and $ \partial_\theta\alpha(s,\theta)$ are $d_{\alpha(s,\theta)}$ closed.
\end{proof}

Let $\widetilde B=\widetilde A_{-1}$ and $\widetilde B_0=\widetilde A_{1}$ be given as in Lemma \ref{CS-alternative-lemma}, and $\widetilde b=\widetilde B-\widetilde B_0$. A straightforward examination of \eqref{CS-alternative} shows that we may integrate out the variable $\theta$ in \eqref{CS-alternative}. After using the assumption that $*_3\nabla_{\widetilde B_0} \widetilde h$ equals $-F_{\widetilde B_0}$, we have
\begin{equation}\label{CS-2nd-alternative}
	  CS_h(B,q)=\frac{1}{8\pi^2}\int_{\widetilde Y}\tr(2\widetilde b\wedge F_{\widetilde B_0}+\widetilde b\wedge d_{\widetilde B_0} \widetilde b+
	  \frac{2}{3}\widetilde b\wedge \widetilde b\wedge \widetilde b))+\frac{1}{4\pi^2}(\widetilde h(\widetilde B)-\widetilde h(\widetilde B_0)).
\end{equation}
Note that $\widetilde B$ is essentially determined by $(B,q)$. The restriction of $\widetilde B$ to $Y$ is $B$, and its restriction to $[0,2]\times \Sigma$ is given by the flat connections $\alpha(s)$ for $s\in [0,2]$, which are uniquely determined by 
\begin{itemize}
	\item[(i)] $\alpha(s)$ is a flat connection representing $q(s)$;
	\item[(ii)] $\alpha(0)=B\vert_{\{0\}\times \Sigma}$;
	\item[(iii)] $d_{\alpha(s)}^*\partial_s\alpha(s)=0$.
\end{itemize}
The only ambiguity in determining $\widetilde B$ is to pick a representative $B'$ for $\alpha(2)$ over $-Y_0$. In particular, any two different choices for $\widetilde B$ are related to each other by applying an element of $\mathcal G(\widetilde E)$ and then possibly the involution $\iota$, defined as in Subsection \ref{Fl-hom-subsec} using the decomposition of $\widetilde Y$ as $Y\cup -Y$. In particular, $\widetilde B$ as an element of $\mathcal B(Y,E)/\iota$, completed with respect to the Sobolev norm $L^2_1$, is well-defined.

The following proposition shows that $2CS_h$ as a map $\bB_{c_0}(Y_0,E)\to \R/\Z$ is well-defined.
\begin{prop}\label{CS-h-wd-mod-Z}
	The value of $2CS_h(B,q)$ mod integers depends only on $[B,q]\in \bB_{c_0}(Y,E)$ and is independent of the choice of the mixed pair $(A,u)$.
\end{prop}
\begin{proof}
	Suppose $(A,u)$ and $(A',u')$ are two pairs as above connecting a flat mixed pair $(B_0,q_0)$ to $(B,q)$ and $(g^*B,q)$ for a gauge transformation $g\in \mathcal G(E)$. 
	Suppose also $\widetilde A$ and $\widetilde A'$ are defined in the same way as above. Suppose $\widetilde B$ and $\widetilde B'$ denote the restrictions of $\widetilde A$ and $\widetilde A'$
	to $\{-1\}\times \widetilde Y$. Then $\widetilde B$ and $\widetilde B'$ over the subspaces $Y$ and $-Y$ of $\widetilde Y$ are gauge equivalent to each other using elements of 
	$\mathcal G(E)$. Thus $\widetilde B$ and $\widetilde B'$ are equivalent to each other either using a determinant one gauge transformation or the composition of a gauge transofmration
	with an in involution $\iota$, defined as in Subsection \ref{Fl-hom-subsec} using the decomposition of $\widetilde Y$ as $Y\cup -Y$. In the first case, we can glue $\widetilde A$
	and $\widetilde A'$ to define an $\SO(3)$ connection $\widehat A$ on $S^1 \times \widetilde Y $ whose $w_2$ is the the pullback of $w_2(\widetilde E)$. We also have
	\begin{align}
		CS_h(B,q)-CS_h(g^*B,q)=&
		\frac{1}{8\pi^2}\int_{S^1 \times \widetilde Y}\tr\((F_{\widehat A}+*_3\nabla_{\widehat A_\theta}\widetilde h)\wedge (F_{\widehat A}+*_3\nabla_{\widehat A_\theta}\widetilde h)\)\nonumber\\
		=&\frac{1}{8\pi^2}\int_{S^1 \times \widetilde Y}\tr\(F_{\widehat A}\wedge F_{\widehat A}\).\label{energy-4D}
	\end{align}
	The last identity is a consequence of the Stokes theorem. In general, \eqref{energy-4D} equals $-p_1(\widehat E)/4$, where $\widehat E$ denotes the underlying $SO(3)$-bundle of $\widehat E$. Therefore, \eqref{energy-4D} 
	is an integer because $w_2(\widehat A)$ is the pullback of $w_2(\widetilde E)$. A similar argument can be 
	applied to the second case with the difference that $w_2(\widehat A)$ is the sum of the Poincar\'e dual of $\Sigma_0$ (one of the connected components of $\Sigma$) and 
	the pullback of $w_2(E)$. Thus \eqref{energy-4D} is a half integer in the second case.
\end{proof}

The following lemma, which gives a relation between the mixed Chern-Simons functional and its gradient, plays a crucial role in showing that the solutions of the mixed equation satisfies an exponential decay on cylindrical ends.

\begin{lemma}\label{mixed-grad-CS}
	There is a positive constant $\kappa$ and a neighborhood $U$ of the space of flat mixed pairs of $\bB_{c_0}(Y,E)$ such that the following holds.
	There is a real lift $\widetilde{CS}_{h}(B,q)$ of $CS_h(B,q)$ for any $[B,q]\in U$ such that
	\begin{equation}\label{ineq-mixed-grad-CS}
	  \widetilde{CS}(B,q)\leq \kappa(\vert\!\vert F_B+*_3\nabla_{B} h\vert\!\vert_{L^2(Y)}^2+\vert\!\vert d q\vert\!\vert_{L^2([0,1])}^2).
	\end{equation}
\end{lemma}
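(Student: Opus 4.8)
The plan is to reduce the inequality \eqref{ineq-mixed-grad-CS} to a standard ``local Łojasiewicz/quadratic control'' estimate for the mixed Chern--Simons functional near its critical set, which is exactly the set of flat mixed pairs. The starting point is the second alternative formula \eqref{CS-2nd-alternative}, which expresses $CS_h(B,q)$ in terms of the associated connection $\widetilde B$ on the double $\widetilde Y$: namely $CS_h(B,q)$ equals $\frac{1}{4\pi^2}$ times the perturbed Chern--Simons functional of $\widetilde B$ relative to a flat reference $\widetilde B_0$. Thus a real-valued lift $\widetilde{CS}_h(B,q)$ is obtained by choosing the obvious real lift of the Chern--Simons functional on $\mathcal B(\widetilde Y,\widetilde E)$ in a contractible neighborhood of the critical point $[\widetilde B_0]$, and the claim becomes: for $[\widetilde B]$ in a small $L^2_1$-neighborhood of the critical set $\fC(\widetilde Y,\widetilde E)=L(Y,E)$, one has
\[
  \widetilde{CS}_h(B,q)\ \le\ \kappa\,\Vert \phi_h(\widetilde B)\Vert_{L^2(\widetilde Y)}^2,
\]
where $\phi_h(\widetilde B)=*_3F_{\widetilde B}+\nabla_{\widetilde B}\widetilde h$ is the gradient of the perturbed functional. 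The final step is then to check that $\Vert\phi_h(\widetilde B)\Vert_{L^2(\widetilde Y)}^2$ is bounded by a constant times $(\Vert F_B+*_3\nabla_B h\Vert_{L^2(Y)}^2+\Vert dq\Vert_{L^2([0,1])}^2)$, which follows because $\widetilde B|_Y=B$, $\widetilde B|_{-Y_0}$ is flat (so $\phi_h$ vanishes there), and $\widetilde B|_{[0,2]\times\Sigma}$ is built from the path $s\mapsto\alpha(s)$ of flat connections representing $q$ in Coulomb gauge, whose curvature term vanishes and whose ``$ds$-derivative'' $\partial_s\alpha(s)$ has $L^2$-norm controlled by $\Vert dq\Vert_{L^2}$ (this is just the statement that $q\mapsto\widetilde B|_{[0,2]\times\Sigma}$ is a smooth map between the relevant Sobolev completions, together with compactness of $L(Y,E)$ to make the constant uniform).

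\textbf{Key steps, in order.} First I would fix, for each connected component $c_0$ of $L(Y,E)$, a flat reference pair $(B_0,q_0)$ and the corresponding $\widetilde B_0$, and define $\widetilde{CS}_h$ on a neighborhood $U$ of the flat mixed pairs via the right-hand side of \eqref{CS-2nd-alternative} (no $\R/\Z$ ambiguity once $\widetilde b=\widetilde B-\widetilde B_0$ is small in $L^2_1$ and we use a fixed primitive for the closed $1$-form $db\mapsto$ Chern--Simons differential). Second, I would invoke the standard quadratic estimate for the Chern--Simons functional of an admissible $\SO(3)$-bundle over a closed $3$-manifold near a non-degenerate critical orbit: in a small $L^2_1$-neighborhood of a critical point $[\widetilde B_0]$, $|\widetilde{CS}|\le \kappa_0\Vert\phi_h\Vert_{L^2}^2$. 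Here ``non-degenerate'' is exactly the transversality of $L(Y,E)\cap L(Y,E)$ guaranteed by our hypotheses (Lemma \ref{reg-ASD} and the transversality of the 3-manifold Lagrangians), together with Lemma \ref{fixed-iota}/irreducibility so that the Hessian of the perturbed functional is non-degenerate transverse to the critical manifold; the constant $\kappa_0$ is uniform over the compact critical manifold $L(Y,E)$. This step is genuinely a Morse--Bott Łojasiewicz inequality for an analytic (holonomy-perturbed) functional and can be cited or proved via the usual Taylor-expansion-plus-elliptic-estimate argument as in the instanton Floer literature (e.g.\ the references \cite{floer:inst1,Don:YM-Floer} already cited for related facts). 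Third, I would carry out the bookkeeping translating $\Vert\phi_h(\widetilde B)\Vert_{L^2(\widetilde Y)}$ into the quantities on the right of \eqref{ineq-mixed-grad-CS}, using the decomposition $\widetilde Y=Y_0\cup[-2,2]\times\Sigma\cup -Y_0$ as above; on $[0,2]\times\Sigma$ one uses that $\alpha(s)$ is flat and $d_{\alpha(s)}^*\partial_s\alpha(s)=0$ to bound $\Vert\partial_s\alpha(s)\Vert_{L^2([0,2],\Sigma)}$ by $C\Vert dq\Vert_{L^2([0,2])}$ via the smooth local slice parametrization of $\mathcal M(\Sigma,F)$ (Lemma \ref{exp-flat-matching}), and on $-Y_0$ the gradient is zero. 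Shrinking $U$ if necessary so that everything stays in the region where the slice charts and the quadratic estimate apply completes the argument.

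\textbf{Main obstacle.} The step I expect to require the most care is the second one: establishing the quadratic (Łojasiewicz-type) bound $|\widetilde{CS}_h|\le \kappa_0\Vert\phi_h\Vert_{L^2}^2$ uniformly over the compact critical manifold $L(Y,E)$, in the presence of the holonomy perturbation $\widetilde h$. The perturbation is only a compact (lower-order) modification of $*_3F$, so the Hessian $\Hess_{\widetilde B_0}\widetilde h$ enters as a compact term and the relevant elliptic operator $*_3d_{\widetilde B_0}+\Hess_{\widetilde B_0}\widetilde h$ is still Fredholm with the non-degeneracy coming from our transversality hypotheses; but one must make sure the Taylor remainder of $\widetilde{CS}_h$ (which involves the cubic term $\int\widetilde b\wedge\widetilde b\wedge\widetilde b$ and the non-polynomial part of $\widetilde h$) is genuinely cubic and estimable in $L^2_1$, and that the constant does not degenerate along the critical manifold. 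Uniformity is handled by viewing $\{\Hess_{\widetilde B_q}\widetilde h\}_{q\in L(Y,E)}$ as a continuous family of Fredholm operators with kernels of constant rank $=\dim L(Y,E)$ over the compact base, exactly as in the proof of Proposition \ref{inv-mixed-op-const-sol}; this gives a uniform lower bound on the smallest nonzero eigenvalue and hence a uniform $\kappa_0$. Once this is in hand, the rest is routine.
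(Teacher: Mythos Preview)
Your proposal is correct and follows essentially the same route as the paper: reduce to the perturbed Chern--Simons functional on the double $(\widetilde Y,\widetilde E)$ via \eqref{CS-2nd-alternative}, establish the quadratic bound $CS_h(\widetilde B)\le\kappa\Vert\phi_h(\widetilde B)\Vert_{L^2}^2$ near the Morse--Bott critical set, and translate back. Two small corrections: the Morse--Bott non-degeneracy comes from the \emph{regularity} of $L(Y,E)$ (i.e.\ $\mathcal H^1_h(Y,\Sigma;B)=0$) via a Mayer--Vietoris argument as in Lemma~\ref{comp-per-reg}, not from transversality with $L(Y',E')$ or Lemma~\ref{reg-ASD}; and the translation step is in fact an \emph{equality} $\Vert\phi_h(\widetilde B)\Vert_{L^2(\widetilde Y)}^2=\Vert F_B+*_3\nabla_Bh\Vert_{L^2(Y)}^2+\Vert dq\Vert_{L^2}^2$, since $\partial_s\alpha(s)$ is the harmonic representative of $dq/ds$ and $\phi_h$ vanishes on $-Y_0$ by construction --- no constant is needed there.
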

This lemma is an (infinite dimensional) instance of a general property for Morse-Bott functions. Suppose $f:M\to \R$ is a function on a finite dimensional manifold and $\fC\subset M$ is a submanifold of $M$ which gives a connected component of the critical points of $f$ consisting of Morse-Bott critical points. Then there are a constant $C$ and  a neighborhood $U$ of $\fC$ such that for any $x\in U$ we have
\[
  |f(x)-f_0|\leq C |\nabla f(x)|^2,
\]
where $f_0$ is the value of $f$ on $C$.

\begin{proof}
	This lemma can be proved using an analogous result which holds for the Chern-Simons functional of connections on closed 3-manifolds. 	
	Define the configuration space of connections $\mathcal B(\widetilde Y,\widetilde E)$ using the $L^2_1$ norm. Fix a connection $\widetilde B_0$ satisfying \eqref{flat-3d-closed-double} and for a connection 
	$\widetilde B=\widetilde B_0+\widetilde b$ on the bundle $\widetilde E$ over $\widetilde Y$, representing an element of $\mathcal B(\widetilde Y,\widetilde E)$, define
	\begin{equation}\label{CS-h-double}
		CS_h(\widetilde B):=\frac{1}{8\pi^2}\int_{\widetilde Y}\tr(\widetilde b\wedge d_{\widetilde B_0} \widetilde b+
		\frac{2}{3}\widetilde b\wedge \widetilde b\wedge \widetilde b)+
		\frac{1}{4\pi^2}\int_{0}^1dt\int_{\widetilde Y}\tr\(\widetilde b\wedge (*_3(\nabla_{\widetilde B_0+t\widetilde b}\widetilde h-\nabla_{\widetilde B_0}\widetilde h))\).
	\end{equation}
	Note that this definition agrees with the left hand side of the expression in \eqref{CS-2nd-alternative}. The critical points of the Chern-Simons functional $CS_h$ satisfy \eqref{flat-3d-closed-double}.
	In fact, with the same argument as in Proposition \ref{CS-h-wd-mod-Z}, $2CS_h$ induces a map $\mathcal B(\widetilde Y,\widetilde E)/\iota\to \R/\Z$, and the critical locus of this functional, denoted by 
	$\fC(\widetilde Y,\widetilde E)$, can be identified
	with $L(Y,E)$. A similar argument as in Lemma \ref{comp-per-reg} 
	shows that for any $\widetilde B_0$ representing an element 
	of $\fC(\widetilde Y,\widetilde E)\cong L(Y,E)$, the vector space
	\begin{equation}\label{tang-closed-double}
		\mathcal H^1_{\widetilde h}(\widetilde Y;\widetilde B_0):=\{\widetilde b \in \Omega^1(\widetilde Y,\widetilde E) \mid d_{\widetilde B_0}^*\widetilde b=0,\,
		*d_{\widetilde B_0}(\widetilde b)+{\rm Hess}_{\widetilde B_0}\widetilde h(\widetilde b)=0 \}
	\end{equation}
	is isomorphic to the tangent space of $\fC(\widetilde Y,\widetilde E)$ at $\widetilde B_0$. That is, the Chern-Simons functional $CS_h$ of $(\widetilde Y,\widetilde E)$ is Morse-Bott. 
	
	Suppose $\widetilde U$ is the subspace of $\mathcal B(\widetilde Y,\widetilde E)/\iota$ represented by connections $\widetilde B$ of the form $\widetilde B_0+\widetilde b$ where $\widetilde B_0$ 
	represents an element 
	of $\fC(\widetilde Y,\widetilde E)$, $\vert\!\vert \widetilde b\vert\!\vert_{L^2_1}<\epsilon'$, 
	$d_{\widetilde B_0}^*\widetilde b=0$ and $\widetilde b$ is $L^2$
	orthogonal to $\mathcal H^1_{\widetilde h}(\widetilde Y;\widetilde B_0)$. A straightforward application of implicit function theorem shows for $\epsilon'$ small enough, $\widetilde U$ 
	determines an open neighborhood of $\fC(\widetilde Y,\widetilde E)$, and the representation of an element of $\widetilde U$ as $\widetilde B_0+\widetilde b$ is unique up to the action of 
	the gauge group. In particular, \eqref{CS-h-double} gives a well-defined real valued function on $\widetilde U$. Moreover, there is a universal constant $\delta_1$ such that 
	\begin{equation}\label{L21-L2dB}
	  \delta_1 \vert \! \vert\widetilde  b\vert \! \vert_{L^2_1(\widetilde Y)}\leq \vert \! \vert d_{\widetilde B_0}\widetilde b +
	  *_3{\rm Hess}_{\widetilde B_0}\widetilde h(\widetilde b)\vert \! \vert_{L^2(\widetilde Y)}.
	\end{equation}	
	This follows from the fact that the Chern-Simons functional $CS_h$ of $(\widetilde Y,\widetilde E)$ is Morse-Bott. The constant $\delta_1$ can be made independent of $\widetilde B_0$
	because $\fC(\widetilde Y,\widetilde E)$ is compact.

	The inequality in \eqref{L21-L2dB} allows us to control the $L^2_1$ norm of $\widetilde b$ by the norm of the gradient of the perturbed Chern-Simons functional $CS_{h}$. We have
	\begin{align*}
		\vert\!\vert F_{\widetilde B}+*_3\nabla_{\widetilde B} \widetilde h\vert\!\vert_{L^2}
		&=\vert\!\vert \(F_{\widetilde B}+*_3\nabla_{\widetilde B} \widetilde h\)-\(F_{\widetilde B_0}+*_3\nabla_{\widetilde B_0} \widetilde h\)\vert\!\vert_{L^2}\\
		&\geq \vert\!\vert d_{\widetilde B_0}\widetilde b+*_3{\rm Hess}_{\widetilde B_0}\widetilde h(\widetilde b)\vert\!\vert_{L^2}-
		\vert\!\vert\widetilde b \wedge \widetilde b\vert\!\vert_{L^2}-
		\vert\!\vert \nabla_{\widetilde B} \widetilde h-\nabla_{\widetilde B_0} \widetilde h-{\rm Hess}_{\widetilde B_0}\widetilde h(\widetilde b)\vert\!\vert_{L^2}.
	\end{align*}
	Since we have
	\begin{align*}
	  \vert\!\vert \nabla_{\widetilde B} \widetilde h-\nabla_{\widetilde B_0} \widetilde h-{\rm Hess}_{\widetilde B_0}\widetilde h(\widetilde b)\vert\!\vert_{L^2}
	  &\leq \vert\!\vert \(\int_{0}^1 {\rm Hess}_{\widetilde B_0+t\widetilde b}\widetilde h(\widetilde b) dt\)-{\rm Hess}_{\widetilde B_0}\widetilde h(\widetilde b)\vert\!\vert_{L^2},\\
	  &\leq\(\int_{0}^1\vert\!\vert {\rm Hess}_{\widetilde B_0+t\widetilde b}\widetilde h-{\rm Hess}_{\widetilde B_0}\widetilde h\vert\!\vert_{L^2} dt\) \vert\!\vert\widetilde b\vert\!\vert_{L^2},
	\end{align*}
	Corollary \ref{Hess-cont} implies that after decreasing the value of $\delta_1$ and shrinking $\widetilde U$ we have
	\begin{align*}
		\vert\!\vert F_{\widetilde B}+*_3\nabla_{\widetilde B} \widetilde h\vert\!\vert_{L^2}
		&\geq \delta_1 \vert \! \vert \widetilde b\vert \! \vert_{L^2_1} - \vert\!\vert\widetilde b\vert\!\vert_{L^4}^2\\
		&\geq (\delta_1-\vert\!\vert\widetilde b\vert\!\vert_{L^4}) \vert \! \vert\widetilde b\vert \! \vert_{L^2_1}.
	\end{align*}	
	Thus, if $\epsilon'$ is small enough, then there is a constant $\kappa_0$ such that
	\begin{equation}\label{control-L2-curv}
	    \vert \! \vert\widetilde  b\vert \! \vert_{L^2_1(\widetilde Y)}\leq \kappa_0 \vert\!\vert F_{\widetilde B}+*_3\nabla_{\widetilde B} \widetilde h\vert\!\vert_{L^2(\widetilde Y)}.
	\end{equation}
	
	The Chern-Simons functional of $\widetilde B$ in \eqref{CS-h-double} can be bounded in the following way:
	\begin{align}
	  CS_h(\widetilde B)
	  &\leq \frac{1}{8\pi^2}\vert \int_{\widetilde Y}\tr(\widetilde b\wedge d_{\widetilde B_0} \widetilde b+
  	  \frac{2}{3}\widetilde b\wedge \widetilde b\wedge \widetilde b)\vert+
	  \frac{1}{4\pi^2}\int_{0}^1dt\left\vert \int_{\widetilde Y}\tr\(\widetilde b\wedge (*_3(\nabla_{\widetilde B_0+t\widetilde b}\widetilde h-\nabla_{\widetilde B_0}\widetilde h))\)\right\vert\nonumber\\
	 & \leq C \(\vert \! \vert \widetilde b\vert \! \vert_{L^2(\widetilde Y)}\vert \! \vert d_{\widetilde B_0}\widetilde b\vert \! \vert_{L^2(\widetilde Y)}+
	 \vert \! \vert \widetilde b\vert \! \vert_{L^3(\widetilde Y)}^3+\vert \! \vert\widetilde b\vert \! \vert_{L^2(\widetilde Y)}^2\)\nonumber\\ 	  
	 &\leq   \kappa \vert\!\vert F_{\widetilde B}+*_3\nabla_{\widetilde B} \widetilde h\vert\!\vert_{L^2(\widetilde Y)}^2,\label{ineq-closed-CS-grad}
	\end{align}
	where the second inequality follows from the general property of cylinder functions which is stated in part (ii) of Proposition \ref{hol-pert-properties}:
	\[
	  |\!|\nabla_{\widetilde B} \widetilde h-\nabla_{\widetilde B'} \widetilde h|\!|_{L^2}\leq C|\!|B-B'|\!|_{L^2}.
	\]	
	The last inequality in \eqref{ineq-closed-CS-grad} is a consequence of \eqref{control-L2-curv} and the assumption that $\epsilon'$ is small enough. 
	
	The upper bound on $CS_h(\widetilde B)$ in \eqref{ineq-closed-CS-grad} for a connection $\widetilde B$ on $\widetilde E$ allows us to verify our main claim. There is a neighborhood $U$ of flat mixed pairs such that 
	for any $[B,q]$, the associated element $[\widetilde B]$ belongs to $\widetilde U$. Since we have $CS_h([B,q])=CS_h(\widetilde B)$ and 
	\[
	   \vert\!\vert F_{\widetilde B}+*_3\nabla_{\widetilde B} \widetilde h\vert\!\vert_{L^2(\widetilde Y)}^2=
	   \vert\!\vert F_{B}+*_3\nabla_{B} h\vert\!\vert_{L^2(Y)}^2+\vert\!\vert d q\vert\!\vert_{L^2([0,1])}^2,
	\]
	\eqref{ineq-closed-CS-grad} gives us the desired inequality in \eqref{ineq-mixed-grad-CS}.
\end{proof}

\subsection{Exponential decay}\label{exp-dec-subsec}

Our next goal is to show that the solutions of the mixed equation for the special quintuples have exponential decay on the mixed ends. This subsection follows a similar scheme as the proofs of the the corresponding results in the context of Yang-Mills gauge theory in \cite[Section 4]{Don:YM-Floer}. To obtain the desired exponential decay results and complete the proof of Proposition \ref{exp-decay}, we may focus on solutions of the mixed equation on the cylinder quintuple, which is introduced in Subsection \ref{Fredholm-mixed-cyl}.

\begin{lemma}\label{weak-conv-cylinder}
	Suppose $c_0$ is a connected component of $L(Y,E)$. For any open neighborhood $U$ of the space of flat mixed pairs in $\bB_{c_0}(Y,E)$, there is $\epsilon$ such that the following holds. If $(A,u)$ is a solution of the mixed 
	equation on the mixed cylinder $\fc_{(-1,1)}$ with $\mathcal E(A,u)<\epsilon$, then for any $\theta\in (-\frac{1}{2},\frac{1}{2})$, the pair $(B_\theta,q_\theta):=(A\vert_{\{\theta\}\times Y},u\vert_{[0,2]\times \{\theta\}})$ belongs to $U$. 
\end{lemma}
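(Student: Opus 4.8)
The statement is a standard "small energy forces proximity to critical set" lemma, and the plan is to argue by contradiction using the compactness theorem for the mixed equation (Theorem \ref{compactness-X-S}, or rather its perturbed version Theorem \ref{compactness-X-S-pert}, applied to the cylinder quintuple $\fc_{(-1,1)}$). Suppose the conclusion fails: then there is a fixed open neighborhood $U$ of the space of flat mixed pairs and a sequence $(A_i,u_i)$ of solutions of the mixed equation on $\fc_{(-1,1)}$ with $\mathcal E(A_i,u_i)\to 0$, but with $\theta_i\in(-\tfrac12,\tfrac12)$ such that $(B_{\theta_i},q_{\theta_i})\notin U$. After passing to a subsequence we may assume $\theta_i\to\theta_0\in[-\tfrac12,\tfrac12]$.

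\textbf{Key steps.} First I would apply Theorem \ref{compactness-X-S-pert} to the sequence $(A_i,u_i)$ on $\fc_{(-1,1)}$ (here there are no Lagrangians other than $L(Y,E)$ attached, and the almost complex structure is $J_*$ near the matching line, so we are in the hypotheses). Since $\mathcal E(A_i,u_i)\to 0$ and the bubbling energy threshold $\hbar>0$ is fixed, for $i$ large the bubbling sets $\sigma_-,\sigma_\partial,\sigma_+$ must be empty; hence, after applying gauge transformations $g_i$, the connections $g_i^*A_i$ converge in $C^\infty_{\loc}$ (or $L^p_1$, which suffices) on compact subsets of the interior, and $u_i$ converges in $C^\infty_{\loc}$, to a limiting solution $(A_0,u_0)$ of the mixed equation on $\fc_{(-1,1)}$ with $\mathcal E(A_0,u_0)\leq\liminf\mathcal E(A_i,u_i)=0$. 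Second, zero analytical energy forces $(A_0,u_0)$ to be a flat mixed pair: the ASD curvature term and the Cauchy-Riemann term both vanish, so $F_{A_0}+*_3\nabla_{A_0}h=0$ fiberwise on each slice $\{\theta\}\times Y$ and $du_0=0$, meaning $u_0$ is the constant map to a point $z\in L(Y,E)$ and the restriction of $A_0$ to each $\{\theta\}\times Y$ represents $z$; in particular $(B^0_{\theta_0},q^0_{\theta_0})$ is a flat mixed pair, i.e.\ lies in the connected component $c_0$ if we arrange the components correctly (here one uses that $L(Y,E)$ is a disjoint union of components and $\bB_{c_0}(Y,E)$ only sees $c_0$; since $U$ is a neighborhood of all flat mixed pairs this is harmless). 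Third, the $C^\infty_{\loc}$ convergence of $(g_i^*A_i, u_i)$ and $\theta_i\to\theta_0$ give, after the gauge transformations descend to $\bB_{c_0}(Y,E)$, that $(B_{\theta_i},q_{\theta_i})\to(B^0_{\theta_0},q^0_{\theta_0})$ in $\bB_{c_0}(Y,E)$. Since the limit lies in $U$ and $U$ is open, $(B_{\theta_i},q_{\theta_i})\in U$ for $i$ large, contradicting our choice.

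\textbf{Main obstacle.} The delicate point is the passage from $C^\infty_{\loc}$ convergence on compact subsets of the interior to convergence of the \emph{slice} data $(B_{\theta_i},q_{\theta_i})=(A_i|_{\{\theta_i\}\times Y}, u_i|_{[0,2]\times\{\theta_i\}})$ in the topology of $\bB_{c_0}(Y,E)$, which involves the boundary behavior at $\Sigma=\partial Y$ and the matching condition. One must verify that the gauge transformations $g_i$ furnished by the compactness theorem can be taken (after a further Coulomb-type adjustment near the boundary, as in Theorem \ref{reg-mixed} and the charts of Subsection \ref{conf-mixed}) so that $g_i^*A_i$ converges up to and including $\Sigma$, and that the matching and Lagrangian boundary conditions are preserved in the limit; this is essentially the content already extracted from \cite{DFL:mix} in the regularity discussion of Subsection \ref{review-DFL:mix}, so I would invoke those results rather than reprove them. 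A second, minor point is that $\theta_0$ may equal $\pm\tfrac12$, which is why the lemma is stated with slices in the open interval $(-\tfrac12,\tfrac12)$ while the energy hypothesis is on the larger cylinder $\fc_{(-1,1)}$ — the compactness/regularity estimates are interior estimates on $(-1,1)$, so they apply uniformly for $\theta$ in the compact set $[-\tfrac12,\tfrac12]$, and the contradiction goes through regardless of where $\theta_0$ lands.
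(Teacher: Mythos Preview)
Your proposal is correct and follows essentially the same argument as the paper: contradiction plus the compactness theorem for the mixed equation, yielding a zero-energy limit which must be a flat mixed pair. The only cosmetic difference is that the paper uses translation invariance of the cylinder to recenter each problematic slice at $\theta=0$ (working on $\fc_{(-\frac12,\frac12)}$), rather than passing to a subsequence with $\theta_i\to\theta_0$ as you do; this sidesteps your ``minor point'' about $\theta_0=\pm\tfrac12$ entirely.
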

\begin{proof}
	If the claim does not hold, there is a sequence $(A_i,u_i)$ of solutions of the mixed equation on $\fc_{(-\frac{1}{2},\frac{1}{2})}$ such that $\mathcal E(A_i,u_i)\to 0$ as $i\to \infty$, and the class
	in $\bB_{c_0}(Y,E)$ represented by the restriction of $(A_i,u_i)$ to $(\{0\}\times Y, [0,2]\times \{0\})$ does not belong to $U$. On the other hand, 
	Theorem \ref{compactness-X-S-pert} implies that there is a subsequence $\{(A_i^\pi,u_i^\pi)\}$ and gauge transformations $g_i^\pi$ such that 
	$((g_i^\pi)^*A_i^\pi,u_i^\pi))$ is $C^\infty$ convergent to the solution $(A_0,u_0)$ of the mixed equation on $\fc_{(-\frac{1}{2},\frac{1}{2})}$.
	In particular, $\mathcal E(A_0,u_0)=0$, and after applying a gauge transofrmation
	 $(A_0,u_0)$ is the pullback of a flat mixed pair to the quintuple $\fc_{(-\frac{1}{2},\frac{1}{2})}$, which is a contradiction.
\end{proof}

\begin{prop}\label{exp-decay-mixed-cylinder-energy}
	There are positive constants $\epsilon$, $\delta_0$ and $C$ such that the following holds. Suppose $(A,u)$ is a solution of 
	the mixed equation on the mixed cylinder $\fc_{(0,\infty)}$ such that $\mathcal E(A,u)<\epsilon$. Then for any $\theta\in (1,\infty)$, we have
	\begin{equation}\label{L2-band}
		\vert\!\vert F_A+*_3\nabla_{A_t}h\vert\!\vert_{L^2((\theta-\frac{1}{2},\theta+\frac{1}{2})\times Y)}+
		\vert\!\vert d u\vert\!\vert_{L^2([0,2]\times (\theta-\frac{1}{2},\theta+\frac{1}{2}))}\leq Ce^{-\delta_0\theta},
	\end{equation}
	where for any $t\in (0,\infty)$, as usual $A_t$ denotes the connection $A\vert_{\{t\}\times Y_0}$.
\end{prop}

\begin{proof}
	Let $c_0$ be the connected component of $L(Y,E)$ determined by $u(2,\theta)$ for any $\theta\in (0,\infty)$. Suppose $(B_\theta,q_\theta)$ is the pair obtained by restricting $A$ and $u$ to $\{\theta\}\times Y$ and 
	$[0,2]\times \{\theta\}$. Then $(B_\theta,q_\theta)$ are elements of $\bA_{c_0}(Y,E)$. 
	Suppose $U$ and $\kappa$ are given as in Lemma \ref{mixed-grad-CS}. 
	Using Lemma \ref{weak-conv-cylinder}, we pick $\epsilon<\frac{1}{2}$ such that for any $\theta\in [\frac{1}{2},\infty)$,
	$(B_\theta,q_\theta)$ represents an element of $U$. Let also $\delta_0=\frac{1}{4\pi^2\kappa}$.
	Define $P:(0,\infty) \to \R$ by
	\[
	  P(\theta)=\frac{1}{8\pi^2}\int_{(\theta,\infty)\times Y}\tr\((F_A+*_3\nabla_{A_t}h)
	  \wedge (F_A+*_3\nabla_{A_t}h)\)+\frac{1}{4\pi^2}\int_{(\theta,\infty)\times [-1,1]}u^*\Omega.
	\]
	The assumption on $\mathcal E(A,u)$ implies that $P(\theta)$ is the lift of $CS_{h}(B_\theta,q_\theta)$ in $[0,\frac{1}{2})$.
	Since $(A,u)$ is a solution of the mixed equation, we have
	\[
	  \frac{dP}{d\theta}(\theta)=-\frac{1}{4\pi^2}(\vert\!\vert F_{B_\theta}+*_3\nabla_{B_\theta} h\vert\!\vert_{L^2(Y)}^2+
		\vert\!\vert d q_\theta\vert\!\vert_{L^2([0,1])}^2).
	\]
	Combining these observations and Lemma \ref{mixed-grad-CS}, we conclude that
	\[
	  \hspace{2cm}P\leq -4\pi^2 \kappa \frac{dP}{d\theta}\hspace{1cm}\text{for $\theta\geq \frac{1}{2}$}.
	\]
	In particular, for any $\theta\geq \frac{1}{2}$, we have the following inequality which gives the desired claim:
	\begin{equation}\label{dec-exp}
	  P(\theta)\leq Ce^{-\delta_0\theta}.
	\end{equation}
	Here we can take $C=e^{\delta_0/2}\epsilon$, because it is greater than $e^{\delta_0/2}P(\frac{1}{2})$. 
\end{proof}

To improve the exponential decay of Proposition \ref{exp-decay-mixed-cylinder-energy}, we need a variation of Theorem \ref{Fred-mixed-op}.

\begin{prop}\label{Fred-mixed-op-ind-A-u}
	Suppose $I$ and $J$ are respectively the intervals $(-1/2,1/2)$ and $(-1/4,1/4)$. For any $k\geq 1$, there are constants $\epsilon_k$ and $C_k$ such that the following holds. 
	Suppose $(A,u)$ is a solution of the mixed equation associated to the cylinder quintuple
	 $\fc_I$ with $\mathcal E(A,u)<\epsilon_k$. Then for any $(\zeta,\nu)\in E^k_{(A,u)}(I)$ we have
	\begin{equation}\label{Fred-estimate-ind-A-u}
		|\!|(\zeta,\nu)|\!|_{L^2_{k,A}(J)}\leq C_k\left(|\!|\mathcal D_{(A,u)}(\zeta,\nu)|\!|_{L^2_{k-1,A}(I)}+|\!|(\zeta,\nu)|\!|_{L^{2}(I)}\right).
	\end{equation}
	where $L^2_{k,A}$ denotes the Sobolev norm defined using the connection $A$. In particular, only the contribution of $\zeta$ to this norm depends on $A$ and the contribution of $\nu$ is independent of $A$.
\end{prop}
\begin{proof}
	Theorem \ref{Fred-mixed-op} implies that for a fixed $(A,u)$ we may find a constant $C_k$ such that \eqref{Fred-estimate-ind-A-u} holds for any $(\zeta,\nu)\in E^k_{(A,u)}(I)$. Using Remark \ref{fred-st-family} and compactness of the 
	space of flat mixed pairs, we may in fact find a constant 
	$C_k'$ which works for any $(A,u)$ which is the pullback of a flat mixed pair. Let $C_k=2C_k'$.
	If the claim does not hold for $k$, then there is a sequence $\{(A_i,u_i)\}$ of solutions of the mixed equation associated to the quintuple $\fc_I$ such that $\mathcal E(A_i,u_i)<1/i$ and there is an element of $E^k_{(A_i,u_i)}(I)$ for which 
	the inequality in \eqref{Fred-estimate-ind-A-u} fails for the pair $(A_i,u_i)$ and the constant $C_k$. Because of our assumption about the Sobolev norms in \eqref{Fred-estimate-ind-A-u}, changing each mixed pair
	$(A_i,u_i)$ by applying a gauge transformation gives us another sequence satisfying the same property. Theorem \ref{compactness-X-S-pert} implies that $(A_i,u_i)$, after passing to a subsequence and applying gauge 
	transformations, is $C^\infty$ convergent to a mixed pair $(A_0,u_0)$ on compact subspaces of $I \times Y$ and $[0,2]\times I$. The topological energy of $(A_0,u_0)$ vanishes and hence it is the pullback of a flat mixed pair. Now, we 
	may use Remark \ref{fred-st-family} to conclude that if $i$ is large enough then the inequality in \eqref{Fred-estimate-ind-A-u} holds for $(A_i,u_i)$, which is a contradiction.
\end{proof}

\begin{prop}\label{exp-decay-mixed-cylinder}
	For any non-negative integer $l$, there are positive constants $\epsilon$, $\delta_0$ and $C$ such that the following holds. Suppose $(A,u)$ is a solution of 
	the mixed equation on the mixed cylinder $\fc_{(0,\infty)}$ such that $\mathcal E(A,u)<\epsilon$ and $A$ is in temporal gauge. Then there is a flat mixed pair $(B,q)$ such that
	for any $\theta\in (1,\infty)$ and any $k\leq l$ we have 
	\begin{equation}\label{exp-decayall-der-conn}
		\vert \nabla^k (A-\pi^*B)\vert_{\{\theta\}\times Y}\leq Ce^{-\delta_0\theta},
	\end{equation}
	$q_\theta$ is $C^0$-convergent to the constant map to $q$, and 
	\begin{equation}\label{u-Cl}
		\hspace{2cm}\vert \nabla^{k-1} (du)\vert_{\{\theta\}\times [0,1]}\leq C e^{-\delta_0\theta}\hspace{1cm}\text{ for $1\leq k\leq l$}.
	\end{equation}
\end{prop}
\begin{proof}
	Let $\delta_0$ be given by Proposition \ref{exp-decay-mixed-cylinder-energy}, and decrease the value of $\epsilon$ in this proposition so that it becomes smaller than the constant $\epsilon_{l+3}$ provided by Proposition 
	\ref{Fred-mixed-op-ind-A-u}. Let also $C$ be the constant given by Proposition \ref{exp-decay-mixed-cylinder-energy}. In the following we might increase the value of $C$ from each line to the next while keeping 
	it independent of $(A,u)$. Suppose $(B_\theta,q_\theta)$ is given as before, and for any $\theta\in (1,\infty)$, let $A(\theta)$ and $u(\theta)$ denote the restriction of $A$ and $u$ to $(\theta-\frac{1}{2},\theta+\frac{1}{2})\times Y$ and 
	$[0,2]\times (\theta-\frac{1}{2},\theta+\frac{1}{2})$. By induction on $k$, we show that for any $0\leq k\leq l+3$ we have
	\begin{equation}\label{sobolev-exp-dec}
	  \vert\!\vert \frac{dA}{d\theta}\vert\!\vert_{L^2_k((\theta-\frac{1}{2},\theta+\frac{1}{2})\times Y)}+\vert\!\vert \frac{d u}{d\theta}\vert\!\vert_{L^2_k([0,2]\times (\theta-\frac{1}{2},\theta+\frac{1}{2}))}\leq Ce^{-\delta_1\theta}.
	\end{equation}
	This claim in the case that $k=0$ is proved in Proposition \ref{exp-decay-mixed-cylinder-energy}. Assuming \eqref{sobolev-exp-dec}, we may integrate this inequality from $\theta$ to $\infty$ in the case that $k=l+3$ and show see that there is a flat mixed pair $(B,q)$  
	such that the claim of this proposition holds.

	The derivatives $\frac{d A}{d\theta}$ and $\frac{d u}{d\theta}$ over $(\theta-\frac{1}{2},\theta+\frac{1}{2})\times Y$ and 
	$[0,2]\times (\theta-\frac{1}{2},\theta+\frac{1}{2})$ define an 	
	element $(\zeta(\theta),\nu(\theta))$ of $E^k_{(A(\theta),u(\theta))}(I)$, where $A(\theta)$ and $u(\theta)$ are the restrictions of $A$ and $u$ to $(\theta-\frac{1}{2},\theta+\frac{1}{2})\times Y$ and 
	$[0,2]\times (\theta-\frac{1}{2},\theta+\frac{1}{2})$ and $I=(-\frac{1}{2},\frac{1}{2})$. 
	The translation invariance of the mixed equation for cylinder quintuples implies that  $(\zeta(\theta),\nu(\theta))$ is in the kernel of $\mathcal D_{(A(\theta),u(\theta))}$. In particular, Proposition \ref{Fred-mixed-op-ind-A-u} and \eqref{sobolev-exp-dec}
	for $k=0$ imply that
	\begin{equation}\label{exp-decay-rest-theta}
		|\!|(\zeta(\theta),\nu(\theta))|\!|_{L^2_{k,A(\theta)}(J)}\leq Ce^{-\delta_0\theta}.
	\end{equation}
	Using the induction assumption we may replace the left hand side of \eqref{exp-decay-rest-theta} with $|\!|(\zeta(\theta),\nu(\theta))|\!|_{L^2_{k}(J)}$. This allows us to prove \eqref{sobolev-exp-dec} for the given $k$.
\end{proof}

\begin{proof}[proof of Proposition \ref{exp-decay}]
	Most steps of Proposition \ref{exp-decay} are already addressed in Subsection \ref{review-DFL:mix}. The only missing part is the exponential decay of solutions of the mixed equation 
	on the mixed ends of special quintuples, which follows from Proposition \ref{exp-decay-mixed-cylinder}.
\end{proof}

\subsection{Compactness}\label{comp-sect}
We shall consider the compactness aspects of Proposition \ref{comp} in this section. 
Suppose $\{(A_i,u_i)\}$ is a sequence in $\bA(\alpha,\beta)$ representing elements of $\bM_\eta(\alpha,\beta)_d$ with $d\leq 1$, which does not have any subsequence convergent to an element of $\bM_\eta(\alpha,\beta)_d$. Here $\eta$ gives a standard perturbation of the mixed equation as in \eqref{mixed-eq-pert}, which is trivial in a neighborhood of the matching line and is provided by Proposition \ref{pert}. In particular, $\bM_\eta(\alpha,\beta)_d$ is empty for negative values of $d$.  Since the indices of the mixed operators $\mathcal D_{(A_i,u_i)}$ are $d$, Proposition \ref{ind-mixed-op} implies that $\mathcal E(A_i,u_i)$ is constant and hence bounded. Therefore, the analytical energy terms $\fE(A_i,u_i)$ are also bounded, and we may apply Theorem \ref{compactness-X-S-pert}.

Theorem \ref{compactness-X-S-pert} implies that after passing to a subsequence and applying gauge transformations there is a sequence, still denoted by $\{(A_i,u_i)\}$, which is convergent in a weak sense. To be more detailed, there is a solution of the mixed equation $(A_0,u_0)$ associated to the special quintuples $\fq_s$ and finite sets $\sigma_-\subset {\rm int}(X)$, $\sigma_\partial \subset \gamma$ and $\sigma_+\subset S\setminus \gamma$ such that $\fE(A_0,u_0)$ is bounded, $A_i$ is $C^\infty$-convergent to $A_0$ on compact subspaces of $X\setminus (\sigma_\partial\times \Sigma\cup \sigma_-)$ and $u_i$ is convergent to $u_0$ on compact subspaces of $S\setminus (\sigma_+\cup \sigma_\partial)$. Proposition \ref{exp-decay} implies that $[A_0,u_0]\in \bM_\eta(\alpha',\beta')_{d'}$ for some choices of $\alpha'\in \fC_G$, $\beta'\in \fC_S$ and $d'\geq 0$. To be more precise, we use adaptation of Proposition \ref{exp-decay}  to the perturbed mixed equation in \eqref{mixed-eq-pert}. Since the secondary perturbations provided by by Proposition \ref{pert} are compactly supported, we may use the exponential decay results of the previous section to prove part (iii) of Proposition \ref{exp-decay} in this more general setup. As it is mentioned in Subsection \ref{review-DFL:mix}, the remaining part of this generalization of Proposition \ref{exp-decay} can be proved as in the original case. 

Fix $T_0$ such that the intersections of the the finite sets $\sigma_-$, $\sigma_\partial$ and $\sigma_+$ with the subspaces 
\begin{equation}\label{G-ends}
  (-\infty,-T_0]\times Y_\#\hspace{0.2cm} \sqcup \hspace{0.2cm} [T_0,\infty)\times Y \hspace{0.2cm}\sqcup\hspace{0.2cm} (-\infty,-T_0]\times Y',
\end{equation}
and 
\begin{equation}\label{S-ends}
  [T_0,\infty)\times [-1,1] \hspace{0.2cm}\sqcup\hspace{0.2cm} [0,2]\times [T_0,\infty)\sqcup \hspace{0.2cm} [0,2]\times (-\infty,-T_0],
\end{equation}
of the ends of $\fq_s$ are empty and the secondary perturbations of Proposition \ref{pert} on these sets are empty. For any $T\geq 2$, let $(A_i(T),u_i(T))$ denote the restrictions  of $(A_i,u_i)$ to the mixed end $((T,\infty)\times Y,[0,2]\times (T,\infty))$. Then $(A_i(T_0),u_i(T_0))$ gives a solution of the mixed equation associated to the cylinder quintuple $\fc_{(T_0,\infty)}$. There are gauge transformations $g_i$ over $(T_0,\infty)\times Y$ such that $A_i(T_0)$ is in temporal gauge. Proposition \ref{exp-decay-mixed-cylinder} implies that $(g_i^*A_i(T_0),u_i(T_0))$ is exponentially asymptotic to a flat mixed pair $(B_i,q_i)$. Similarly, there is $g_0$ such that $(g_0^*A_0(T),u_0(T))$ is exponentially asymptotic to a flat mixed pair $(B_0,q_0)$. Using patching arguments of \cite[Section 4.4.2]{DK}, we may assume that the gauge transformations $g_i$ are trivial after changing the gauge equivalence classes of connections $A_i$.

After passing to a further subsequence, we may assume that $(B_i,q_i)$ is $C^\infty$ convergent to a flat mixed pair $(B,q)$ because the gauge equivalence classes of flat mixed pairs is a compact set. A priori, $(B,q)$ might not be necessarily equal to $(B_0,q_0)$ because we only know that $A_i(T)-\pi^*(B_i)$ is $C^\infty$-convergent to $A_0(T)-\pi^*(B)$ on compact subspaces of $((T,\infty)\times Y,[0,2]\times (T,\infty))$. However, if there is $T\geq T_0$ such that the topological energies of $(A_i(T),u_i(T))$ for large enough values of $i$ is less than the constant $\epsilon$ given by Proposition \ref{exp-decay-mixed-cylinder}, then Proposition \ref{exp-decay-mixed-cylinder} and the dominated convergence theorem imply that 
\begin{equation}\label{L2-ext-delta}
  \hspace{1cm}\lim_{i\to \infty} \int_{(T,\infty)\times Y}\vert \nabla^k \left ((A_0(T)-\pi^*B)-(A_i(T)-\pi^*(B_i))\right )\vert^2 e^{\delta_0\theta}=0,\hspace{.5cm}0\leq k \leq l.
\end{equation}
In particular, $B=B_0$ and hence $q=q_0$. Moreover, by working in a smooth chart about $q\in \mathcal M(\Sigma,F)$, we obtain a similar exponential convergence of $u_i(T)$ to $u_0(T)$.

Now, suppose there is no $T$ satisfying the above properties. After passing to a subsequence, we may assume that there is a sequence $\{T_i\}_i$ converging to $\infty$ such that $\mathcal E(A_i(T_i),u_i(T_i))$ is equal to $\epsilon$. In particular, we have
\begin{align}\label{top-energy-ineq}
  \mathcal E(A_0(T_0),u_0(T_0))&\leq \limsup_{i\to \infty}\mathcal E(A_i(T_0),u_i(T_0))-\epsilon.
\end{align}
(In fact, we may replace $\epsilon$ with $\frac{1}{2}$ using the fact that the minimal topological energy of a solution of the mixed equation on $\fc_{\R}$ with finite energy is $\frac{1}{2}$. However, we do not need this stronger upper bound.) We obtain a similar dichotomy for the mixed end associated to $Y'$: either the solutions $(A_i,u_i)$ restricted to the mixed end $((-\infty,-T_0) \times Y',[0,2]\times (-\infty,-T_0))$ is convergent to $(A_0,u_0)$ as in \eqref{L2-ext-delta}, or there is a loss of topological energy by at least $\epsilon$ on the mixed end analogous to \eqref{top-energy-ineq}.

A similar analysis can be applied to study the behavior of the sequence $\{(A_i,u_i)\}$ on the gauge theoretical and symplectic ends of $\fq_s$, and one can even obtain more efficient results. For instance, we may apply the results of \cite[Section 5]{Don:YM-Floer} to the sequence of (perturbed) ASD connections $A_i\vert_{(-\infty,-T_0)\times Y_\#}$ and obtain a sequence 
\[
 A_1^G\in \breve \rM_G(\alpha,\alpha_1)_{p_1},\,A_2^G\in \breve \rM_G(\alpha_1,\alpha_2)_{p_2},\,\dots,\,A_n^G\in \breve \rM_G(\alpha_{n-1},\alpha')_{p_n}
\]
such that $[A_i\vert_{(-\infty,-T_0)\times Y_\#}]$ is {\it chain convergent} to $[A_1^G,A_2^G,\dots,A_n^G,A_0\vert_{(-\infty,-T_0)\times Y_\#}]$ on the complement of a finite set of bubbling points in the sense of \cite[Section 5]{Don:YM-Floer}. This means that in addition to the convergent of $A_i$ to $A_0$ on compact subspaces of $(-\infty,-T_0)\times Y_\#$, there are finite subsets $\sigma_i\subset \R\times Y_\#$ and a sequence of real numbers 
\[
  s_0^i=0<s_1^i<s_2^i<\dots<s_n^i
\]
such that for any $1\leq j \leq n$, we have $s_j^i-s_{j-1}^i \to \infty$ as $i\to \infty$ and $\tau_{s_j^i}^*(A_i)$ is $C^\infty$ convergent to $A_j^G$ on the compact subspaces of $\R\times Y_\#\setminus \sigma_i$. Here $\tau_{s_j^i}^*(A_i)$ denotes the translate of $A_i\vert_{(-\infty,-T_0)\times Y_\#}$ by $s_j^i$, which is a connection on $(-\infty,-T_0+s_j^i)\times Y_\#$. If $k_G$ is the sum of the size of the bubbling sets $\sigma_i$, the chain convergence implies that we have
 \begin{equation}\label{top-energy-ineq-gauge-theory}
  \mathcal E(A_0\vert_{(-\infty,-T_0)\times Y_\#})+\sum_{i=1}^n\mathcal E(A_i^G)\leq \limsup_{i\to \infty}\mathcal E(A_i\vert_{(-\infty,-T_0)\times Y_\#})-k_G.
\end{equation}
Similar argument shows that on the gauge theory side there is an integer $k_S$ and a sequence of holomorphic strips 
\[
 u_1^S\in \breve \rM_S(\beta',\beta_1)_{p_1'},\,u_2^S\in \breve \rM_S(\beta_1,\beta_2)_{p_2'},\,\dots,\,u_m^S\in \breve \rM_S(\beta_{m-1},\beta)_{p_m'}
\]
such that the sequence of pseudo-holomorphic maps $u_i\vert_{(T_0,\infty)\times [-1,1]}$ are chain convergent to the broken pseudo-holomorphic map $(u_0\vert_{(T_0,\infty)\times [-1,1]}, u_1^S,u_2^S,\dots,u_m^S)$ on the complement of a set of $k_S$ bubbling points. Consequently, we have
 \begin{equation}\label{top-energy-ineq-symplectic}
  \mathcal E(u_0\vert_{(T_0,\infty)\times [-1,1]})+\sum_{i=1}^m\mathcal E(u_i^S)\leq \limsup_{i\to \infty}\mathcal E(u_i\vert_{(T_0,\infty)\times [-1,1]})-\frac{k_S}{2},
\end{equation}
where for a map $u$ from an oriented Riemann surface $S$ to $\mathcal M(\Sigma,F)$, we define $\mathcal E(u)$ as the integral of $u^*\Omega$ over $S$ divided by $4\pi^2$. To get the term $k_s/2$ in \eqref{top-energy-ineq-symplectic}, we use the fact that the pairing of $\Omega$ with an element of $\pi_2(\mathcal M(\Sigma,F))$ is a positive integer multiple of $2\pi^2$ (see Subsection \ref{flat-surface}).

Theorem \ref{compactness-X-S-pert} implies that the topological energy of $(A_0,u_0)$ over the complement of the ends of $\fq_s$ in \eqref{G-ends} and \eqref{S-ends} is bounded by the $\limsup$ of the topological energies of $(A_i,u_i)$ over the same set, and this inequality can be improved by $\hbar$ unless $\sigma_-$, $\sigma_\partial$ and $\sigma_+$ are empty. Combining all of the above inequalities we conclude that 
\begin{equation}\label{ineq-top-energy}
  \mathcal E(A_0,u_0)+\sum_{i}\mathcal E(A_i^G)+\sum_{j}\mathcal E(u_j^S)\leq \limsup_{i}\mathcal E(A_i,u_i).
\end{equation}
Since $\mathcal E(A_i,u_i)$ is constant, we may replace the right hand side with $\mathcal E(A_i,u_i)$ for any $i\geq 1$. The inequality in \eqref{ineq-top-energy} is strict unless $\sigma_-$, $\sigma_\partial$ and $\sigma_+$ are empty, $(A_i,u_i)$ is convergent to $(A_0,u_0)$ with respect to the $L^2_{l,\delta}$ on the mixed ends as in \eqref{L2-ext-delta}, and $k_S=k_G=0$. If the inequality is strict, then Lemma \ref{top-energy-constant-reverse} implies that in fact the difference between the two sides of \eqref{ineq-top-energy} is at least $\frac{1}{2}$. To see the latter claim note that we can glue $(A_0,u_0)$, the connections $A_i^G$ and the maps $u_j^S$ to obtain an element of $\bB(\alpha,\beta)$, and the topological energy is additive with respect to gluing. Using Proposition \ref{ind-mixed-op} we also have 
\begin{equation}\label{ineq-index}
  \ind(\mathcal D_{(A_0,u_0)})+\sum_{i}\ind(\mathcal D_{A_i^G})+\sum_{j}\ind(\mathcal D_{u_j^S})\leq d,
\end{equation}
and if the inequality in \eqref{ineq-top-energy} is strict, then the left hand side of \eqref{ineq-index} is at most $d-4$, which is impossible because $d\leq 1$ and all the indices on the left hand side of \eqref{ineq-index} are non-negative. In fact, $\ind(\mathcal D_{A_i^G})$ and $\ind(\mathcal D_{u_j^S})$ are at least one. Thus, in the case that $d=0$, $m=n=0$, which shows that $[A_0,u_0]\in \bM_\eta(\alpha,\beta)_0$, and $[A_i,u_i]$ is a sequence in $\bM_\eta(\alpha,\beta)_0$ convergent to $[A_0,u_0]$. This is a contradiction, and hence $\bM_\eta(\alpha,\beta)_0$ is compact. In the case that $d=1$, we conclude that either $n=0$ or $m=0$. In the first case, $\beta=\beta'$, $[A_0,u_0]\in \bM_\eta(\alpha',\beta)_0$, $A_1^G$ is in the $0$-dimensional moduli space $\breve \rM_G(\alpha,\alpha')_{p_1}$, and $[A_i,u_i]$ is chain convergent to $[A_1^G,(A_0,u_0)]$. In the case that $m=0$, $\alpha=\alpha'$, $[A_0,u_0]\in \bM_\eta(\alpha,\beta')_0$, $u_1^S$ is in the $0$-dimensional moduli space $\breve \rM_S(\beta',\beta)_{p_1'}$, and $[A_i,u_i]$ is chain convergent to $[(A_0,u_0),u_1^S]$. Therefore, we can compactify $\bM_\eta(\alpha,\beta)_1$ by adding the points 
\begin{equation}\label{ends-1dim-com-sec}
	\breve \rM_{G}(\alpha, \alpha')_p \times \bM_\eta(\alpha',\beta)_0,\hspace{1cm}
	\bM_\eta(\alpha,\beta')_0\times\breve \rM_{S}(\beta',\beta)_p,
\end{equation}
as it is stated in Proposition \ref{comp}.

To complete the proof of part (ii) of Proposition \ref{comp}, we need a gluing theorem as an inverse to the above compactness theorem, which shows that the non-compact ends of $\bM_\eta(\alpha,\beta)_1$ can be parametrized by gluing the {\it broken solutions} of the mixed equation in \eqref{ends-1dim-com-sec}. Such gluing theory are standard in the context of instanton Floer homology \cite{floer:inst1}, \cite[Section 4]{Don:YM-Floer} and Lagrangian Floer homology \cite{Fl:LFH}. In the present setup, we need to either glue solutions of (perturbed) ASD equation on $\R\times Y_\#$ to solutions of the mixed equation on the gauge theoretical end of $\fq_s$ or glue pseudo-holomorphic strips in $\mathcal M(\Sigma,F)$ to solutions of the mixed equation on the symplectic end of $\fq_s$. In these two cases the proofs in the gluing theory of instanton Floer homology and Lagrangian Floer homology can be adapted without any essential change to prove the desired result. Using similar arguments as in instanton Floer homology and Lagrangian Floer homology, one can also see easily that the conventions in Subsection \ref{subsection-ori} determine orientations of the moduli spaces that satisfy the claim in Proposition \ref{comp}.

\section{Perturbations}\label{perturbations}

In several stages in the definition of instanton Floer homology group $\rI_*(Y_\#,E_\#)$, its symplectic counterpart $\rSI_*(Y_\#,E_\#)$ and the isomorphism between them, we had to use perturbations of the relevant equations. Up until this point we treat such perturbations as blackboxes, and only exploited their properties to obtain the required results. In this section, we recall the definition of these perturbations \cite{donaldson:orientations,floer:inst1,Tau:Cass,He:3-man-lag,KM:YAFT} and collect the properties which are used in the earlier sections. The first subsection reviews the definition of cylinder functions. We used the formal gradients of cylinder functions to perturb the defining equation of 3-manifold Lagrangians (Subsection \ref{3-man-lag-sec}) and the ASD equation in the definition of $\rI_*(Y_\#,E_\#)$ (Subsection \ref{Fl-hom-subsec}). The primary version of the mixed equation in \eqref{mixed-eq} is also defined using cylinder functions $h$ and $h'$. Then we use secondary perturbations to deform the mixed equation. These secondary perturbations are the main subject of the second subsection of the present section.

In this section, we slightly change our viewpoint on the configuration spaces of connections. As it is mentioned in the proof of Lemma \ref{fixed-iota}, the $\SO(3)$-bundle $E_\#$ over $Y_\#$ is the adjoint bundle associated to a $\U(2)$-bundle $\widetilde E_\#$ over $Y_\#$. Suppose $B_0$ is a fixed connection on the determinant bundle $\Lambda^2\widetilde E_\#$ of $\widetilde E_\#$. Then the configuration space of $\SO(3)$-connections on $E_\#$ modulo the determinant one gauge group $\mathcal G(E_\#)$ can be identified with the quotient by $\mathcal G(E_\#)$ of the space of $\U(2)$-connections on $\widetilde E_\#$ with $B_0$ being the induced connection on the determinant bundle. There is a similar description for the configuration spaces of connections on $Y$, $Y'$ and $X$ in terms of $\U(2)$-connections and we use this viewpoint through this section. (We assume that the $\U(1)$-connections fixed on these four spaces are compatible with each other in the obvious way.) In particular, holonomy of connections  will be regraded as a section of a fiber bundle with fiber $\U(2)$. This will give us some flexibility in the construction of holonomy perturbations.

\subsection{Cylinder functions and holonomy perturbations}\label{hol-pert-sect}

In this subsection we prove Proposition \ref{family-lag} and Lemma \ref{reg-ASD}. We first recall the definition of cylinder functions and then collect some of their properties from the literature which are used in the earlier subsections. As before $(Y,E)$ and $(Y',E')$ denote pairs which are introduced in Subsection \ref{3-man-bdles}.

Let $\gamma:S^1\times D^2\to Y$ be a smooth immersion, supported in the interior of $Y_0 \subset Y$. Given a connection $B\in \mathcal A(Y,E)$ and $z\in D^2$, let $\tau_z(B)$ denote the trace of the holonomy of $B$ along the loop $\gamma(S^1\times \{z\})$. Fix a compactly supported 2-from $\mu$ on $D^2$ with integral $1$. Define 
\[
  \tau_\gamma(B)=\int_{D^2} \tau_z(B)\mu.
\]
Now suppose $\lambda$ is the data of a positive integer $n$, smooth immersions $\gamma_i:S^1\times D^2\to Y_0$ for $1\leq i \leq n$, and a smooth function $G:[-3,3]^n \to \R$. The {\it cylinder function} $h_\lambda$ associated to $\lambda$ is defined as
\[
  h_\lambda(B):=G(\tau_{\gamma_1}(B),\dots,\tau_{\gamma_n}(B)).
\]
The formal gradient of the cylinder function $h_\lambda$ can be regarded as a section $\nabla h_\lambda$ of the tangent bundle $\mathcal T_l$ of  $\mathcal A(Y,E)$, which is the Banach space bundle $\mathcal A(Y,E)\times L^2_l(Y,\Lambda^1\otimes E)$. Clearly, $h_\lambda$ is invariant with respect to the action of $\mathcal G(E)$ and induces a map $\mathcal B(Y,E)\to \R$, still denoted by $h_\lambda$. The formal gradient $\nabla h_\lambda$ is also $\mathcal G(E)$-invariant and its value at any $B\in \mathcal A(Y,E)$ belongs to $X_B\subset L^2_l(Y,\Lambda^1\otimes E)$.

Following \cite{KM:YAFT}, we may form a Banach space $\mathcal P$ of perturbations from cylinder functions. Fix once and for all a sequence $\{\lambda_i\}_{i \in \N}$ where $\lambda_i$ is the information of a positive integer $n_i$, immersions $\gamma_{i,j}:S^1\times D^2\to Y_0$ for $1\leq j \leq n_i$ and a smooth function $G_i:\R^{n_i}\to \R$. We require this sequence to be dense in the following dense: given any $(n,\{\gamma_i\},G)$ as above there is a subsequence $\{\phi_{i_k}\}_{k \in \N}$ of  $\{\lambda_i\}$ such that $n_{i_k}=n$, $\gamma_{i_k,j}$, for any $j$, is convergent to $\gamma_j$ in the $C^\infty$ topology and $G_{i_k}$ is $C^\infty$ convergent to $G$. For a sequence of real numbers $\rho=\{a_i\}_{i\in \N}$, define $h_{\rho}:\mathcal A(Y,E)\to \R$ as
\[
  h_\rho=\sum_{i=1}^\infty a_ih_{\lambda_i}.
\]
To have convergent cylinder functions $h_\rho$ belonging to appropriate function spaces, we need to control the growth of the sequence  $\{a_i\}$ by a condition of the form
\[
  |\rho|:=\sum_{i=1}^\infty C_i|a_i|<\infty
\]
for a carefully chosen sequence of positive real numbers $C_i$. 

\begin{prop}{\cite[Proposition 3.7]{KM:YAFT}}\label{hol-pert-properties}
	The constants $C_i$ can be chosen such that the cylinder functions $h_\rho$ and their formal gradients $\nabla h_\rho$ satisfy the following properties.
	\begin{itemize}
	\item[(i)] The association 
	\[
	  (B,\rho)\to \nabla_Bh_\rho
	\]
	determines a smooth map $H:\mathcal A(Y,E)\times \mathcal P\to\mathcal T_l$.
	\item[(ii)] There is a constant $C$ such that for any $B\in \mathcal A(Y,E)$
	\[
	  |h_\rho(B)|\leq C|\rho|,\hspace{1cm} |\!|\nabla_B h_\rho|\!|_{L^\infty}\leq C|\rho|,
	\]
	and for any $B, B'\in \mathcal A(Y,E)$ and $1\leq p \leq \infty$
	\[
	  |\!|\nabla_B h_\rho-\nabla_{B'} h_\rho|\!|_{L^p}\leq C|\rho||\!|B-B'|\!|_{L^p}.
	\]
	\item[(iii)] The derivative $DH$ of $H$ with respect to the component in $\mathcal A(Y,E)$ defines a smooth section of $ \mathcal A(Y,E)\times \mathcal P\to Hom (\mathcal T_l,\mathcal T_l)$ which extends smoothly to 
	a smooth section of $\mathcal A(Y,E)\times \mathcal P\to Hom (\mathcal T_k,\mathcal T_k)$ for any $0\leq k\leq l$. Analogous to $\mathcal T_l$, the Banach space bundle $\mathcal T_k$ over $\mathcal A(Y,E)$ is given by 
	$\mathcal A(Y,E)\times L^2_k(Y,\Lambda^1\otimes E)$.
	\end{itemize}
\end{prop}

The following corollary is used in the proof of Proposition \ref{mixed-grad-CS}.
\begin{cor}\label{Hess-cont}
	For any $B\in \mathcal A(Y,E)$, $\rho\in \mathcal P$ and any constant $\epsilon$, there is a neighborhood $U$ of $B$ such that for any $B'\in U$ we have
	\[
	  \vert\!\vert {\rm Hess}_{B}h_\rho-{\rm Hess}_{B'}h_\rho\vert\!\vert_{L^2}<\epsilon.
	\]
\end{cor}	
\begin{proof}	
	This follows immediately from the special case of part (iii) of Proposition \ref{hol-pert-properties} that $k=0$.
\end{proof}

Recall that $\bL(Y,E)$ is the subspace of $\mathcal B(Y,E)\times \mathcal P$ consisting of pairs $([B],\rho)$ such that $\Phi(B,\rho)=0$ where $\Phi$ is defined in \eqref{flat-pert-3d}. Let $\pi:\bL(Y,E) \to \mathcal P$ be the projection map sending $([B],\rho)\in \bL(Y,E)$ to $\rho$. Given $([B_0],\rho_0)\in \bL(Y,E)$, the space $\bL(Y,E)$ in a neighborhood of a point $([B_0],\rho_0)$ can be identified with the solutions of the equation
\[
  \Pi_{B_0}\circ \Phi:\mathcal B(Y,E)\times \mathcal P\to \ker(d_{B_0}^*),
\]
with $\Pi_{B_0}$ being projection into $\ker(d_{B_0}^*)$. The linearization of this equation acts as
\begin{equation}\label{linearized-family}
  (b,\sigma) \to*d_{B_0}b+{\rm Hess}_{B_0}h_{\rho_0}(b)+\nabla_{B_0}h_\sigma
\end{equation}
on the elements of $X_{B_0}\times \mathcal P$. Restricting to pairs $(b,0)$ in \eqref{linearized-family} gives the Fredholm operator $L_{B_0}$ in \eqref{linearized} which governs the local behavior of the space $L_{h_{\rho_0}}(Y,E)=\pi^{-1}(\rho_0)\subset \mathcal B(Y,E)$. According to Proposition \ref{linearized-3-man-Lag}, the kernel and the cokernel of the operator $L_{B_0}$ are given by $\mathcal H^1_{h_{\rho_0}}(Y;B_0)$ and $\mathcal H^1_{h_{\rho_0}}(Y,\Sigma;B_0)$, and it has Fredholm index $-\frac{3}{2}\chi(\Sigma)$. We sketch a slight modification of an argument of \cite{He:3-man-lag} showing that the operator in \eqref{linearized-family} is surjective.  Therefore, the implicit function theorem implies that $\bL(Y,E)$ is a smooth Banach manifold and $\pi:\bL(Y,E) \to \mathcal P$ is a Fredholm map with index $-\frac{3}{2}\chi(\Sigma)$.

To verify surjectivity of \eqref{linearized-family}, it suffices to show that for any non-zero $b\in \mathcal H^1_{h_{\rho_0}}(Y,\Sigma;B_0)$, there is $\sigma\in \mathcal P$ such that $\langle \nabla_{B_0}h_\sigma,b\rangle $, which is equal to the derivative $D_{B_0}h_\sigma(b)$  of $h_\sigma$ at the connection $B_0$ evaluated at $b$, is non-zero. Our assumption on the density of the sequence $\{\rho_i\}$ implies that this claim holds unless the derivative $D_{B_0}\tau_\gamma$ vanishes at $b$ for any $\gamma$ as above. The latter condition holds only if the derivative $D_{B_0}{\rm Hol}_\ell$ evaluates to zero at $b$ for any closed loop $\ell$ in $Y_0$ \cite[Section 5.5]{Don:YM-Floer}. Here ${\rm Hol}_\ell:\mathcal A(Y,E)\to \U(2)$ is the holonomy of a connection on $Y$ along $\ell$. The same claim holds for loops in the collar neighborhood of the boundary $Y\setminus Y_0$ because $d_{B_0}b$ vanishes on $Y\setminus Y_0$ and $B_0$ restricts to a flat connection on $Y\setminus Y_0$. Finally vanishing of the derivative of ${\rm Hol}_\ell$ for closed loops $\ell$ in $Y_0$ and $Y\setminus Y_0$ can be used to conclude the vanishing of these derivatives for any closed loop in $Y$.

We claim that if $D_{B_0}{\rm Hol}_\ell$ vanishes at $b$, then there is a section $\zeta$ of $E$ on $Y$ such that $d_{B_0}\zeta=b$. Fix a basepoint $p_0\in \Sigma$, and for any $p\in Y$, take a path $\ell:[0,1] \to Y$ with $\ell(0)=p_0$, $\ell(1)=p$. Trivialize $E\vert_\ell$ by parallel transport and define
\begin{equation}\label{deta-def}
  \zeta(p):=\int_0^1 \ell^*(b).
\end{equation}
The assumption on $b$ implies that \eqref{deta-def} is independent of the choice of $\ell$ and hence is well-defined. From the definition, it is straightforward to check that $d_{B_0}\zeta=b$. In particular, $\zeta$ restricts to zero on $\Sigma$ because the same property holds for $b$ and the restriction of $B_0$ is irreducible. From this we obtain
\begin{align*}
	\int_Y\langle b,b\rangle=&-\int_Y\tr(d_{B_0}\zeta\wedge *b)\\
	=& -\int_{\Sigma}\tr(\zeta\wedge *b)-\int_Y\tr(\zeta\wedge *d_{B_0}^*b)=0,
\end{align*}
where in the second identity we use $d_{B_0}^*b=0$ and the vanishing of $\zeta$ on $\Sigma$. This shows that $b=0$, which is a contradiction and hence the operator in \eqref{linearized-family} is surjective.

\begin{prop}\label{familytransversality}
	Suppose $(B_0,\rho_0)$ is as above. Suppose $a$ is an $L^2_{l-1}$ section of $\Lambda^1\otimes E$ with $d_{B_0}^*a=0$. Then there are $\sigma\in \mathcal P$ and $b\in L^2_l(Y,\Lambda^1 \otimes E)$ with 
	$d_B^*b=0$ and $*b|_{\Sigma}=0$ such that 
	\begin{equation}\label{a-b-sigma}
		a=*d_{B_0}b+{\rm Hess}_{B_0}h_{\rho_0}(b)+\nabla_{B_0}h_\sigma
	\end{equation}
	Similarly, if  $a$ is an $L^2_{l-1}$ section of $\Lambda^1\otimes E$ with $d_{B_0}^*a=0$ and $*a|_{\Sigma}=0$, then there are $\sigma\in \mathcal P$ and $b\in L^2_l(Y,\Lambda^1 \otimes E)$ with 
	$d_B^*b=0$ and $b|_{\Sigma}=0$ such that \eqref{a-b-sigma} holds.
\end{prop}
\begin{proof}
	We already addressed the first part of this lemma. To prove the second part, we consider the map in \eqref{linearized-family} as an operator from $X_{B_0}'\times \mathcal P$ with 
	\[
	  X_{B_0}':=\{b \in L^2_l(Y,\Lambda^1 \otimes E) \mid d_B^*b=0,\, b|_{\Sigma}=0\},
	\]
	to the subspace of $\ker(d_{B_0}^*)$ given by $a$ with $*a|_{\Sigma}=0$. Restricting to pairs $(b,0)$ determines an operator $L_{B_0}'$ with the kernel and the cokernel $\mathcal H^1_{h_{\rho_0}}(Y,\Sigma;B_0)$ and $\mathcal H^1_{h_{\rho_0}}(Y;B_0)$.
	Now a similar argument as above can be used to complete the proof.
\end{proof}

\begin{prop}[cf. \cite{He:3-man-lag}]
	The map $r:\bL(Y,E) \to \mathcal M(\Sigma,F)$ which maps any element $([B_0],\rho_0)$ to the restriction $\alpha_0$ of $B_0$ to the boundary is a submersion.\label{res-par-submer}
\end{prop}
This proposition together with Proposition \ref{familytransversality} completes the proof of Proposition \ref{family-lag}.
\begin{proof}
	 Suppose $c$ is a 1-form with values in $F$ over $\Sigma$ which represents an element of $T_{\alpha_0}\mathcal M(\Sigma,F)$. That is to say, $d_{\alpha_0}c=0$. Extend $c$ to a smooth section $b_0$ of $\Lambda^1\otimes E$ over $Y$. 
	 Then $a:=*d_{B_0}(b_0)+{\rm Hess}_{B_0}h_{\rho_0}(b_0)$ is in the kernel of $d_{B_0}^*$ and hence Proposition \ref{familytransversality} applied to $a$ implies that there are $\sigma\in \mathcal P$ and $b_1\in L^2_l(Y,\Lambda^1 \otimes E)$ with $d_B^*b_1=0$ and 
	 $b_1|_{\Sigma}=0$ such that
	\[
	  *d_{B_0}(b_1-b_0)+{\rm Hess}_{B_0}h_{\rho_0}(b_1-b_0)+\nabla_{B_0}h_\sigma=0.
	\]
	In particular, the restriction of $b_0-b_1$ to $\Sigma$ is equal to $c$. There is an $L^2_{l+1}$ section $\zeta$ of $E$ such that $b:=b_1-b_0-d_{B_0}\zeta$ is in the Coulomb gauge
	\[
	  d_{B_0}^*b=0,\hspace{1cm}*b|_{\Sigma}=0.
	\]
	Thus $(b,\sigma)$ gives a vector tangent to $\bL(Y,E)$ whose restriction to the boundary represents the same element as $c$. 
\end{proof}

Next, we turn into the proof of Lemma \ref{reg-ASD}. As before, suppose $(Y_\#,E_\#)$ is an admissible pair with an admissible splitting
\[
  (Y,E)\cup_{(\Sigma,F)}(Y',E').
\]
Associated to the pairs $(Y,E)$, $(Y',E')$ we may form Banach spaces $\mathcal P$ and $\mathcal P'$ parametrizing perturbations. Since the map $\pi:\bL(Y,E)\to \mathcal P$ is Fredholm, the Sard-Smale theorem implies that there is a residual subset $\mathcal P_{\rm reg}$ of $\mathcal P$ such that for any $\sigma\in \mathcal P_{\rm reg}$, the space $L_{h_\sigma}(Y,E)$ is a smooth Lagrangian in $\mathcal M(\Sigma,F)$. We fix one such element $\sigma$ of $\mathcal P_{\rm reg}$. 
Similarly we may form a residual subset $\mathcal P_{\rm reg}'$ of $\mathcal P'$. The restriction map $r':\bL(Y',E') \to \mathcal M(\Sigma,F)$ is transversal to $L_{h_0}(Y,E)$ because the former map is a submersion. Therefore, by passing to a smaller residual subset, we may assume that any $\sigma'\in \mathcal P_{\rm reg}'$ has the property that $L_{h_0}(Y,E)$ and $L_{h_{\sigma'}}(Y',E')$ are transversal to each other. We fix one such $\sigma'$. 

The functions $h_\sigma$ and $h_{\sigma'}$ induce a perturbation of the flat equation for connections on $E_\#$ as in \eqref{flat-3d-closed}. We follow the same notation as before to denote the solutions of this equation with $\fC_G$. According to Lemma \ref{comp-per-reg} elements of $\fC_G$ are regular. We form \eqref{ASD} which is a perturbation of the ASD equation on configuration spaces $\mathcal B(\alpha,\beta)_p$ with $\alpha,\beta\in \fC_G$. Regularity of elements of $\fC_G$ implies that these equations are Fredholm (see, for example, \cite[Chapter 4]{Don:YM-Floer}). Note that the perturbation of the ASD equation in \eqref{ASD} are supported in the subset $\R\times (Y_0\sqcup Y_0')$ of the cylinder $\R\times Y$. Lemma \ref{reg-ASD} is a consequence of the following proposition.

\begin{prop}\label{reg-gauge-4D-mod-space}
	The Riemannian metrics on $Y$, $Y'$ and cylinder functions $h$ and $h'$ supported in the interior of $Y_0\subset Y$ and $Y_0'\subset Y'$ can be chosen such that the following conditions are satisfied.
	\begin{itemize}
		\item[(i)] Solutions of the perturbation of the flat equation \eqref{flat-3d-closed} for the functions $h$ and $h'$ agree with $\fC_G$.
		\item[(ii)] Solutions of the perturbation of the ASD equation \eqref{ASD} for the functions $h$ and $h'$ with index at most seven are regular.
	\end{itemize}
\end{prop}

Here the assumption on the index of the solutions of \eqref{ASD} is not essential and we are making this assumption to avoid the bubbling phenomena.
\begin{proof}
	The proof is just a slight modification of a similar result in \cite[Section 5.5]{Don:YM-Floer}. The main difference is that we want to 
	guarantee our perturbation vanishes in the complement of $Y_0\sqcup Y_0'$. Following the notation of Subsection \ref{3-man-bdles},
	let $Y_1$ (resp. $Y_1'$) be the union of $Y_0$ and $[-1,0]\times \Sigma$ (resp. $Y_0'$ and $[0,1]\times \Sigma$).
	Then the intersection of $Y_1$ and $Y_1'$ is a copy of $\Sigma$ with the collar neighborhood $[-1,1]\times \Sigma$.  
	Fix Riemannian metrics on $Y$ and $Y'$ in the same way as before. We show inductively that for any $k\leq 7$, there is a pair 
	$(h_k,h_k')$ of cylinder functions such that $h_k$ and $h_k'$ agree with $h_\sigma$ and $h_\sigma'$ in a neighborhood of $\fC_G$, 
	$\nabla_B h_k$ (resp. $\nabla_{B'} h_k'$) is compactly supported in the interior of $Y_1$ (resp. $Y_1'$) for any $B$ (resp. $B'$), 
	solutions of the perturbation of the flat equation \eqref{flat-3d-closed} for the pair $(h_k,h_k')$ agree with $\fC_G$, and the moduli space $M_G(\alpha,\beta)_p$ of solutions to the perturbed ASD equation associated to 
	$(h_k,h_k')$ is regular for any path $p$ of index at most $k$. In fact, the pair $h_k-h_\sigma$, $h_k'-h_{\sigma'}$ are sums of finitely many cylinder functions where our assumption on cylinder functions is slightly relaxed 
	and we allow cylinder functions
	 associated to immersion into the interior of $Y_1$ and $Y_1'$. In particular, $h_k$ and $h_k'$ vanish in the regular neighborhood 
	$[-\epsilon,\epsilon] \times \Sigma$ of $Y_1\cap Y_1'$ if $\epsilon$ is small enough. Note that if $k$ is small enough, then solutions of the perturbed ASD equation with index at most $k$ should have negative topological energy and hence these 
	moduli spaces are empty. Thus the claim for such values of $k$ holds if we set $(h_k,h_k')=(h_\sigma,h_{\sigma'})$.

	Next, we show that the claim holds for $k$ assuming that it already holds for $k-1$. We find a collection of cylinder functions 
	$(h_\rho,h_{\rho}')$ where $\rho$ belongs to a finite dimensional vector space $\mathcal P_\#$, $(h_\rho,h_{\rho}')$ depends linearly on
	$\rho$, $(h_\rho,h_\rho')$ is a  cylinder functions associated to an immersion into the interior of $Y_1$ and $Y_1'$, and for small values of $\rho$, 
	solutions of the perturbation of the flat equation \eqref{flat-3d-closed} for the pair $(h_{k-1}+h_\rho,h_{k-1}'+h'_{\rho})$ agree with 
	$\fC_G$. Moreover, for any path $p$ of index $k$, the family moduli space $\mathbb M_G(\alpha,\beta)_p\subset \mathcal B_G(\alpha,\beta)_p\times \mathcal P_\#$, which is the union of 
	the moduli spaces $M_G(\alpha,\beta)_p$ of solutions to the perturbed ASD equation associated to $(h_{k-1}+h_\rho,h_{k-1}'+h'_{\rho})$ for all $\rho\in \mathcal P_\#$, is cut down transversely at elements of the form $([A],0)$.

	Suppose $([A],0)$ is an element of a moduli space $\mathbb M_G(\alpha,\beta)_p$ with index $k$ which is not cut down transversely.
	Our induction assumption and the assumption $k\leq 7$ imply that the space of all such non-regular elements of $\mathbb M_G(\alpha,\beta)_p$ is compact. Assume that $A$ is given 
	in the temporal gauge, and let $A_t$ denote the restriction of $A$ to $\{t\}\times Y_\#$. 
	A non-trivial element in the cokernel of the linearized operator for $\mathbb M_G(\alpha,\beta)_p$ is given by a smooth family of $1$-forms $\{\phi_t\}_{t\in \R}$ on 
	$Y_\#$ with values in $E_\#$ such that 
	\begin{equation}\label{ODE-phi}
	  \frac{d}{dt}\phi_t=-L_t\phi_t,
	\end{equation}
	\begin{equation}\label{phi-2-eq}
	  d_{A_t}^*\phi_t=0,\hspace{2cm}\int_{-\infty}^\infty \langle \nabla_{A_t}h_\rho + \nabla_{A_t'}h_\rho',\phi_t\rangle =0,\hspace{1cm} \forall \rho\in \mathcal P_\#,
	\end{equation}
	and the $L^2$ norm of $\phi_t$ converges to zero as $|t| \to \infty$. Here $L_t$ is a self-adjoint operator (with respect to the $L^2$ norm) defined on the sections of 
	$\Lambda^1(Y_\#)\otimes E_\#$. The operator $L_t$ depends on $A_t$, $(h_0+h_\rho,h_0'+h_\rho')$ and $L_t(b)$ is equal to $*_3d_{A_t}b$ outside the supports of $h_0+h_\rho$ and $h_0'+h_\rho'$. 
	Unique continuation of the solutions 
	of the equations of the form \eqref{ODE-phi} \cite[Lemma 7.1.3]{km:monopole} implies that $\phi_t$ is non-zero for all values of $t$.
	For $\dot A_t:=\frac{d}{dt}A_t$, we have
	\begin{equation}\label{ODE-Adot}
	  \frac{d}{dt}\dot A_t=L_t\dot A_t.
	\end{equation}
	Using \eqref{ODE-phi},  \eqref{ODE-Adot} and the decay of $\phi_t$ and $\frac{d}{dt}\dot A_t$, we can see that $\phi_t$ is $L^2$-orthogonal to $\frac{d}{dt}\dot A_t$ for all $t$. This claim can be proved by differentiating the inner product of 
	$\phi_t$ and $\dot A_t$ with respect to $t$.
	
	Since the restrictions of $\phi_t$ and $\dot A_t$ to $Y_\#\setminus \Sigma$ are linearly independent, we may use the same argument as in \cite[Proposition 5.17]{Don:YM-Floer} to find cylinder 
	functions $h_{\rho}$, $h_{\rho}'$ supported in the interior of $Y$, $Y'$ such that $h_{\rho}$, $h_{\rho}'$ vanish in a neighborhood of $\fC_G$ and 
	\[
	  \int_{-\infty}^\infty \langle \nabla_{A_t}h_\rho + \nabla_{A_t'}h_\rho',\phi_t\rangle >0.
	\]
	In particular, if we enlarge $\mathcal P_\#$ using $(h_\rho, h_\rho')$, then the dimension of the cokernel of the linearization of the moduli space $\mathbb M_G(\alpha,\beta)_p$
	at $([A],0)$ decreases by one. Because of the compactness of the space of the non-regular elements of $\mathbb M_G(\alpha,\beta)_p$ with index $k$, we may iterate this process and 
	modify $\mathcal P_\#$ such that $\mathbb M_G(\alpha,\beta)_p$ for any $p$ with index $k$ is regular. 	
	Now, a standard application of Sard's theorem shows that for a generic small $\rho\in \mathcal P_\#$, the pair $(h_k,h_k')=(h_{k-1}+h_\rho,h_{k-1}'+h'_{\rho})$ verifies the claim for $k$.
	
	Let $(h_7,h_7')$ be the cylinder functions given for $k=7$. Then $(h_7,h_7')$ is supported in the complement of $\Sigma\subset Y_\#$. In fact, these functions are supported in the 
	complement of a regular neighborhood of $\Sigma$ because  $h_\sigma$, $h_\sigma'$ are already supported in $Y_0$, $Y_0,$ 
	and $h_7-h_\sigma$ and $h'_7-h_\sigma'$ are defined using finitely many immersions into the interior of $Y_1$ and $Y_1'$. If we rescale the metric on $Y_\#$ by a constant, the same assumption on the regularity of the moduli spaces hold. 
	Moreover, we may assume that the rescaling constant 
	is large enough so that $(h_0+h_{1},h_0'+h_{1}')$ is supported in the complement of a copy of $(-1,1)\times \Sigma$ equipped with the product metric of the standard metric on $(-1,1)$ and some Riemannian metric on $\Sigma$. 
\end{proof}

\subsection{Secondary perturbations}\label{pert-sect}
The purpose of this subsection is to prove Proposition \ref{pert} using a secondary perturbation of the mixed equation. The perturbed equation has the form in \eqref{mixed-eq-pert}, copied below again for the reader's convenience:
\begin{equation}\label{mixed-eq-pert-2}
	\left\{
	\begin{array}{l}
		F^+(A)+(*_3\nabla_{A_t}h)^++(*_3\nabla_{A_t'}h')^++\eta(A)=0,\\
		\overline \partial_{J} u=0.
	\end{array}
	\right.
\end{equation}
The perturbation of the ASD equation is given by the holonomy perturbation term $\eta(A)$, and the perturbation of the CR equation is provided by perturbing the complex structure $J$. Let $X_c$ and $U_c$ be the sunspaces of $X$ and $U_+$ obtained by the complement of the gray region sketched in Figure \ref{support}. Since we established the analysis of mixed equation in a neighborhood of the matching line only in the unperturbed case, we limit ourselves to holonomy perturbations which are supported in $X_c$ and complex structures $J$ which differ from the standard complex structure $J_*$ only in $U_c$.
\begin{figure}
	\begin{center}
	\begin{tikzpicture}[thick]
\pic [scale=0.15](lower) at (-1.15,2) {handle};
\pic [scale=0.15](lower) at (-1.65,2.35) {hole};
\pic [scale=0.15](lower) at (-1.15,-2) {handle};
\pic [scale=0.15](lower) at (-1.65,-1.8) {hole};
\pic [scale=0.15](lower) at (-1.65,-1.4) {hole};
\shade[left color=gray,right color=gray](0,3)--(0,-3)--(0.3,-3)--(0.3,3);		     
\shade[right color=gray,left color=gray](0,3)--(0,-3)--(-0.3,-3)--(-0.3,3);	
\draw (0,3)  --  (0,-3)
	(-1,3)  --  (-1,1.5)
	(-1,1.5) coordinate (-left) 
	to [out=-90, in=0] (-2,.5)
	 (-2,.5) -- (-3.5,0.5)
	 (-2,-.5) -- (-3.5,-0.5)
	(-2,-.5) coordinate (-left) 
	to [out=0, in=90] (-1,-1.5)
	to (-1,-3);
\draw[red]	(-.70,3) to (-.70,1.5)
	to [out=-90, in=0] (-2,.2)
	to (-3.5,0.2)
	(-3.5,-0.2) to (-2,-.2)
	to [out=0, in=90] (-.70,-1.5)
	to (-0.7,-3);
\draw (1,3)  --  (1,1.5)
	(1,1.5) coordinate (-left) 
	to [out=-90, in=180] (2,.5)
	 (2,.5) -- (3.5,0.5)
	 (2,-.5) -- (3.5,-0.5)
	(2,-.5) coordinate (-left) 
	to [out=180, in=90] (1,-1.5)
	to (1,-3);
\draw[dotted] (-.5,4.5)  --  (-0.5,3.5);	
\draw[dotted] (.5,4.5)  --  (0.5,3.5);	
\draw[dotted] (-.5,-4.5)  --  (-0.5,-3.5);	
\draw[dotted] (.5,-4.5)  --  (0.5,-3.5);	
\draw[dotted] (4,0)  --  (5,0)
		     (-4,0)  --  (-5,0)
		     (-1.5,2.6) -- (-1.5,3)
		     (-1.5,1.4) to [out=-90, in=0] (-2,.8)
		     to (-2.5,0.8)
		     (-1.5,-2.6) -- (-1.5,-3)
		     (-1.5,-1.4) to [out=90, in=0] (-2,-.8)
		     to (-2.5,-0.8);		     		     
	\end{tikzpicture}
	\end{center}
		\caption{ Support of the secondary perturbation terms is in the complement of the gray region.}
	\label{support}
\end{figure}
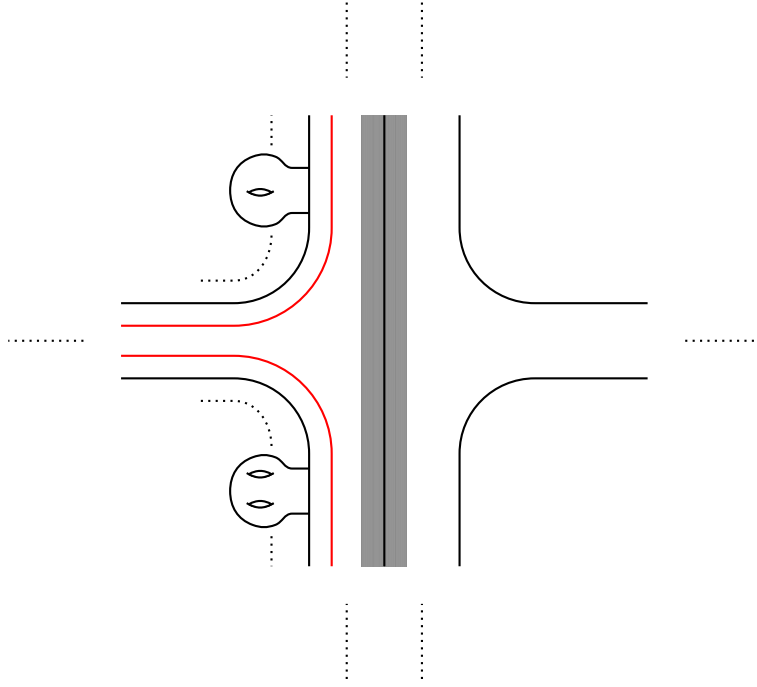

The definition of the holonomy perturbation term $\eta(A)$ is analogous to the definition of cylinder functions. Given a Riemannian 4-manifold $M$, let $\gamma:S^1\times D^4\to M$ be a smooth submersion such that $\gamma$ restricted to $\{1\}\times D^4$ is an embedding of $D^4$ into $M$. Let also $\omega$ be a self-dual 2-form on the image of $\gamma(\{1\}\times D^4)$. For any connection $A$ on a $U(2)$ bundle $E$ over $M$, the holonomy of $A$ along the loop $\gamma(S^1\times\{z\})$ is an element of the fiber of ${\rm End}(E)$ over the point $\gamma(1,z)$. Let $H_z(A)$ be the image of this holonomy with respect to the homomorphism ${\rm End}(E) \to \su(E)$ induced by the projection of ${\rm End}(\C^2) \to \su(2)$. Then $H_z(A)$ determines a section of $\su(E)$ over $\gamma(\{1\}\times D^4)$. Define
\begin{equation}
  P_{\gamma,\omega}(A):=H_z(A)\otimes \omega
\end{equation}
which gives a gauge invariant map from the space of connections on $E$ to the space of sections of $\Lambda^+\otimes \su(E)$. Note that the $|P_{\gamma,\omega}(A)|$ is bounded by the $C^0$ norm of $\omega$.

Now, we turn into the proof of Proposition \ref{pert}. We shall define the perturbation term $\eta$ as a linear combination
\begin{equation}\label{eta-form}
  \eta=\sum_{i=1}^NP_{\gamma_i,\omega_i}
\end{equation}
for a finite collection of $\gamma_i:S^1\times D^4 \to X_c$ and self-dual 2-forms $\omega_i$. For any solution $(A,u)$ of \eqref{mixed-eq-pert-2}, we have
\[
  \mathcal E(A,u) =\frac{1}{8\pi^2} \fE(A,u)-\frac{1}{4\pi^2}\vert\!\vert\eta(A)\vert\!\vert_{L^2(X)}^2\geq -C\sum_{i=1}^N|\!|\omega_i|\!|_{L^2}^2
\] 
where $C$ is a fixed constant, independent of $(\gamma_i,\omega_i)$. Thus, if the sum of $|\!|\omega_i|\!|_{L^2}^2$ is less than $\epsilon$ for a small enough $\epsilon$, then the topological energy of any solution $(A,u)$ of \eqref{mixed-eq-pert-2} is greater than $-\delta$ for a given positive constant $\delta$. Since the set of possible values for the topological energy $\mathcal E(A,u)$ for elements of the configuration spaces $\bB(\alpha,\beta)$ is a discrete subset of $\R$, we can pick $\epsilon$ such that the topological energy of any solution of \eqref{mixed-eq-pert-2} is non-negative. To prove Proposition \ref{pert}, we follow a similar strategy as in Proposition \ref{reg-gauge-4D-mod-space}. By induction on the expected dimension of the moduli spaces of mixed equation, we show that for any $k\leq 3$, there are $\eta_k$ and a family complex structures $\mathcal J_{k}=\{J_{(s,\theta)}\}_{(s,\theta)\in U_+}$ as in Subsection \ref{quintuples} such that all moduli spaces $\bM_{\eta_k}(\alpha,\beta)_d$, with $d\leq k$ and defined with respect to $\mathcal J_k$, are cut down transversely, and the moduli spaces $\bM_{\eta_k}(\alpha,\alpha)_0$ consists of a single regular element. Moreover, the perturbation term $\eta$ has the form in \eqref{eta-form} with the sum of $|\!|\omega_i|\!|_{L^2}^2$ being less than $\epsilon-1/2^{k}$. If $k$ is small enough, then the trivial perturbation of the ASD equation and a family of almost complex structures as in Subsection \ref{quintuples} satisfy this claim.

Next, we show that the claim holds for $k$ assuming that it already holds for $k-1$. Suppose $\eta_{k-1}$ and $\mathcal J_{k-1}$ are chosen satisfying the above properties. We show that there is a collection $\{(\gamma_i,\omega_i)\}_{i=1}^N$, an open neighborhood $\mathcal U$ of the origin in a Euclidean space, and for each $x\in \mathcal U$, a family of compatible almost complex structures $\{J^x_{(s,\theta)}\}_{(s,\theta)\in U_+}$ on $\mathcal M(\Sigma,F)$ with $\{J^0_{(s,\theta)}\}_{(s,\theta)\in U_+}=\mathcal J_{k-1}$ such that the following claim holds. Suppose $\fM(\alpha,\beta)$ is the subspace of $([A,u],r_1,\dots,r_N,x)\in \bB(\alpha,\beta)\times \R^{N}\times \mathcal U$ where $(A,u)$ is a solution of \eqref{mixed-eq-pert-2} defined using almost complex structures $\{J^x_{(s,\theta)}\}_{(s,\theta)\in U_+}$ and the perturbation 
\[\eta=\eta_{k-1}+\sum_{i=1}^Nr_iP_{\gamma_i,\omega_i}.\]
Then any solution $(A,u,0,0,\dots,0,\vec 0)\in \fM(\alpha,\beta)$ with $\ind(\mathcal D_{(A,u)})=k$ is cutdown transversely. Here $J^x_{(s,\theta)}$ depends smoothly on $(x,s,\theta)$, is equal to the standard complex structure $J_*$ for $s\leq 1$ and is equal to $J_\theta$, the complex structure given by Lemma \ref{alm-cx-str} for $s\geq 3$. Moreover, $J^x_{(s,\theta)}$ is constant in the $\theta$ direction if $|\theta|>2$.

Assuming the claim of the previous paragraph, a standard application of Sard's theorem shows that there are $\br=(r_1,\dots,r_N)\in \R^{N}$ and $x\in \mathcal U$ with with arbitrary small norms such that if we define $\bM_{\eta}(\alpha,\beta)_d$ using $\eta=\eta_{k-1}+\sum_{i=1}^Nr_iP_{\gamma_i,\omega_i}$ and the family of compatible almost complex structures $\{J^x_{(s,\theta)}\}_{(s,\theta)\in U_+}$, then any such moduli space is regular if $d\leq k$. Since any moduli space of the form $\bM_{\eta_{k-1}}(\alpha,\alpha)_0$ already contains a unique regular element, if $\br$ is small enough, then the moduli space $\bM_{\eta_{k-1}}(\alpha,\alpha)_0$ contains a unique regular element, too. Moreover, a small enough $\br$ allows us to guarantee that $\eta$ has the desired form in \eqref{eta-form} with the sum of $|\!|\omega_i|\!|_{L^2}^2$ being less than $\epsilon-1/2^{k}$. This completes the proof of Proposition \ref{pert}.

Now, we turn to the construction of the family of perturbations of the ASD equation and the compatible almost complex structures on $\mathcal M(\Sigma,F)$. Suppose $(A,u,0,0,\dots,0,\vec 0)\in \fM(\alpha,\beta)$ is a non-regular solution of $\fM(\alpha,\beta)$ with the index of $\mathcal D_{(A,u)}$ being $k$. The induction assumption and the compactness results of Subsection \ref{comp-sect} imply that the set of all such non-regular solutions is compact. Fix a non-zero element $(\mu,\xi,z)$ in the cokernel of the linearization of $\fM(\alpha,\beta)$ at $(A,u,0,0,\dots,0,\vec 0)$, where as before $\mu$ and $\xi$ respectively denote a $0$-form and a self-dual 2-form on $X$, and $z$ denotes a a section of  $u^*T\mathcal M(\Sigma,F)$ over $U_+$.

If $(\mu,\xi)$ is non-trivial, then unique continuation implies that the restriction of $\mu$ to $X_c$ is nontrivial \cite{Ar:Unique-cont} because the restriction of $A$ to the complement of $X_c$ satisfies the (unperturbed) ASD equation. Since $A$ over $X_c$ is irreducible, we may find $(\omega,\gamma)$ as above such that $P_{\gamma,\omega}(A)\neq 0$ (See, for example, the proof of Lemma 13 in \cite{K:higher}). Therefore, if we add $(\omega,\gamma)$ to the collection $\{(\gamma_i,\omega_i)\}_{i=1}^N$, then the dimension of the cokernel of $\fM(\alpha,\beta)$ at $(A,u,0,0,\dots,0,\vec 0)$ decreases by $1$. In the case that $(\mu,\xi)$ is trivial, the restriction of $z$ to $U_\partial$ needs to be trivial. If $u$ is a constant map, then one can use the arguments of Subsection \ref{cons-map-ind-subs} to see that the vanishing of $z$ on $U_\partial$ implies that $z$ vanishes globally. This is in contradiction with non-triviality of $(\mu,\xi,z)$, and hence $u$ is a non-constant map. In particular, there are points in $U_c$ where the derivative of $u$ does not vanish. This implies that we may enlarge $\mathcal U$ by adding another direction to deform $\{J^x_{(s,\theta)}\}_{(s,\theta)\in U_+}$ such that the deformation is compactly supported in $U_c$ and the dimension of of the cokernel of $\fM(\alpha,\beta)$ at $(A,u,0,0,\dots,0,\vec 0)$ decreases by $1$. (See, for example, the proof of Proposition 6.7.7. in \cite{MS:J-holo}. Note also that the domain $U_+$ is rigid and does not have any non-trivial automorphism mapping $U_\partial$, $\eta_+$ and $\eta_+'$ to themselves.) By iterating this process finitely many times, we may guarantee the regularity of $(A,u,0,0,\dots,0,\vec 0)$. Using the compactness of the space of all such elements of $\fM(\alpha,\beta)$, we can more generally achieve regularity at all points $(A,u,0,0,\dots,0,\vec 0)\in \fM(\alpha,\beta)$ with the same topological energy. This completes the proof of our claim.

\section{Extensions of the main theorem}
The goal of this section is to show that the isomorphism $\bN$ of Theorem \ref{main-thm} is compatible with certain additional structures on instanton Floer homology and its symplectic variant. In the first subsection, we define the structure relevant for Theorem \ref{framed-AF} in the introduction and then prove this theorem. A more precise version of Theorem \ref{framed-AF} is stated as Theorem \ref{enriched-AF}. In the second subsection, we review the definition of the operators $m_\sigma^G$, $m_\sigma^S$ and then prove Theorem \ref{module-over-quantum}.

\subsection{Filtered framed Floer homology groups}\label{action}
Topological energy of solutions to the mixed equation plays a key role in the proof of our main theorem in Section \ref{main-thm-sec}. In fact, we can use the notion of topological energy to define an additional structure on instanton Floer homology and its symplectic version. We call this additional structure the {\it Chern--Simons} filtration. 
In the discussion below, we follow similar conventions as in \cite{DS1}.

To define the Chern--Simons filtration, it is convenient to introduce $\overline \fC_G$, as a variation of $\fC_G$. Fix an element $\alpha_0$ of $\fC_G$, and let $\overline \fC_G$ consist of pairs $\overline \alpha=(\alpha,p)$ where $\alpha\in \fC_G$ and $p$ is a path from $\alpha$ to $\alpha_0$. We call $\overline \alpha$ a {\it lift} of $\alpha$. Define $\deg_I(\overline \alpha)$, the I-grading of $\overline \alpha$, to be the topological energy of the path $p$ defined in \eqref{top-energy-connection}. Any $\overline \alpha=(\alpha,p)$ in $\overline \fC_G$ is determined by $\alpha$ and $\deg_I(\overline \alpha)$. Moreover, for any two different lifts $\overline \alpha$ and $\overline \alpha'$ of $\alpha$, the expression $2(\deg_I(\overline \alpha)-\deg_I(\overline \alpha'))$ is an integer. We define a bijection $U:\overline \fC_G \to \overline \fC_G$ by requiring that for a lift $\overline \alpha$ of $\alpha$ we have 
\[\deg_I(U(\overline \alpha))-\deg_I(\overline \alpha)=\frac{1}{2}.\]
Similarly, define $\overline C_G(Y_\#,E_\#)$ to be the variation of $C_G(Y_\#,E_\#)$ which is the free abelian group generated by the elements of $\overline \fC_G$. The bijection $U$ defines the structure of a $\Z[U^{\pm1}]$-module on $\overline C_G(Y_\#,E_\#)$. We also modify the differential on $C_G(Y_\#,E_\#)$ in the following way to get a differential $d:\overline C_G(Y_\#,E_\#) \to \overline C_G(Y_\#,E_\#)$:
\begin{equation}\label{diff-filtered-gauge}
	d(\overline \alpha):=\sum_{p:\alpha\to \beta}\#\breve \rM_G(\alpha,\beta)_p \cdot \overline \beta.
\end{equation}
Here $\overline \alpha, \overline \beta\in \overline \fC_G$ are lifts of $\alpha, \beta\in \fC_G$ which are related by the path $p$. That is to say, the path from $\alpha$ to $\alpha_0$ given by $\overline \alpha$ is equal to the composition of $p$ and the path provided by $\overline \beta$. The differential map in \eqref{diff-filtered-gauge} is a $\Z[U^{\pm1}]$-module homomorphism. The following lemma implies that the I-grading defines a filtration on $\overline C_G(Y_\#,E_\#)$, which we call the Chern--Simons filtration. 

\begin{lemma}\label{ASD-pos-energy}
	Suppose  $\overline \alpha, \overline \beta\in \overline \fC_G$ are lifts of $\alpha, \beta\in \fC_G$ which are related by the path $p$ and the moduli space $\rM_G(\alpha,\beta)_p $ is non-empty. Then 
	\[
	  \deg_I(\overline \alpha)\geq \deg_I(\overline \beta),
	\]
	and the equality holds if and only if $\overline \alpha=\overline \beta$ and $p$ is the constant path.
\end{lemma}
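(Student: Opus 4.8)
### Proof Plan for Lemma \ref{ASD-pos-energy}

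The plan is to reduce the statement to the already-established relationship between topological energy and the existence of solutions to the perturbed ASD equation. First I would recall the definition: if $\overline\alpha = (\alpha, p\sharp p')$ and $\overline\beta = (\beta, p')$ are related by the path $p$, then by definition of the I-grading and the additivity of topological energy under concatenation of paths,
\[
  \deg_I(\overline\alpha) - \deg_I(\overline\beta) = \mathcal E(A)
\]
for any connection $A \in \mathcal A_G(\alpha,\beta)_p$, where $\mathcal E(A)$ is the topological energy defined in \eqref{top-energy-connection}; recall from the discussion following \eqref{rewrite-top-energy} that $\mathcal E(A)$ depends only on the path $p$. So the content of the lemma is exactly the assertion that $\mathcal E(A) \geq 0$ for any $A$ representing an element of the (non-empty) moduli space $\rM_G(\alpha,\beta)_p$, with equality characterizing the constant solution.

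The key step is the observation already recorded in the paragraph preceding Proposition \ref{top-energy-index-gauge}: for a connection $A$ that represents an element of $\rM_G(\alpha,\beta)_p$, the quantity $\mathcal E(A)$ is non-negative, and is zero if and only if $\alpha=\beta$, $p$ is the constant path, and $A$ is the pullback of a representative of $\beta$. I would supply the short argument behind this: since $A$ solves the perturbed ASD equation \eqref{ASD}, the $2$-form $F_A + *_3\nabla_{A_t}h + *_3\nabla_{A_t'}h'$ is anti-self-dual, so its self-dual part vanishes and
\[
  8\pi^2\,\mathcal E(A) = \int_{\R\times Y}\bigl|F_A + *_3\nabla_{A_t}h + *_3\nabla_{A_t'}h'\bigr|^2\,\dvol \geq 0,
\]
using that for a $2$-form $\omega$ on a $4$-manifold one has $\int \tr(\omega\wedge\omega) = \|\omega^+\|^2_{L^2} - \|\omega^-\|^2_{L^2}$ together with the anti-self-duality. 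Equality forces $F_A + *_3\nabla_{A_t}h + *_3\nabla_{A_t'}h' \equiv 0$ on all of $\R\times Y$; restricting to each slice $\{t\}\times Y_\#$ shows $A_t$ is a solution of the perturbed flat equation \eqref{flat-3d-closed} for every $t$, hence (after gauge fixing) $A$ is translation-invariant and is the pullback of a single element of $\fC_G$. This means $\alpha=\beta$ and the path $p$ is constant, so $\overline\alpha = \overline\beta$.

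Combining: non-emptiness of $\rM_G(\alpha,\beta)_p$ gives a solution $A$, whence $\deg_I(\overline\alpha) - \deg_I(\overline\beta) = \mathcal E(A) \geq 0$; and the difference is zero precisely when $A$ is the constant solution, i.e. when $\overline\alpha=\overline\beta$ and $p$ is the constant path. I do not anticipate a serious obstacle here — the lemma is essentially a bookkeeping consequence of facts already assembled in Section \ref{HF}, the only mild care needed being the standard gauge-fixing argument (putting $A$ in temporal gauge on the cylinder) to conclude translation-invariance from the vanishing of the perturbed curvature; this is entirely parallel to the corresponding statement in ordinary instanton Floer theory and I would simply cite \cite{floer:inst1,Don:YM-Floer} for the details.
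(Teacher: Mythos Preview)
Your proof is correct and follows essentially the same approach as the paper: both reduce the inequality to the identity $\mathcal E(A)=\frac{1}{8\pi^2}\fE(A)$ for solutions of the perturbed ASD equation, where $\fE(A)$ is the $L^2$-norm squared of the perturbed curvature. You supply more detail on the equality case (the temporal-gauge argument showing that vanishing of the perturbed curvature forces $A$ to be a pullback), whereas the paper simply cites the statement already recorded before Proposition~\ref{top-energy-index-gauge}.
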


\begin{proof}
	This is a consequence of the fact that for $[A]\in \rM_G(\alpha,\beta)_p $ the topological energy $\mathcal E(A)$ is non-negative because it is equal to $\frac{1}{8\pi^2}\fE(A)$ where 
	\[
	  \fE(A):=\int_{\R\times Y_\#}\vert F_A+*_3\nabla_{A_t}h+*_3\nabla_{A_t'}h'\vert^2 \rd t\,\dvol_{Y_\#}.
	\]
\end{proof}

We may define a Chern--Simons filtration on symplectic instanton Floer homology in a similar way. The sets $\fC_G$ and $\fC_S$ are naturally identified with each other and we define the $\Z$-covering $\overline \fC_S$ of $\fC_S$ to be the same as $\overline \fC_G$ with the same I-grading as above. We define $\overline C_S(Y_\#,E_\#)$ to be the abelian group generated by $\overline \fC_S$. Suppose $\alpha,\beta\in \fC_S$ and $p$ is a path from $\alpha$ to $\beta$ represented by a strip $u:\R\times [-1,1]\to \mathcal M(\Sigma,F)$. By gluing $u$ to the constant mixed pair $(A_\alpha,u_\alpha)$ as in Subsection \ref{ind-mixed-op-subs} and then applying mixed shifting we obtain a connection $A$ on $\R\times Y_\#$ from $\alpha$ to $\beta$ which is glued to the constant mixed pair $(A_\beta,u_\beta)$. This construction allows us to associate to $p$ a well-defined gauge theoretical path $p'$ from $\alpha\in \fC_G$ to $\beta\in \fC_G$. In particular, $p'$ can be used to assign a lift $\overline \beta$ of $\beta$ in $\overline \fC_S$ to any lift $\overline \alpha\in \overline \fC_S$ of $\alpha$, which is characterized by
\[\deg_I(\overline \alpha)=\mathcal E(u)+\deg_I(\overline \beta).\] 
Analogous to \eqref{diff-filtered-gauge}, we define a differential $d:\overline C_S(Y_\#,E_\#)\to \overline C_S(Y_\#,E_\#)$ which is a $\Z[U^{\pm1}]$-module homomorphism and the following lemma implies that it is filtered with respect to the I-grading.

\begin{lemma}\label{ASD-pos-energy}
	Suppose  $\overline \alpha, \overline \beta\in \overline \fC_S$ are lifts of $\alpha, \beta\in \fC_S$ which are related by the path $p$ and the moduli space $\rM_S(\alpha,\beta)_p $ is non-empty. Then 
	\[
	  \deg_I(\overline \alpha)\geq \deg_I(\overline\beta),
	\]
	and the equality holds if and only if $\overline \alpha=\overline \beta$ and $p$ is the constant path.
\end{lemma}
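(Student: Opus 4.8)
The plan is to reduce this statement, which concerns pseudo-holomorphic strips in $\mathcal M(\Sigma,F)$, to the already-established gauge-theoretic version (Lemma \ref{ASD-pos-energy} for $\overline\fC_G$), via the energy-preserving gluing construction that was set up in Subsection \ref{ind-mixed-op-subs}. First I would recall the key facts linking the two sides: if $u:\R\times[-1,1]\to\mathcal M(\Sigma,F)$ is a solution of the (perturbed) Cauchy--Riemann equation \eqref{CR} representing the path $p$ from $\alpha$ to $\beta$, then gluing $u$ to the constant mixed pair $(A_\alpha,u_\alpha)$ and applying the mixed shifting operation of Subsection \ref{mixed-shit-op} produces a connection $A$ on $\R\times Y_\#$ interpolating between the representatives of $\alpha$ and $\beta$, and this is precisely the gauge-theoretic path $p'$ used to define the lift $\overline\beta$ from $\overline\alpha$. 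By construction (and the identity $\deg_I(\overline\alpha)=\mathcal E(u)+\deg_I(\overline\beta)$ already recorded before the lemma statement), it suffices to show $\mathcal E(u)\geq 0$, with equality if and only if $u$ is the constant strip.

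The nonnegativity of $\mathcal E(u)=\frac{1}{4\pi^2}\int_{\R\times[-1,1]}u^*\Omega$ is the standard energy identity for pseudo-holomorphic strips: for a solution of \eqref{CR} (or its perturbed analogue) one has the pointwise identity $u^*\Omega = \tfrac12\vert du\vert^2_{J}\,ds\wedge d\theta$, so $4\pi^2\mathcal E(u)=\tfrac12\vert\!\vert du\vert\!\vert_{L^2}^2\geq 0$. Equality forces $du\equiv 0$, hence $u$ is constant; since $u$ is asymptotic to $\alpha$ as $s\to-\infty$ and to $\beta$ as $s\to+\infty$, a constant strip gives $\alpha=\beta$ and $p$ the constant path, whence $\overline\alpha=\overline\beta$. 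The only mild subtlety is that the differential $d$ on $\overline C_S(Y_\#,E_\#)$ is defined using the perturbed moduli spaces $\rM_S(\alpha,\beta)_p$ (with the domain-dependent almost complex structures of Lemma \ref{alm-cx-str}, and ultimately the secondary perturbations of Proposition \ref{pert} on the relevant ends); I would note that the perturbation of the CR equation used here is purely through the choice of compatible almost complex structures, for which the identity $u^*\omega=\tfrac12\vert du\vert^2_{J}$ still holds verbatim, so positivity of energy is unaffected. (If one preferred, one could alternatively invoke the already-proven Lemma \ref{ASD-pos-energy} for $\overline\fC_G$ applied to the glued connection $A$ on $\R\times Y_\#$, using that the gluing and mixed-shifting constructions preserve topological energy and preserve the property of being a genuine solution; but the direct argument is cleaner.)

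There is essentially no obstacle here: this lemma is the symplectic mirror of the gauge-theoretic positivity-of-energy statement and follows from the same one-line energy identity. The only thing requiring care is bookkeeping — making sure the $\Z[U^{\pm1}]$-module structure and the sign conventions for $\deg_I$ (so that $U$ raises $\deg_I$ by $\tfrac12$, while the differential can only decrease $\deg_I$) are consistent with the convention $\deg_I(\overline\alpha)=\mathcal E(u)+\deg_I(\overline\beta)$ chosen just before the statement. I would close by remarking that this filtration is exactly the Chern--Simons filtration whose invariance and compatibility with $\bN$ is the content of Theorem \ref{framed-AF}; indeed, since the isomorphism $\bN$ is built from mixed moduli spaces whose topological energy is additive under the gluings of Propositions \ref{additivity-gauge} and \ref{additivity-symplectic}, $\bN$ intertwines the two Chern--Simons filtrations, which is what is needed for the enriched version of the Atiyah--Floer statement.
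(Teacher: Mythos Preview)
Your proposal is correct and matches the paper's approach: the paper states this lemma without an explicit proof, relying on the obvious analogy with the preceding gauge-theoretic version, and your direct argument via the energy identity $u^*\Omega = \tfrac{1}{2}|du|_J^2\,ds\wedge d\theta$ for $J$-holomorphic strips (with domain-dependent compatible $J$) together with the relation $\deg_I(\overline\alpha)=\mathcal E(u)+\deg_I(\overline\beta)$ is exactly what is intended. One small inaccuracy: the secondary perturbations of Proposition~\ref{pert} do not enter the definition of $\rM_S(\alpha,\beta)_p$ (they are only for the mixed equation on the special quintuple), so that parenthetical remark can be dropped; the only perturbation in $\rM_S$ is the family $\{J_\theta\}$ of compatible almost complex structures, for which your energy identity holds verbatim.
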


We can more generally use any mixed pair $(A,u)\in \bA(\alpha,\beta)$, or rather its connected component, to associate a lift $\overline \beta\in \overline \fC_S$ of $\beta\in \fC_S$ to any lift $\overline \alpha\in \overline \fC_G$ of $\alpha\in \overline \fC_G$. In fact, one can see that $ \deg_I(\overline \alpha)-\mathcal E(A,u)$ is the I-grading of a lift of $\beta$ by using Lemma \ref{part-symp-constant} and reducing this claim to the case that $(A,u)$ is a symplectically constant pair. Therefore, we can define $\overline \beta$ by requiring   
\[
  \deg_I(\overline \alpha)=\mathcal E(A,u)+\deg_I(\overline \beta).
\]
This allows us to lift the isomorphism $\bN:C_{G}(Y_\#,E_\#) \to C_{G}(Y_\#,E_\#) $ of Subsection \ref{bN-definition} to a $\Z[U^{\pm1}]$-module homomorphism from $\overline C_G(Y_\#,E_\#)$ to $\overline C_S(Y_\#,E_\#)$. The third part of Lemma \ref{pert} implies that if the perturbation term in the definition of $\bN$ is small enough, then for any $\overline \alpha\in \overline \fC_G$
\[
  \bN(\overline \alpha)=\overline \alpha+\sum_{\overline \beta}n_{\alpha,\beta}\overline \beta,
\]
where $\overline \beta$ appears in the sum above only if $\deg_I(\overline \beta)< \deg_I(\overline \alpha)$. In another word, $\bN$ is filtered with respect to the Chern-Simons filtration and its leading term is equal to the identity map.

\begin{definition}
	An {\it I-graded complex} is a chain complex $(C,d)$ which is freely and finitely generated over the ring $\Q[U^{\pm 1}]$ and has a $\Z\times \R$-bigrading. If $C_{i,j}$ is the subgroup of $C$ consisting of elements with bigrading $(i,j)$, 
	then we have
	\begin{itemize}
		\item[(i)] $U \, C_{i,j}\subset C_{i+4,j+\frac{1}{2}}$,
		\item[(ii)] $d \, C_{i,j}\subset \bigcup_{j'\leq j} C_{i-1,j'}$.
	\end{itemize}
	Here $i$ and $j$ are respectively called the Floer grading and the I-grading of $C_{i,j}$.
	A chain map $f:(C,d)\to (C',d')$ of I-graded complexes is of level $\lambda>0$, if it is a module homomorphism satisfying 
	\[
	  f\, C_{i,j}\subset \bigcup_{j'\leq j+\lambda} C_{i,j'}
	\]
	A level $\lambda$ chain homotopy $h:(C,d)\to (C',d')$ between two chain maps $f,\, g:(C,d)\to (C',d')$ of level $\lambda$ is a module homomorphism satisfying 
	\[
	  f-g=d'h+hd
	\]
	and
	\[
	  h \, C_{i,j}\subset \bigcup_{j'\leq j+\lambda} C_{i+1,j'}.
	\]
\end{definition}

Our discussion above shows that the instanton Floer complex $\overline C_G(Y_\#,E_\#)$  and its symplectic version $\overline C_S(Y_\#,E_\#)$ are I-graded complexes, and $\bN$ defines an I-graded chain map of level $0$ which is an isomorphism. However, this is not completely satisfactory for two reasons. First the I-gradings on $\overline C_G(Y_\#,E_\#)$ and $\overline C_S(Y_\#,E_\#)$ depend on the auxiliary choices of perturbations, the Riemann metrics and almost complex structures. (It turns out the dependence on perturbation terms $h$ and $h'$ are more serious than the other two items.) Second we need to fix a distinguished element $\alpha_0$ of $\fC_G$ to define the I-grading. Since the set $\fC_G$ changes by varying the auxiliary choices, we need to slightly modify this choice to resolve the second issue. Mostly for the ease of exposition, from now on we focus on the case of framed Floer homology discussed in the introduction. 

Recall that for any closed 3-manifold $M$, framed Floer homology is defined by introducing $Y_\#:=M\#T^3$. We also fix an $\SO(3)$ bundle $E_\#$ which is induced by the trivial bundle on $M$ and the pullback to $T^3$ of the non-trivial bundle $F_1$ on $T^2$. There is a unique flat connection on $F_1$ up to the action of the gauge group $\mathcal G(F_1)$ and the pullback of this connection to $T^3$ and the trivial connection on $M$ induce an $SO(3)$ family of flat connections on $E_\#$. Let $\alpha_0$ be an arbitrary element of this family. As it is explained in the introduction, any Heegaard splitting of $M$ induces an admissible splitting of $(Y_\#,E_\#)$ as $(Y\cup Y',E\cup E')$.  For any such admissible splitting, there is a sequence $\{(\sigma_i,\sigma_i')\}$ in the perturbation space $\mathcal P\times \mathcal P'$ which converges to zero and $(h_i,h_i'):=(h_{\sigma_i},h'_{\sigma_i'})$ satisfies the following properties.
\begin{enumerate}
	\item[(i)] The spaces $L_{h_i}(Y,E)$, $L_{h_i'}(Y',E')$ are smooth embedded Lagrangians which intersect transversely. The intersection of these two Lagrangians is denoted by $\fC_S^i$ and it can be identified with its gauge theoretical counterpart 
	$\fC_G^i$. The set $\fC_G^i$ includes the flat connection $\alpha_0$.
	\item[(ii)] The claim of Lemma \ref{reg-ASD} holds. In particular, we can use $(h_i,h_i')$ to define I-graded Floer complexes $(\overline C_G^i(Y_\#,E_\#),d^i)$ and 
	$(\overline C_S^i(Y_\#,E_\#),d^i)$, where the I-grading is defined using $\alpha_0$.
\end{enumerate}
Standard continuation maps in Floer theory provide chain maps $f_i^j:\overline C_G^i(Y_\#,E_\#)\to \overline C_G^j(Y_\#,E_\#)$ for any $i, j$ such that $f_i^i={\rm Id}$ and $f_j^k\circ f_i^j$ is chain homotopic to $f_i^k$ using a chain homotopy $l_{i,j,k}$. In fact, the chain maps $f_i^j$ and the chain homotopy $l_{i,j,k}$ for $i,j,k\geq n$ are of level $\lambda_n$ where $\lambda_n\to 0$ as $n\to \infty$. (See \cite[Subsection 2.2]{D:CS-HCG} for the proof of a similar claim in a closely related context.) In particular, the chain complexes $\overline C_G^i(Y_\#,E_\#)$ form an {\it enriched complex} in the sense of the following definition. (This is a slight variation of \cite[Definition 7.16]{DS1}, which is adapted to the case of instanton Floer homology for admissible bundles.)

\begin{definition}
	An {\it enriched complex} $\fE=\{(C^i,d^i),f_i^j,\lambda_n\}_{i,j,n}$ is a sequence of I-graded complexes $\{(C^i,d^i)\}$ and a family of chain maps $f_i^j:C^i\to C^j$ such that 
	\begin{itemize}
		\item[(i)] $f_i^j$ is of level $\lambda_n$ for any $i,j\geq n$,
		\item[(ii)] $f_i^i={\rm Id}$,
		\item[(iii)] $f_j^k\circ f_i^j$ is chain homotopic to $f_i^k$ using a chain homotopy $l_{i,j,k}$ of level $\lambda_n$ whenever $i,j,k\geq n$,
		\item[(iv)] $\lim_{n\to \infty}\lambda_n=0$.
	\end{itemize}
\end{definition}
\begin{definition}	
	If $\fC_1=\{(C^i_1,d^i_1),f_i^j,\lambda_n\}_{i,j,n}$ and $\fC_2=\{(C^i_2,d^i_2),g_i^j,\mu_n\}_{i,j,n}$ are two enriched complexes, then an enriched morphism $\fN:\fC_1\to \fC_2$ consists of level $\kappa_i$ chain maps $\bN_i:C^i_1\to C^i_2$ for
	any $i$ such that 
	$\bN_j f_i^j$ and $g_i^j \bN_i$ are chain homotopic using a chain homotopy of level $\kappa_n$ whenever $i,j\geq n$, and $\kappa_n\to 0$ as $n\to \infty$. 
	Enriched morphisms $\fN=\{\bN_i:C^i_1\to C^i_2\}$ and $\fM=\{\bM_i:C^i_1\to C^i_2\}$ are chain homotopic 
	to each other, if there is a sequence $\{\bK_i:C^i_1\to C^i_2\}$ where $\bK_i$ is a chain homotopy of level $\kappa_i$ between $\bN_i$ and $\bM_i$ with $\kappa_n\to 0$ as $n\to \infty$. .
	The enriched complexes $\fC_1$ and $\fC_2$ are chain homotopy equivalent to each other if there are enriched
	 morphisms $\fN=\{\bN_i:C^i_1\to C^i_2\}$ and $\fM=\{\bM_i:C^i_2\to C^i_1\}$ such that the enriched morphism
	 $\fM\circ \fN:=\{\bM_i\circ \bN_i:C^i_1\to C^i_1\}$ (resp. $\fN\circ \fM:=\{\bN_i\circ \bM_i:C^i_2\to C^i_2\}$) is chain homotopy equivalent to an 
	 isomorphism of $\fC_1$ (resp. $\fC_2$).
\end{definition}

The following theorem is an extension of our main theorem, which in particular shows that framed Floer homology and symplectic framed Floer homology together with their Chern-Simons filtrations are isomorphic to each other. In the statement of the theorem, we write $\fE_G(M)$ for the enriched framed Floer complex $\{(\overline C_G^i(Y_\#,E_\#),d^i),f_i^j,\lambda_n\}$ of $M$.
\begin{theorem}\label{enriched-AF}
	The I-graded complexes $(\overline C_S^i(Y_\#,E_\#),d^i)$ can be completed into an enriched complex 
	\[\fE_S(M)=\{(\overline C_S^i(Y_\#,E_\#),d^i),g_i^j,\mu_n\},\] 
	and the enriched framed Floer theories $\fE_G(M)$ and $\fE_S(M)$ are chain homotopy equivalent to each other as enriched complexes.
\end{theorem}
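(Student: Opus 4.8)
The plan is to build the enriched symplectic complex $\fE_S(M)$ and the chain-homotopy equivalence with $\fE_G(M)$ by upgrading the already-constructed data to the filtered, enriched setting, using the mixed equation as the bridge at each level $i$. All the individual pieces are in place: for each $i$ we have the I-graded complex $(\overline C_S^i(Y_\#,E_\#),d^i)$ and the level-$\kappa_i$ isomorphism $\bN_i:\overline C_G^i(Y_\#,E_\#)\to \overline C_S^i(Y_\#,E_\#)$ obtained by lifting the isomorphism $\bN$ of Theorem~\ref{main-thm-detailed} with respect to the Chern--Simons filtration (as spelled out just before the statement: $\bN_i(\overline\alpha)=\overline\alpha+\sum n_{\alpha,\beta}\overline\beta$ with $\deg_I(\overline\beta)<\deg_I(\overline\alpha)$, so $\kappa_i\to 0$ can be arranged by shrinking the secondary perturbations of Proposition~\ref{pert}). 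What remains is to supply the connecting maps $g_i^j$ on the symplectic side, verify the level bounds, and check the homotopy-commutativity of the two squares relating $\bN_i$, $f_i^j$ and $g_i^j$.

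\textbf{Step 1: construct $g_i^j$.} I would define $g_i^j:\overline C_S^i(Y_\#,E_\#)\to \overline C_S^j(Y_\#,E_\#)$ as the standard continuation map in Lagrangian Floer theory, counting solutions of the Cauchy--Riemann equation \eqref{CR} with a domain-dependent almost complex structure and domain-dependent Lagrangian boundary conditions interpolating between $(L_{h_i}(Y,E),L_{h_i'}(Y',E'))$ and $(L_{h_j}(Y,E),L_{h_j'}(Y',E'))$. Using Proposition~\ref{pert-3-man-Lag} (the perturbed 3-manifold Lagrangians are Lagrangian cobordant, hence one can interpolate through embedded Lagrangians for $i,j$ large) together with the energy-filtration estimate of Lemma~\ref{ASD-pos-energy}, the continuation map $g_i^j$ is of level $\mu_n$ for $i,j\geq n$, and a Gromov-type compactness argument as in Section~\ref{comp-sect} shows $\mu_n\to 0$ since the minimal energy of any non-constant interpolating strip is bounded below and the Hamiltonian-isotopy term can be taken arbitrarily small by shrinking the $(\sigma_i,\sigma_i')$. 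The standard triangle identities $g_i^i=\mathrm{Id}$ and $g_j^k g_i^j\simeq g_i^k$ (with a level-$\mu_n$ homotopy) follow from the usual ``concatenation of homotopies of homotopies'' argument, filtered exactly as on the gauge-theoretic side; this is where one cites \cite[Subsection~2.2]{D:CS-HCG} mutatis mutandis, now in the Lagrangian Floer context.

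\textbf{Step 2: homotopy-commutativity of the squares.} For fixed $i\leq j$ one must show $\bN_j\circ f_i^j$ is chain homotopic to $g_i^j\circ \bN_i$ via a chain homotopy of level $\kappa_n$ (for $i,j\geq n$) with $\kappa_n\to 0$. Both composites count the same kind of broken objects: the bridging homotopy $\bK_i^j$ is defined by a one-parameter family of special quintuples in which the mixed equation \eqref{mixed-eq-pert} is coupled to a continuation-type interpolation of the Lagrangian boundary data and of the family of almost complex structures, i.e.\ one runs the mixed moduli space $\bM_\eta$ in a family over an interval whose two ends realize the two composites. The compactness analysis of Section~\ref{reg-comp-exp-dec-sec} (energy bounds from Proposition~\ref{ind-mixed-op}, exponential decay from Proposition~\ref{exp-decay}, and the broken-trajectory description in Proposition~\ref{comp}) together with a gluing theorem identify the codimension-one ends of the one-dimensional parametrized moduli spaces with exactly the terms of $\bN_j f_i^j - g_i^j \bN_i - d^j \bK_i^j - \bK_i^j d^i$; the filtration (I-grading) bound on $\bK_i^j$ comes again from non-negativity of topological/analytical energy, $\eqref{rel-mixed-top-ana-energy}$.

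\textbf{Expected main obstacle.} The hard part will be the parametrized transversality and compactness for the \emph{family} of special quintuples underlying the bridging homotopy $\bK_i^j$: one needs a version of Proposition~\ref{pert} and Proposition~\ref{comp} in which the Lagrangian boundary conditions and the almost complex structure vary over the homotopy parameter, while keeping the perturbations trivial near the matching line (so that the delicate local analysis of \cite{DFL:mix} near $U_\partial$ still applies) and keeping all energies controlled so that the level constants $\kappa_n$, $\mu_n$ genuinely tend to zero. Once this parametrized package is established, everything else is a bookkeeping exercise in the category of enriched complexes: $\fM\circ\fN$ and $\fN\circ\fM$ are, level by level, $\mathrm{Id}+(\text{strictly filtration-decreasing})$, hence chain homotopy equivalent to isomorphisms, giving the desired equivalence $\fE_G(M)\simeq \fE_S(M)$ and in particular Theorem~\ref{framed-AF}.
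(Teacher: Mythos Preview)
Your approach is correct in spirit but takes a genuinely different and much harder route than the paper, and it has a potential gap that the paper itself flags.

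The paper's proof is a one-line transport-of-structure argument. The key observation you underuse is that each $\bN_i$ is a \emph{level-$0$ isomorphism} (not merely a level-$\kappa_i$ chain map with $\kappa_i\to 0$): its leading term is the identity and all other terms strictly lower the I-grading, so $\bN_i^{-1}$ exists and is also level $0$. Given this, one simply \emph{defines}
\[
  g_i^j:=\bN_j\circ f_i^j\circ \bN_i^{-1},
\]
transports the chain homotopies $l_{i,j,k}$ as $\bN_k\circ l_{i,j,k}\circ \bN_i^{-1}$, and sets $\mu_n=\lambda_n$. The equivalence is then given by $\fN=\{\bN_i\}$ and $\fM=\{\bN_i^{-1}\}$, with $\fM\circ\fN$ and $\fN\circ\fM$ literally equal to the identity enriched morphisms. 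No parametrized mixed moduli spaces, no symplectic continuation maps, no new analysis.

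By contrast, your Step~1 builds $g_i^j$ intrinsically via Lagrangian Floer continuation maps with moving boundary conditions, and your Step~2 then requires a parametrized version of the mixed-equation package (Propositions~\ref{pert} and~\ref{comp} over a family with varying Lagrangian boundary data). This is precisely the ``expected main obstacle'' you identify, and the paper avoids it entirely. More seriously, your Step~1 relies on interpolating through embedded Lagrangians (or an embedded Lagrangian cobordism), and the paper explicitly cautions that the cobordism of Proposition~\ref{pert-3-man-Lag} is only known to be \emph{immersed}; see the remark following Proposition~\ref{inv-symp-ins}, where this is described as an open issue deferred to a sequel on immersed Lagrangian Floer theory. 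In the framed case one might argue that small perturbations of an already-embedded Lagrangian remain embedded and can be joined by a path of embedded Lagrangians, but this is an additional argument you would need to supply, and even then the filtered energy estimates for the continuation maps (to get $\mu_n\to 0$) require care with the Hofer-type term from the moving boundary.

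In short: your program could perhaps be carried out in the framed setting, and would yield a more ``intrinsic'' $\fE_S(M)$, but it buys nothing for the theorem as stated and introduces substantial technical debt. The paper's argument exploits that $\bN_i$ is already an honest filtered isomorphism, making the enriched equivalence tautological.
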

\begin{proof}
	The construction of Subsection \ref{bN-definition} gives a level $0$ chain map $\bN_i:\overline C_G^i(Y_\#,E_\#) \to \overline C_S^i(Y_\#,E_\#)$ for each $i$, which is an isomorphism. Then
	\[
	  g_i^j:=\bN_j\circ f_i^j\circ \bN_i^{-1}:(\overline C_S^i(Y_\#,E_\#),d_i)\to (\overline C_S^j(Y_\#,E_\#),d^j)
	\]
	is a chain map, and $g_j^k\circ g_i^j$ is chain homotopic to $g_i^k$ using the chain homotopy $\bN_k\circ l_{i,j,k}\circ \bN_i^{-1}$. By picking $\mu_n=\lambda_n$, we may easily see that $\fE_S(M)=\{(\overline C_S^i(Y_\#,E_\#),d^i),g_i^j,\mu_n\}$
	is an enriched complex. The maps $\bN_i$ and $\bM_i:=\bN_i^{-1}$ give the desired chain homotopy equivalence between $\fE_G(M)$ and $\fE_S(M)$.
\end{proof}

\begin{remark}
	In this paper we have been concerned with the instanton Floer homology for admissible bundles on 3-manifolds. The key feature of these bundles is that they do not admit reducible flat connections. However, there are versions of 
	instanton Floer homology for 3-manifolds \cite{floer:inst1,Don:YM-Floer,froyshov} 
	and knots \cite{collin-steer,DS1,SFO} where one works with bundles which admit flat reducible connections. In these cases, one still has the Chern-Simons filtration which can be used to produce numerical invariants of 3-manifolds
	\cite{D:CS-HCG,NST:filtered} and 
	knots \cite{DS1}. Although this has not been investigated in the literature, it is reasonable to expect that the 
	Chern-Simons filtration on framed Floer homology, in the form of the enriched complex $\fE_G(M)$ (or equivalently $\fE_S(M)$), could be useful in the study low of dimensional manifolds. 
\end{remark}

\subsection{Quantum cohomology and the $\mu$-operator}\label{mu-quantum-comp}

Associated to any configuration space of connections on a principal bundle, there is a universal bundle, which can be used to produce cohomology classes of the configuration space. As the first example, let $\mathcal A^*(\Sigma,F)$ be the open subspace of $\mathcal A(\Sigma,F)$ given by irreducible connections, and define $\mathcal B^*(\Sigma,F)\subset \mathcal B(\Sigma,F)$ as the quotient of $\mathcal A^*(\Sigma,F)$ by $\mathcal G(F)$. The gauge group $\mathcal G(F)$ acts in the obvious way on the product space $\mathcal A^*(\Sigma,F)\times \Sigma$, and this action can be lifted to the pullback of $F$ to $\mathcal A^*(\Sigma,F)\times \Sigma$. The stabilizer of the action of $\mathcal G(F)$ at any point of $\mathcal A^*(\Sigma,F)\times \Sigma$ is $\{\pm 1\}$ which act trivially at any point on the bundle. In particular, taking quotient with respect to $\mathcal G(F)$ defines an $\SO(3)$-bundle on $\mathcal B^*(\Sigma,F)\times \Sigma$ which is called the universal bundle associated to $F$. We write $\mathbb F$ for the restriction of this bundle to the subspace $\mathcal M(\Sigma,F)\times \Sigma$ of $\mathcal B^*(\Sigma,F)\times \Sigma$. Similar constructions give rise to the universal bundles $\mathbb E_\#$ over $\mathcal B^*(Y_\#,E_\#)\times Y_\#$ and $\mathbb V$ over $\bB(\alpha,\beta)\times X$ for any $\alpha,\beta \in \fC_G$. Here $\mathcal B^*(Y_\#,E_\#)$ is again the subspace of $\mathcal B(Y_\#,E_\#)$ given by irreducible connections.

The universal bundles $\mathbb F$, $\mathbb E_\#$ and $\mathbb V$ are related to each other. For any $(A,u)\in \bA(\alpha,\beta)$, the restriction of $A$ to $\{(0,\theta)\}\times \Sigma\subset U_\partial \times \Sigma$ gives a flat connection in $\mathcal A^*(\Sigma,F)$ and this association is equivariant. This induces a map $\rho:\bB(\alpha,\beta)\times U_\partial \times \Sigma \to\mathcal M(\Sigma,F) \times \Sigma$, and the restriction of $\mathbb V$ to $\bB(\alpha,\beta)\times U_\partial \times \Sigma$ is given by the pullback of $\mathbb F$ via the map $\rho$. Let $\widetilde X$ be the manifold obtained as the union of  $X$ and $U_+\times \Sigma$ where $U_\partial \times \Sigma \subset X\times \Sigma$ is identified with $U_\partial \times \Sigma \subset U_+ \times \Sigma$ in the obvious way. The above discussion shows that the bundles $\mathbb V$ over $\bB(\alpha,\beta)\times X$ and the pullback of $\mathbb F$ to $\bB(\alpha,\beta)\times U_+\times \Sigma$ via the map 
\begin{equation}\label{symp-rel-univ-bundles}
  ([A,u],s,\theta,x)\in \bB(\alpha,\beta)\times U_+\times \Sigma\to (u(s,\theta),x)\in \mathcal M(\Sigma,F) \times \Sigma,
\end{equation}
can be naturally identified with each other over  $U_\partial \times \Sigma$, and hence they induce a bundle over $\bB(\alpha,\beta)\times\widetilde X$, which we denote by $\widetilde {\mathbb V}$. Fix $T\geq 3$ and let $\bB^*(\alpha,\beta)$ denote the subspace of $\bB(\alpha,\beta)$ given by mixed pairs $[A,u]$ that the restriction of $A$ to $\{t\}\times Y_\#$, for any $t\in (-\infty,-T]$, is irreducible. Then the restriction of $\mathbb V$ to $\bB^*(\alpha,\beta) \times (-\infty,-T]\times Y_\#$ is the pullback of the bundle $\mathbb E_\#$ with respect to the map
\begin{equation}\label{gauge-rel-univ-bundles}
  ([A,u],t,x) \in \bB^*(\alpha,\beta) \times (-\infty,-T]\times Y_\#\to ([A|_{\{t\}\times Y_\#}],x)\in \mathcal B^*(Y_\#,E_\#) \times Y_\#.
\end{equation}
Later in this subsection we shall need a further constraint on $T$.

Universal bundles can be used to produce cohomology classes in configuration spaces of connections. For instance, in the case of $\mathcal M(\Sigma,F)$, the first Pontryagin class of $\mathbb F$ has the K\"unneth decomposition
\[
  p_1(\mathbb F)=\alpha_0\otimes \omega_0+\alpha_1\otimes \omega_1+\beta_0\otimes x_0+\beta_1\otimes x_1 +\sum_{i}\psi_j\otimes \gamma_j \in H^4(\mathcal M(\Sigma,F)\times \Sigma)
\]
 where the cohomology classes $\omega_j\in H^2(\Sigma)$, $x_j\in H^0(\Sigma)$ and $\gamma_j\subset H^1(\Sigma)$ give a basis for the corresponding cohomology groups of $\Sigma$, and $\alpha_j\in H^2(\mathcal M(\Sigma,F))$, $\beta_j \in H^4(\mathcal M(\Sigma,F))$ and $\psi_j\in H^3(\mathcal M(\Sigma,F))$. These cohomology classes of $\mathcal M(\Sigma,F)$ provide a multiplicative generating set for the cohomology ring $H^*(\mathcal M(\Sigma,F))$ \cite{AB:YM}. 
 
 Explicit representatives for these cohomology classes can be constructed in the following way. Let $\mathbb F_\C$ denote the complexification $\mathbb F \otimes \C$. Fix two sections $s_1^S$ and $s_2^S$ of $\mathbb F_\C$ and define
 \[
   Z:=\{(\alpha,x)\in \mathcal M(\Sigma,F)\times \Sigma\mid \text{$s_1^S(\alpha,x)$, $s_2^S(\alpha,x)$ are linearly dependent}\}.
 \]
If the sections $s_1^S$ and $s_2^S$ are chosen generically, then $Z$ is a codimension four compact stratified subspace of $\mathcal M(\Sigma,F)\times \Sigma$ where the top stratum $Z_0$ is a smooth submanifold and $Z\setminus Z_0$ given by the common zeros of $s_1^S$ and $s_2^S$ has codimension twelve. In particular, if we co-orient $Z$ using the product orientation on $ \mathcal M(\Sigma,F)\times \Sigma$ and the complex orientation on the fibers of $\mathbb F_\C$, then it has a well-defined fundamental class which gives the Poincar\'e dual for $c_2(\mathbb F_\C)=-p_1(\mathbb F)$. For this space $Z$ and the analogous ones defined in the following, we use the non-standard product orientation convention to get a representative for $p_1(\mathbb F)$.  Thus $Z$ can be used to produce representatives for the cohomology classes $\alpha_j$, $\beta_j$ and $\psi_j$. For instance, the projection of $Z\cap (\mathcal M(\Sigma,F)\times \Sigma_j)$ to $\mathcal M(\Sigma,F)$ gives a cycle representing $\alpha_j$ and the intersection of $Z$ with $\mathcal M(\Sigma,F)\times \{x\}$ for a generic $x\in\Sigma_j$ gives a representative for $\beta_j$. Similarly, if $\ell$ is a closed oriented loop representing $\gamma_j$, then the intersection $Z\cap (\mathcal M(\Sigma,F)\times \ell)$, after possibly a perturbation of $\ell$, is transversal and the projection of this intersection to $\mathcal M(\Sigma,F)$ gives a cycle representing $\gamma_j$.

As in Subsection \ref{Fl-hom-subsec}, for $\alpha$, $\beta\in \fC_S$, let $\rM_S(\alpha,\beta)_p$ be the moduli space of pseudo-holomorphic maps $u:\R\times [-1,1] \to \mathcal M(\Sigma,F)$ satisfying Lagrangian boundary conditions corresponding to the path $p$ from $\alpha$ to $\beta$. Let ${\rE\rv}:\rM_S(\alpha,\beta)_p\times \Sigma \to \mathcal M(\Sigma,F)\times \Sigma$ be the evaluation map that sends $(u,x)$ in $\rM_S(\alpha,\beta)_p\times \Sigma$ to $(u(0,0),x)$. We also fix an element of $H^2(\Sigma)$ represented by one of the connected components $\Sigma_\sigma$ of $\Sigma$. A generic choice of $s_1^S$ and $s_2^S$ allows us to assume that ${\rE\rv}$ is transversal to $Z$. We form the cutdown moduli space
\[
  \rL_S(\alpha,\beta)_p:=\{(u,x)\in \rM_S(\alpha,\beta)_p\times \Sigma_\sigma \mid {\rE\rv}(u,x)\in Z\}.
\]
Our transversality assumption implies that $\dim(\rL_S(\alpha,\beta)_p)=\dim(\rM_S(\alpha,\beta)_p)-2$. 

Now, we are ready to review the definition of the operator $m_\sigma^S$. If the index of $p$ is at most $1$, the moduli space $\rL_S(\alpha,\beta)_p$ is empty and if the index of $p$ is $2$, then $\rL_S(\alpha,\beta)_p$ is a compact $0$-dimensional manifold which we may orient using the orientation of $\rM_S(\alpha,\beta)_p$ and the co-orientation of $Z$ that realizes $p_1(\mathbb F)$. Orientation of these moduli spaces allows us to define a homomorphism $m_\sigma^S:C_S((Y,E),(Y',E'))\to C_S((Y,E),(Y',E'))$ as
\[
  m_\sigma^S(\alpha):=\sum_{p:\alpha\to \beta} \# \rL_S(\alpha,\beta)_p\cdot \beta
\]
where the sum is over all paths $p$ from $\alpha\in \fC_S$ to $\beta\in \fC_S$ of index two and $\#\rL_S(\alpha,\beta)_p$ denotes the signed count of the elements of $\rL_S(\alpha,\beta)_p$. An analysis of the ends of $1$-dimensional cutdown moduli spaces $\rL_S(\alpha,\beta)_p$ shows that the homomorphism $m_\sigma^S$ is a chain map, and we use the same notation to denote the induced map $m_\sigma^S:\rSI_*(Y_\#,E_\#) \to \rSI_*(Y_\#,E_\#)$ at the level of homology.

The above construction has a counterpart in the case of instanton Floer homology for admissible pairs, as we review the construction now for the pair $(Y_\#,E_\#)$. For any $\alpha,\beta\in \fC_G$, any path $p$ from $\alpha$ to $\beta$ and any $[A]\in \rM_G(\alpha,\beta)_p$, the restriction $A_t$ of $A$ to $\{t\}\times Y_\#$ for any $t\in \R$ is irreducible. Otherwise, if $A_t$ has a non-trivial stabilizer $u$, then $A$ and the pullback $u^*A$ are two solutions of the perturbed ASD equation that agree on $\{t\}\times Y_\#$. Unique continuation implies that these two connections are equal to each other which contradicts with the irreducibility of $A$. In particular, we obtain a well-defined map $\rM_G(\alpha,\beta)_p \to \mathcal B^*(Y_\#,E_\#)$ by restricting $[A]$ to $\{0\}\times Y_\#$. 

Next, we fix sections $s_1^G$ and $s_2^G$ of the complexified universal bundle $\mathbb E_\#\otimes \C$ over $\mathcal B^*(Y_\#,E_\#)\times Y_\#$ and define
\begin{align*}
  \rL_G(\alpha,\beta)_p:=\{([A],x)\in &\rM_G(\alpha,\beta)_p\times \Sigma_\sigma \mid \\
  &\text{ $s_1^G ([A\mid_{\{0\}\times Y_\#}],x)$ and $s_2^G([A\mid_{\{0\}\times Y_\#}],x)$ are linearly dependent.}\}.
\end{align*}
Again, we may assume that the space $\rL_G(\alpha,\beta)_p$ is cut down transversely. In particular, it is empty, if the index of $p$ is at most one, and it is a compact $0$-dimensional manifold if the index of $p$ is two. In the latter case, we use the product orientation of $\rM_G(\alpha,\beta)_p\times \Sigma_\sigma$ to orient $\rL_G(\alpha,\beta)_p$. Oriented $0$-dimensional moduli spaces $\rL_G(\alpha,\beta)_p$ can be used to define the operator $m_\sigma^G:C(Y_\#,E_\#) \to C(Y_\#,E_\#)$ as 
\[
  m_\sigma^G(\alpha):=\sum_{p:\alpha\to \beta} \# \rL_G(\alpha,\beta)_p\cdot \beta
\]
where the sum is over all paths $p$ from $\alpha\in \fC_G$ to $\beta\in \fC_G$ with index two. Using $1$-dimensional moduli spaces $\rL_G(\alpha,\beta)_p$ one can see again that $m_\sigma^G$ is a chain map. The induced operator acting on $\rI_*(Y_\#,E_\#)$ is denoted by the same notation.

To relate the operators $m_\sigma^G$ and $m_\sigma^S$ we need sections of the bundle $\widetilde {\mathbb V}$ interpolating between $s_1^G$, $s_2^G$ on the gauge theoretical side and $s_1^S$, $s_2^S$ on the symplectic side. Fix continuous sections  $\bs_1$ and $\bs_2$ of $\widetilde {\mathbb V}$ defined over $\bB^*(\alpha,\beta)\times \widetilde X$ which satisfy the following properties:
\begin{itemize}
	\item[(i)] The restriction of $\bs_i$ to $\bB^*(\alpha,\beta)\times X$ is smooth.
	\item[(ii)] The restriction of $\bs_i$ to $\bB^*(\alpha,\beta)\times (-\infty,-T]\times Y_\#$ is given by pulling back $s_i^G$ using the map \eqref{gauge-rel-univ-bundles}. 
	\item[(iii)] As in Subsection \ref{proof-thm-Fredholm-D}, let $X_T$ denote the the compact subspace of $X$ given as the complement of $ (T,\infty)\times T$, $(-\infty,-T)\times Y'$ and $(-\infty,-T)\times Y_\#$ in $X$.
	Then $\bs_i([A,u],x)$ for $([A,u],x)\in \bB^*(\alpha,\beta)\times X_T$ depends on the restriction of $[A]$ to $X_T$. To be more precise, the bundle $\mathbb V$ over 
	$\bB^*(\alpha,\beta)\times X_T$ is the pullback of the universal bundle over $\mathcal B^*(X_T)\times X_T$ and we demand that $\bs_i$ is the pullback of a section of this universal bundle. Here $\mathcal B^*(X_c)$ denotes the configuration 
	space of irreducible $L^2_{l}$ connections on $X_T$ and the universal bundle over this space is defined analogous to the previous instances of universal bundles.
	\item[(iv)] The restriction of $\bs_i$ to $\bB^*(\alpha,\beta)\times U_-\times \Sigma$ is given by pulling back $s_i^S$ using the map \eqref{symp-rel-univ-bundles}. 
\end{itemize}
Suppose the constant $T$ in the definition of $\bB^*(\alpha,\beta)$ is chosen such that the secondary perturbation term $\eta$ vanishes on $(-\infty,-T]\times Y_\#$. Then unique continuation again implies that the moduli space $\bM_\eta(\alpha,\beta)$ is contained in $\bB^*(\alpha,\beta)$. We may arrange the sections $s_i^G$, $s_i^S$ and $\bs_i$ so that the following subspace of $\bM_\eta(\alpha,\beta)\times \R\times \Sigma_\sigma$ for $d\leq 2$ is cut down transversely:
\begin{align*}
  \bL(\alpha,\beta)_{d}:=\{([A,u],t,x)\in &\bM_\eta(\alpha,\beta)_{d+1} \times \R\times \Sigma_\sigma \mid \\
  &\text{ $\bs_1 ([A,u],t,x)$ and $\bs_2 ([A,u],t,x)$ are linearly dependent}\}.
\end{align*}
Here we use the embedding of $\R\times \Sigma_\sigma$ into $\widetilde X$ where $(-\infty,0]\times \Sigma_\sigma$ is mapped to $(-\infty,0]\times \{0\}\times \Sigma_\sigma\subset U_-\times \Sigma$ and $[0,\infty)\times \Sigma_\sigma$ is mapped to $[0,\infty)\times \{0\}\times \Sigma_\sigma\subset U_+\times \Sigma$. By assumption, the restrictions of $\bs_i$ to $\bM(\alpha,\beta)_d \times \Sigma_\sigma\times (-\infty,0]$, $\bM(\alpha,\beta)_d \times \Sigma_\sigma\times [0,\infty)$ and $\bM(\alpha,\beta)_d \times \Sigma_\sigma\times \{0\}$ are smooth. The transversality assumption above means that the loci that $\bs_1$ and $\bs_2$ are linearly dependent over each of these subspaces is cut down transversely.  This transversality assumption implies that $\bL(\alpha,\beta)_d$ is empty for $d< 0$ and it is a $0$-dimensional manifold for $d=0$.

\begin{lemma}\label{cut-dn-compact}
	The moduli space $\bL(\alpha,\beta)_0$ is compact.
\end{lemma}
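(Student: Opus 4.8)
The plan is to prove compactness of $\bL(\alpha,\beta)_0$ by combining the compactness of the relevant $1$-dimensional mixed moduli space $\bM_\eta(\alpha,\beta)_1$ with an analysis of what happens to the cutdown condition in the limit. First I would recall that, by Proposition \ref{ind-mixed-op}, the index condition $\ind(\mathcal D_{(A,u)})=1$ pins down the topological energy $\mathcal E(A,u)$, so the sequences under consideration have bounded analytical energy and Theorem \ref{compactness-X-S-pert} applies. Thus, starting from a sequence $([A_i,u_i],t_i,x_i)\in \bL(\alpha,\beta)_0$, after passing to a subsequence we obtain weak convergence of $(A_i,u_i)$ to a limiting mixed solution together with broken trajectories on the gauge-theoretic and symplectic ends, exactly as in Subsection \ref{comp-sect}, and $x_i\to x_0\in \Sigma_\sigma$ by compactness of $\Sigma$.

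The core of the argument is then a case analysis on the behavior of the evaluation point. There are three possibilities for the sequence $t_i$: it stays bounded, it goes to $+\infty$, or it goes to $-\infty$. If $t_i$ stays bounded, then after passing to a subsequence $t_i\to t_0$, the evaluation points $([A_i,u_i],t_i,x_i)$ converge into a compact region of $\widetilde X$ where convergence is $C^\infty$ away from finitely many bubbling points, and one argues in the standard way that no energy is lost (otherwise the index bound $1$ would be violated, as the left-hand side of the index inequality drops by at least $4$ while all summands are non-negative, cf. the argument following \eqref{ineq-index}); hence the limit $[A_0,u_0]$ lies in $\bM_\eta(\alpha,\beta)_1$, the sections $\bs_1,\bs_2$ remain linearly dependent at the limit by continuity, and $([A_0,u_0],t_0,x_0)\in \bL(\alpha,\beta)_0$. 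If instead $t_i\to +\infty$, the evaluation point escapes into the symplectic end $[0,2]\times[T,\infty)$ (for large enough $i$), where by property (iii) of Proposition \ref{exp-decay} the map $u_i$ is exponentially close to the constant flat mixed pair, so the restriction of $\bs_i$ there is governed by the restriction of the universal bundle $\mathbb F$ over a neighborhood of $\beta\in \mathcal M(\Sigma,F)$; the linear dependence condition becomes the condition $\ev(u(0,0),x)\in Z$ at the endpoint $\beta$, which for generic choices of $s_1^S,s_2^S$ is a codimension-four condition on a point—an event that either does not occur or forces the limiting configuration (after translating) into a lower-dimensional cutdown stratum, again contradicting the index count. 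The case $t_i\to -\infty$ is handled symmetrically using the gauge-theoretic end and the $\rho$-invariant/energy gap, noting that the secondary perturbation vanishes on $(-\infty,-T]\times Y_\#$ so unique continuation keeps us in $\bB^*(\alpha,\beta)$.

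I would organize the bookkeeping by appealing to the energy gap $\hbar$ of Theorem \ref{compactness-X-S-pert} together with Lemma \ref{top-energy-constant-reverse} (which quantizes energy differences in half-integers) to rule out all degenerations except the benign one, precisely as in the proof of the compactness half of Proposition \ref{comp} in Subsection \ref{comp-sect}; the only new ingredient is that the extra evaluation/cutdown datum $(t_i,x_i)$ must also converge, and this follows from compactness of $\Sigma_\sigma$ together with the observation that the cutdown condition is closed under $C^0$-convergence of the sections $\bs_j$ on the relevant compact pieces and asymptotically trivial on the ends. The main obstacle is the end analysis: one must verify that no sequence of cutdown solutions can ``run off'' to $t_i\to\pm\infty$ while keeping the linear-dependence condition satisfied, i.e.\ that the escaping energy would have to be accounted for either by a non-trivial broken piece (killed by the index bound) or by the cutdown locus acquiring positive codimension in the limit. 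This requires carefully matching the dimension count—$\bM_\eta(\alpha,\beta)_{1}$ has dimension $1$, the cutdown removes $2$, so $\bL(\alpha,\beta)_0$ is $0$-dimensional—and checking that the generic transversality of $\bs_1,\bs_2$ on each of the three strata (the two ends and the matching slice) is genuinely achievable, which is where I would lean on the same Sard-type arguments already used to set up $\bL(\alpha,\beta)_d$ before the statement of the lemma.
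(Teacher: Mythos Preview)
Your overall architecture is correct and matches the paper: pass to a subsequence, run the chain-convergence argument of Subsection~\ref{comp-sect}, and do a case split on whether $t_i$ stays bounded or escapes to $\pm\infty$. But the execution of each case has gaps.

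In the bounded case you claim ``no energy is lost'' forces $[A_0,u_0]\in\bM_\eta(\alpha,\beta)_1$. Ruling out bubbling (index drop $\ge 4$) is fine, but that still leaves breaking: with total index $1$ one can have $m+n=1$, exactly as in the proof of Proposition~\ref{comp}. What pins down $m=n=0$ is that continuity of $\bs_1,\bs_2$ forces the limit $([A_0,u_0],t_0,x_0)$ to satisfy the cutdown condition, so $\bL(\alpha',\beta')_d$ is nonempty for $d+1=\ind\mathcal D_{(A_0,u_0)}$; since $\bL$ is empty for $d<0$, the mixed piece already has index $\ge 1$, leaving no room for $A_j^G$ or $u_j^S$. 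You never invoke this.

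In the unbounded cases the geometry is off. As $t\to+\infty$ the point $(t,0)\in U_+$ runs into the \emph{symplectic} end $[3,\infty)\times[-1,1]$, not the mixed end $[0,2]\times[T,\infty)$; property~(iii) of Proposition~\ref{exp-decay} is about the mixed ends and is irrelevant here. More importantly, the key mechanism is missing: on $(-\infty,-T]\times Y_\#$ the section $\bs_i$ is the pullback of $s_i^G$ (property~(ii)), and on $U_+\times\Sigma$ it is the pullback of $s_i^S$ (property~(iv)). So when $t_i\to-\infty$ you translate $A_i$ by $t_i$, pass to a limit $B$ which is either one of the broken ASD pieces $A_j^G$ or the pullback of a critical connection $\alpha_j$, and the cutdown condition survives to give $s_1^G([B|_{\{0\}\times Y_\#}],x_0),\,s_2^G(\cdots)$ linearly dependent. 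Genericity of $s_i^G$ rules out $B$ being constant, so $B=A_j^G$ and $([A_j^G],x_0)\in\rL_G(\alpha_{j-1},\alpha_j)_{p_j}$, forcing $\ind(p_j)\ge 2$ --- incompatible with total index $1$. The $t_i\to+\infty$ case is the symmetric statement with $\rL_S$. Your sketch (``codimension-four condition on a point \dots\ or forces the limiting configuration into a lower-dimensional cutdown stratum'') gestures at this but does not carry it out, and the phrase ``lower-dimensional cutdown stratum'' does not name the actual contradiction.

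Minor: your dimension count ``$\bM_\eta(\alpha,\beta)_1$ has dimension $1$, the cutdown removes $2$'' omits the $\R$ factor; the correct bookkeeping is $1+1+2-4=0$.
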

\begin{proof}
	Suppose $\{([A_i,u_i],t_i,x_i)\}$ is a sequence of elements in $\bL(\alpha,\beta)_0$. After passing to a subsequence, we may assume that $x_i$ converges to $x_0\in \Sigma_\sigma$ and $t_i$ 
	converges to $t_0$ which is either a finite real number or $\pm \infty$. The argument of Subsection \ref{comp-sect} implies that there is a solution of the mixed equation $[A_0,u_0]\in \bM_{\eta}(\alpha',\beta')_{d+1}$ with $d\leq 0$, 
	perturbed ASD connections 
	\[
	 A_1^G\in \breve \rM_G(\alpha,\alpha_1)_{p_1},\,A_2^G\in \breve \rM_G(\alpha_1,\alpha_2)_{p_2},\,\dots,\,A_n^G\in \breve \rM_G(\alpha_{n-1},\alpha')_{p_n}
	\]
	and pseudo--holomorphic strips 
	\[
	 u_1^S\in \breve \rM_S(\beta',\beta_1)_{p_1'},\,u_2^S\in \breve \rM_S(\beta_1,\beta_2)_{p_2},\,\dots,\,u_m^S\in \breve \rM_S(\beta_{m-1},\beta)_{p_m'}
	\]
	such that $[A_i,u_i]$ is chain convergent to $([A_1^G],\dots,[A_n^G],[A_0,u_0],u_1^S,\dots,u_m^S)$ on the complement of a set of bubble points. Since the index of the mixed pairs $[A_i,u_i]$ is $1$, we may show that the set of bubble points 
	is empty by arguing as in Subsection \ref{comp-sect}. If $t_0$ is a finite number, then the continuity of the sections $\bs_1$ and $\bs_2$ (together with the properties (ii) and (iii) of $\bs_i$ if $t_0<0$) implies that $\bs_1 ([A_0,u_0],t_0,x_0)$ and $\bs_2 ([A_0,u_0],t_0,x_0)$ are linearly dependent. 
	In particular, the moduli space $\bL(\alpha',\beta')_d$ is non-empty which shows that $d=0$. This in turn implies that $m=n=0$ and $\alpha=\alpha'$, $\beta=\beta'$, and $\{([A_i,u_i],t_i,x_i)\}$ is convergent to
	$([A_0,u_0],t_0,x_0)\in \bL(\alpha,\beta)_0$ with respect to the topology of $\bL(\alpha,\beta)_0$. Thus in this case $\{([A_i,u_i],t_i,x_i)\}$ is convergent after passing to a subsequence.
	
	Next, we consider the case that $t_0$ is not a finite number. First let $t_0=-\infty$. After passing to a subsequence, we may assume that $t_i\leq -T$ for any $i$. Translating the restriction of $A_i$ to $(-\infty,-T]$ by $t_i$ gives 
	a connection $A_i'$ on $(-\infty,-T-t_i]$ such that $s_1^G([A_i'\vert_{\{0\}\times Y_\#}],x_i)$ and $s_2^G([A_i'\vert_{\{0\}\times Y_\#}],x_i)$ are linearly independent. Since $t_i\to -\infty$, our assumption on the chain convergence of 
	$\{([A_i,u_i],t_i,x_i)\}$ implies that the connections $A_i'$ modulo the action of the gauge group are convergent to $B$ which is one of the connections $A_i^G$ or the pullback of one of the connections $\alpha_0=\alpha$, $\alpha_1$, $\dots$, 
	$\alpha_{n-1}$, $\alpha_n=\alpha'$. Moreover, property (ii) of the sections $\bs_i$ implies that 
	$s_1^G([B|_{\{0\}\times Y_\#}],x_0)$ and $s_2^G([B|_{\{0\}\times Y_\#}],x_0)$ are linearly dependent. In particular, $B$ equals one of the connections $A_i^G$, and this connections represents an element of $\rL_G(\alpha_{i-1},\alpha_i)_p$. 
	This implies that the index of 
	 $A_i^G$ is at least $2$. On the other hand, the sum of the indices of the connections $A_i^G$, the mixed pair $[A_0,u_0]$ and $u_j^S$ is $1$, which is a contradiction. This shows that $t_0$ cannot be $-\infty$. A similar proof rules out the 
	 case $t_0=\infty$.
\end{proof}

We orient the compact $0$-dimensional manifold $\bL(\alpha,\beta)_0$  using the orientation of $\bM_\eta(\alpha,\beta)_1$ and the induced product orientation on $\bM_\eta(\alpha,\beta)_1 \times \R\times \Sigma_\sigma$. These oriented moduli spaces allow us to define a map $K:C(Y_\#,E_\#)\to C_S((Y,E),(Y',E'))$ as
\[
  K(\alpha):=\sum_{\beta} \# \bL(\alpha,\beta)_0 \cdot \beta.
\]
To prove Theorem \ref{module-over-quantum} for the operators $m_\sigma^G$ and $m_\sigma^S$, it suffices to show that $\bN\circ m_\sigma^G-m_\sigma^S\circ \bN=dK+Kd$. The following proposition, which is the counterpart of part (ii) of Proposition \ref{comp}, shows that this relation follows from analyzing the ends of the $1$-dimensional moduli spaces $\bL(\alpha,\beta)_1$.

\begin{prop}\label{comp-cutdown}		
	The moduli spaces of the form $\bL(\alpha,\beta)_1$ can be compactified into compact 1-manifolds by adding points in correspondence to the $0$-dimensional spaces
	\begin{equation}\label{ends-1dim-bL-1}
		\bL(\alpha,\gamma)_0\times \breve \rM_{S}(\gamma,\beta)_p,\hspace{1cm}
		\breve \rM_{G}(\alpha, \gamma)_p \times \bL(\gamma,\beta)_0,
	\end{equation}
	and 
	\begin{equation}\label{ends-1dim-bL-2}
		\bM_\eta(\alpha,\gamma)_0\times \rL_{S}(\gamma,\beta)_p,\hspace{1cm}
		\rL_{G}(\alpha, \gamma)_p \times \bM_\eta(\gamma,\beta)_0.
	\end{equation}
	where $\gamma\in \fC_G\cong \fC_S$, in \eqref{ends-1dim-bL-1} $p$ denotes a path of index $1$, and in \eqref{ends-1dim-bL-2} $p$ denotes a path of index $2$.
	Moreover, the induced orientation on the boundary components 
	of the compactified moduli space
	$\bL(\alpha,\beta)_1$ agree with the product orientation on the two terms in \eqref{ends-1dim-bL-1} and the first term in \eqref{ends-1dim-bL-2}
	and disagrees with the induced orientation on the second term in \eqref{ends-1dim-bL-2}.
\end{prop}

\begin{proof}
	A straightforward adaptation of the proof of Lemma \ref{cut-dn-compact} shows that any sequence of elements in $\bL(\alpha,\beta)_1$ without any subsequence convergent to an element of this moduli space has a subsequence chain convergent 	to an element in one of the spaces in \eqref{ends-1dim-bL-1} or \eqref{ends-1dim-bL-2}. We need a gluing theory as a converse to this  compactness result to show all elements of \eqref{ends-1dim-bL-1} and \eqref{ends-1dim-bL-2} appear 
	as the ends of the moduli space $\bL(\alpha,\beta)_1$. As in the case of Proposition \ref{comp}, the desired gluing theory results concern gluing mixed pairs to ASD connections or pseudo-holomorphic strips on the gauge theoretical or 
	symplectic ends. In particular, they can be proved by a straightforward adaptation of the corresponding gluing results in the context of instanton Floer theory and Lagrangian Floer theory. The discussion of the induced orientations of the moduli 
	spaces on the boundary components is also similar to the standard corresponding results in the context of instanton Floer theory and Lagrangian Floer theory.
\end{proof}

We may follow a similar discussion to prove the variation of the above result in the case that $\sigma\in H^1(\Sigma)$ and is represented by a loop $\ell_\sigma$ in $\Sigma$. The main modifications applied to the proof are replacing $\Sigma_\sigma$ with $\ell_\sigma$ and working with the moduli spaces of instantons, pseudo-holomorphic strips and mixed pairs of one dimension higher. The above argument does not immediately generalize to the case that $\sigma \in H^0(\Sigma)$ because we need to work with the 4-dimensional moduli spaces of solutions to the mixed equation. The main obstacle in this case is that we may have bubbling along the matching line in the compactification of the moduli spaces of mixed pairs, and we have not studied the behavior of the compactified moduli spaces in a neighborhood of such bubbles.

\bibliography{references}
\bibliographystyle{hplain}
\end{document}